\documentclass[reqno,10pt,centertags,draft]{amsart}
\usepackage{amsmath,amsthm,amscd,amssymb,latexsym,upref}
\date{\today}

\newcommand{\bbD}{{\mathbb{D}}}

\newcommand{\bbR}{{\mathbb{R}}}

\newcommand{\bbZ}{{\mathbb{Z}}}
\newcommand{\bbC}{{\mathbb{C}}}

\newcommand{\cH}{{\mathcal{H}}}
\newcommand{\bbT}{{\mathbb{T}}}
\newcommand{\mcB}{{\mathcal{B}}}
\newcommand{\cZ}{{\mathcal{Z}}}
\newcommand{\cF}{{\mathcal{F}}}
\newcommand{\cP}{{\mathcal{P}}}
\newcommand{\cQ}{{\mathcal{Q}}}
\newcommand{\cR}{{\mathcal{R}}}
\newcommand{\cS}{{\mathcal{S}}}
\newcommand{\cE}{{\mathcal{E}}}
\newcommand{\cG}{{\mathcal{G}}}
\newcommand{\cM}{{\mathcal{M}}}
\newcommand{\bW}{{\mathbf{W}}}
\newcommand{\bB}{{\mathbf{B}}}
\newcommand{\bH}{{\mathbf{H}}}
\newcommand{\bT}{{\mathbf{T}}}

\newcommand{\fe}{{\mathfrak{e}}}
\newcommand{\fw}{{\mathfrak{w}}}
\newcommand{\fu}{{\mathfrak{u}}}

\newcommand{\fA}{{\mathfrak{A}}}
\newcommand{\fH}{{\mathfrak{H}}}
\renewcommand{\t}{\tau}
\renewcommand{\k}{\varkappa}
\newcommand{\z}{\zeta}
\renewcommand{\Re}{\text{\rm Re}}
\renewcommand{\Im}{\text{\rm Im}}

\newcommand{\tr}{\text{\rm tr}}
\newcommand{\fae}{{\mathfrak{A}}_{SB}(E)}
\newcommand{\thte}{{\Theta}_{SB}(E)}
\newcommand{\ale}{{A}_{SB}(E)}

\newcommand{\Mth}{\langle \theta \rangle_{I}}

\newcommand{\Mst}{\langle \tilde R_+ \rangle_{I}}
\allowdisplaybreaks \numberwithin{equation}{section}
\newtheorem{theorem}{Theorem}[section]

\newtheorem{lemma}[theorem]{Lemma}
\newtheorem{proposition}[theorem]{Proposition}
\newtheorem{corollary}[theorem]{Corollary}

\theoremstyle{definition}
\newtheorem{definition}[theorem]{Definition}
\newtheorem{remark}[theorem]{Remark}

\newtheorem{example}[theorem]{Example}

\begin{document}

\title[]{CMV matrices with asymptotically constant coefficients. Szeg\"o over Blaschke class, Scattering Theory}
\author[]{F. Peherstorfer, A. Volberg,
and P. Yuditskii}

\address{Institute for Analysis, Johannes Kepler University Linz,
A-4040 Linz, Austria}
\email{Franz.Peherstorfer@jku.at}
\email{Petro.Yudytskiy@jku.at}
\thanks{
Partially supported by NSF grant DMS-0501067  the Austrian Founds
FWF, project number: P20413--N18}

\address{
Department of Mathematics, Michigan State University, East Lansing,
MI 48824, USA} \email{volberg@math.msu.edu}

\date{\today}

\begin{abstract}
We develop a modern extended scattering theory for CMV matrices with
asymptotically  constant Verblunsky  coefficients. First we
represent CMV matrices with constant coefficients as a
multiplication operator in $L^2$ -space with respect to a specific
basis. This basis substitutes the standard basis in $L^2$, which is
used for the free Jacobi matrix. Then we demonstrate that a similar
orthonormal system in a certain "weighted'' Hilbert space, which we
call the Fadeev-Marchenko (FM) space, behaves asymptotically as the
system in the standard (free) case discussed just before. The
duality between the two types of Hardy subspaces in it plays the key
role in the proof of all asymptotics involved.  We show that the
traditional (Faddeev-Marchenko) condition is too restrictive to
define the class of CMV matrices for which there exists a unique
scattering representation. The main results are: 1) Szeg\"o-Blaschke
class: the class of twosided CMV matrices acting in $l^2$ , whose
spectral density satisfies the Szeg\"o condition and whose point
spectrum the Blaschke condition, corresponds precisely to the class
where the scattering problem can be posed and solved. That is, to a
given CMV matrix of this class, one can associate the scattering
data and related to them the FM space. The CMV matrix corresponds to
the multiplication operator in this space, and the orthonormal basis
in it (corresponding to the standard basis in $l^2$) behaves
asymptotically as the basis associated with the free system. 2)
$A_2$-Carleson class: from the point of view of the scattering
problem, the most natural class of CMV matrices is that one in which
a) the scattering data determine the matrix uniquely and b) the
associated Gelfand-Levitan Marchenko transformation operators are
bounded. Necessary and sufficient conditions for this class can be
given in terms of an $A_2$ kind condition for the density of the
absolutely continuous spectrum and a Carleson kind condition for the
discrete spectrum. Similar close to the optimal conditions are given
directly in terms of the scattering data.
\end{abstract}

\maketitle

\section{Introduction}

\subsection{Why almost periodic operators are so important?} One of
the possible reason is the following rough statement: all operators with
the "strong" absolutely continuous spectrum asymptotically looks as
almost periodic operators with the same a.c. spectral set. To formulate a
theorem (in support of the above "philosophy") let us give some
definitions.

Of course
$$
a_n=\cos(\kappa n+\alpha)
$$
is the best known example of an almost periodic sequence. Generally,
they are of the form
$$
a_n=f(\kappa n),
$$
where $f$ is a continuous function on a compact Abelian group $\cG$
and $\kappa\in \cG$.

A Jacobi matrix
$$
J=J(\{p_n\},\{q_n\})=\begin{bmatrix}
\ddots&\ddots&\ddots& &\\
&p_0&q_0&p_1&\\
& &p_1&q_1&p_2& \\
& & & \ddots& \ddots& \ddots\\
\end{bmatrix}
$$
with almost periodic coefficient sequences $\{p_n\}_{n\in\bbZ}$ and
$\{q_n\}_{n\in\bbZ}$ is called almost periodic. In what follows we
assume $p_n>0$ and $q_n\in\bbR$.

Let $E\subset \bbR$ be  a compact of positive Lebesgue measure.
Moreover, we require a certain kind of {\it regularity} of the set
with respect to the Lebesgue measure: there exists $\eta>0$ such
that
\begin{equation}\label{hom}
    |E\cap(x-\delta,x+\delta)|\ge \eta\delta\quad\text{for all}\
    x\in E \ \text{and}\ 0<\delta<1.
\end{equation}
Such set $E$ is called homogeneous \cite{car}.

Denote
\begin{equation}\label{ape}
    J(E)=\{J \ \text{is almost
    periodic}:\sigma(J)=\sigma_{a.c.}(J)=E\}.
\end{equation}
That is $J(E)$ is the collection of all almost periodic Jacobi
matrices acting in $l^2(\bbZ)$, such that the spectral set of $J$ is
$E$, moreover, the support of the absolutely continuous component of
the spectrum covers the whole $E$.  It is important that the above
set has a parametric representation
\begin{theorem}\cite{syu} Let $\pi_1(\Omega)$ be the fundamental
group of the domain $\Omega:=\bar\bbC\setminus E$ and
$\pi^*_1(\Omega)$ denote its  group of characters. There exist the
character $\mu\in\pi^*_1(\Omega)$ and the continuous functions
$\cP(\alpha)$ and $\cQ(\alpha)$, $\alpha\in\pi^*_1(\Omega)$, such
that
\begin{equation}\label{repralpha}
    J(E)=\{J(\alpha),\ \alpha\in\pi^*_1(\Omega):p_n(\alpha)=\cP(\alpha\mu^{-n}), \
    q_n(\alpha)=\cQ(\alpha\mu^{-n})\}.
\end{equation}
\end{theorem}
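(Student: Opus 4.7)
The plan is to realize $J(E)$ as a torus of reflectionless Jacobi matrices, parametrized by the character group of $\pi_1(\Omega)$ via inverse spectral theory on the universal cover of $\Omega=\bar\bbC\setminus E$.

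First I would establish the \emph{reflectionless} structure: any $J\in J(E)$ has $\sigma_{a.c.}(J)=E$ of full spectral measure, and an almost periodic Jacobi matrix with $\sigma(J)=\sigma_{a.c.}(J)=E$ is reflectionless on $E$, i.e.\ the Weyl $m$-functions $m_\pm(z)$ satisfy $m_+(x+i0)=-\overline{m_-(x+i0)}$ for a.e.\ $x\in E$. Consequently the spectral data reduce to a divisor $D=\{(x_k,\epsilon_k)\}$ with one point per gap of $\bbR\setminus E$, sitting on a two-sheeted Riemann-surface-like object over $\Omega$. The collection of admissible divisors forms a compact torus-like space, and the inverse spectral problem assigns to every divisor a unique reflectionless Jacobi matrix with spectrum $E$.

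Next I would lift everything to the uniformization $\pi:\bbD\to\Omega$, whose deck group is isomorphic to $\pi_1(\Omega)$. Character-automorphic Hardy spaces $H^\infty(\bbD,\alpha)$, $\alpha\in\pi_1^*(\Omega)$, are the main analytic tool: the Blaschke product $b(z)$ whose zeros are $\pi^{-1}(\infty)$ (equivalently, the inner function corresponding to the Green function of $\Omega$ with pole at $\infty$) has a well-defined character $\mu\in\pi_1^*(\Omega)$, and the shift operator on the Jacobi side is intertwined by multiplication by $b$ on the cover, which shifts characters by $\mu$. The Abel-type map sending a divisor to its character gives a continuous bijection between divisors and $\pi_1^*(\Omega)$. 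Composing the inverse spectral map with the inverse of this bijection produces continuous functions $\cP,\cQ:\pi_1^*(\Omega)\to\bbR$ such that the matrix $J(\alpha)$ assigned to $\alpha$ has coefficients $p_n(\alpha)=\cP(\alpha\mu^{-n})$, $q_n(\alpha)=\cQ(\alpha\mu^{-n})$; almost periodicity is immediate from continuity of $\cP,\cQ$ and density of $\{\alpha\mu^{-n}\}$ in (the closure of its orbit inside) the compact group $\pi_1^*(\Omega)$. Conversely, every $J\in J(E)$ arises this way by assigning to it its divisor and the corresponding character.

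The main obstacle will be the \emph{continuity} of $\cP$ and $\cQ$ on $\pi_1^*(\Omega)$: this is exactly where the homogeneity condition \eqref{hom} is used. Without homogeneity, the Abel map and the inverse spectral map need not be continuous (in fact the spectrum can fail to be a.c.\ on all of $E$). Homogeneity ensures the Widom condition $\sum_k G(\infty,c_k)<\infty$ (sum over critical points $c_k$ of the Green function), which guarantees that character-automorphic $H^\infty$ spaces on $\Omega$ are nontrivial and depend continuously on the character, and that the Direct Cauchy Theorem holds. These analytic facts are what make the association $\alpha\mapsto J(\alpha)$ continuous and the orbit $\{J(\alpha\mu^{-n})\}$ genuinely almost periodic, completing the parametrization in \eqref{repralpha}.
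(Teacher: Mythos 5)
The paper does not prove this theorem---it is quoted from \cite{syu}---and your outline reproduces exactly the strategy of that reference: Kotani-type reflectionless reduction to gap divisors, uniformization of $\Omega$ with character-automorphic Hardy spaces, the Abel map onto $\pi_1^*(\Omega)$ with the Green's-function inner function supplying the character $\mu$, and homogeneity entering through the Widom condition and the Direct Cauchy Theorem to secure continuity of $\cP,\cQ$. Nothing in your sketch is off track; the only caveat is that the step you compress into the phrase ``gives a continuous bijection'' (solvability of the infinite-dimensional Jacobi inversion for \emph{every} character, together with the Direct Cauchy Theorem for homogeneous sets) is where essentially all of the work in \cite{syu} is concentrated.
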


\begin{remark}  $\pi^*_1(\Omega)$ is a  compact (multiplicative) Abelian
group. In \cite{syu} precise formulae for $\cP(\alpha), \cQ(\alpha)$
and $\mu$ are given.
\end{remark}

Recall that one--sided Jacobi matrices $J_+$ acting as bounded
self--adjoint operators in $l^2(\bbZ_+)$ are in one--to--one
correspondence with compactly supported measures on $\bbR$: $J_+$ is
the matrix of the multiplication operator by independent variable
with respect to the basis of  orthonormal  polynomials
$\{P_n(z;\sigma)\}_{n=0}^\infty $ in $L^2_{\sigma}$, the measure $\sigma$ is
called the spectral measure of $J_+$.

Let $\omega(dx)=\omega(dx,\infty;\Omega)$ and
$G(z)=G(z,\infty;\Omega)$ be the harmonic measure on $E$ and the
Green function in the domain $\Omega$ with respect to infinity.
\begin{theorem}\label{pyuth}\cite{pyu} Let $E$ be a homogenous set and $X$ be a
discreet subset   of $\bbR\setminus E$, consisting of points
accumulating to $E$ only, moreover
\begin{equation}\label{blashke}
    \sum_{x_j\in X} G(x_j)<\infty.
\end{equation}
Assume that $\sigma$ is a measure on $E\cup X$ such that
\begin{equation}\label{szego}
    \int_{E}|\log\sigma'_{a.c.}(x)|\omega(dx)<\infty,
\end{equation}
and $J_+$ is the Jacobi matrix associated to the $\sigma$.

Then there exists $\alpha\in\pi_1^*(\Omega)$ such that
\begin{equation}\label{claimpyu}
    p_n-\cP(\alpha\mu^{-n})\to 0,\quad  q_n-\cQ(\alpha\mu^{-n})\to
0.
\end{equation}
\end{theorem}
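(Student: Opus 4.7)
The plan is to analyze the orthonormal polynomials $P_n(z;\sigma)$ through character-automorphic Hardy space theory on $\Omega=\bar\bbC\setminus E$. I would first lift everything to the universal cover, so that single-valued objects downstairs correspond to automorphic functions on the disk transforming under $\pi_1(\Omega)$ by characters in $\pi_1^*(\Omega)$. The multi-valued harmonic conjugate of the Green function $G(z)$ yields exactly the character $\mu$ entering \eqref{repralpha}. The Szeg\"o hypothesis \eqref{szego} lets me construct a character-automorphic outer function $D(z)$ with $|D|^2=\sigma'_{a.c.}$ on $E$, while the Blaschke hypothesis \eqref{blashke} lets me construct a convergent character-automorphic Blaschke product $B(z)$ with zeros at the lifts of the points $x_j\in X$.

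Next, I would set up the Szeg\"o extremal problem defining $P_n(z;\sigma)$ and compare it with the analogous extremal problem on the almost-periodic side, whose solution is the orthonormal polynomial $P_n(z;J(\alpha))$ associated with $J(\alpha)\in J(E)$ for a suitable $\alpha\in\pi_1^*(\Omega)$. The natural candidate for $\alpha$ is forced by the characters carried by $D$ and $B$: one picks $\alpha$ so that the character of $DB$ matches the character produced by the almost-periodic reproducing structure described by $\cP,\cQ,\mu$. Then the ratio $P_n(z;\sigma)/P_n(z;J(\alpha))$ should converge, uniformly on compact subsets of $\Omega$, to an explicit character-automorphic limit expressible in terms of $D$ and $B$. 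The mechanism making this a genuine equality rather than merely an extremal inequality is the Direct Cauchy Theorem on $\Omega$, which is valid precisely because $E$ is homogeneous in the sense of \eqref{hom}.

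Once the polynomial ratio converges, the asymptotics of the recurrence coefficients $p_n,q_n$ follow by reading them off the three-term relation. Ratios like $P_n/P_{n-1}$ and the leading coefficients, evaluated at infinity, reproduce exactly the values $\cP(\alpha\mu^{-n})$, $\cQ(\alpha\mu^{-n})$ from \eqref{repralpha}; the shift $\mu^{-n}$ arises because advancing the recurrence by one step twists the underlying character by $\mu^{-1}$. Convergence of the ratios then translates directly into \eqref{claimpyu}.

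The main obstacle, as I see it, is the simultaneous handling of the absolutely continuous part, controlled by the Szeg\"o outer function $D$, and the Blaschke-class discrete spectrum, controlled by $B$. The point masses $x_j$ are absent from the almost-periodic comparison model $J(\alpha)$, and one must show that their cumulative effect does not spoil the leading-order asymptotics of the coefficients. This is where the homogeneity of $E$ is indispensable: it guarantees that the space of character-automorphic Hardy functions on $\Omega$ is rich enough and that the Direct Cauchy Theorem holds, which is what converts weak convergence of the extremal functions into the strong coefficient convergence \eqref{claimpyu}.
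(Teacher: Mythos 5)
First, a point of comparison that matters: this paper does not prove Theorem \ref{pyuth} at all — it is quoted from \cite{pyu} as motivation for the CMV results that follow, so there is no in-paper argument to measure you against. Judged against the actual proof in \cite{pyu}, your outline names the right objects: the character-automorphic Hardy spaces on the cover of $\Omega$, the outer function $D$ built from \eqref{szego}, the character-automorphic Blaschke product $B$ built from \eqref{blashke}, the character shift by $\mu$ per degree of the polynomial, and the Direct Cauchy Theorem as the place where homogeneity \eqref{hom} enters. So the strategy is the one actually employed.

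As a proof, however, it has genuine gaps. The crux of \cite{pyu} is a two-sided comparison: the Szeg\"o-type extremal problem gives an inequality between the leading coefficient of $P_n$ and the extremal value in the twisted Hardy space $H^2(\alpha\mu^{-n})$ in one direction only, and the reverse inequality — which is what forces the ratio of extremal functions to converge rather than merely stay bounded — comes from a step-by-step sum-rule/entropy argument that you never mention. Without it, ``the ratio should converge'' is an assertion, not a consequence. Second, passing from locally uniform convergence of the polynomial ratios to the coefficient asymptotics \eqref{claimpyu} requires continuity of the extremal problem in the character and continuity of $\cP,\cQ$ on the compact group $\pi_1^*(\Omega)$ in \eqref{repralpha}; this is a nontrivial theorem of \cite{syu}, again resting on homogeneity, which you invoke only implicitly. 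Finally, you correctly flag the point masses $x_j$ as ``the main obstacle'' and then leave it unresolved: in \cite{pyu} one must show that the partial Blaschke products converge to $B$ together with their characters and that the associated divisors do not perturb the leading-order asymptotics — that argument is the technical heart of the theorem, not a remark to be deferred.
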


\begin{remark} In \cite{pyu} asymptotic for $P_n(z;\sigma)$ is also
given.
\end{remark}

In this paper we present a similar to Theorem \ref{pyuth} result for
CMV matrices in the basic case, when the resolvent set is a
simply--connected domain. The general case of a multi--connected
domain with a homogeneous boundary will be considered in a
forthcoming paper (we would like to mention this here, in
particular, to indicate the power of our method).

\subsection{CMV matrices with constant coefficients}
For
 a given sequence of numbers from the unit disk $\bbD$
\begin{equation}\label{21o1}
...,\,a_{-1},\,a_{0},\,a_{1},\,a_{2},\,...
\end{equation}
define unitary $2\times 2$ matrices
\begin{equation*}
A_j=\begin{bmatrix} \overline{a_j}& \rho_i\\
\rho_j&-a_j
\end{bmatrix}, \quad \rho_j=\sqrt{1-|a_j|^2},
\end{equation*}
and unitary operators in $l^2(\bbZ)=l^2(\bbZ_-)\oplus l^2(\bbZ_+)$
given by block--diagonal matrices
\begin{equation*}
\mathfrak A_0=\begin{bmatrix} \ddots& & &\\
& A_{-2}& & \\
& & A_{0}&  \\
 & & &\ddots
\end{bmatrix}, \quad
\mathfrak A_1=\cS\begin{bmatrix} \ddots& & &\\
& A_{-1}& & \\
& & A_{1}&  \\
 & & &\ddots
\end{bmatrix}\cS^{-1},
\end{equation*}
where $\cS|j\rangle=|j+1\rangle$.
 The CMV matrix $\mathfrak A$,
generated by the sequence \eqref{21o1}, is the product
\begin{equation}\label{21o2}
\mathfrak A=\mathfrak A(\{a_j\}):=\mathfrak A_0 \mathfrak A_1,
\end{equation}
 CMV matrices have been studied by
M.J. Cantero, L. Moral, and L. Vel\'asquez \cite{cmv}, for
historical details see \cite{s5y}.

Recall that a Schur function $\theta_+(v)$, $|\theta_+(v)|\le 1$,
$v\in\bbD$, (a finite Blaschke product is a special case) is in a
one to one correspondence with the so called Schur parameters
\begin{equation}\label{21o3}
\theta_+(v)\sim\{a_0, a_1,...\},
\end{equation}
where
\begin{equation*}
\theta_+(v)=\frac{a_0+v \theta_+^{(1)}(v)}{1+v\overline{a_0}
\theta_+^{(1)}(v)},
\end{equation*}
$a_0=\theta_+(0)$, $a_1= \theta_+^{(1)}(0)$, and so on...

It is evident that the matrix $\mathfrak A(\{a_k\})$ is well defined
by the two
 Schur functions $\{\theta_+(v), \theta_-(v)\}$, given by  \eqref{21o3} and
\begin{equation}\label{27o4}
\theta_-(v)\sim\{-\overline{a_{-1}},-\overline{a_{-2}},...\}.
\end{equation}

The spectral set of a CMV matrix $\mathfrak A_a$ with the constant
coefficients $a_n=a\not=0$ is an arc
\begin{equation}\label{spcnst}
    E=E_a=\{e^{i\xi}:\xi_0\le\xi\le 2\pi-\xi_0\},
\end{equation}
where $\rho=\sqrt{1-|a|^2}=\cos\frac{\xi_0} 2$.

The following construction is a very special case of a functional
realization of almost periodic operators \cite{mvm, pyu2}. The
domain $\Omega=\bar\bbC\setminus E$ is conformally equivalent to the
unit disk $\bbD$:
\begin{equation}\label{CsmE}
\begin{split}
    z=&i\frac{1+v}{1-v},\ v\in\Omega,\\
2\tan \frac{\xi_0}2\, z=&\z+\frac 1{\z},\ \z\in\bbD.
\end{split}
\end{equation}
Put $\k:=-i\tan\frac{\pi-\xi_0}4\in\bbD$, so that
\begin{equation}\label{kappa}
    v(\k)=0,\quad v(\bar\k)=\infty.
\end{equation}

The Green function $G(v,v_0)=G(v,v_0;\Omega)$ is of the form
\begin{equation}\label{greenom}
    G(v(\z),v(\z_0))=\log\frac 1{|b_{\z_0}(\z)|},
\end{equation}
where
\begin{equation}\label{blfac}
b_{\z_0}(\z)=e^{ic}\frac{\z-\z_0}{1-\z\bar\z_0}
\end{equation}
is the Blaschke factor in $\bbD$. For $\Im \z_0\not=0$ it is
convenient to use the normalization $b_{\z_0}(\bar\z_0)>0$. In
particular,
\begin{equation}\label{blfackappa}
b_{\k}(\z)=\frac 1 i\frac{\z-\k}{1-\z\bar\k} \ \text{and threfore}\
v(\z)=\frac{b_{\k}(\z)}{b_{\bar\k}(\z)}.
\end{equation}

For the Lebesgue measure $dm=dm(\t)$ on the unit circle $\bbT$ as
usual we define the $L^2$--norm by
\begin{equation}\label{1.9d}
||f||^2=\int_{\bbT}|f(\t)|^2\,dm(\t),
\end{equation}
so that the reproducing kernel of the $H^2$ subspace is of the form
$k_{\z_0}(\z)=k(\zeta,\zeta_0)= \frac{1}{1-\z\bar\z_0}$. By
$K(\zeta,\zeta_0)$ we denote the normalized kernel
\begin{equation}\label{noke}
    K(\zeta,\zeta_0)=\frac{k_{\z_0}(\z)}{\Vert
k_{\z_0}\Vert}=\frac{\sqrt{1-|\z_0|^2}}{1-\z\bar\z_0}.
\end{equation}

Using the above notation, to the given $\beta_k=e^{2\pi i\tau_k}\in
\bbT$, $k=0,1$, we associate the space $H^2(\beta_0,\beta_1)$ of
analytic multivalued functions $f(\zeta)$,
$\zeta\in\bbD\setminus\{\k,\bar\k\}$, such that $|f(\zeta)|^2$ is
singlevalued and has a harmonic majorant and
$$
f\circ\gamma_i=\beta_i f,
$$
where $\gamma_i$ is a small circle around $\k$ and $\bar\k$
respactively. Such a space can be reduced to the standard Hardy
space $H^2$, moreover
$$
H^2(\beta_0,\beta_1)= b_{\k}^{\tau_0}b_{\bar\k}^{\tau_1}H^2.
$$

\begin{lemma} Let $b=\sqrt{b_{\k}b_{\bar\k}}$.  The space $bH^2(1,-1)$ is a subspace
of $H^2(-1,1)$ having a one  dimensional orthogonal compliment,
moreover
\begin{equation}\label{vs3}
H^2(-1,1)=\{\sqrt{b_{\k}}k_{\bar\k}\} \oplus bH^2(1,-1).
\end{equation}
\end{lemma}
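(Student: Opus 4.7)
The plan is to reduce the statement to a classical one-dimensional model-space identity for $H^2$. Applying the representation $H^2(\beta_0,\beta_1) = b_\k^{\tau_0} b_{\bar\k}^{\tau_1} H^2$ recalled just above the lemma (with $\beta_0=e^{2\pi i\tau_0}$, $\beta_1=e^{2\pi i\tau_1}$), one has $H^2(-1,1) = \sqrt{b_\k}\, H^2$ and $H^2(1,-1) = \sqrt{b_{\bar\k}}\, H^2$. Multiplying the latter by $b = \sqrt{b_\k b_{\bar\k}}$ gives $bH^2(1,-1) = \sqrt{b_\k}\,b_{\bar\k} H^2$. So the claim is equivalent to
\begin{equation*}
\sqrt{b_\k}\, H^2 \;=\; \bigl\{\sqrt{b_\k}\, k_{\bar\k}\bigr\} \,\oplus\, \sqrt{b_\k}\, b_{\bar\k} H^2.
\end{equation*}

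Next, I would note that $|\sqrt{b_\k}|=1$ a.e.\ on $\bbT$, so multiplication by $\sqrt{b_\k}$ (as a boundary-value operator) is a unitary on $L^2(\bbT)$, and the $L^2(\bbT)$-inner product on the left-hand side coincides with the natural inner product on $H^2(-1,1)$. Factoring this unitary out of both sides, it remains to prove the much simpler identity
\begin{equation*}
H^2 \;=\; \{k_{\bar\k}\} \,\oplus\, b_{\bar\k} H^2.
\end{equation*}

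This last equality is the standard description of the one-dimensional model space $K_{b_{\bar\k}} = H^2 \ominus b_{\bar\k} H^2$. Orthogonality $k_{\bar\k} \perp b_{\bar\k} H^2$ follows immediately from the reproducing property:
\begin{equation*}
\langle k_{\bar\k}, b_{\bar\k} h\rangle_{H^2} = \overline{b_{\bar\k}(\bar\k)\,h(\bar\k)} = 0,\qquad h\in H^2,
\end{equation*}
since $\bar\k$ is the zero of $b_{\bar\k}$. To see that the orthogonal complement is exactly one-dimensional, I would invoke the Beurling-type fact that for an inner function $\theta$ the dimension of $K_\theta$ equals the number of zeros of $\theta$ counted with multiplicity; for the single Blaschke factor $\theta = b_{\bar\k}$ this dimension is $1$, and $k_{\bar\k}$ being a nonzero element in the complement must span it.

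The only subtle point, which I would double-check at the end, is that the branches of $\sqrt{b_\k}$ and $\sqrt{b_{\bar\k}}$ used to represent $H^2(-1,1)$, $H^2(1,-1)$ and $b=\sqrt{b_\k b_{\bar\k}}$ are chosen compatibly so that the identification $bH^2(1,-1)=\sqrt{b_\k}\,b_{\bar\k}H^2$ holds literally (not merely up to a unimodular constant). Everything else is then a one-line consequence of the classical model-space fact.
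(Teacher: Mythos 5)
Your argument is correct, and in fact the paper states this lemma without proof, so there is nothing to compare it against; your reduction is precisely the intended one. Using $H^2(\beta_0,\beta_1)=b_{\k}^{\tau_0}b_{\bar\k}^{\tau_1}H^2$ to identify $H^2(-1,1)=\sqrt{b_{\k}}\,H^2$ and $bH^2(1,-1)=\sqrt{b_{\k}}\,b_{\bar\k}H^2$, then factoring out the boundary-unimodular multiplier $\sqrt{b_{\k}}$ and invoking the one-dimensional model space $H^2\ominus b_{\bar\k}H^2=\{k_{\bar\k}\}$, is exactly the standard route; your closing remark about branch compatibility is the right caveat, and it is harmless here since a unimodular constant changes neither the subspace $\sqrt{b_{\k}}\,b_{\bar\k}H^2$ nor the span $\{\sqrt{b_{\k}}k_{\bar\k}\}$.
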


Iterating, now, the decomposition \eqref{vs3}
\begin{equation*}
\begin{split}
H^2(-1,1)=&\{\sqrt{b_{\k}}k_{\bar\k}\} \oplus bH^2(1,-1)\\=&
\{\sqrt{b_{\k}}k_{\bar\k}\} \oplus b\{\sqrt{b_{\bar\k}}k_{\k}\}
\oplus b^2H^2(-1,1)=...,
\end{split}
\end{equation*}
one gets an orthogonal basis in $H^2(-1,1)$ consisting of vectors of
two sorts
\begin{equation}\label{stbasis}
    b^{2m}\{\sqrt{b_{\k}}k_{\bar\k}\}\ \ \text{and}\ \
b^{2m+1}\{\sqrt{b_{\bar\k}}k_{\k}\}.
\end{equation}
Note that this orthogonal system can be extended to the negative
integers $m$ so that we obtain a basis in the standard $L^2$.

\begin{theorem}\label{th1.6}
With respect to the orthonormal basis
\begin{equation}\label{ts4}
e_n=\begin{cases} b^{2m}\sqrt{b_{\k}}K_{\bar\k}e^{ic}
, &n=2m\\
b^{2m+1}\sqrt{b_{\bar\k}}K_{\k} ,&n=2m+1
\end{cases}
\end{equation}
the multiplication operator by $v$ is the CMV matrix $\fA_a$,
$a=e^{ic}\frac{k(\k,\bar\k)}{k(\k,\k)}=e^{ic}\sin\frac{\xi_0}2$.
\end{theorem}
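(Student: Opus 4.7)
I would prove Theorem~\ref{th1.6} by directly computing the matrix of $M_v$, the operator of multiplication by $v$, in the basis $\{e_n\}_{n\in\bbZ}$ and identifying it, entry by entry, with the pentadiagonal matrix $\fA_0\fA_1$ under constant Verblunsky coefficients $a_n\equiv a$.

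First, I would upgrade the iteration of \eqref{vs3} to a two-sided basis of $L^2(\bbT,dm)$. Running \eqref{vs3} forward yields the orthonormal basis of $H^2(-1,1)$ indicated in \eqref{stbasis}; the extension to negative $n$ amounts to iterating backwards in $b^2$. Since $b^2=b_\k b_{\bar\k}$ is a degree-two inner function, Beurling's theorem gives $\bigcap_{m\in\bbZ} b^{2m}H^2=\{0\}$, and the two-sided family \eqref{ts4} is then seen to be an orthonormal basis of the whole $L^2$. Next, from \eqref{ts4} the shift identity $M_{b^2}e_n=e_{n+2}$ is immediate, and since $M_v$ commutes with $M_{b^2}$, the matrix of $M_v$ in $\{e_n\}$ is periodic under the diagonal shift by $2$. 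In particular it is completely determined by its action on $e_0$ and $e_1$.

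Second, pentadiagonality of a CMV matrix suggests that in each of these two columns at most five inner products can be nonzero, namely $\langle ve_0,e_j\rangle$ for $j\in\{-2,-1,0,1,2\}$ and $\langle ve_1,e_j\rangle$ for $j\in\{-1,0,1,2,3\}$. After substituting \eqref{ts4} and $v=b_\k/b_{\bar\k}$, each such bracket becomes a contour integral over $\bbT$ of a rational function; the unimodularity of $\sqrt{b_\k},\sqrt{b_{\bar\k}},b$ on $\bbT$ allows one to replace the conjugates of the square-root factors by their reciprocals and the conjugate of $K_\k,K_{\bar\k}$ by explicit meromorphic functions. The resulting integrand is meromorphic in $\bbD$ with its only poles at $\bar\k$ (and sometimes $\k$), and the residue theorem reduces each entry to an algebraic expression in $\k,\bar\k,e^{ic}$. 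Using $k(\k,\bar\k)/k(\k,\k)=\sin(\xi_0/2)$ one checks that these entries coincide with those of $\fA_0\fA_1$ for $a=e^{ic}\sin(\xi_0/2)$ and $\rho=\cos(\xi_0/2)$; for example a short calculation gives
\[
\langle ve_0,e_0\rangle=-\frac{(1-|\k|^2)^2}{(1-\bar\k^2)^2}=-|a|^2,\qquad \langle ve_1,e_0\rangle=\bar a\rho,\qquad \langle ve_0,e_1\rangle=-a\rho,
\]
and similarly for the remaining entries. Combining with shift-invariance by $2$, this completes the identification with $\fA_0\fA_1$ whose block is $A=\bigl[\begin{smallmatrix}\bar a & \rho\\ \rho & -a\end{smallmatrix}\bigr]$.

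\textbf{Main obstacle.} The principal difficulty is the bookkeeping of the multivalued factors $\sqrt{b_\k},\sqrt{b_{\bar\k}}$ together with the phase $e^{ic}$. These square roots live in the section spaces $H^2(\pm 1,\mp 1)$ with prescribed monodromy around $\k,\bar\k$, so the branches must be chosen consistently in advance in order that each bracket $\langle ve_m,e_n\rangle$ has a single-valued integrand on $\bbT$. Ensuring that the resulting signs and phases match \emph{exactly} those of $\fA_0\fA_1$, and not merely up to an overall sign, is the delicate point; once the conventions of \eqref{stbasis}--\eqref{ts4} are fixed, the remaining identification is a finite algebraic (residue) calculation.
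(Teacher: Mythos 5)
Your computation is correct where I have checked it ($\langle ve_0,e_0\rangle=-|a|^2$, $\langle ve_0,e_1\rangle=-a\rho$, $\langle ve_1,e_0\rangle=\bar a\rho$ do agree, by residues at the double pole $\tau=\bar\k$, with the corresponding columns of $\fA_0\fA_1$ for $a_n\equiv a$), and the reduction to two columns via $M_{b^2}e_n=e_{n+2}$ is sound. But your route is genuinely different from the paper's. The paper never proves Theorem~\ref{th1.6} in isolation: it is the ``free'' case of the theorem in Section~4, where the basis consists of normalized reproducing kernels $K_{\alpha^n}(\cdot,\k)$, $K_{\alpha^n}(\cdot,\bar\k)$ of weighted Hardy spaces. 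There the four nonzero coefficients of each column are found structurally: the recurrence \eqref{f2} identifies the Verblunsky parameter as $a(\alpha)=K_{\alpha}(\k,\bar\k)/K_{\alpha}(\k,\k)$, the outer coefficients $c_0,c_1$ come from evaluating the expansion of $vK_{\alpha}(\cdot,\bar\k)$ at $\bar\k$ and $\k$, and $c_2,c_3$ from scalar products using only the reproducing property. Your residue calculus is the explicit instance of this for the Szeg\"o kernel $k_{\z_0}(\z)=(1-\z\bar\z_0)^{-1}$; it is more elementary and verifiable by hand, but it does not extend to the weighted spaces of Theorem~\ref{thm1}, which is where the paper actually needs the argument.

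Two steps need to be firmed up. First, you restrict to five inner products per column because ``pentadiagonality of a CMV matrix suggests'' the rest vanish; the band structure is part of the conclusion and cannot be assumed. It is, however, a short subspace argument: writing $ve_0=b_\k^{3/2}b_{\bar\k}^{-1}K_{\bar\k}e^{ic}$ and recalling $H^2(-1,1)=\sqrt{b_\k}H^2$, one has $ve_0=b^{-2}\,b_\k^{5/2}K_{\bar\k}e^{ic}\in b^{-2}H^2(-1,1)$, so $\langle ve_0,e_j\rangle=0$ for $j\le-3$; and for $h\in H^2$, $\langle ve_0,\,b^2\sqrt{b_\k}\,h\rangle=e^{ic}\langle K_{\bar\k},b_{\bar\k}^2h\rangle=e^{ic}\,\overline{(b_{\bar\k}^2h)(\bar\k)}\,\Vert k_{\bar\k}\Vert^{-1}=0$, so $\langle ve_0,e_j\rangle=0$ for $j\ge2$; similarly for $ve_1$. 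Second, completeness of \eqref{ts4} in $L^2$ requires both $\bigcap_{m\ge0}b^{2m}H^2(-1,1)=\{0\}$ and density of $\bigcup_{m\ge0}b^{-2m}H^2(-1,1)$ in $L^2$; the single intersection statement you quote does not by itself give the second fact (though it follows easily since $b^2$ is a nonconstant inner function). With these two points inserted, your argument is a complete proof of Theorem~\ref{th1.6}.
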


As it was mentioned the above theorem is a part of a very general
construction. In this particular case, we will break the symmetry of
the shift operation. Paying this price we can use the standard $H^2$
space. Factoring out $\sqrt{b_{\k}}$, we get
\begin{theorem} The system of functins
\begin{equation}\label{ts4bis}
\fe_{n,c}=\begin{cases} b_\k^{m} b_{\bar\k}^{m}K_{\bar\k}e^{ic} ,
&n=2m\\
b_\k^{m} b_{\bar\k}^{m+1}K_{\k} ,&n=2m+1
\end{cases}
\end{equation}
forms orthonormal basis in $H^2$ if $n\in \bbZ_+$ and in $L^2$ if
$n\in \bbZ$. With respect to this basis the multiplication operator
by $v$ is the CMV matrix $\fA_a$.
\end{theorem}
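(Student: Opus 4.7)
The plan is to reduce the theorem to the already-established Theorem~\ref{th1.6} by a single unitary change of space, namely multiplication by $\sqrt{b_\k}$.

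First I would verify the algebraic identity $e_n=\sqrt{b_\k}\,\fe_{n,c}$ for every $n$. Using $b=\sqrt{b_\k b_{\bar\k}}$, one has $b^{2m}=b_\k^m b_{\bar\k}^m$ and $b^{2m+1}=b_\k^{m+1/2}b_{\bar\k}^{m+1/2}$, so
\begin{equation*}
b^{2m}\sqrt{b_\k}K_{\bar\k}e^{ic}=\sqrt{b_\k}\,b_\k^m b_{\bar\k}^m K_{\bar\k}e^{ic},\qquad b^{2m+1}\sqrt{b_{\bar\k}}K_{\k}=\sqrt{b_\k}\,b_\k^m b_{\bar\k}^{m+1}K_{\k},
\end{equation*}
which matches \eqref{ts4bis} exactly. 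Thus the systems differ by the pointwise factor $\sqrt{b_\k}$.

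Next I would argue that multiplication by $\sqrt{b_\k}$ is a unitary map from $H^2$ onto $H^2(-1,1)$ (and, extended to the full two-sided case, from $L^2$ onto $L^2$). Indeed, $|b_\k|=1$ on $\bbT$, so $M_{\sqrt{b_\k}}$ preserves the $L^2$-norm; and the monodromy bookkeeping $H^2(-1,1)=b_\k^{1/2}b_{\bar\k}^{0}H^2=\sqrt{b_\k}\,H^2$ (already recorded in the discussion preceding the lemma) shows that $M_{\sqrt{b_\k}}$ sends $H^2$ precisely onto $H^2(-1,1)$. Combining this with the previous step, since Theorem~\ref{th1.6} says $\{e_n\}$ is an orthonormal basis of $H^2(-1,1)$ (one-sided case) or of $L^2$ (two-sided case), its image under the unitary $M_{1/\sqrt{b_\k}}$, namely $\{\fe_{n,c}\}$, is an orthonormal basis of $H^2$, respectively of $L^2$.

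Finally, and this is the key observation, multiplication by $v$ is single-valued on $\bbD$ and therefore commutes with multiplication by the multi-valued factor $\sqrt{b_\k}$. Consequently $M_{1/\sqrt{b_\k}}M_v M_{\sqrt{b_\k}}=M_v$, so the matrix representation of $M_v$ in the basis $\{\fe_{n,c}\}$ coincides entry-by-entry with the matrix representation of $M_v$ in the basis $\{e_n\}$; by Theorem~\ref{th1.6} this is the CMV matrix $\fA_a$ with $a=e^{ic}\sin(\xi_0/2)$.

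The only subtle point is the third step: one must ensure that the branch of $\sqrt{b_\k}$ used to pass from $\{e_n\}$ to $\{\fe_{n,c}\}$ is consistent with the branch implicit in the identification $H^2(-1,1)=\sqrt{b_\k}\,H^2$. This is essentially a bookkeeping issue rather than a genuine obstacle, but it is where the multi-valuedness hidden in the notation $\sqrt{b_\k},\sqrt{b_{\bar\k}}$ has to be pinned down once and for all; all the analytic content has already been done in Theorem~\ref{th1.6}.
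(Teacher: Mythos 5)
Your proposal is correct and is essentially the paper's own argument: the text introduces this theorem with the words ``Factoring out $\sqrt{b_{\k}}$, we get,'' which is exactly your reduction $e_n=\sqrt{b_\k}\,\fe_{n,c}$ combined with the unitarity of multiplication by the unimodular factor $\sqrt{b_\k}$ (mapping $H^2$ onto $H^2(-1,1)$) and the fact that $M_v$ commutes with it. You have merely spelled out the bookkeeping that the paper leaves implicit.
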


\subsection{Szeg\"o over Blaschke calss and the direct scattering}
It follows from
\begin{equation}\label{3term}
\begin{split}
    \fA\{|2n-1\rangle\rho_{2n-1}-|2n\rangle\bar a_{2n-1}\}=&|2n\rangle\bar
a_{2n}+|2n+1\rangle\rho_{2n}\\
\fA^{-1}\{|2n\rangle\rho_{2n}-|2n+1\rangle a_{2n}\}=&|2n+1\rangle
a_{2n+1}+|2n+2\rangle\rho_{2n+1}
\end{split}
\end{equation}
that the subspace formed by the vectors $|-1\rangle, |0\rangle$ is
cyclic for $\fA$. The resolvent ma\-trix--function is defined by the
relation
\begin{equation}\label{rf}
    \cR(v)=\cE^*\frac{\fA+v}{\fA-v}\cE,
\end{equation}
where $\cE:\bbC^2\to l^2(\bbZ)$ in such a way that
\begin{equation*}
    \cE\begin{bmatrix}c_{-1}\\c_0\end{bmatrix}=|-1\rangle c_{-1}+|0\rangle c_{0}.
\end{equation*}
This matrix--function possesses the integral representation
\begin{equation}\label{ir}
    \cR(v)=\int_{\bbT}\frac{t+v}{t-v}\, d\Sigma(t)
\end{equation}
with $2\times 2$ matrix--measure. $\fA$ is unitary equivalent  to
the multiplication operator by an independent variable on
\begin{equation*}
    L^2_{d\Sigma}=\left\{f=\begin{bmatrix}f_{-1}(t)\\f_0(t)
\end{bmatrix}: \int_{\bbT}f^*(t)d\Sigma(t) f(t)<\infty\right\}.
\end{equation*}

Note that $\Sigma$ is not an arbitrary $2\times 2$ matrix--measure,
but has a specific structure. In terms of the Schur functions
\eqref{21o3}, \eqref{27o4}
\begin{equation}\label{Rwiththeta}
    \cR(v)=\frac{I+v A^*_{-1}
\begin{bmatrix}
\theta_-^{(1)}(v)&0\\
0&\theta_+(v)\end{bmatrix}}{I-v A^*_{-1}
\begin{bmatrix}
\theta_-^{(1)}(v)&0\\
0&\theta_+(v)\end{bmatrix}}.
\end{equation}
In particular, this means that the rang of the measure $\Sigma(t_0)$
at an isolated spectral point $t_0$ is one,  and the spectral
density has the form

\begin{equation}\label{Wwiththeta}
\frac{d\Sigma(t)}{dm(t)}=
 \frac{I+\cR^*(t)}2
\begin{bmatrix}
{1-|\theta_-^{(1)}(t)|^2}&0\\
0&{1-|\theta_+(t)|^2}\end{bmatrix}\frac{I+\cR(t)} 2.
\end{equation}

\begin{definition} Let $E$ be an arc of the form \eqref{spcnst} and $X$ be a discreet set in
$\bbT\setminus E$, which satisfies the Blaschke condition in
$\Omega=\bar\bbC\setminus E$:
\begin{equation}\label{matbl}
\begin{split}
   X=&\{t_k=v(\zeta_k): \z_k\in\cZ\},\\
 \cZ=&\{\z_k\in \bbR\cap\bbD: \sum(1-|\zeta_k|)<\infty\}.
\end{split}
\end{equation}
We say that $\fA$ is in $\fae$ if $\sigma(\fA)=E\cup X$, the
spectral measure is absolutely continuous  on $E$,
\begin{equation}\label{dencity}
    d\Sigma|E=W(t)\,dm(t)
\end{equation}
and the density satisfies the Szeg\"o condition
\begin{equation}\label{matrsz}
    \log\det W(v(\tau))\in L^1.
\end{equation}
\end{definition}

\begin{remark}Due to  condition \eqref{matbl}, $\cR(v)$ is of bounded
characteristic in $\Omega$ \cite[Theorem D]{syu}, see Sect
\ref{specshur}. Therefore \eqref{matrsz} is equivalent to
\begin{equation}\label{matszvar}
    \log(1-|\theta_-^{(1)}(v(\tau))|^2)(1-|\theta_+(v(\tau))|^2)\in
L^1.
\end{equation}

\end{remark}

With the set $X$ \eqref{matbl} we associate  the Blaschke product
\begin{equation}\label{9.9d}
B(\z)=\prod_{\cZ}\frac{|\z_k|} {\z_k}\frac{\z_k-\z} {1-\z\bar\z_k}
\end{equation}
(this product contains the factor $\z$ if $0\in \cZ$).

\begin{theorem}\label{thszego}
Let $\fA\in \fae$. Then there exists a generalized eigen vector (see
\eqref{3term})
\begin{equation}\label{evp}
\begin{split}
    v(\t)\{e^+(2m-1,\t)\rho_{2m-1}-&e^+(2m,\t)\bar a_{2m-1}\}\\=&e^+(2m,\t)
a_{2m}+e^+(2m+1,\t)\rho_{2m}\\
v(\t)^{-1}\{e^+(2m,\t)\rho_{2m}-&e^+(2m+1,\t)a_{2m}\}\\=&e^+(2m+1,\t)
a_{2m+1}+e^+(2m+2,\t)\rho_{2m+1}
\end{split}
\end{equation}
such that (see \eqref{ts4bis})
\begin{equation}\label{asppm}
\begin{split}
T_+(\t)e^+(-n-1,\t)&=\bar\t\fe_{n,c_-}(\bar
\t)+R_-(\t)\fe_{n,c_-}(\t)+o(1), \\
T_+(\t)e^+(n,\t)&=T_+(\t)\fe_{n,c_+}(\t)+o(1)
\end{split}
\end{equation}
in $L^2$ as $n\to\infty$. Moreover the functions
\begin{equation}\label{anevenodd}
   b^{-m}_{\k}b^{-m}_{\bar\k} (BT_+)(\t)e^+(2m,\t)\quad\text{and}\quad
 b^{-m}_{\k}b^{-m-1}_{\bar\k} (BT_+)(\t)e^+(2m+1,\t)
\end{equation}
belong to $H^2$ for all $m\in\bbZ$, that is,  $e^+(n,\z)$ is well
defined in $\bbD$ and
\begin{equation}\label{defnup}
    \frac
1{\nu_+(\z_k)}:=\sum_{n=-\infty}^{\infty}|e^+(n,\z_k)|^2<\infty
\end{equation}
for all $\z_k: v(\z_k)\in X$.
\end{theorem}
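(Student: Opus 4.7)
The plan is to construct $e^+(n,\tau)$ as the analogue of a Jost solution for $\fA$, realized inside a Hardy-type space, and then read off both the scattering asymptotics and the $H^2$ membership from the structure of this realization.

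First I would produce the scattering coefficients from the spectral data. Using Remark after the definition of $\fae$, the Szegő condition \eqref{matrsz} is equivalent to \eqref{matszvar}, so both $\log(1-|\theta_+(v(\tau))|^2)$ and $\log(1-|\theta_-^{(1)}(v(\tau))|^2)$ lie in $L^1(\bbT)$. This lets me form the outer function $T_+$ with $|T_+(\tau)|^2$ comparable to the scalar density entering \eqref{Wwiththeta}, together with a reflection coefficient $R_-$ built from the off-diagonal Schur-class structure of $\cR$. The unitarity identity $|T_+|^2+|R_-|^2=1$ a.e.\ on $E$ (equivalently on $\bbT$ after the $v$-pullback) then comes out of \eqref{Rwiththeta}.

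Next I would build $e^+(n,\tau)$ as a formal generalized eigenfunction satisfying the two-term recursion \eqref{evp}. The idea is to write, for large $n\ge N$, an ansatz $T_+(\t)e^+(n,\t)=T_+(\tau)\fe_{n,c_+}(\tau)+\rho_n(\tau)$ with $\rho_n$ a small correction, and continue back to $n=0,-1$ using the CMV three-term relation \eqref{3term}, and then on to the left half-lattice where the ansatz becomes $\bar\tau\,\fe_{n,c_-}(\bar\tau)+R_-(\tau)\fe_{n,c_-}(\tau)+o(1)$. The existence of the correction $\rho_n$ with $\|\rho_n\|_{L^2}\to 0$ is the content of the asymptotic claim \eqref{asppm}. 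I would obtain it by expanding $T_+e^+(n,\cdot)$ against the orthonormal bases \eqref{ts4bis} with the two different phases $c_\pm$ and using that the coefficients $a_n$ approach their asymptotic values (here $|a_n|\to|a|$) in the sense dictated by the Szegő/Blaschke hypotheses; the tail of the expansion is exactly $\rho_n$ and its $L^2$-norm is controlled by the entropy-type integrand $\log\det W$.

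Then I would promote $e^+(n,\cdot)$ from the circle to the disk via the $H^2$ statement \eqref{anevenodd}. The functions $\fe_{n,c}$ are already boundary values of functions analytic in $\bbD$, and the factor $b_\k^{-m}b_{\bar\k}^{-m}$ (or $b_\k^{-m}b_{\bar\k}^{-m-1}$) undoes the unimodular power that was built into $\fe_{n,c}$. So the quantity in \eqref{anevenodd} is, up to these explicit unimodular Blaschke powers, simply $BT_+\,e^+(\cdot,\t)$ times a fixed normalized Cauchy kernel. The outer function $T_+$ together with the inner Blaschke product $B$ associated with the discrete spectrum $\cZ$ via \eqref{9.9d} transforms the $L^2$ function into one whose Fourier expansion in $\{\fe_{n,c_\pm}\}$ has only nonnegative indices (after the Blaschke shift), i.e.\ into an $H^2$ function. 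For the bound \eqref{defnup} at an eigenvalue $\z_k\in\cZ$, apply Parseval to the orthogonal expansion of the analytic continuation of $BT_+\,e^+(\cdot,\z_k)$: the sum $\sum_n|e^+(n,\z_k)|^2$ equals $|B(\z_k)T_+(\z_k)|^{-2}$ times a convergent evaluation of an $H^2$ norm against reproducing kernels, which is finite since $B$ has a simple zero at $\z_k$ precisely compensating the pole of the resolvent.

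The main obstacle I expect is showing that the ansatz for $T_+e^+$ can be carried consistently from $+\infty$ through the body of the matrix and out to $-\infty$ so that \emph{both} pieces of \eqref{asppm} hold with the \emph{same} solution $e^+$. This requires identifying the reflection coefficient $R_-(\t)$ unambiguously as the boundary ratio produced by the left-side Fourier expansion of $T_+e^+$, and then verifying the algebraic identity that makes this ratio equal to the reflection coefficient defined from $\theta_-^{(1)}$ and $\theta_+$ via \eqref{Rwiththeta}. All other steps are routine once the duality of the two Hardy subspaces (the FM-space mechanism outlined in the abstract) is set up, but this matching of the two ends of the lattice is the genuine content of direct scattering and is where the Szegő and Blaschke conditions both get used in an essential way.
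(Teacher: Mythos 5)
Your overall architecture (pass from the spectral data to scattering coefficients, realize $e^+$ inside a Hardy-type space, read off the $H^2$ statements from the Smirnov-class structure) matches the paper's, and your treatment of \eqref{anevenodd} and \eqref{defnup} is essentially the paper's argument in looser form. But the central step --- the $L^2$ asymptotics \eqref{asppm} --- has a genuine gap. You propose to control the correction $\rho_n$ by expanding $T_+e^+(n,\cdot)$ in the free bases $\{\fe_{n,c_\pm}\}$ and ``using that the coefficients $a_n$ approach their asymptotic values in the sense dictated by the Szeg\H{o}/Blaschke hypotheses,'' with the tail ``controlled by the entropy-type integrand $\log\det W$.'' No such control is available: the Szeg\H{o}--Blaschke class gives no quantitative decay of $a_n-a$ (the whole point of the theorem, stressed in the introduction, is that this class is much wider than the Faddeev--Marchenko one), and the sum rule for $\log\det W$ does not by itself bound the tail of the coefficient expansion of $T_+e^+(n,\cdot)$. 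This is precisely the hard part of the theorem and cannot be discharged as the residue of an ansatz.

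What the paper actually does is reduce \eqref{asppm}, through two lemmas, to the convergence of a single scalar quantity --- the value $L_{\pm,\z_0}(n,\z_0)$ of a normalized reproducing kernel at one point $\z_0\in\bbD$, see \eqref{limll} --- and that convergence is obtained from the duality between the two Hardy subspaces $\check H^2_{\alpha_+}$ and $\hat H^2_{\alpha_-}$ (Theorem \ref{t1.3} together with the identity \eqref{2.1af1}), which converts an upper bound for one kernel into a lower bound for the dual one. Combined with monotone approximation of the data (finitely many mass points, $qR_+$ with $q<1$) this yields the two-sided squeeze \eqref{is2.1}--\eqref{is2.2} and hence \eqref{fihlim}. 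You mention the duality only as something ``routine once set up,'' whereas it is the engine of the proof; conversely, the ``matching of the two ends of the lattice,'' which you single out as the genuine content, is handled automatically because $e^-$ is \emph{defined} from $e^+$ by the scattering relation \eqref{emdef}, so both halves of \eqref{asppm} reduce to the same one-point limit \eqref{limll}. To repair your argument you would need to supply the duality identity and the squeeze, at which point you would be reproducing the paper's proof.
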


Let $\iota|n\rangle:=|-1-n\rangle$. The following involution acts on
the isospectral set of CMV  matrices
\begin{equation}\label{involution}
    \iota\fA(\{a_n\})\iota=\fA(\{-\overline{a_{-n-2}}\}).
\end{equation}
Theorem \ref{thszego} with respect to $\iota\fA\iota$ can be
rewritten into the form
\begin{corollary}\label{corszego}
Simultaneously with the eigen vector \eqref{evp} there exists the
vector
\begin{equation}\label{evm}
\begin{split}
    v(\t)\{e^-(-2m,\t)\rho_{2m-1}-&e^-(-2m-1,\t)\bar a_{2m-1}\}\\=&e^-(-2m-1,\t)
a_{2m}+e^-(-2m-2,\t)\rho_{2m}\\
v(\t)^{-1}\{e^-(-2m-1,\t)\rho_{2m}-&e^-(-2m-2,\t)a_{2m}\}\\=&e^-(-2m-2,\t)
a_{2m+1}+e^-(-2m-3,\t)\rho_{2m+1},
\end{split}
\end{equation}
possessing the asymptotics
\begin{equation}\label{asmpm}
\begin{split}
T_-(\t)e^-(-n-1,\t)&=\bar\t\fe_{n,c_+}(\bar
\t)+R_+(\t)\fe_{n,c_+}(\t)+o(1), \\
T_-(\t)e^-(n,\t)&=T_-(\t)\fe_{n,c_-}(\t)+o(1)
\end{split}
\end{equation}
in $L^2$ as $n\to\infty$. And, also,
\begin{equation}\label{defnum}
    \frac
1{\nu_-(\z_k)}:=\sum_{n=-\infty}^{\infty}|e^-(n,\z_k)|^2<\infty
\end{equation}
for all $\z_k: v(\z_k)\in X$.
\end{corollary}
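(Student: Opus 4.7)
The plan is to deduce Corollary \ref{corszego} directly from Theorem \ref{thszego} by exploiting the involution $\iota$ of \eqref{involution}. Since $\iota$ is a unitary that conjugates $\fA = \fA(\{a_n\})$ to $\tilde\fA := \fA(\{-\overline{a_{-n-2}}\})$, both matrices have the same spectrum $E\cup X$ (in particular the same discrete set $X$), and the $2\times 2$ spectral measure on $E$ is merely permuted by the swap of the cyclic generators $|-1\rangle \leftrightrightarrows |0\rangle$. Thus the Szeg\"o condition \eqref{matrsz} is preserved and $\tilde\fA \in \fae$. Moreover, inspecting the Schur parametrizations \eqref{21o3}, \eqref{27o4}, one sees that $\iota$ simply exchanges the two Schur functions: the ``$+$''--Schur parameters $\{\tilde a_0,\tilde a_1,\ldots\}$ of $\tilde\fA$ agree with the Schur parameters of $\theta_-^{(1)}$, and symmetrically for the opposite side. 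Through the formulas \eqref{Rwiththeta}--\eqref{Wwiththeta} this identifies the scattering data of $\tilde\fA$ with those of $\fA$ via $\tilde T_+ = T_-$, $\tilde R_- = R_+$, and $\tilde c_{\pm} = c_{\mp}$.

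Next, I would apply Theorem \ref{thszego} to $\tilde\fA$ to obtain a generalized eigenvector $\tilde e^+(n,\t)$ satisfying the analogue of \eqref{evp} with coefficients $\tilde a_n$, the asymptotics \eqref{asppm} with the data $(\tilde T_+,\tilde R_-,\tilde c_{\pm})$ identified above, the summability \eqref{defnup} at each $\z_k$, and the Hardy-class memberships \eqref{anevenodd}. I would then define
\begin{equation*}
e^-(n,\t) := \tilde e^+(-n-1,\t).
\end{equation*}
This is precisely the pullback through $\iota$ of the eigenvector $\tilde\psi = \sum_n \tilde e^+(n,\t)\,|n\rangle$: since $\iota\tilde\fA\iota = \fA$, the vector $\iota\tilde\psi = \sum_n \tilde e^+(n,\t)\,|-1-n\rangle$ is an eigenvector of $\fA$ at the same eigenvalue $v(\t)$, and its $n$-th component in the standard basis is exactly $\tilde e^+(-n-1,\t)$.

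Substituting this definition into the recurrence for $\tilde e^+$ (with $a_n \to -\overline{a_{-n-2}}$ and $\tilde\rho_n = \rho_{-n-2}$) and relabeling $m \mapsto -m-1$ converts it, after a short algebraic check, into the form \eqref{evm} for $e^-$. The asymptotic relations \eqref{asppm} at $n\to +\infty$ for $\tilde e^+$ likewise become the relations \eqref{asmpm} at $n\to -\infty$ for $e^-$, once the scattering-data identifications above are inserted; summability \eqref{defnum} is then \eqref{defnup} applied to $\tilde e^+$.

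The key point of this plan is that the analytic machinery behind Theorem \ref{thszego} --- existence of the Jost-type solution, $L^2$ asymptotics, and $\ell^2$ summability at the point spectrum --- is not redone; $\iota$ reduces everything to a purely formal translation. The only non-trivial bookkeeping is verifying that the scattering data of $\tilde\fA$ transform by the stated swap under $\iota$, which in turn reduces to identifying the Schur parameters of $\tilde\fA$ from those of $\fA$ via \eqref{21o3}, \eqref{27o4} together with $\tilde a_n = -\overline{a_{-n-2}}$. This is the step to be executed with some care, but it is elementary. No new analytic obstacle is expected.
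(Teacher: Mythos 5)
Your proposal is correct and is exactly the paper's own argument: the paper derives Corollary \ref{corszego} in one line by observing that it is Theorem \ref{thszego} applied to $\iota\fA\iota$ and rewritten via the involution \eqref{involution}. Your write-up merely fills in the bookkeeping (the swap of Schur functions, of $T_\pm$, $R_\pm$, $c_\pm$, and the reindexing $e^-(n,\t)=\tilde e^+(-n-1,\t)$) that the paper leaves implicit.
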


\begin{remark}
1. We will see that Theorem \ref{thszego} belongs to the family of
Szeg\"o kind results. Actually it claims that an orthonormal system
in a certain "weighted" Hilbert space behaves asymptotically as such
a system in the "unweighted" space.

2. On the other hand it belongs to the family of "direct scattering
theorems" having the following specific: the class of CMV matrices
is given in terms of their spectral properties, but not in terms of
a behavior of the coefficients sequences.
\end{remark}

 $R_\pm$, $T_\pm$ in asymptotics \eqref{asppm},
\eqref{asmpm} are called the reflection and transmission
coefficients respectively. They form the, so called,  scattering
matrix
\begin{equation}\label{1.10d}
S(\t)=\begin{bmatrix}
R_-&T_-\\
T_+& R_+
\end{bmatrix}(\t),\ \t\in\bbT.
\end{equation}

\begin{proposition}\label{propscat}
The matrix function $S$ possesses two fundamental properties:
$S^*(\bar\tau)=S(\tau)$ and it is unitary--valued. The third
property is analyticity of the entries $T_\pm$, each of them has
analytic continuation in $\bbD$ as a function of bounded
characteristic of a specific nature,
--- it is the ratio of an {\it outer function} and a {\it Blaschke
product}. That is,
\begin{itemize}
\item
$R_+$ is a contractive symmetric Szeg\"o function on $\bbT$:
\begin{equation}\label{5.9d}
\begin{split}
&| R_+(\t)|\le 1,\quad R_+(\t)= \overline{R_+(\bar\t)},\\
&\int_{\bbT}{\log(1-|R_+(\t)|^2)}dm(\t)>-\infty;
\end{split}
\end{equation}
\item All other coefficients are of the form
\begin{equation}\label{2.10d}
T_-:=\frac{O}B,\
T_+(\tau)=\overline{T_-(\bar\tau)}\quad\text{and}\quad R_-:=-\frac
{T_-}{\bar T_+}\bar R_+,
\end{equation}
where $O$ is the outer function in the unit disk $\bbD$,  such that
\begin{equation}\label{8.9d}
|O|^2+|R_+(\t)|^2=1\ \text{a.e. on}\ \bbT
\end{equation}
and
\begin{equation}\label{normob}
    T_\mp(\k)=-ie^{ic_\pm}|T_\mp(\k)|.
\end{equation}
\end{itemize}
Also
\begin{equation}\label{3.10d}
\frac 1{\nu_+(\z_k)}\frac 1{\nu_-(\z_k)}= \left|\left(\frac
1{T_\pm}\right)'(\z_k)\right|^2.
\end{equation}

\end{proposition}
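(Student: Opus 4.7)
The plan is to extract all stated properties from the $L^2$-asymptotics \eqref{asppm}, \eqref{asmpm} of the generalized eigenvectors $e^\pm(n,\tau)$, combined with the analytic-extension statements \eqref{anevenodd}--\eqref{defnum}. I would proceed in four steps, treating the symmetry and unitarity of $S$ first, then the analyticity and outer factorization of the transmission coefficients, and finally the algebraic relations between the entries of $S$ together with the residue formula at bound states.

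\medskip

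For the symmetry $S^*(\bar\tau)=S(\tau)$: the three-term recurrences \eqref{evp} inherit from the CMV form \eqref{3term} an anti-holomorphic involution $\tau\mapsto\bar\tau$, $f\mapsto\overline{f(\bar\tau)}$ that sends solutions with eigenvalue $v(\tau)$ to solutions with eigenvalue $\overline{v(\tau)}=v(\tau)^{-1}$, while the basis \eqref{ts4bis} satisfies $\overline{\fe_{n,c}(\bar\tau)}=\fe_{n,-c}(\tau)$. Together with the phase normalization \eqref{normob} this forces the asymptotic relations satisfied by $\overline{e^+(n,\bar\tau)}$ to coincide with those of $e^-(n,\tau)$, yielding $T_+(\tau)=\overline{T_-(\bar\tau)}$ and, analogously, $R_\pm(\tau)=\overline{R_\pm(\bar\tau)}$. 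For the unitarity of $S(\tau)$, both $e^+(\cdot,\tau)$ and $e^-(\cdot,\tau)$ lie in the two-dimensional space of generalized eigenvectors of $\fA$ with eigenvalue $v(\tau)$; the $\pm\infty$ asymptotics in \eqref{asppm}, \eqref{asmpm} express them in terms of the orthonormal bases $\{\fe_{n,c_\pm}\}$, and matching coefficients produces $S(\tau)$ as the $2\times 2$ transition matrix between the two asymptotic free systems. Unitarity then reflects conservation of the discrete Wronskian of the recurrence, which is a direct consequence of $\fA\fA^*=I$. This is the main technical step, because promoting an $L^2$-asymptotic identity into pointwise unitarity of $S(\tau)$ for a.e.\ $\tau$ requires a careful choice of test functions against which to pair the $e^\pm$.

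\medskip

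For the analyticity and outer factorization of $T_\pm$: by \eqref{anevenodd}, the products $(BT_+)(\tau)e^+(n,\tau)$, weighted by the appropriate powers of $b_\k,b_{\bar\k}$, belong to $H^2$ for every $n\in\bbZ$. Inserting the explicit form \eqref{ts4bis} of $\fe_{n,c_+}$ into the asymptotic $T_+e^+(n,\tau)=T_+\fe_{n,c_+}(\tau)+o(1)$ from \eqref{asppm} and passing to the $L^2$-limit, one obtains that $BT_+\cdot K_{\bar\k}\in H^2$; since $K_{\bar\k}$ is outer and bounded below on $\bbD$, this gives $BT_+\in H^2(\bbD)$, and the uniform bound $|BT_+|=|T_+|\le 1$ coming from unitarity places $BT_+$ in $H^\infty(\bbD)$. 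Any inner factor of $BT_+$ would have to divide $B$, and the residue analysis below identifies this inner part as $B$ itself, so $BT_+=\bar O$ for some outer $O\in H^\infty$, and $T_-=O/B$ by Step~1. The identity $|O|^2+|R_+|^2=|T_+|^2+|R_+|^2=1$ a.e.\ is unitarity; integrating $\log|O|^2=\log(1-|R_+|^2)$ against $dm$ and using that $O$ is a nonzero outer function gives the Szeg\"o condition \eqref{5.9d}. Finally, \eqref{normob} is obtained by evaluating $BT_\mp$ at $\z=\k$ and reading off the phase carried by the basis vectors \eqref{ts4bis} at $v(\k)=0$.

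\medskip

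The relation $R_-=-\frac{T_-}{\bar T_+}\bar R_+$ is the off-diagonal entry of $SS^*=I$: the constraint $R_-\bar T_++T_-\bar R_+=0$, already forced in Step~2, rearranges to the stated formula. For the residue identity \eqref{3.10d}, observe that at a bound state $\z_k$ the Blaschke factor $B$ vanishes simply while the outer factor $O$ does not, so $T_\pm$ acquires a simple pole at $\z_k$; on the other hand, at $v=v(\z_k)$ the generalized eigenvectors $e^+(\cdot,\z_k)$ and $e^-(\cdot,\z_k)$ must be proportional, as both represent the unique $\ell^2$-eigenvector of $\fA$ with eigenvalue $v(\z_k)$. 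Writing $e^+(\cdot,\z_k)=c(\z_k)\,e^-(\cdot,\z_k)$ and using \eqref{defnup}, \eqref{defnum} gives $|c(\z_k)|^2=\nu_-(\z_k)/\nu_+(\z_k)$. Combining this proportionality with the asymptotic formulas \eqref{asppm}, \eqref{asmpm} specialized at $\z_k$ (where the non-$\ell^2$ terms $\bar\tau\fe_{n,c_\pm}(\bar\tau)$ must cancel), one expresses the residue of $T_\pm$ at $\z_k$ in terms of $c(\z_k)$ and the norms $\nu_\pm(\z_k)$; the two expressions combine to yield \eqref{3.10d}.
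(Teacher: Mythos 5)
Your proposal runs in the opposite direction from the paper: you take Theorem \ref{thszego} (the $L^2$-asymptotics \eqref{asppm}, \eqref{asmpm} and the $H^2$-membership \eqref{anevenodd}) as given and try to read the properties of $S$ off from them, whereas the paper establishes Proposition \ref{propscat} first, on the spectral side, by writing $S(\tau)=-\Phi^{-1}(\tau)\bar\tau\Phi(\bar\tau)$ with $\Phi$ built explicitly from the canonical representations $\theta_\pm=e^{ic_\pm}L_{\pm,\bar\k}/L_{\pm,\k}$ of the Schur functions (Theorem \ref{reprth2}); unitarity is then immediate from $W(v(\bar\tau))=W(v(\tau))$ together with the factorization \eqref{winv1}, and the symmetry $S^*(\bar\tau)=S(\tau)$ from the explicit identity \eqref{phisym}. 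Since the paper's proof of the asymptotics (Section 7) itself relies on the scattering data, the functions $T_\pm$, and the Faddeev--Marchenko spaces already being in place with the properties of Proposition \ref{propscat}, your derivation is circular within the paper's architecture; it could only stand on its own if you supplied an independent proof of Theorem \ref{thszego}.

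The decisive gap, however, is in your treatment of the third property. Having placed $BT_+$ in $H^\infty$, you assert that ``any inner factor of $BT_+$ would have to divide $B$'' and defer to a residue analysis. Nothing in your argument excludes a \emph{singular} inner factor, and that exclusion is precisely the hard analytic content of the statement that $T_\pm$ is an outer function over a Blaschke product. In the paper it is obtained by computing $T_\pm=-e^{ic_\mp}(v^{-1})'/(\rho_{-1}\Delta)$ with $\rho_{-1}b_\k^2\Delta = b_{\bar\k}L_{-,\k}L_{+,\k}-e^{i(c_++c_-)}b_\k L_{+,\bar\k}L_{-,\bar\k}$, observing that the ratio of the sum to the difference of these two products equals the Herglotz function $\frac{1+v\theta_+\theta_-}{1-v\theta_+\theta_-}$, invoking Theorem \ref{D} (no singular inner part for such functions under the Blaschke condition on the poles), and then a coprimality argument: the inner parts of $b_{\bar\k}L_{-,\k}L_{+,\k}$ and of $b_\k L_{+,\bar\k}L_{-,\bar\k}$ are supported in opposite half-planes by Proposition \ref{prop5.4}, so the inner part of $\rho_{-1}b_\k^2\Delta$ is exactly the Blaschke product over the point spectrum. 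None of this is recoverable from the soft $H^2$-limit argument you give. A secondary, self-acknowledged but unclosed gap is the promotion of the $L^2$-asymptotics to pointwise a.e.\ unitarity of $S(\tau)$; the paper sidesteps this entirely because its $S$ is defined pointwise by an explicit algebraic formula, and the exact Wronskian identity \eqref{wids} you would need for a recurrence-based argument is itself derived there from that formula.
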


Now, we define the Faddeev--Marchenko Hilbert space.

\begin{definition}\label{deffm} Set
\begin{equation}\label{4.9d}
\alpha_+:=\{R_+,\nu_+\}.
\end{equation}
An element $f$ of the space $L^2_{\alpha_+}$ is a function on
$\bbT\cup \cZ$ such that
\begin{equation}\label{7.9d}
\begin{split}
||f||^2_{\alpha_+}= &\sum_{\z_k\in\cZ}|f(\z_k)|^2\nu_+(\z_k)\\
 +&\frac 1 2
\int_{\bbT}\begin{bmatrix} \overline{f(\t)}& \t\overline{f(\bar\t)}
\end{bmatrix}       \begin{bmatrix}         1&
         \overline{R_+(\t)}\\
      R_+(\t)& 1
            \end{bmatrix}
\begin{bmatrix}
          {f(\t)}\\ {\bar \t f(\bar\t)}
               \end{bmatrix}dm
\end{split}
\end{equation}
is finite.
\end{definition}

\begin{theorem}\label{thscrepr1}
For $\fA\in \fae$ the system
\begin{equation}\label{bofe}
    \{e^+(n,\z)\}_{n=-\infty}^{\infty}
\end{equation}
forms an orthonormal basis in the associated space $L^2_{\alpha_+}$.
Therefore, the map
\begin{equation}\label{screprp}
\cF^+:l^2(\bbZ)\to L^2_{\alpha_+}\quad \text{such that}\quad
\cF^+|n\rangle:=e^+(n,\z)
\end{equation}
is unitary. Moreover $\cF^+\fA(\cF^+)^*$ is the multiplication
operator by $v$.
\end{theorem}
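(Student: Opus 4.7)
The plan is to verify the three defining properties of $\cF^+$ separately: the intertwining $\cF^+\fA = M_v\cF^+$ (where $M_v$ is multiplication by $v$), the isometric identity $\|\cF^+ c\|^2_{\alpha_+} = \|c\|^2_{l^2}$ on finitely supported sequences, and the density of the range. The intertwining is essentially tautological: the relations \eqref{evp} are \eqref{3term} with $\fA$ replaced by multiplication by $v$ and $|n\rangle$ replaced by $e^+(n,\cdot)$, so that $v(\tau)\sum_n c_n e^+(n,\tau) = \sum_n(\fA c)_n\, e^+(n,\tau)$ for every finitely supported $c\in l^2(\bbZ)$.

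For the discrete contribution to the $\alpha_+$-norm, I use that the points $\zeta_k\in\cZ$ are real, which together with the reality of the Schur parameters in the Jost construction renders $n\mapsto e^+(n,\zeta_k)$ real-valued. By \eqref{evp} this sequence is an eigenvector of $\fA$ at the isolated eigenvalue $v(\zeta_k)\in X$, with squared $l^2$-norm equal to $1/\nu_+(\zeta_k)$ by \eqref{defnup}. The vectors $\psi^{(k)}_n := \sqrt{\nu_+(\zeta_k)}\,e^+(n,\zeta_k)$ are therefore normalized; distinct eigenvalues of the unitary $\fA$ give mutually orthogonal eigenspaces, so $\{\psi^{(k)}\}$ is an orthonormal basis of the discrete subspace. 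A direct computation then gives
\[
\sum_k \nu_+(\zeta_k)\,|(\cF^+ c)(\zeta_k)|^2 = \sum_k|\langle c,\psi^{(k)}\rangle_{l^2}|^2,
\]
which is the squared norm of the projection of $c$ onto the discrete spectrum of $\fA$.

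The heart of the proof is matching the $\bbT$-integral in \eqref{7.9d} with the pull-back under $t=v(\tau)$ of the absolutely continuous spectral measure $W(t)\,dm(t)$ on $E$, cf.~\eqref{dencity}. Formula \eqref{Wwiththeta} expresses $W$ in terms of the Schur functions $\theta_\pm$, and Proposition \ref{propscat} combined with the asymptotic identification \eqref{asppm} re-encodes these as the scattering coefficients $R_\pm, T_\pm$. Using the symmetry $S^*(\bar\tau)=S(\tau)$, the unitarity relation $|T_+|^2+|R_+|^2=1$ a.e.~on $\bbT$, and the factorization \eqref{2.10d}--\eqref{8.9d}, one rewrites $W(v(\tau))\,dm(\tau)$ on the doubled cover $\bbT\to E$ as exactly the matrix kernel appearing in \eqref{7.9d}, acting on the doubled variable $(f(\tau),\bar\tau f(\bar\tau))$. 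Applied to $\cF^+c$ this yields the squared norm of the a.c.~projection of $c$; together with the discrete part and the absence of singular continuous spectrum for $\fA\in\fae$, it gives $\|\cF^+c\|^2_{\alpha_+}=\|c\|^2_{l^2}$. Density of the range then follows because the asymptotics \eqref{asppm} show that $e^+(n,\cdot)$, after multiplication by $T_+$, approaches the reference system $\fe_{n,c_\pm}$ as $|n|\to\infty$, and the latter is an orthonormal basis of the unweighted $L^2$ by construction \eqref{stbasis}.

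The main obstacle is precisely the identification in the third paragraph: rewriting $W(v(\tau))$ as the doubled-variable kernel of \eqref{7.9d}. This requires the full dictionary between the spectral Schur functions $\theta_\pm$ and the scattering matrix of Proposition \ref{propscat}, together with a careful treatment of the involution $\tau\mapsto\bar\tau$ inherent in the uniformisation $v(\zeta)=b_\kappa(\zeta)/b_{\bar\kappa}(\zeta)$. Relation \eqref{3.10d} plays the analogous gluing role at the discrete points $\cZ$, ensuring that the discrete and continuous halves of the $\alpha_+$-norm fit together consistently with the overall spectral decomposition of $\fA$.
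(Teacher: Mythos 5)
Your overall strategy --- intertwining plus a Parseval identity split into a discrete and an absolutely continuous part --- is the right one, and you have correctly located the crux in the identification of $W(v(\tau))\,dm(\tau)$ with the doubled-variable kernel of \eqref{7.9d}. But that identification is precisely where the proof lives, and you only assert it. The paper carries it out by factoring $W^{-1}(v(\t))\frac{dm(v(\t))}{dm(\t)}=\rho_{-1}^2\Phi\Phi^*$ (see \eqref{winv1}), computing the point masses via the residue formula \eqref{sigmaLsym}, and then packaging everything into the single resolvent identity \eqref{mar31}: for every $f^+\in L^2_{\alpha_+}$ the compression of $\frac{v+w}{v-w}$ to the pair $e^+(-1,\cdot),e^+(0,\cdot)$ equals $\int\frac{t+w}{t-w}\,d\Sigma(t)\tilde f(t)$ with an explicit invertible correspondence $f^+\mapsto\tilde f$. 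Since $|-1\rangle,|0\rangle$ is cyclic for $\fA$ and the two resolvent matrix functions coincide, unitarity of $\cF^+$ (including surjectivity, via cyclicity of $e^+(-1,\cdot),e^+(0,\cdot)$, which follows from nondegeneracy of $\Sigma$ and invertibility of $\Phi$ a.e.) comes out in one stroke. Without some version of this computation your second and third paragraphs are a restatement of what must be proved, not a proof.

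Two further ingredients you invoke are not available at this stage and one of them would not suffice anyway. First, \eqref{defnup} (that $1/\nu_+(\z_k)=\sum_n|e^+(n,\z_k)|^2$) is proved in the paper \emph{as a consequence} of the basis theorem, using the reproducing kernels of $\cF^+(\bbZ_{+,n})$ and the limit \eqref{limh}; in the forward direction $\nu_+(\z_k)$ is \emph{defined} from the residue of $\Sigma$ at $t_k$ via \eqref{collin}, so using \eqref{defnup} to normalize the eigenvectors is circular. (Also, the $e^+(n,\z_k)$ are not real: the symmetry is $\overline{L_{\bar\k}(\bar\z)}=L_{\k}(\z)$, which at real $\z_k$ relates $L_{\bar\k}$ to $L_{\k}$, not $L_{\bar\k}$ to itself --- though this is harmless for your argument.) Second, and more seriously, your density argument fails: the asymptotics \eqref{asppm} are themselves derived in the paper from the basis theorem (via Proposition \ref{prop120} and the kernel estimates), and even granting them, the statement that $T_+e^+(n,\cdot)$ approaches $T_+\fe_{n,c_+}$ in the \emph{unweighted} $L^2$, where $\{\fe_{n,c_+}\}$ is complete, gives no control on density of $\mathrm{span}\{e^+(n,\cdot)\}$ in $L^2_{\alpha_+}$ --- the two norms are not comparable in general (that comparability is exactly the content of the $A_2$/Carleson conditions of Theorem \ref{th23}). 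Completeness must instead come from cyclicity of the pair $e^+(-1,\cdot),e^+(0,\cdot)$ for multiplication by $v$ in $L^2_{\alpha_+}$, which is what \eqref{mar31} delivers.
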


\eqref{screprp} is called the scattering representation of $\fA$.
Note that simultaneously we have the representation
\begin{equation}\label{screprm}
\cF^-:l^2(\bbZ)\to L^2_{\alpha_-}\quad \text{such that}\quad
\cF^-|-n-1\rangle:=e^-(n,\z).
\end{equation}

\begin{theorem}\label{thscrepr2}
The scattering representations \eqref{screprp},
 \eqref{screprm} determine  each other by
\begin{equation}\label{splitf}
\begin{split}
    T_\pm(\tau)(\cF^\pm\tilde f)(\tau)=&\bar\tau(\cF^\mp\tilde f)(\bar\tau)
+R_\mp(\tau)(\cF^\mp\tilde f)(\tau),\quad\tau\in\bbT,\\
(\cF^{\pm}\tilde f)(\z_k)=&-\left(\frac 1 {T_\pm}\right)'(\z_k)
{\nu_\mp(\z_k)}(\cF^{\mp}\tilde f)(\z_k),\quad\z_k\in \cZ,
\end{split}
\end{equation}
for $\tilde f\in l^2(\bbZ)$, and have the following analytic
properties
\begin{equation}\label{anpr}
      (BT_\pm)\cF^\pm (l^2(\bbZ_\pm))\subset H^2.
\end{equation}
\end{theorem}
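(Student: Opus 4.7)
The plan is to address the two components of the statement separately: the Hardy-space inclusion \eqref{anpr} is a direct consequence of \eqref{anevenodd}, while the intertwining identities \eqref{splitf} will come from a two-dimensional generalized-eigenspace argument combined with asymptotic matching driven by Theorem \ref{thszego} and Corollary \ref{corszego}.

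\textbf{Analyticity \eqref{anpr}.} The formula \eqref{anevenodd} reads $(BT_+)e^+(2m,\tau) \in b_{\k}^m b_{\bar\k}^m H^2$ and $(BT_+)e^+(2m+1,\tau) \in b_{\k}^m b_{\bar\k}^{m+1} H^2$ for every $m\in\bbZ$. For $n\ge 0$ the inner-function prefactors lie in $H^\infty$, so $(BT_+)e^+(n,\cdot)\in H^2$, giving the inclusion on finitely supported vectors. The matrix kernel in \eqref{7.9d} dominates $(1-|R_+|)I$, while on $\bbT$ one has $|BT_+|^2 = 1-|R_+|^2 \le 2(1-|R_+|)$; hence $\tilde f\mapsto(BT_+)\cF^+\tilde f$ is bounded from $l^2(\bbZ)$ into $L^2(\bbT)$, and since $H^2$ is $L^2$-closed the inclusion extends to all of $l^2(\bbZ_+)$. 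The statement for $\cF^-$ follows by applying the same machinery to the reflected matrix $\iota\fA\iota$ via \eqref{involution}.

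\textbf{Torus identity.} A direct computation from $v = b_{\k}/b_{\bar\k}$ gives $v(\bar\tau) = v(\tau)$ for $\tau\in\bbT$, so all four functions $e^\pm(\cdot,\tau),\, e^\pm(\cdot,\bar\tau)$ solve the same generalized eigenvalue system \eqref{3term} at spectral value $v(\tau)\in E^\circ$. This is a two-step recurrence whose solution space is two-dimensional; the asymptotics \eqref{asppm}--\eqref{asmpm} show that $e^-(-n-1,\tau)$ and $e^-(-n-1,\bar\tau)$ are linearly independent for a.e.\ $\tau$. Hence there exist unique measurable coefficients $\alpha(\tau),\beta(\tau)$ with $e^+(n,\tau) = \alpha\,e^-(-n-1,\tau) + \beta\,e^-(-n-1,\bar\tau)$. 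Matching the coefficients of $\fe_{n,c_+}(\tau)$ and $\fe_{n,c_+}(\bar\tau)$ as $n\to\infty$, and invoking the algebraic relations of Proposition \ref{propscat} (the symmetry $R_+(\bar\tau)=\overline{R_+(\tau)}$, the formula $R_- = -(T_-/\overline{T_+})\overline{R_+}$, and $|T_\pm|^2+|R_+|^2=1$), identifies $\alpha(\tau) = R_-(\tau)/T_+(\tau)$ and $\beta(\tau) = \bar\tau/T_+(\tau)$. Multiplying by $T_+$ yields the torus identity for $\tilde f = |n\rangle$, and $l^2$-density extends it to all of $l^2(\bbZ)$.

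\textbf{Discrete identity and main obstacle.} At $\z_k\in\cZ$ the value $v(\z_k)\in X$ is a simple eigenvalue with one-dimensional eigenspace, and by Step 1 both $e^+(n,\z_k)$ and $e^-(-n-1,\z_k)$ extend analytically from $\bbT$ and lie in that eigenspace, hence are proportional. The constant is pinned down by analytically continuing the torus identity inside $\bbD$ and taking the residue at $\z_k$, where $1/T_+$ has a simple pole inherited from $B$ while the remaining factors are regular; normalizing via \eqref{defnum} and using \eqref{3.10d} produces the asserted constant $-(1/T_+)'(\z_k)\nu_-(\z_k)$ and the compatibility with the mirror formula. The main hurdle lies in the torus step: the asymptotics \eqref{asppm}, \eqref{asmpm} converge in $L^2$-norm rather than pointwise, so the linear relation $e^+(n,\tau) = \alpha e^-(-n-1,\tau) + \beta e^-(-n-1,\bar\tau)$ must be interpreted as an $L^2$-identity, and the coefficient matching carried out after projecting onto the asymptotically orthogonal ``direct'' and ``reflected'' subspaces spanned by $\{\fe_{n,c_+}(\tau)\}_{n\gg 0}$ and $\{\fe_{n,c_+}(\bar\tau)\}_{n\gg 0}$, reading off $\alpha(\tau),\beta(\tau)$ as limits of the associated Fourier-type coefficients.
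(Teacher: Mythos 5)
Your treatment of \eqref{anpr} is sound and is essentially the paper's own argument: \eqref{anevenodd} gives $(BT_+)e^+(n,\cdot)\in H^2$ for $n\ge 0$ since the prefactors $b_\k^m b_{\bar\k}^m$ are inner, and the bound $\Vert T_+f\Vert_{L^2}^2\le 2\Vert f\Vert_{\alpha_+}^2$ (this is \eqref{1.22d}) lets you pass to the closed span; you actually fill in a closure step the paper leaves implicit. For \eqref{splitf}, however, you invert the paper's logic. In the paper the identity is built in by construction: the scattering matrix is \emph{defined} by $S(\tau)=-\Phi^{-1}(\tau)\bar\tau\Phi(\bar\tau)$ in \eqref{scat}, which written out is exactly \eqref{splitf} for $n=-1,0$ (formula \eqref{scate}); since $\bar\tau e^{\pm}(n,\bar\tau)+R_\pm(\tau)e^{\pm}(n,\tau)$ and $T_\mp(\tau)e^{\mp}(-n-1,\tau)$ obey the same three-term recurrence in $n$, the identity propagates to all $n$, and the point-mass relation is read off from the rank-one structure of $\Sigma(t_k)$ via \eqref{collin}, \eqref{collinm}, \eqref{sigmaL}. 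The paper then \emph{uses} \eqref{splitf} to reduce the asymptotics \eqref{asppm}, \eqref{asmpm} to \eqref{asl}; your argument derives \eqref{splitf} from those asymptotics, so it rests the easy statement on the hard one.

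Beyond this architectural inversion there are two concrete gaps. First, the coefficient matching: the asymptotics hold only in $L^2(\bbT)$, while the connection coefficients you must identify are $\alpha=R_-/T_+$ and $\beta=\bar\tau/T_+$, which are in general unbounded ($1/|T_+|=|B/O|$ need not lie in $L^\infty$); multiplying an $L^2$-convergent sequence by an unbounded measurable function destroys convergence, so the ``projection onto asymptotically orthogonal subspaces'' needs an actual argument (truncation to sets where $|1/T_+|\le M$ plus the weak convergence of the oscillating ratios $\bigl(b_\k(\tau)/b_\k(\bar\tau)\bigr)^{2m}$ to $0$, in the spirit of \eqref{pminuspr}). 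You name this as the main hurdle but do not resolve it. Second, the discrete identity: $T_\pm=O_\pm/B$ has \emph{poles} at the $\z_k$, so $1/T_\pm$ has simple \emph{zeros} there --- there is no residue of $1/T_+$ to take, and the quantity $(1/T_\pm)'(\z_k)$ in \eqref{splitf} is the derivative of a function vanishing at $\z_k$. Moreover, proportionality of the two $\ell^2$-eigenvectors together with \eqref{defnup}, \eqref{defnum} and \eqref{3.10d} determines only the modulus $|c_k|=|(1/T_+)'(\z_k)|\,\nu_-(\z_k)$ of the proportionality constant, not its phase; the phase (the minus sign and the argument of $(1/T_+)'(\z_k)$) is precisely the content of the second line of \eqref{splitf}, and your sketch does not produce it. The paper obtains it from the explicit collinearity relation \eqref{collin}, which is forced by $\Sigma(t_k)\ge 0$ and the factorization \eqref{sigmaL}.
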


Finally, let us mention the important Wronskian identity. Put
formally
\begin{equation}\label{reprapeven}
\begin{split}
e^+(2m,\tau)&=e^{ic_+}b_\k^m b_{\bar\k}^m L_{\bar\k}(2m,\t),\\
\rho_{2m}e^+(2m+1,\tau)+ \bar a_{2m}e^+(2m,\tau)&=b_\k^m
b_{\bar\k}^{m} L_{\k}(2m,\t),
\end{split}
\end{equation}
and
\begin{equation}\label{reprapodd}
\begin{split}
e^+(2m+1,\tau)&=b_\k^m b_{\bar\k}^{m+1} L_{\k}(2m+1,\t),\\
\rho_{2m+1}e^+(2m+2,\tau)+ a_{2m+1}e^+(2m+1,\tau)&=e^{ic_+}b_\k^{m}
b_{\bar\k}^{m+1} L_{\bar\k}(2m+1,\t).
\end{split}
\end{equation}
Then
\begin{equation}\label{wids}
    \left|\begin{matrix}
\bar\t L_{\bar\k}(n,\bar\t)& \bar\t L_{\k}(n,\bar\t)\\
L_{\bar\k}(n,\t)&L_{\k}(n,\t)
\end{matrix}
\right|=\frac{d\log v(\tau)}{d\t}.
\end{equation}

\subsection{Inverse scattering: a brief discussion}
    The unimodular constant $e^{ic_+}$ and
the pair  $\alpha_+$ \eqref{4.9d} are called the  scattering data.

A fundamental question is how to recover the CMV matrix from the
scattering data? When can this be done? Do we have a uniqueness
theorem?

We say that the scattering data  are in the Szeg\"o over Blaschke
class, $\alpha_+\in \ale$, if
\begin{itemize}
\item $R_+$ is of the form \eqref{5.9d},
\item $\nu_+$ is a discrete measure supported on $\cZ$
\eqref{matbl}.
\end{itemize}
Let us point out that we did not even assume that the measure
$\nu_+$ is finite.

In short: to every scattering data of this class we can associate
the system of refection/trans\-mission coefficients by
\eqref{2.10d}, \eqref{8.9d}, \eqref{normob}, the dual measure
$\nu_-$ \eqref{3.10d} and the constant $e^{ic_-}$ \eqref{normob} in
such a way that there exists a CMV matrix from $\fae$, which
satisfies Theorem \ref{thszego} and Corollary \ref{corszego} with
these data.

To this end we associate with $\alpha_+$ the Faddeev--Marchenko
space $L^2_{\alpha_+}$, define  a Hardy type subspace
$\check{H}^2_{\alpha_+}$ in it, and, similar to \eqref{ts4bis},
construct the orthonormal basis (at this place the constant
$e^{ic_+}$ is required). Then, the multiplication operator (with
respect to this basis) is the CMV matrix and the claim of Theorem
\ref{thszego} is a Szeg\"o kind result on the asymptotics of this
orthonormal system.

For a brief explanation of the uniqueness problem we would like to
use the following analogy.  For the measure
$d\mu=\fw(\tau)dm(\tau)$, with $\log\fw\in L^1$, we can define the
Hardy space $\check{H}^2_{\mu}$ as the closer of $H^{\infty}$ (or
polynomials) in $L^2_{\mu}$--sense. On the other hand, let us define
the outer function $\phi$ such that $|\phi|^2=\fw$ and then define
\begin{equation}\label{exh2}
\hat{H}^2_{\mu}:=\left\{f=\frac{g}{\phi}: g\in H^2\right\}.
\end{equation}
According to the Beurling Theorem \cite{Garnett} these two Hardy
spaces are the same. But in the Faddeev--Marchenko setting their
counterpats $\check{H}^2_{\alpha_+}$ and $\hat{H}^2_{\alpha_+}$ not
necessarily coincide. For the data $\alpha_+$ the uniqueness takes
place if and only if $\check{H}^2_{\alpha_+}=\hat{H}^2_{\alpha_+}$.

\subsection{Hardy subspaces in the Faddeev--Marchenko space. Duality}
Let $\alpha_+\in \ale$. Define $T_\pm$, the dual data: $e^{ic_-}$,
$\alpha_-:=\{R_-,\nu_-\}$, and set
\begin{equation}\label{4.10d}
\begin{split}
\begin{bmatrix}
T_+f^+\\ T_- f^-
\end{bmatrix}(\t)=&
\begin{bmatrix}
T_+&0\\ R_+&1
\end{bmatrix}(\t)
\begin{bmatrix}
f^+(\t)\\  \bar \t f^+(\bar\t)
\end{bmatrix}\\
=&
\begin{bmatrix}
1&R_-\\ 0&T_-
\end{bmatrix}(\t)
\begin{bmatrix}
\bar \t f^-(\bar\t)\\  f^-(\t)
\end{bmatrix}
\end{split}
\end{equation}
for $\t\in\bbT$ and
\begin{equation}\label{5.10d}
f^{-}(\z_k)=-\left(\frac 1 {T_-}\right)'(\z_k)
{\nu_+(\z_k)}{f^+(\z_k)}
\end{equation}
for $\z_k\in\cZ$. It is evident that in this way we define a unitary
map from $L^2_{\alpha_+}$ to $L^2_{\alpha_-}$, in fact, due to
\eqref{4.10d}
\begin{equation}\label{1.22d}
\frac 1 {2}
               \int_{\bbT}\begin{bmatrix}
               \overline{f^+(\t)}&
               \overline{\bar \t f^+(\bar\t)}
               \end{bmatrix}
               \begin{bmatrix}
               1&
               \overline{R_+(\t)}\\
               R_+(\t)& 1
               \end{bmatrix}
               \begin{bmatrix}
               {f^+(\t)}\\
               {\bar \t f^+(\bar\t)}
               \end{bmatrix}dm=
               \frac{||T_+f^+||^2+||T_-f^-||^2}2,
\end{equation}
where in the RHS we have the standard $L^2$ norm on $\bbT$. The key
point is duality not only between these two spaces but, what is more
important, between corresponding Hardy subspaces.

Actually now we give two versions of  definitions of Hardy subspaces
(in general, {\it they are not equivalent!}). Due to the first one
$\check{H}^2_{\alpha_+}$ basically is the closer of $H^\infty$ with
respect to the given norm. More precisely, let $\mcB=\{B_N\}$, where
$B_N$ is a divisor of $B$ such that $B/B_N$ is a finite Blaschke
product. Then
\begin{equation}\label{10.9d}
f:=B_N g,\quad g\in H^\infty,\ B_N\in\mcB,
\end{equation}
belongs to $L^2_{\alpha_+}$. By $\check H^2_{\alpha_+}$ we denote
the closer in $L^2_{\alpha_+}$ of functions of the form
\eqref{10.9d}. Let us point out that every element $f$ of $\check
H^2_{\alpha_+}$ is such that $Of$ belongs to the standard $H^2$, see
\eqref{1.22d}. Therefore, in fact, $f(\z)$ has an analytic
continuation from $\bbT$ in the disk $\bbD$. Moreover, the value of
$f$ at $\z_k$, due to this continuation, and $f(\z_k)$ that should
be defined for all $\z_k\in\cZ$, since $f$ is a function from
$L^2_{\alpha_+}$, still perfectly coincide.

The second space also consists of functions from $L^2_{\alpha_+}$
having an analytic continuation in $\bbD$.
\begin{definition} A function $f\in L^2_{\alpha_+}$ belongs to
$\hat H^2_{\alpha_+}$ if $g(\t):=(BT_+ f)(\t)$, $\t\in \bbD$,
belongs to the standard $H^2$ and
$$
f(\z_k)=\left(\frac{g}{BT_+}\right)(\z_k), \ \z_k\in\cZ,
$$
where in the RHS $g$ and $BT_+$ are defined by  their analytic
continuation in $\bbD$.
\end{definition}

The following theorem  clarifies  relations between two Hardy
spaces.
\begin{theorem}\label{t1.3}
Let $f^+\in L^2_{\alpha_+}\ominus \check H^2_{\alpha_+}$ and let
$f^-\in L^2_{\alpha_-}$ be defined by \eqref{4.10d}, \eqref{5.10d}.
Then $f^-\in \hat H^2_{\alpha_-}$. In short, we write
\begin{equation}\label{0.17d}
(\hat H^2_{\alpha_-})^+= L^2_{\alpha_+}\ominus \check
H^2_{\alpha_+}.
\end{equation}
\end{theorem}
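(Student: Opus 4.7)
The plan is to test orthogonality against the explicit generators of $\check H^2_{\alpha_+}$, namely $h^+=B_Ng$ with $g\in H^\infty$ and $B_N\in\mcB$, and to rewrite $\langle f^+,h^+\rangle_{\alpha_+}$ as an integral involving $BT_-f^-$. Polarizing \eqref{1.22d} yields
\begin{equation*}
\langle f^+,h^+\rangle_{\alpha_+}=\sum_{\z_k\in\cZ} f^+(\z_k)\overline{h^+(\z_k)}\,\nu_+(\z_k)+\tfrac12\langle T_+f^+,T_+h^+\rangle_{\bbT}+\tfrac12\langle T_-f^-,T_-h^-\rangle_{\bbT},
\end{equation*}
where $\langle\cdot,\cdot\rangle_{\bbT}$ denotes the standard $L^2(\bbT)$ pairing and $h^-$ is obtained from $h^+$ through the duality \eqref{4.10d}, \eqref{5.10d}. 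Thus the whole question is reduced to a computation in the standard $L^2(\bbT)$ together with a finite discrete correction.

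For $h^+=B_Ng$, \eqref{4.10d} gives $T_+h^+=T_+B_Ng$ and $T_-h^-(\t)=R_+(\t)B_N(\t)g(\t)+\bar\t B_N(\bar\t)g(\bar\t)$. Substituting these into the two continuous terms, performing the change of variable $\t\leftrightarrow\bar\t$ in the anti-analytic piece, and invoking the scattering identity $T_+f^+=\bar\t f^-(\bar\t)+R_-f^-$ together with the cross relations \eqref{2.10d}, the two continuous pairings collapse into the single integral
\begin{equation*}
\int_{\bbT}(BT_-f^-)(\t)\,\overline{\Bigl(\tfrac{B_N}{B}\,g\Bigr)(\t)}\,dm(\t)
\end{equation*}
modulo a remainder that integrates to zero by anti-analyticity. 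The Szeg\"o condition \eqref{5.9d} on $R_+$ is used here to place $BT_\pm$ in the appropriate Smirnov classes.

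Now $B/B_N$ is a finite Blaschke product whose zeros are precisely the finitely many $\z_j\in\cZ$ at which $B_N$ does not vanish, so $(B_N/B)g$ extends meromorphically to $\bbD$ with simple poles only at those $\z_j$. Assuming $BT_-f^-\in H^2$, Cauchy's theorem evaluates the above integral as a finite sum of residues at the $\z_j$; the identification $f^-(\z_k)=-(1/T_-)'(\z_k)\nu_+(\z_k)f^+(\z_k)$ from \eqref{5.10d} then shows that these residues cancel precisely the discrete sum $\sum_{B_N(\z_k)\neq 0}f^+(\z_k)\overline{B_N(\z_k)g(\z_k)}\nu_+(\z_k)$. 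Conversely, the vanishing of $\langle f^+,B_Ng\rangle_{\alpha_+}$ for every admissible $g$ and $B_N$ forces $BT_-f^-$ to possess an $H^2$ analytic continuation to $\bbD$ whose quotient with $BT_-$ reproduces the prescribed values $f^-(\z_k)$, i.e., $f^-\in\hat H^2_{\alpha_-}$, yielding \eqref{0.17d}.

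The main obstacle is precisely this last matching of the Cauchy residues at the $\z_j$ with the discrete point-mass contributions via \eqref{5.10d}. This is the step in which both the Blaschke condition on $\cZ$, which ensures that $B/B_N$ is a bona fide finite Blaschke product so that Cauchy's formula applies, and the Szeg\"o condition, which places $BT_\pm$ in $H^\infty$ rather than a larger Smirnov class, are simultaneously essential. The freedom to let $B_N$ range over all of $\mcB$ is what separates the $\z_k$-probes from the continuous $H^\infty$ bulk and completes the characterization.
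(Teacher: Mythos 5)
Your overall strategy is the paper's: test the orthogonality of $f^+$ to $\check H^2_{\alpha_+}$ against the generators $B_N g$, rewrite the continuous part of the $\alpha_+$-pairing (after the substitution $\t\mapsto\bar\t$ and the symmetry $R_+(\bar\t)=\overline{R_+(\t)}$) as a standard $L^2(\bbT)$ pairing against $T_-f^-=R_+f^++\bar\t f^+(\bar\t)$, and match the contributions at the $\z_k$ with the point masses via \eqref{5.10d}. However, there is one genuine gap, and it is a matter of logical order rather than computation: in your third paragraph you evaluate the collapsed integral by residues \emph{``assuming $BT_-f^-\in H^2$''}, yet that membership is exactly half of the conclusion $f^-\in\hat H^2_{\alpha_-}$ and must itself be extracted from the orthogonality; the closing sentence asserting that the vanishing of all $\langle f^+,B_Ng\rangle_{\alpha_+}$ ``forces'' the $H^2$ continuation is a restatement of what has to be proved, not an argument. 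The repair is to separate the probes, which is what the paper does: first take $B_N=B$, i.e.\ test against $h^+=Bh$ with $h\in H^2$. Then the discrete sum vanishes identically because $B(\z_k)=0$, and the continuous part reduces to $\int (T_-f^-)(\t)\,\t\,B(\t)h_*(\t)\,dm=0$ for all $h\in H^2$, which says precisely that $BT_-f^-$ is orthogonal to $\overline{\t H^2}$, hence lies in $H^2$. Only then does one probe the point values, and the paper does so with the single, explicitly computable test function $B(\t)/(\t-\z_k)$: its continuous pairing with $f^+$ is the reproducing-kernel evaluation $g(\z_k)$ of $g:=BT_-f^-\in H^2$, the discrete part is $f^+(\z_k)B'(\z_k)\nu_+(\z_k)$, and setting the sum to zero gives $f^-(\z_k)=(g/BT_-)(\z_k)$ with no residue bookkeeping over all of $\mcB$.

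Two smaller corrections. First, your collapsed integrand is not quite right: carrying $\overline{B}=1/B$ and $\overline{B_N}=1/B_N$ on $\bbT$ through the conjugations, the continuous part for $h^+=B_Ng$ is $\int (BT_-f^-)(\t)\,(B_N/B)(\t)\,\t g_*(\t)\,dm(\t)$ with $g_*(\z)=\overline{g(\bar\z)}$, so the poles picked up by the residue computation sit in the analytic factor $B_N/B$, not in a conjugated one. Second, $BT_\pm=O$ lies in $H^\infty$ simply because $|O|^2=1-|R_+|^2\le1$ on $\bbT$; the Szeg\"o condition \eqref{5.9d} is what guarantees that this outer function exists and is not identically zero, not its boundedness.
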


\subsection{Main results}
Both $\check H^2_{\alpha_+}$ and $\hat H^2_{\alpha_+}$ are spaces of
analytic functions in $\bbD$ with the reproducing kernels, which we
denote by $\check k_{\alpha_+,\z_0}=\check k_{\alpha_+}(\z,\z_0)$
and $\hat k_{\alpha_+,\z_0}=\hat k_{\alpha_+}(\z,\z_0)$
respectively. We put
\begin{equation}\label{scrkp}
    \check K_{\alpha_+,\z_0}=\frac{\check k_{\alpha_+,\z_0}}{\Vert \check
k_{\alpha_+,\z_0}\Vert},\quad \hat K_{\alpha_+,\z_0}=\frac{\hat
k_{\alpha_+,\z_0}}{\Vert \hat k_{\alpha_+,\z_0}\Vert}.
\end{equation}
Define the following shift operation on the scattering data
\begin{equation}\label{shift}
   \alpha_+^n= \{R^{(n)}_+,\nu_+^{(n)}\}:=
   \{b^{n}_{\k} b^{n}_{\bar\k}R_+,b^{n}_{\k} b^{n}_{\bar\k}\nu_+\}, \ n\in\bbZ.
\end{equation}
\begin{theorem}\label{thm1} Let $K_{\alpha+,\z_0}$ denote one of the
normalized kernel in \eqref{scrkp}. The system of functions
\begin{equation}\label{ts04bis}
e^+(n,\tau)=\begin{cases} b_\k^{m}
b_{\bar\k}^{m}K_{\alpha^n_+,\bar\k}(\t)e^{ic_+} ,
&n=2m\\
b_\k^{m} b_{\bar\k}^{m+1}K_{\alpha^n_+,\k}(\t) ,&n=2m+1
\end{cases}
\end{equation}
forms orthonormal basis in $\check H^2_{\alpha_+}$ and $\hat
H^2_{\alpha_+}$ respectively, if $n\in \bbZ_+$ and in the whole
$L^2_{\alpha_+}$ if $n\in \bbZ$. With respect to this basis the
multiplication operator by $v(\t)$ is the CMV matrix $\fA\in \fae$
with the coefficients given by \eqref{evp}. Moreover, the scattering
data given by Proposition \ref{propscat} and the dual orthonormal
system
\begin{equation}\label{emdef}
    T_-(\tau) e^-(-1-n,\tau):=\bar\tau
e^+(n,\bar\tau)+R_+(\t)e^+(n,\tau)
\end{equation}
correspond to $\fA$ in the sense of Theorem \ref{thszego} and
Corollary \ref{corszego}.
\end{theorem}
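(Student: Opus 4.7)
The plan is to mirror the unweighted construction of Theorem \ref{th1.6} inside the Faddeev--Marchenko Hardy subspaces, building the basis by an iterated orthogonal decomposition based on reproducing kernels at the two distinguished points $\k$ and $\bar\k$, and then recovering the CMV matrix and its scattering data by computing the action of multiplication by $v(\t)$.

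First I would verify that, under the hypothesis $\alpha_+\in\ale$, both $\check H^2_{\alpha_+}$ and $\hat H^2_{\alpha_+}$ are reproducing-kernel Hilbert spaces of functions analytic in $\bbD$ on which evaluation at $\k$ and at $\bar\k$ is bounded; in each of them the normalized reproducing kernel $K_{\alpha_+,\bar\k}$ spans the one-dimensional orthogonal complement of the closed subspace of elements vanishing at $\bar\k$. In analogy with \eqref{vs3}, this vanishing subspace equals $b_{\bar\k}$ times the Hardy space attached to the same data with its branching type toggled; a second step using $\k$ in place of $\bar\k$ then recovers the integer data shift $\alpha_+\mapsto\alpha_+^1$ accompanied by the factor $b_\k b_{\bar\k}$ appearing in \eqref{ts04bis}. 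Iterating this two-step decomposition gives orthonormality of $\{e^+(n,\t)\}_{n\ge 0}$, and completeness follows from the identity
$$
\bigcap_{n\ge 0}(b_\k b_{\bar\k})^n\hat H^2_{\alpha_+^n}=\{0\},
$$
which encodes both the Szeg\"o condition on $R_+$ (through the outer factor) and the Blaschke condition on $\nu_+$ (through $B$).

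To extend to $n\in\bbZ$, I invoke Theorem \ref{t1.3}: the complement $L^2_{\alpha_+}\ominus\check H^2_{\alpha_+}$ is unitarily identified via \eqref{4.10d}--\eqref{5.10d} with $\hat H^2_{\alpha_-}$, so applying the one-sided construction to the dual data $\alpha_-$ and pulling back through this identification produces the vectors $\{e^+(n,\t)\}_{n<0}$; the defining formula \eqref{emdef} is then precisely this identification evaluated on the basis. For the CMV structure, using $v=b_\k/b_{\bar\k}$ from \eqref{blfackappa}, I would expand $v(\t)e^+(n,\t)$ in the basis: multiplication by $v$ shifts the $b_\k/b_{\bar\k}$ exponents by one, so only immediate neighbors contribute, and the resulting relations are exactly \eqref{3term}, from which the coefficients $\{a_n\}$ are read off. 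The matching of the scattering data is finally verified by applying Theorem \ref{thszego} to the constructed $\fA\in\fae$ and checking that the kernel formulas \eqref{ts04bis} reduce to the free expressions in the asymptotic limit $|n|\to\infty$.

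The main obstacle is this last asymptotic identification: proving that $K_{\alpha_+^n,\bar\k}$ and $K_{\alpha_+^n,\k}$ converge in $L^2$ as $|n|\to\infty$ to their unweighted counterparts $K_{\bar\k}$ and $K_\k$, which is the Szeg\"o-type content foreshadowed in the remarks after Corollary \ref{corszego} and what supplies the $o(1)$ terms in \eqref{asppm} and \eqref{asmpm}. Although $|R_+^{(n)}|=|R_+|$ on $\bbT$, the phase $(b_\k b_{\bar\k})^n$ oscillates rapidly, and one needs quantitative control over the outer-function and Blaschke-product factors appearing in $\hat K_{\alpha_+^n,\cdot}$; it is exactly here that both hypotheses $\log(1-|R_+|^2)\in L^1$ and $\sum G(t_k)<\infty$ enter jointly and indispensably.
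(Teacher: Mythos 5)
Your outline reproduces the paper's architecture for the basis construction (iterated orthogonal decomposition at $\k$ and $\bar\k$, duality for the negative indices), but three points do not go through as written. First, a concrete bookkeeping error: by Lemma \ref{l1.4}, dividing an element of $\hat H^2_{\alpha_+}$ that vanishes at $\bar\k$ by the \emph{single} factor $b_{\bar\k}$ already lands in $\hat H^2_{\alpha_+^{1}}$, because the weight transforms by $w w_*=b_{\bar\k}\cdot\overline{b_{\bar\k}(\bar\cdot)}$, which is $b_\k b_{\bar\k}$ up to a unimodular constant; the data index advances by one at \emph{each} step of the decomposition, not by one per pair of steps. With your indexing the kernels would be attached to $\alpha_+^{\lfloor n/2\rfloor}$, which does not match \eqref{ts04bis} and would break the identification of the Verblunsky coefficients as $e^{ic_+}a(\alpha_+^{n})$ with $a(\alpha)=K_{\alpha}(\k,\bar\k)/K_{\alpha}(\k,\k)$ --- an identification you never state, although it is the actual content of the ``CMV'' claim. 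The observation that ``only immediate neighbors contribute'' yields five-diagonality, but pinning down the entries requires the two-term recurrence \eqref{f2} for the normalized kernels and the explicit computation of the four coefficients in the expansion of $v\,K_{\alpha}(\cdot,\bar\k)$ (the paper's $c_0,\dots,c_3$).

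Second, the assertion that the constructed matrix lies in $\fae$ and that its scattering data are exactly $\{R_+,\nu_+\}$, $e^{ic_+}$ cannot be obtained by ``applying Theorem \ref{thszego}'': in the paper Theorem \ref{thszego} is \emph{deduced} from Theorem \ref{thm1} via Proposition \ref{prop120}, so this step is circular. What is actually needed is the identification of the Schur function of the constructed matrix as $\theta_{\alpha}(v)=K_{\alpha}(\z,\bar\k)/K_{\alpha}(\z,\k)$ (Corollary \ref{c3}), the passage to the spectral density through \eqref{winv1}--\eqref{Phi2} and \eqref{scat}, and the resolvent computation of Theorem \ref{th61} showing that the multiplication operator on $L^2_{\alpha_+}$ has resolvent function $\cR$; none of this appears in your outline. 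Finally, for the asymptotics --- which you correctly single out as the main obstacle --- you only say that quantitative control of the outer and Blaschke factors is needed. The mechanism that closes the argument is the duality identity \eqref{2.1af1}, $\check k_{\alpha_+}(\k,\k)\,\hat k_{\alpha_-^{-1}}(\bar\k,\bar\k)=|T_-(\bar\k)|^{-2}(1-|\k|^2)^{-2}$, which permits squeezing $L_{\alpha_\pm^{(n)}}(\z_0,\z_0)$ between $\check K_{\alpha^{(n)}_{q,\pm}}(\z_0,\z_0)$ and $\hat K_{\alpha^{(n)}_{N,\pm}}(\z_0,\z_0)$ for the regularized data (finitely many mass points, $|R_+|\le q<1$), where the limit is elementary; without this the $o(1)$ terms in \eqref{asppm}--\eqref{asmpm} are not established.
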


An important observation is the following
\begin{proposition}\label{prop120}
Let $\fA\in \fae$, and let $\alpha_+$ and $\cF^+$ correspond to this
matrix. Then
\begin{equation}\label{h}
    \check{H}^2_{\alpha_+}\subset\cF^+(l^2(\bbZ_+))\subset\hat{H}^2_{\alpha_+}.
\end{equation}
\end{proposition}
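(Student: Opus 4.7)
I will split the proof into the two inclusions, handling each one by a direct unpacking of definitions once the analytic property \eqref{anpr} and the duality Theorem \ref{t1.3} are in hand.

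For the right--hand inclusion $\cF^+(l^2(\bbZ_+)) \subset \hat{H}^2_{\alpha_+}$, fix $\tilde f \in l^2(\bbZ_+)$ and set $f := \cF^+ \tilde f$. Property \eqref{anpr} immediately delivers $g := (BT_+)f \in H^2$. To verify the discrete constraint in the definition of $\hat{H}^2_{\alpha_+}$, namely $f(\z_k) = (g/(BT_+))(\z_k)$ for each $\z_k \in \cZ$, I note that \eqref{anevenodd} endows each basis function $e^+(n,\cdot)$, $n \geq 0$, with an analytic continuation into $\bbD$ whose values at $\z_k$ coincide with its $L^2_{\alpha_+}$--values, so the identity holds on finite linear combinations $\sum c_n e^+(n,\cdot)$. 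To pass to the closure I use continuity of point evaluation at $\z_k$ in $L^2_{\alpha_+}$ (from the inequality $\nu_+(\z_k)|f(\z_k)|^2 \leq \|f\|^2_{\alpha_+}$) together with uniform convergence on compact subsets of $\bbD \setminus \cZ$ inherited from $H^2$ convergence of $(BT_+)f_n \to g$.

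For the left--hand inclusion $\check{H}^2_{\alpha_+} \subset \cF^+(l^2(\bbZ_+))$, I prove the equivalent orthogonality $\cF^+(l^2(\bbZ_-)) \subset L^2_{\alpha_+} \ominus \check{H}^2_{\alpha_+}$. The key observation is that the duality map \eqref{4.10d}--\eqref{5.10d} from $L^2_{\alpha_+}$ to $L^2_{\alpha_-}$ sends $e^+(n,\cdot)$ to $e^-(-1-n,\cdot)$: this is read off by comparing the second row of \eqref{4.10d} with the definition \eqref{emdef}. For $n \leq -1$ the image satisfies $-1-n \ge 0$ and hence lies in $\cF^-(l^2(\bbZ_-))$; the $-$--version of the argument just given (using \eqref{anpr} with the $-$ sign) places $\cF^-(l^2(\bbZ_-)) \subset \hat{H}^2_{\alpha_-}$. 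Reading Theorem \ref{t1.3} as an equivalence---which is precisely what the shorthand $(\hat{H}^2_{\alpha_-})^+ = L^2_{\alpha_+} \ominus \check{H}^2_{\alpha_+}$ signals---then pulls each $e^+(n,\cdot)$, $n \leq -1$, back into $L^2_{\alpha_+} \ominus \check{H}^2_{\alpha_+}$, closing the argument.

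The main subtlety is verifying the consistency at the discrete points $\z_k \in \cZ$ in the first part: one must confirm that the value of $f$ prescribed by its membership in $L^2_{\alpha_+}$ matches the value coming from the analytic continuation of $g/(BT_+)$ in $\bbD$. Beyond this, the proof is a matter of index bookkeeping, exploiting the symmetry between the $+$ and $-$ scattering representations and treating Theorem \ref{t1.3} as a genuine bijection.
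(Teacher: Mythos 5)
Your proof is correct and follows essentially the same route as the paper: the inclusion $\cF^+(l^2(\bbZ_+))\subset\hat H^2_{\alpha_+}$ from the Smirnov-class/analyticity property of the $e^+(n,\cdot)$ with $n\ge 0$, and the inclusion $\check H^2_{\alpha_+}\subset\cF^+(l^2(\bbZ_+))$ by applying the analogous fact to $\cF^-(l^2(\bbZ_-))$ and invoking the duality of Theorem \ref{t1.3}. Your explicit verification of the consistency condition at the points $\z_k\in\cZ$ and of the identification of the duality map with $e^+(n,\cdot)\mapsto e^-(-1-n,\cdot)$ supplies details the paper leaves implicit, but does not change the argument.
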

\noindent
Due to this observation Theorem \ref{thszego} is proved as
a corollary of Theorem \ref{thm1}.

\smallskip
 Concerning the uniqueness problem:
\begin{theorem}\label{thuni}
The scattering data $\alpha_+$, $e^{ic_+}$ determine $\fA\in \fae$
if and only if
\begin{equation}\label{uniccond}
\check k_{\alpha_{\pm}}(\k,\k) \check
k_{\alpha_{\mp}^{-1}}(\bar\k,\bar\k)
=\frac{1}{|T_\pm(\k)|^2}\frac{1}{(1-|\k|^2)^2}.
\end{equation}
\end{theorem}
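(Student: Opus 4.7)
The plan is to reduce uniqueness to the coincidence $\check H^2_{\alpha_+}=\hat H^2_{\alpha_+}$ of the two Hardy subspaces, and then to detect this coincidence by a single reproducing-kernel identity at the distinguished point $\k$.

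\emph{Step 1 (reformulation).} By Theorem \ref{thm1} the scattering data $(\alpha_+,e^{ic_+})$ produce \emph{two} candidate CMV matrices in $\fae$, one built from the normalized kernel $\check K$ and one from $\hat K$. By Proposition \ref{prop120} any $\fA\in\fae$ realizing these data has its $\cF^+$-image sandwiched between $\check H^2_{\alpha_+}$ and $\hat H^2_{\alpha_+}$. Hence uniqueness of $\fA$ is equivalent to $\check H^2_{\alpha_+}=\hat H^2_{\alpha_+}$.

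\emph{Step 2 (kernel identity).} The key analytic step is to prove that, identically for every $\alpha_+\in\ale$,
$$
\hat k_{\alpha_+}(\k,\k)\,\check k_{\alpha_-^{-1}}(\bar\k,\bar\k)
=\frac{1}{|T_+(\k)|^2(1-|\k|^2)^2}.
$$
For $f\in\hat H^2_{\alpha_+}$ the function $g:=BT_+ f$ lies in $H^2$, and the norm-split \eqref{1.22d} writes $\|f\|^2_{\alpha_+}$ as $\tfrac12\|g\|^2_{H^2}$ plus a remainder coming from the dual $f^-$ defined by \eqref{4.10d}--\eqref{5.10d}. Applying the complement formula \eqref{0.17d} to $f^-$ identifies this remainder as a norm on a copy of $\check H^2_{\alpha_-^{-1}}$; the shift index $-1$ emerges because $\bar\k$ is the image of $\k$ under $v\leftrightarrow 1/v$ and the odd-index basis vectors in \eqref{ts04bis} carry an extra factor $b_{\bar\k}$. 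The extremal problem $\sup\{|f(\k)|^2:\|f\|_{\alpha_+}=1,\,f\in\hat H^2_{\alpha_+}\}$ then splits into the Szeg\H{o} extremal problem on $H^2$ at $\k$ (giving a factor $(1-|\k|^2)^{-1}$) and an extremal problem in $\check H^2_{\alpha_-^{-1}}$ at $\bar\k$ (giving $\check k_{\alpha_-^{-1}}(\bar\k,\bar\k)^{-1}$); using the normalization \eqref{normob} and the "real" symmetry $\overline{B(\bar\zeta)}=B(\zeta)$ of $B$ (valid because all $\z_k\in\bbR$) to evaluate $|BT_+(\k)|^2$ produces the displayed formula.

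\emph{Step 3 (conclusion).} From $\check H^2_{\alpha_+}\subset \hat H^2_{\alpha_+}$ we get $\check k_{\alpha_+}(\k,\k)\le \hat k_{\alpha_+}(\k,\k)$, with equality iff the extremal function at $\k$ in $\hat H^2_{\alpha_+}$ already lies in $\check H^2_{\alpha_+}$. Both subspaces are invariant under multiplication by the degree-two Blaschke product $b_\k b_{\bar\k}$, and the basis \eqref{ts04bis} shows that the normalized reproducing kernels at the two points $\k,\bar\k$ generate each subspace under this shift. Thus equality at $\k$ propagates to agreement of the full bases, hence $\check H^2_{\alpha_+}=\hat H^2_{\alpha_+}$; combining with Step 2 gives the criterion \eqref{uniccond}. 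The version with opposite signs follows by applying the same argument to the isospectral involute $\iota\fA\iota$ of \eqref{involution}.

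\emph{Main obstacle.} The technical heart is Step 2, specifically correctly pinning down the shift index $-1$ in $\alpha_\mp^{-1}$. This requires carefully tracking the block-diagonal structure of the basis \eqref{ts04bis} through the norm decomposition \eqref{1.22d}, the duality relations \eqref{4.10d}--\eqref{5.10d}, and the complement formula \eqref{0.17d}, while reconciling the normalizations \eqref{noke}, \eqref{blfackappa}, \eqref{normob} so that the factor $(1-|\k|^2)^2$ emerges naturally as the Szeg\H{o} kernel $k(\bar\k,\bar\k)=(1-|\k|^2)^{-1}$ applied once on each side of the duality. Once Step 2 is in place, Steps 1 and 3 are standard reproducing-kernel and shift-invariance arguments.
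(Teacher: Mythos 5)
Your proposal follows essentially the same route as the paper: uniqueness is reduced to the coincidence $\check H^2_{\alpha_\pm}=\hat H^2_{\alpha_\pm}$, and the coincidence is detected by the exact kernel identity at $\k,\bar\k$, which is precisely the content of the paper's Lemma \ref{l2.1f5} (identity \eqref{2.1af1}), obtained there from the duality Theorem \ref{t1.3} via the one-dimensional complements $\{\check k_{\alpha_+,\k}\}=\check H^2_{\alpha_+}\ominus b_\k\check H^2_{\alpha_+^1}$. Your Step 2 identity is just the check/hat-swapped twin of \eqref{2.1af1}, equivalent to it through $T_+(\t)=\overline{T_-(\bar\t)}$, so the argument is the same one.
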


\begin{corollary} Let $\fA\in \fae$ and $W$ be its spectral density
\eqref{dencity}. If
\begin{equation}\label{unsuff}
    \int_E W^{-1}(t)\,dm(t)<\infty,
\end{equation}
then there is no other CMV matrix of $\fae$ class corresponding to
the same scattering data.
\end{corollary}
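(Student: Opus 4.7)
The strategy is to apply the uniqueness criterion of Theorem~\ref{thuni}, namely the reproducing-kernel identity~\eqref{uniccond}. By Proposition~\ref{prop120} this identity is equivalent to the coincidence of the two Hardy subspaces $\check H^2_{\alpha_+}=\hat H^2_{\alpha_+}$ (and the analogous equality for $\alpha_-^{-1}$), so it suffices to establish this coincidence under the hypothesis $\int_E W^{-1}\,dm<\infty$.

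First I would translate this matrix integrability into a scalar condition on the outer factor $O$ from Proposition~\ref{propscat}. Using the explicit form~\eqref{Wwiththeta} of the density together with $|T_\pm|^2=1-|R_+|^2=|O|^2$ on $\bbT$, and the fact that the Cayley-type factor $(I+\cR)/2$ contributes a matrix of comparably bounded modulus in the trace, the hypothesis reduces to $|O|^{-2}\in L^1(\bbT)$, i.e.\ $1/O\in L^2(\bbT)$. Since $O$ is outer so is $1/O$, whence $L^2$-membership automatically promotes to $1/O\in H^2$. Equivalently, writing $\tilde O:=BT_+$ (outer in $\bbD$ with $|\tilde O|=|O|$ on $\bbT$, since $B$ has real zeros), one has $1/\tilde O=B/T_+\in H^2$.

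Next I would exploit this $H^2$-invertibility of $\tilde O$ to upgrade the inclusion $\check H^2_{\alpha_+}\subset\hat H^2_{\alpha_+}$ of Proposition~\ref{prop120} to an equality. Given $f\in\hat H^2_{\alpha_+}$, set $g:=\tilde O f\in H^2$; via the duality~\eqref{1.22d} the boundary contribution to $\|f-B_N h\|^2_{L^2_{\alpha_+}}$ reduces to $\|g-\tilde O B_N h\|^2_{L^2(\bbT)}$. This is made small by the density of $\tilde O B_N\cdot H^\infty$ in $H^2$, which follows from $\tilde O$ being outer together with $B_N\to B$ as $N\to\infty$. The discrete contribution $\sum_k|f(\z_k)-B_N(\z_k)h(\z_k)|^2\nu_+(\z_k)$ is controlled because $B_N(\z_k)=0$ for all but finitely many indices, and the finitely many remaining terms are matched via the free choice of $h$; the $H^2$-membership of $1/\tilde O$ provides the $\ell^2$-summability of the sequence $\{f(\z_k)=g(\z_k)/\tilde O(\z_k)\}_k$ required to close the estimate. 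The symmetric argument yields $\check H^2_{\alpha_-^{-1}}=\hat H^2_{\alpha_-^{-1}}$.

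Finally, with the two Hardy subspaces identified with their $H^2$-realizations $f=g/\tilde O$ (and similarly for $\alpha_-^{-1}$ at $\bar\k$), one computes $\hat k_{\alpha_+}(\k,\k)=|\tilde O(\k)|^{-2}(1-|\k|^2)^{-1}$ from the standard $H^2$ reproducing kernel, together with the analogous expression for $\hat k_{\alpha_-^{-1}}(\bar\k,\bar\k)$; multiplying these and using $|T_+(\k)|=|\tilde O(\k)/B(\k)|$ with the reality of $\cZ$ produces exactly~\eqref{uniccond}, and Theorem~\ref{thuni} completes the proof. The principal obstacle will be the density argument in the previous paragraph: the Faddeev--Marchenko norm intertwines the continuous $\bbT$-weight built from $R_+$ with the point masses $\nu_+(\z_k)$ on $\cZ$, so the approximation must respect both simultaneously, and it is precisely here that the strengthened integrability $1/O\in H^2$ --- rather than merely the Szeg\"o condition $\log|O|\in L^1$ built into $\fae$ --- is indispensable, reflecting the fact that the generic Szeg\"o-over-Blaschke class does admit non-uniqueness.
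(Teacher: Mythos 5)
Your overall strategy --- reduce \eqref{uniccond} to the coincidence of the two Hardy subspaces and then verify that coincidence --- is consistent with the paper's machinery (Theorem \ref{thuni} together with \eqref{2.1af1} and the inclusions of Proposition \ref{prop120}). However, there is a genuine gap at the very first quantitative step: the hypothesis $\int_E W^{-1}\,dm<\infty$ does \emph{not} reduce to $|O|^{-2}\in L^1(\bbT)$. By \eqref{winv1} the condition $\int_E \operatorname{tr}W^{-1}\,dm<\infty$ is exactly the statement that the entries of $\Phi$, i.e.\ the functions $L_{\pm,\k},L_{\pm,\bar\k}$ and hence the eigenvectors $e^\pm(n,\t)$ for $n=-1,0$, belong to $L^2(\bbT)$; equivalently, using \eqref{Okap}, it is the integrability of $(1-|\theta_+|^2)^{-1}+(1-|\theta_-^{(1)}|^2)^{-1}$. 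On the other hand $|O|^{-2}=|T_+|^{-2}$ is governed by the \emph{determinant}: from \eqref{determinant} one has
$$
\frac{1}{|T_+(\t)|^2}\asymp\frac{|1-v\theta_+\theta_-|^2}{(1-|\theta_+|^2)(1-|\theta_-|^2)},
$$
which is comparable to $\det W^{-1}$, not to $\operatorname{tr}W^{-1}$. Since $\operatorname{tr}W^{-1}\ge 2\sqrt{\det W^{-1}}$, the hypothesis only yields $1/O\in L^1(\bbT)$; the product of the two reciprocal defects need not be integrable when only their sum is. So the cornerstone of your argument ($1/\tilde O\in H^2$) is unavailable, and everything built on it --- in particular the final identification $\hat H^2_{\alpha_+}=\{g/\tilde O: g\in H^2\}$ with kernel $|\tilde O(\k)|^{-2}(1-|\k|^2)^{-1}$, which in any case ignores the $\nu_+$-part and the $T_-f^-$ contribution to the norm in \eqref{1.22d} --- does not go through.

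The paper's route is different and avoids $O$ entirely: \eqref{unsuff} is read off from \eqref{winv1} as $e^\pm(n,\t)\in L^2(\bbT)$ for $n=-1,0$, hence for all $n$ by the recurrence \eqref{evp}. Combined with the Smirnov property \eqref{anevenodd}, each $e^+(n,\cdot)$, $n\in\bbZ_+$ (after removing the unimodular factor $b_\k^m b_{\bar\k}^m$), lies in the standard $H^2$ and can be approximated in the $L^2_{\alpha_+}$-norm by functions $B_N h$, $h\in H^\infty$ (the discrete tail $\sum_k|e^+(n,\z_k)|^2\nu_+(\z_k)$ is finite because $\|e^+(n,\cdot)\|_{\alpha_+}=1$, and $B_N$ kills all but finitely many of its terms). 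Hence $\cF^+(l^2(\bbZ_+))\subset\check H^2_{\alpha_+}$, which together with \eqref{h} collapses all three spaces to one, and likewise on the minus side; the duality Theorem \ref{t1.3} then gives $\check H^2=\hat H^2$ for both $\alpha_\pm$ and \eqref{uniccond} follows from \eqref{2.1af1}. If you want to salvage your write-up, replace the $1/O\in L^2$ reduction by this trace computation; the rest of your approximation scheme is essentially the correct (if sketchy) argument, applied to the basis vectors $e^+(n,\cdot)$ rather than to a general $f\in\hat H^2_{\alpha_+}$.
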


In fact, \eqref{unsuff} means that $e^{\pm}(n,\t)\in L^2$ for
$n=-1,0$ and, therefore, for all $n\in\bbZ$. In this case there
exist the decompositions
\begin{equation}\label{transferpm}
    e^{\pm}(n,\t)=\sum_{l\ge n}M^{\pm}_{l,n}\fe_{l,c_\pm}(\t).
\end{equation}
The following matrix
\begin{equation}\label{trop}
    \cM_+=\begin{bmatrix}M^+_{0,0}&0&0&\dots\\
M^+_{1,0}&M^+_{1,1}&0&\dots\\
M^+_{2,0}&M^+_{2,1}&M^+_{2,2}&\dots\\
\vdots&\vdots&\vdots&\ddots
\end{bmatrix}
\end{equation}
yields the transformation (Gelfand--Levitan--Marchenko) operator,
acting in $l^2(\bbZ_{+})$. Similarly we define
$\cM_-:l^2(\bbZ_{-})\to l^2(\bbZ_{-})$ (for details see Sect.
\ref{shtr}). Note that under condition \eqref{unsuff} they are not
necessary bounded. We present necessary and sufficient conditions
when the scattering data determine the CMV matrix and both
transformation operators $\cM_{\pm}$ are bounded.

For $\theta\in\thte$  consider the following two conditions:

\begin{itemize}
\item[(i)] for all arcs $I\subset E$
\begin{equation}
\label{A2_220} \sup_{I} \langle \fw\rangle_I \langle
\fw^{-1}\rangle_I\,<\infty,
\end{equation}
where $\fw(t):=\frac{1-|\theta(t)|^2}{|1-\theta(t)|^2}$, and
$$\langle
\fw\rangle_I := \frac{1}{|I|}\int_I \fw(t)\, dm(t).
$$

\item[(ii)] for all arcs of the form $I=(e^{i\xi},e^{i\xi_0})$
or $I=(e^{-i\xi_0},e^{-i\xi})$, $I\subset\bbT\setminus E$,
\begin{equation}\label{NS20}
\sup_{I}\left\{\sum_{e^{i\eta_k}\in Y\cap I} \frac
{1}{\sqrt{|I||I_k|}\langle \fw\rangle_
 {I_k}}\frac{d\log v}{d\log\theta}(e^{i\eta_k})\right\}
\,<\infty,
\end{equation}
where $Y=\{e^{i\eta_k}\in\bbT\setminus E:\theta(e^{i\eta_k})=1\}$,
and
\begin{equation*}
    I_k=\begin{cases}(e^{i\xi_0}, e^{i(2\xi_0-\eta_k)}),&\eta_k>0,\\
(e^{-i(2\xi_0-\eta_k)},e^{-i\xi_0}),& \eta_k<0.
\end{cases}
\end{equation*}

\end{itemize}

\begin{theorem}\label{th23}
Let $\fA\in\fae$ with the associated Schur functions $\theta_\pm$
and scattering data $\alpha_+=\{R_+,\nu_+\}$. Then the following
statements are equivalent.
\begin{itemize}
\item[1.] The Schur functions $\theta_\pm$ satisfy (i), (ii).

\item[2.] The scattering data $\alpha_+$ determine a CMV matrix of
$\fae$ class uniquely and both related transformation operators are
bounded.
\end{itemize}
\end{theorem}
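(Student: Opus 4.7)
The plan is to translate both the uniqueness criterion of Theorem \ref{thuni} and the boundedness of the Gelfand--Levitan--Marchenko operators $\cM_\pm$ into uniform reproducing-kernel estimates on the shifted Faddeev--Marchenko Hardy subspaces $\check H^2_{\alpha_\pm^n}$ and $\hat H^2_{\alpha_\pm^n}$ at the two distinguished disk points $\k,\bar\k$ produced by the conformal map \eqref{CsmE}. By Theorem \ref{thm1} the orthonormal basis elements $e^\pm(n,\cdot)$ are, up to the unimodular phase $e^{ic_\pm}$ and the unimodular factor $b_\k^m b_{\bar\k}^m$, the normalized reproducing kernels at $\bar\k$ (even $n$) and at $\k$ (odd $n$) of these shifted spaces. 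Hence, via the expansion \eqref{transferpm}, the transformation operators $\cM_\pm$ are bounded if and only if the two-sided comparisons
$$
\|\check k_{\alpha_\pm^n,\bar\k}\|_{\alpha_\pm^n}\asymp\|k_{\bar\k}\|_{L^2},\qquad
\|\check k_{\alpha_\pm^n,\k}\|_{\alpha_\pm^n}\asymp\|k_{\k}\|_{L^2}
$$
hold uniformly in $n$; and, by Theorem \ref{thuni} applied to each shifted datum $\alpha_\pm^n$, uniqueness is equivalent to the accompanying reproducing-kernel identity \eqref{uniccond} at $\k,\bar\k$. The combination of these two statements upgrades to the uniform coincidence $\check H^2_{\alpha_\pm^n}=\hat H^2_{\alpha_\pm^n}$ with equivalent norms, so condition (2) of the theorem is exactly this uniform coincidence.

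The next step is to match this uniform coincidence with (i)+(ii). The absolutely continuous part of $\|\cdot\|_{\alpha_+}$ is controlled, through \eqref{Wwiththeta} and \eqref{Rwiththeta}, by the two scalar Schur weights $\fw_\pm(t)=(1-|\theta_\pm(t)|^2)/|1-\theta_\pm(t)|^2$: the matrix factor $(I+\cR)/2$ is uniformly bounded and invertible away from the endpoints of $E$ and decouples the two scalar channels there. Consequently (i) for $\theta_+$ and $\theta_-$ is, by the Hunt--Muckenhoupt--Wheeden theorem (applied channel-wise after the scalar reduction, or by Treil--Volberg directly to the matrix weight), equivalent to boundedness of the Riesz projection on the associated weighted $L^2$, which in Hardy-space language is precisely $\check H^2=\hat H^2$ on the a.c. part with uniform norm equivalence. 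This equivalence is stable under the shift $\alpha_\pm\mapsto\alpha_\pm^n$, because $|b_\k b_{\bar\k}|=1$ on $\bbT$ leaves $\fw_\pm$ invariant and only rotates the phase in the off-diagonal entry of the FM matrix weight. In parallel, condition (ii) is the discrete counterpart: it says that the shifted point masses $b_\k^n b_{\bar\k}^n\nu_\pm$ form a Carleson measure near the endpoints $\pm\xi_0$ of $E$ with the correct density, which is exactly what is needed to compare the discrete contributions to the reproducing-kernel norms at $\k,\bar\k$ in $\check H^2_{\alpha_\pm^n}$ and $\hat H^2_{\alpha_\pm^n}$.

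With these dictionaries in hand, the implication $(1)\Rightarrow(2)$ follows by assembling the a.c. estimate from (i) and the discrete estimate from (ii) to produce, uniformly in $n$, both the kernel comparisons and the identity \eqref{uniccond}. The converse $(2)\Rightarrow(1)$ runs in reverse: uniform bounded invertibility of $\cM_\pm$ together with \eqref{uniccond} at every shift forces the Muckenhoupt-type weighted bound (yielding (i) by the converse half of Hunt--Muckenhoupt--Wheeden / Treil--Volberg) and the Carleson embedding near $\pm\xi_0$ (yielding (ii)), since a failure of either would produce normalized test vectors violating the kernel comparisons. The main obstacle I expect is the delicate coupling at the endpoints of $E$, where under \eqref{CsmE} the discrete spectrum accumulates onto the very points $\k,\bar\k$ that control the a.c. weight: there (i) and (ii) cannot be decoupled, and one must carefully track how the full matrix weight from \eqref{Wwiththeta}, the outer/Blaschke splitting inside $BT_\pm$ from \eqref{anpr}, and the shifted discrete measures interact so that the cross terms preserve, rather than destroy, the combined $A_2$+Carleson structure.
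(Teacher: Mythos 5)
Your opening reduction is where the argument breaks. You claim that boundedness of $\cM_\pm$ is equivalent to two-sided comparisons of the reproducing-kernel norms $\Vert\check k_{\alpha_\pm^n,\k}\Vert$, $\Vert\check k_{\alpha_\pm^n,\bar\k}\Vert$ at the two points $\k,\bar\k$, uniformly in $n$. Testing the embedding on the normalized kernels only controls the column norms $\sum_l|M^+_{l,n}|^2=\Vert e^+(n,\cdot)\Vert^2_{L^2}$ of the lower-triangular matrix \eqref{trop}; uniform boundedness of the columns is necessary but far from sufficient for boundedness of $\cM_+$. The correct reduction (the paper's first lemma in Sect.~\ref{shtr}) is that $\cM_+$ is bounded iff the full embedding $\int_\bbT|F|^2dm\le C\Vert F\Vert^2_{\alpha_+}$ holds for \emph{all} $F\in\cF^+(\bbZ_+)$, and the whole difficulty of the theorem is to test this on the entire space, not at two points. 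Everything downstream of your first paragraph inherits this gap.

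The second missing idea is the mechanism that produces the specific weight $\fw=\frac{1-|\theta|^2}{|1-\theta|^2}$ and the specific coupled form of (ii). The paper does not work with the FM matrix weight $\bigl[\begin{smallmatrix}1&\bar R_+\\ R_+&1\end{smallmatrix}\bigr]$ and Treil--Volberg at all here: it builds a unitary map \eqref{yasam2} from $L^2_{d\sigma}$ onto $\cF^+(\bbZ_{+,1})$, where $\sigma$ is the Herglotz measure of $\frac{1+\theta}{1-\theta}$, converting the embedding into the two-weight Hilbert-transform inequality \eqref{yasam5}. Restricting to the a.c. part of $\sigma$ gives $A_2$, i.e.\ (i); but the discrete part is handled by dualizing, passing to the Smirnov class $E^2(\bbC\setminus[-2,2])$, and invoking a Carleson embedding there (Lemmas \ref{E2A2}, \ref{transfer}), followed by the comparison $|\phi(x)|^2\asymp\frac1{x-2}\int_{4-x}^2\tilde\fw$ for the outer function of an $A_2$ weight (Lemma \ref{comparison}). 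That last lemma is exactly why the averages $\langle\fw\rangle_{I_k}$ over the reflected arcs $I_k$ appear in \eqref{NS20}: condition (ii) is \emph{not} a Carleson condition on $\nu_\pm$ alone but a condition coupling the point masses of $\sigma$ to the a.c. weight. You flag this endpoint coupling as "the main obstacle I expect," but it is not a technicality to be tracked -- it is the content of the theorem, and your proposal supplies no route to it.
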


In Sect. \ref{secsc} we propose the following sufficient condition
given directly in terms of the scattering data.

With $\nu_+$ we associate the measure $\tilde \nu_+$ by
\begin{equation}\label{sscc1}
    \tilde \nu_+(\z_k)=\frac{1}{|B'(\z_k)|^2\nu_+(\z_k)}
\end{equation}
and with the reflection coefficient $R_+$ the Szeg\"o function
\begin{equation}\label{sscc2}
    \tilde R_+(\t)={R_+(\t)}{B(\t)^2}.
\end{equation}
\begin{theorem}\label{thsc}
Let $\tilde\nu_+$ be a Carleson measure in $\bbD$ and $\tilde R_+$
satisfy the following modification of the $A_2$ condition
\begin{equation}\label{sscc3}
\sup_{I}\frac{1}{|I|}\int_{I} \frac{|\tilde R_+-\Mst|^2+
(1-|\Mst|^2)}{1-|\tilde R_+|^2} \,dm<\infty.
\end{equation}
Then the data $\alpha_+=\{R_+,\nu_+\}$ determine the CMV matrix
uniquely for any $e^{ic_+}$. Moreover, the both GLM transformation
operators are bounded.
\end{theorem}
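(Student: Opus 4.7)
The plan is to reduce Theorem \ref{thsc} to Theorem \ref{th23} by showing that the modified $A_2$-Carleson hypotheses on $\{\tilde R_+, \tilde\nu_+\}$ force the Schur functions $\theta_\pm$ associated with $\alpha_+$ to satisfy conditions (i) and (ii) of that theorem. The algebraic bridge is Proposition \ref{propscat}: since $T_\pm = O/B$ with $|O|^2 = 1-|R_+|^2$ on $\bbT$, the twist by $B^2$ in $\tilde R_+ = R_+B^2$ and the division by $|B'(\z_k)|^2$ in $\tilde\nu_+$ are precisely the devices that absorb the inner Blaschke factor coming from the discrete spectrum. Observe that $1-|\tilde R_+|^2 = 1-|R_+|^2$ on $\bbT$ (so the denominator in \eqref{sscc3} has intrinsic meaning) and, via the Wronskian identity \eqref{3.10d}, $\tilde\nu_+(\z_k)$ is comparable to $\nu_-(\z_k)$ up to Green-function factors $|B'(\z_k)|$ that encode the hyperbolic geometry of the discrete spectrum.

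For condition (i), I would use \eqref{Rwiththeta}--\eqref{Wwiththeta} to express the Schur weight $\fw = (1-|\theta_\pm|^2)/|1-\theta_\pm|^2$ on $E$ in terms of $1-|R_+|^2$, the correction factor being bounded above and below on compact subsets of the interior of $E$. The unimodular twist $B^2$ winds its full phase over each connected component of $\bbT\setminus E$, so the average $\Mst$ captures both the Szeg\"o oscillation of $R_+$ on $E$ and the Blaschke winding on the gap; the defect term $1-|\Mst|^2$ in \eqref{sscc3} supplies the remaining control when the arc $I$ straddles an endpoint. For condition (ii), the Carleson test boxes in $\bbD$ whose bases lie near an endpoint of $E$ pull back, via the conformal map \eqref{CsmE}, to exactly the arcs $I_k\subset\bbT\setminus E$ appearing in \eqref{NS20}; substituting $\tilde\nu_+(\z_k) = |B'(\z_k)|^{-2}\nu_+(\z_k)^{-1}$ into the Carleson test and invoking \eqref{3.10d} to trade $\nu_+(\z_k)\nu_-(\z_k)$ for $|(1/T_\pm)'(\z_k)|^2$ produces the sum in \eqref{NS20} with $d\log v/d\log\theta$ emerging as the Jacobian of the coordinate change. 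The analogous conclusion for $\theta_-$ follows from the duality \eqref{2.10d}.

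The main obstacle is the endpoint regime at $e^{\pm i\xi_0}$, where conditions (i) and (ii) are coupled: for a small arc $I$ straddling an endpoint, the quantity $\Mst$ mixes the Szeg\"o weight on $E\cap I$ with the Blaschke winding on $I\setminus E$, and the Carleson test for $\tilde\nu_+$ near the same endpoint is sensitive to this mixing through $|B'(\z_k)|$. The defect term in \eqref{sscc3} together with the half-arc geometry of the $I_k$ in \eqref{NS20} are tailored precisely so that the two conditions transfer across this boundary; the quantitative estimates matching $\Mst$ to $\Mth$ in this transition constitute the genuinely technical step, and will likely require a careful Cauchy-type splitting of $\tilde R_+ - \Mst$ into a part adapted to $E$ and a part adapted to the adjacent gap. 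Once (i) and (ii) are in place, Theorem \ref{th23} delivers uniqueness of $\fA\in\fae$ for any choice of $e^{ic_+}$ together with boundedness of both Gelfand--Levitan--Marchenko operators $\cM_\pm$, which is the full content of Theorem \ref{thsc}.
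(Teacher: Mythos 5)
Your strategy---reduce to Theorem \ref{th23} by showing that \eqref{sscc3} plus the Carleson condition on $\tilde\nu_+$ imply conditions (i) and (ii) for $\theta_\pm$---is logically available in principle (Theorem \ref{th23} is an equivalence, so the implication must hold once the theorem is known), but it is not the paper's route and, as sketched, it contains two concrete gaps. First, you identify the support of $\tilde\nu_+$ with the point set appearing in condition (ii). These are different sets: $\nu_+$ lives on $\cZ$, the preimages of the eigenvalues of the two-sided matrix $\fA$ (zeros of $1-v\theta_+\theta_-$, equivalently of $\Delta=\det\Phi$), whereas condition (ii) concerns $Y=\{\theta=1\}$, the support of the point part of the \emph{half-line} measure $\sigma$ in \eqref{yasam1}, taken for each of $\theta_\pm$ separately. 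Passing from a Carleson bound on the one set with weights $1/(|B'(\z_k)|^2\nu_+(\z_k))$ to \eqref{NS20} on the other set with weights $\sigma_k/|\phi(x_k)|^2$ is not a change of variables; it is a comparison of two distinct spectral problems whose point masses merely interlace. Second, the claim that $\fw_{\theta_\pm}=(1-|\theta_\pm|^2)/|1-\theta_\pm|^2$ differs from $1/(1-|R_+|^2)$ by a two-sided bounded factor is unsupported: by \eqref{s12} and \eqref{8.9d} one has $1-|R_+|^2=|T_+|^2=|(v^{-1})'|^2/(\rho_{-1}^2|\Delta|^2)$, which couples $\theta_+$ and $\theta_-^{(1)}$ multiplicatively through $\Delta$, and an $A_2$-type bound for such a product does not split into separate $A_2$ bounds for the factors without a substantial extra argument. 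The ``genuinely technical step'' you defer is in fact the entire proof.

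The paper argues directly in the Faddeev--Marchenko space and never touches (i), (ii). Writing $\bW=\begin{bmatrix}1&\bar R_+\\ R_+&1\end{bmatrix}$ and $\bB=\begin{bmatrix}\bar B&0\\ 0&B\end{bmatrix}$, condition \eqref{sscc3} says precisely that the matrix weight $\bB\bW\bB^*$ is in $A_2$. This gives the embedding $\Vert f^-\Vert^2_{L^2}\le Q\Vert f^-\Vert^2_{\alpha_-}$ for $f^-\in\hat H^2_{\alpha_-}$, whence $\hat H^2_{\alpha_\pm}=\check H^2_{\alpha_\pm}$ by the duality Theorem \ref{t1.3}; that is the uniqueness. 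For boundedness on the $+$ side one decomposes $f=f_1+Bf_2$ with $f_1\in K_B=H^2\ominus BH^2$, computes the Gram matrix of the $\alpha_+$ metric in terms of Hankel operators with symbol $\tilde R_+$ and a truncated Toeplitz operator, and closes the key estimate \eqref{mmm2} using $\Vert\bH_2\Vert<1$ (from \eqref{sscc3}) together with the Carleson embedding for $K_B$ furnished by $\tilde\nu_+$. To make your reduction work you would essentially have to re-derive these embeddings anyway, so I recommend the direct argument.
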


It shows that the class of data, comparably with the classical
Faddeev--Marchenko one, is indeed widely extended (an infinite set
of mass points and the reflection coefficient is very far necessary
to be a continuous function).
\begin{remark}
As we clarified in a discussion with A. Kheifets, in fact, our
condition is optimal in the class of conditions on the scattering
data with the following two properties: a) the condition is stable
with respect to the involution $R_+(\t)\mapsto -R_+(\t)$; b) the
assumptions on $R_+$ and $\nu_+$ are independent.
\end{remark}

\section{Proof of the Duality Theorem}

 The main goal of the  Lemma below is to clarify notations
that are, probably, a bit confusing. We believe that the diagram,
given in it, and the proof will help to avoid misunderstanding:
$\pm$-mappings $ L^2_{\alpha_+}\stackrel{\pm}{\longleftrightarrow}
L^2_{\alpha_-}$, defined by \eqref{4.10d}, \eqref{5.10d}, actually
depend of the scattering data $\{R_\pm,\nu_\pm\}$, although we do
not indicate this dependence explicitly.

\begin{lemma}\label{l1.4} Let $w(\z)$ be an inner
meromorphic function in $\bbD$ such that $w(\z_k)\not=0$,
$w(\z_k)\not=\infty$ for all $\z_k\in\cZ$. Put
$w_*(\z):=\overline{w(\bar\z)}$. The following diagram is
commutative
\begin{equation}\label{18j31}
\begin{array}{lll}
L^2_{\{w w_* R_+,w w_*\nu_+\}}
&\stackrel{w}{\longrightarrow}& L^2_{\alpha_+}\\
\llap{+}\Big{\uparrow}\Big\downarrow\rlap{-} & &
\llap{+}\Big{\uparrow}\Big\downarrow\rlap{-}\\
L^2_{\{w^{-1} w_*^{-1} R_-,w^{-1}
w_*^{-1}\nu_-\}}&\stackrel{w_*^{-1}}{\longrightarrow}&
L^2_{\alpha_-}
\end{array}
\end{equation}
Here the horizontal arrows are related to the unitary multiplication
operators and the vertical arrows are related to two
\textsl{different} $\pm$--duality mappings.
\end{lemma}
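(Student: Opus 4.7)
The plan is to verify the commutativity separately on the circle $\bbT$ and on the discrete set $\cZ$, after first establishing that the two horizontal arrows are unitary. For unitarity on $\bbT$, the key inputs are $|w(\t)| = |w_*(\t)| = 1$ a.e.: conjugating the $2\times 2$ matrix kernel of \eqref{7.9d} by $\mathrm{diag}(w(\t), w(\bar\t))$ replaces the reflection coefficient $R_+$ by $w(\t)\overline{w(\bar\t)}R_+ = ww_*R_+$, which matches the weight on the upper-left space. At a mass point, the reality $\cZ\subset\bbR$ together with the hypothesis $w(\z_k)\neq 0,\infty$ gives $(ww_*)(\z_k) = |w(\z_k)|^2$, so $|(wf)(\z_k)|^2\nu_+(\z_k) = |f(\z_k)|^2(ww_*\nu_+)(\z_k)$. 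Thus multiplication by $w$ is an isometry of $L^2_{\{ww_*R_+,ww_*\nu_+\}}$ onto $L^2_{\alpha_+}$; the calculation for $w_*^{-1}$ on the bottom row is completely analogous.

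The crucial observation for the vertical arrows is that the transmission coefficient itself does not change when passing from the right column to the left. Indeed, $|R'_+| = |ww_*||R_+| = |R_+|$ a.e.\ on $\bbT$, while the Blaschke product $B$ built from $\cZ$ is preserved under the standing hypothesis $w(\z_k)\neq 0,\infty$. Hence the outer-over-Blaschke representation $T_- = O/B$ from Proposition \ref{propscat}, combined with the normalization \eqref{normob}, forces $T'_-(\z) = T_-(\z)$ for all $\z\in\bbD$. Consistency of the label on the bottom-left then follows automatically: on $\bbT$, $R'_- = -(T'_-/\bar T'_+)\bar R'_+ = -(T_-/\bar T_+)\overline{ww_*}\bar R_+ = w^{-1}w_*^{-1}R_-$, since $\overline{ww_*} = w^{-1}w_*^{-1}$ on the circle.

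Once $T'_- = T_-$ is available, the commutativity reduces to a direct substitution into \eqref{4.10d} and \eqref{5.10d}. On $\bbT$, using the identity $w_*^{-1}(\t) = w(\bar\t)$ (which follows from $w_*(\t) = \overline{w(\bar\t)} = w(\bar\t)^{-1}$ on $\bbT$), one computes for $f$ in the upper-left space
\begin{align*}
w_*^{-1}(\t)(T'_-)^{-1}(\t)\bigl[(ww_*R_+)(\t)f(\t) + \bar\t f(\bar\t)\bigr] &= (T_-)^{-1}(\t)\bigl[w(\t)R_+(\t)f(\t) + w(\bar\t)\bar\t f(\bar\t)\bigr] \\
&= (T_-)^{-1}(\t)\bigl[R_+(\t)(wf)(\t) + \bar\t(wf)(\bar\t)\bigr],
\end{align*}
which is exactly $(wf)^-(\t)$. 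At a mass point, the identity $w_*^{-1}(\z_k)(ww_*)(\z_k) = w(\z_k)$ (valid because $\z_k\in\bbR$) together with $(1/T'_-)'(\z_k) = (1/T_-)'(\z_k)$ gives $w_*^{-1}(\z_k)\bigl[-(1/T_-)'(\z_k)(ww_*\nu_+)(\z_k)f(\z_k)\bigr] = -(1/T_-)'(\z_k)\nu_+(\z_k)(wf)(\z_k) = (wf)^-(\z_k)$. The main obstacle is more conceptual than technical: one must first recognize that the outer-over-Blaschke structure of $T_-$ is preserved under the perturbation $R_+\mapsto ww_*R_+$, after which both the boundary and the discrete identities become essentially formal.
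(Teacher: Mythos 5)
Your proposal is correct and follows essentially the same route as the paper: establish unitarity of the horizontal multiplication maps, observe that $|ww_*R_+|=|R_+|$ on $\bbT$ and that $\cZ$ (hence $B$) is unchanged so the transmission coefficients $T_\pm$ are the same for both sets of scattering data, and then substitute into \eqref{4.10d}, \eqref{5.10d}. You have merely written out explicitly the computations the paper leaves implicit.
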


\begin{proof}
Note that both $w$ and $w_*^{-1}$ are well defined on $\bbT\cup\cZ$.
Evidently, $wf\in L^2_{\alpha_+}$ means that $f\in L^2_{\{w w_*
R_+,w w_*\nu_+\}}$. Also, since $|w(\t)|=1$, $\t\in \bbT$, we have
that $\{w^{-1} w_*^{-1} R_-,w^{-1}w_*^{-1}\nu_-\}$ are
minus--scattering data for $\{w w_* R_+,w w_*\nu_+\}$ if $\alpha_-$
corresponds to $\alpha_+$. In other words $T_\pm$--functions remain
the same for both sets of scattering data. Then we use definitions
\eqref{4.10d}, \eqref{5.10d}.
\end{proof}

\begin{proof}[Proof of Theorem \ref{t1.3}]
Let us mention that $f^+\in L^2_{\alpha_+}$ implies
$$
(T_-f^-)(\t)=R_+(\t)f^+(\t)+\bar\t f^+(\bar\t)\in L^2, \ \t\in\bbT.
$$
Since
$$
\langle f^+, Bh \rangle_{\alpha_+}= \langle R_+(\t)f^+(\t)+\bar \t
f^+(\bar\t), \bar\t B(\bar\t)h(\bar\t) \rangle,\ h\in H^2,
$$
it follows from $f^+\in L^2_{\alpha_+}\ominus \check H^2_{\alpha_+}$
that
$$
(BT_-f^{-})(\t)=g(\t) :=B(\t)(R_+(\t)f^+(\t)+\bar\t f^+(\bar\t))\in
H^2.
$$

Now we calculate the scalar product
\begin{equation*}
\begin{split}
\langle f^+, \frac{B(\tau)}{\t-\z_k} \rangle_{\alpha_+}=&
f^+(\z_k)\overline{B'(\z_k)}\nu_+(\z_k) +\langle BT_-
f^-,\frac{1}{1-\t\bar\z_k} \rangle\\=& f^+(\z_k)B'(\z_k)\nu_+(\z_k)
+ g(\z_k)=0.
\end{split}
\end{equation*}
Therefore, by \eqref{5.10d} we get
$$
f^-(\z_k)=\left(\frac{g}{BT_-}\right)(\z_k), \ \z_k\in\cZ.
$$

For the converse direction we calculate the scalar product of
$f^+\in \hat{H}^2_{\alpha_+}$ with a function of the form $B_N g$,
$B_N\in\mathcal B$, $g\in H^2$ and use the fact that $BT_-f^-\in
H^2$.
\end{proof}

\section{Reproducing Kernels}

We prove several propositions concerning specific properties of the
reproducing kernels in $\check H^2_{\alpha_+}$ and $\hat
H^2_{\alpha_+}$. The multiplication operator by $v$ is playing an
essential role in these constructions.

\begin{lemma}\label{l2.1f5}
Let $\check k_{\alpha_+}(\z,\k)$ and $\hat k_{\alpha_+}(\z,\k)$
 denote the reproducing kernels of the spaces
$\check H^2_{\alpha_+}$ and $\hat H^2_{\alpha_+}$ respectively. Then
\begin{equation}\label{2.1f1}
(\check k_{\alpha_+}(\z,\k))^-= \frac{1-\z\k}{(\z-\bar
\k)(1-|\k|^2)}\frac{1}{T_-(\bar\k)} \frac{\hat
k_{\alpha_-^{-1}}(\z,\bar\k)} {\hat
k_{\alpha_-^{-1}}(\bar\k,\bar\k)},
\end{equation}
and, therefore,
\begin{equation}\label{2.1af1}
\check k_{\alpha_+}(\k,\k) \hat k_{\alpha_-^{-1}}(\bar\k,\bar\k)
=\frac{1}{|T_{-}(\bar\k)|^2}\frac{1}{(1-|\k|^2)^2}.
\end{equation}
\end{lemma}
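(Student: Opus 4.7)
\emph{Proof plan.}
The plan is to verify \eqref{2.1f1} by exhibiting its right-hand side as the unique element of $L^2_{\alpha_-}$ that corresponds, under the $\pm$-duality, to the reproducing kernel $\check k_{\alpha_+}(\cdot,\k)$; identity \eqref{2.1af1} will then follow by taking norms. Write $G(\z)$ for the RHS of \eqref{2.1f1}. The explicit form of $b_{\bar\k}$ obtained from \eqref{blfackappa} together with the normalization $b_{\bar\k}(\k)>0$ gives $(1-\z\k)/(\z-\bar\k)=i/b_{\bar\k}(\z)$, so
$$G(\z)=C\,\frac{\hat k_{\alpha_-^{-1}}(\z,\bar\k)}{b_{\bar\k}(\z)},\qquad C:=\frac{i}{(1-|\k|^2)\,T_-(\bar\k)\,\hat k_{\alpha_-^{-1}}(\bar\k,\bar\k)}.$$
Applying Lemma \ref{l1.4} with $w=b_\k$ (so that $w_*=b_{\bar\k}$, $ww_*=b_\k b_{\bar\k}$) makes the map $m\colon g\mapsto g/b_{\bar\k}$ a unitary $L^2_{\alpha_-^{-1}}\to L^2_{\alpha_-}$ that intertwines the $\pm$-dualities between $\alpha_\pm$ with those between $\alpha_+^{1}$ and $\alpha_-^{-1}$; in particular $G\in L^2_{\alpha_-}$.

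Next I would show $G\in(\hat H^2_{\alpha_-})^\perp$, so that Theorem \ref{t1.3} yields $G^+\in\check H^2_{\alpha_+}$. The key step is the identification
$$b_{\bar\k}\,\hat H^2_{\alpha_-}=\bigl\{\phi\in\hat H^2_{\alpha_-^{-1}}:\phi(\bar\k)=0\bigr\},$$
valid because $BT_-=O\in H^\infty$ is regular and nonvanishing at $\bar\k$ (being outer). Since the $L^2_{\alpha_-^{-1}}$-orthogonal complement of this hyperplane inside $\hat H^2_{\alpha_-^{-1}}$ is spanned by $\hat k_{\alpha_-^{-1}}(\cdot,\bar\k)$, transporting via $m^{-1}$ gives the splitting
$$(\hat H^2_{\alpha_-})^\perp\;=\;\frac{1}{b_{\bar\k}}\,(\hat H^2_{\alpha_-^{-1}})^\perp\;\oplus\;\bbC\,\frac{\hat k_{\alpha_-^{-1}}(\cdot,\bar\k)}{b_{\bar\k}},$$
and $G$ belongs to the second summand.

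It remains to establish the reproducing property $\langle f^-,G\rangle_{\alpha_-}=f(\k)$ for every $f\in\check H^2_{\alpha_+}$. Decomposing $b_{\bar\k}f^-=\phi_\perp+c_f\,\hat k_{\alpha_-^{-1}}(\cdot,\bar\k)$ according to the splitting above and using the unitarity of $m$, the inner product collapses to $\overline C\,c_f\,\hat k_{\alpha_-^{-1}}(\bar\k,\bar\k)$, so the claim reduces to
$$c_f = i\,(1-|\k|^2)\,T_+(\k)\,f(\k),$$
where $T_+(\k)=\overline{T_-(\bar\k)}$ by \eqref{2.10d}. To establish this I would test on the dense family $f=B_N g$, $g\in H^\infty$, $B_N\in\mcB$: for such $f$ the function $(BT_+)f=O^*f$ lies in $H^2$, so the Cauchy formula at $\k$ gives
$$O^*(\k)\,f(\k)=\int_\bbT B(\t)\bigl[\bar\t\,f^-(\bar\t)+R_-(\t)\,f^-(\t)\bigr]\frac{dm(\t)}{1-\bar\t\k}$$
via \eqref{4.10d}. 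Regrouping the integrand using $\overline{b_{\bar\k}(\bar\t)}=b_\k(\t)$, $|b_{\bar\k}|=1$ on $\bbT$, and $O^*(\k)=T_+(\k)B(\k)$ exhibits the right-hand side as the $L^2_{\alpha_-^{-1}}$-pairing of $b_{\bar\k}f^-$ with $\hat k_{\alpha_-^{-1}}(\cdot,\bar\k)$, isolating $c_f$ with precisely the predicted constant. Finally, \eqref{2.1af1} follows by taking $\|\cdot\|^2_{\alpha_-}$ of \eqref{2.1f1}: the LHS equals $\check k_{\alpha_+}(\k,\k)$ by unitarity of the $\pm$-map, while the RHS equals $|C|^2\,\hat k_{\alpha_-^{-1}}(\bar\k,\bar\k)=1/[(1-|\k|^2)^2|T_-(\bar\k)|^2\hat k_{\alpha_-^{-1}}(\bar\k,\bar\k)]$ by unitarity of $m$ and $\|\hat k_{\alpha_-^{-1}}(\cdot,\bar\k)\|^2_{\alpha_-^{-1}}=\hat k_{\alpha_-^{-1}}(\bar\k,\bar\k)$.

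The main obstacle is the Cauchy-formula step pinning down $c_f$: since $f^-$ is defined only via its boundary values and need not extend analytically to $\bbD$ for $f\in\check H^2_{\alpha_+}$, the coefficient $c_f$ has to be extracted from the Hardy function $O^*f\in H^2$ rather than by a naive evaluation at $\bar\k$, and the tracking of the unimodular factors $b_\k,b_{\bar\k}$ on $\bbT$ is the delicate bookkeeping that must be executed carefully.
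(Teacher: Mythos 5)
Your first half is sound and matches the paper: the chain $\{\check k_{\alpha_+}(\cdot,\k)\}=\check H^2_{\alpha_+}\ominus b_\k\check H^2_{\alpha_+^1}$, the commutative diagram of Lemma \ref{l1.4}, and Theorem \ref{t1.3} do identify $(\check k_{\alpha_+}(\cdot,\k))^-$ with $b_{\bar\k}^{-1}\hat k_{\alpha_-^{-1}}(\cdot,\bar\k)$ up to a scalar, and your norm computation for \eqref{2.1af1} is correct. The gap is in the determination of that scalar --- which is exactly what the paper calls ``the essential part of the lemma.'' Your pivotal assertion is that the Cauchy integral $\int_\bbT B(\t)\bigl[\bar\t f^-(\bar\t)+R_-(\t)f^-(\t)\bigr]\frac{dm(\t)}{1-\bar\t\k}$ can be ``regrouped'' into the $L^2_{\alpha_-^{-1}}$-pairing $\langle b_{\bar\k}f^-,\hat k_{\alpha_-^{-1}}(\cdot,\bar\k)\rangle_{\alpha_-^{-1}}$. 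This is not bookkeeping; it is the whole content, and it does not go through as stated. First, that Faddeev--Marchenko pairing contains the discrete term $\sum_{\z_k}(b_{\bar\k}f^-)(\z_k)\overline{\hat k_{\alpha_-^{-1}}(\z_k,\bar\k)}\,\nu_-^{(-1)}(\z_k)$, which has no counterpart in your Cauchy integral: unlike the paper's test element $B/(1-\t\bar\k)$, the kernel $\hat k_{\alpha_-^{-1}}(\cdot,\bar\k)$ does not vanish on $\cZ$. Second, when you polarize the continuous part of the FM form (the $\t\mapsto\bar\t$ symmetrization used in the proof of Theorem \ref{t1.3}), the weight that appears against $T_+ b_\k^{-1}f^+$ is $\t\,\overline{\hat k_{\alpha_-^{-1}}(\bar\t,\bar\k)}=\t\,\hat k_{\alpha_-^{-1}}(\t,\k)$, i.e.\ the \emph{weighted} reproducing kernel at $\k$, not the Szeg\"o kernel $1/(1-\bar\t\k)$; these integrals are genuinely different. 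Since you already know a priori that $\langle f^-,G\rangle_{\alpha_-}$ is proportional to $f(\k)$, producing another expression proportional to $f(\k)$ (your Cauchy formula for $O_*f$) determines nothing unless the two are actually identified --- so the argument is circular precisely at the point where the constant must be pinned down.

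The fix is the paper's choice of test element: compute $\bigl\langle \check k_{\alpha_+}(\cdot,\k),\,\tfrac{B(\t)}{1-\t\bar\k}\bigr\rangle_{\alpha_+}$ two ways. Since $B k_\k\in\check H^2_{\alpha_+}$, the reproducing property gives $B(\bar\k)/(1-|\k|^2)$; since $B(\z_k)=0$ the $\nu$-part drops out, and the polarized integral becomes $\bigl\langle T_-(\check k_{\alpha_+}(\cdot,\k))^-,\tfrac{\bar B}{\t-\bar\k}\bigr\rangle_{L^2}$. Substituting the already-established form $C\,b_{\bar\k}^{-1}\hat k_{\alpha_-^{-1}}(\cdot,\bar\k)$ and using that $(BT_-)\hat k_{\alpha_-^{-1}}(\cdot,\bar\k)\in H^2$ by the very definition of $\hat H^2_{\alpha_-^{-1}}$, the integral collapses to an $H^2$ point evaluation at $\bar\k$, which yields $C$. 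In other words, the Cauchy/Smirnov structure must be exploited on the \emph{kernel} side (where membership in $\hat H^2$ is built in), not on the test-function side as you attempt.
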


\begin{proof} First we note that the following one--dimensional
spaces coincide:
\begin{equation*}
\{(\check k_{\alpha_+}(\z,\k)\}^-= \{b^{-1}_{\bar\k} \hat
k_{\alpha_-^{-1}}(\z,\bar\k)\}.
\end{equation*}
It follows immediately from Theorem \ref{t1.3}, but let us give a
formal prove. Starting with the orthogonal decomposition
\begin{equation*}
\{\check k_{\alpha_+}(\z,\k)\}= \check H^2_{\alpha_+}\ominus
b_{\k}\check H^2_{\alpha_+^{1}}
\end{equation*}
we have
\begin{equation*}
\{\check k_{\alpha_+}(\z,\k)\}^-= (\check H^2_{\alpha_+})^-\ominus
(b_{\k}\check H^2_{\alpha_+^{1}})^-,
\end{equation*}
or, due to \eqref{18j31},
\begin{equation*}
\{\check k_{\alpha_+}(\z,\k)\}^-= (\check H^2_{\alpha_+})^-\ominus
b^{-1}_{\bar\k}(\check H^2_{\alpha_+^{1}})^-.
\end{equation*}

Now we use Theorem \ref{t1.3}
\begin{equation*}
\begin{split}
\{k_{\alpha_+}(\z,\k)\}^-=& (L^2_{\alpha_-}\ominus \hat
H^2_{\alpha_-}) \ominus b^{-1}_{\bar\k}(L^2_{\alpha_-^{-1}} \ominus
\hat
H^2_{\alpha_-^{-1}})\\
=&b^{-1}_{\bar\k} (\hat H^2_{\alpha_-^{-1}} \ominus b_{\bar\k}\hat
H^2_{\alpha_-}).
\end{split}
\end{equation*}
Thus
\begin{equation}\label{2.2f1}
(k_{\alpha_+}(\z,\k))^-= C b^{-1}_{\bar\k} \hat
k_{\alpha_-^{-1}}(\z,\bar\k).
\end{equation}

The essential part of the lemma deals with the constant $C$. We
calculate the scalar product
\begin{equation*}
\left\langle \check k_{\alpha_+}(\t,\k), \frac{B}{1-\t\bar\k}
\right\rangle_{\alpha_+}.
\end{equation*}
On the one hand, since $\frac{B}{1-\z\bar\k}$ belongs to the
intersection of $L^2_{\alpha_+}$ with $H^2$, we can use the
reproducing property of $\check k_{\alpha_+}$:
\begin{equation}\label{2.3f1}
\left\langle \check k_{\alpha_+}(\t,\k), \frac{B}{1-\t\bar\k}
\right\rangle_{\alpha_+} = \overline{\frac{B(\k)}{1-|\k|^2}}=
\frac{B(\bar\k)}{1-|\k|^2}.
\end{equation}

On the other hand we can reduce the given scalar product to the
scalar product in the standard $H^2$. Since $B(\z_k)=0$, the
$\nu$--component vanishes and we get
\begin{equation*}
\begin{split}
\frac 1 2 &\left\langle
\begin{bmatrix} 1&\bar R_+\\
R_+&1\end{bmatrix}(\t)
\begin{bmatrix}
\check k_{\alpha_+}(\t,\k)\\ \bar \t \check k_{\alpha_+}(\bar\t,\k)
\end{bmatrix},
\begin{bmatrix}
\frac{B(\t)}{1-\t\bar\k}\\
\frac{B(\bar\t)}{t-\bar\k}
\end{bmatrix}
\right\rangle\\=& \left\langle T_-(\t) (\check
k_{\alpha_+}(\t,\k))^-, \frac{\bar{B}}{\t-\bar\k} \right\rangle.
\end{split}
\end{equation*}
Substituting here \eqref{2.2f1} we get
\begin{equation*}
C\left\langle (BT_-)(\t) \hat k_{\alpha_-^{-1}}(\t,\bar\k),
b_{\bar\k}(\t)\frac{1}{\t-\bar\k} \right\rangle.
\end{equation*}
Since $(BT_-)(\z) \hat k_{\alpha_-^{-1}}(\z,\bar\k)$ belongs to
$H^2$ and $b_{\bar\k}(\z)\frac{1}{\z-\bar\k}=
e^{ic}\frac{1}{1-\z\k}$ is collinear to the reproducing kernel here,
we get recalling \eqref{2.3f1}
\begin{equation*}
e^{-ic}C (BT_-)(\bar\k) \hat k_{\alpha_-^{-1}}(\bar\k,\bar\k)=
\frac{B(\bar\k)}{1-|\k|^2}.
\end{equation*}

Thus \eqref{2.1f1} is proved. Comparing the norms of that vectors
and taking into account that the $-$--map is an isometry we get
\eqref{2.1af1}.
\end{proof}


Consider the multiplication operator by $\bar v$, acting in
\begin{equation}\label{2.16d}
L^2_{\alpha_+}=(\hat H^2_{\alpha_-})^+\oplus
 \check H^2_{\alpha_+}.
\end{equation}
\begin{lemma}\label{l2.2f5}
The multiplication operator by $\bar v$ acts as a unitary operator
from
\begin{equation}\label{5.16d}
\{\hat k_{\alpha_-}^+(\z,\k)\}\oplus \check H^2_{\alpha_+}
\end{equation}
to
\begin{equation}\label{6.16d}
\{\hat k_{\alpha_-}^+(\z,\bar\k)\}\oplus \check H^2_{\alpha_+}.
\end{equation}
\end{lemma}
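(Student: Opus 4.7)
The plan is to reduce the claim to a one-line Hardy-space identity inside $\hat H^2_{\alpha_-}$ by exploiting the $+$-duality, then push it back to $L^2_{\alpha_+}$ and dualize.

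First, I would verify that multiplication by $v$ commutes with the $\pm$-mappings. Apply Lemma \ref{l1.4} with $w=v$: using \eqref{blfackappa} a direct calculation gives $v_* = 1/v$, so $ww_*=1$, and the regularity requirement $v(\z_k),v_*(\z_k)\notin\{0,\infty\}$ for $\z_k\in\cZ$ follows from $\z_k\in\bbR$ while $\k,\bar\k\notin\bbR$. The commutative diagram \eqref{18j31} then says precisely that the $\pm$-mappings intertwine multiplication by $v$ on $L^2_{\alpha_+}$ with multiplication by $v$ on $L^2_{\alpha_-}$.

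Next I would work entirely inside $\hat H^2_{\alpha_-}$. For $\xi\in\{\k,\bar\k\}$, multiplication by the Blaschke factor $b_\xi$ is an isometry $\hat H^2_{\alpha_-}\to\hat H^2_{\alpha_-}$ (it preserves the condition $BT_-f\in H^2$), and
\[
\hat H^2_{\alpha_-}\ominus b_\xi\hat H^2_{\alpha_-}=\{\hat k_{\alpha_-}(\cdot,\xi)\},
\]
the one-dimensional span of the reproducing kernel at $\xi$ (this uses the standard reproducing-kernel argument $\langle\hat k_{\alpha_-}(\cdot,\xi),b_\xi h\rangle=(b_\xi h)(\xi)=0$ together with the fact that $b_\xi$ has a simple zero at $\xi$). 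Since $v=b_\k b_{\bar\k}^{-1}$, I obtain the key identity
\[
v\bigl(\hat H^2_{\alpha_-}\ominus\{\hat k_{\alpha_-}(\cdot,\bar\k)\}\bigr)
=v\,b_{\bar\k}\hat H^2_{\alpha_-}=b_\k\hat H^2_{\alpha_-}
=\hat H^2_{\alpha_-}\ominus\{\hat k_{\alpha_-}(\cdot,\k)\}.
\]

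Now apply the $+$-mapping, which by Step 1 commutes with multiplication by $v$, and use the notation $\hat k^+_{\alpha_-}(\cdot,\xi):=[\hat k_{\alpha_-}(\cdot,\xi)]^+$:
\[
v\bigl((\hat H^2_{\alpha_-})^+\ominus\{\hat k^+_{\alpha_-}(\cdot,\bar\k)\}\bigr)
=(\hat H^2_{\alpha_-})^+\ominus\{\hat k^+_{\alpha_-}(\cdot,\k)\}.
\]
By Theorem \ref{t1.3}, $L^2_{\alpha_+}=(\hat H^2_{\alpha_-})^+\oplus\check H^2_{\alpha_+}$, so the orthogonal complements in $L^2_{\alpha_+}$ of the two sides are exactly $\{\hat k^+_{\alpha_-}(\cdot,\bar\k)\}\oplus\check H^2_{\alpha_+}$ and $\{\hat k^+_{\alpha_-}(\cdot,\k)\}\oplus\check H^2_{\alpha_+}$. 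Since multiplication by $\bar v=v^{-1}$ is unitary on $L^2_{\alpha_+}$ and swaps orthogonal complements, the displayed identity rewrites as
\[
\bar v\bigl(\{\hat k^+_{\alpha_-}(\cdot,\k)\}\oplus\check H^2_{\alpha_+}\bigr)
=\{\hat k^+_{\alpha_-}(\cdot,\bar\k)\}\oplus\check H^2_{\alpha_+},
\]
which is the lemma. The only step requiring care is the intertwining in Step 1, but it is handled wholesale by Lemma \ref{l1.4}; everything else is a routine application of the $b_\xi H^2$-orthogonal decomposition.
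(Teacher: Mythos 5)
Your argument is in substance the paper's own proof: the paper likewise observes that multiplication by $\bar v=b_{\bar\k}/b_{\k}$ carries $\{f\in\hat H^2_{\alpha_-}:f(\k)=0\}$ onto $\{f\in\hat H^2_{\alpha_-}:f(\bar\k)=0\}$ and then passes to orthogonal complements in $L^2_{\alpha_+}$ via $L^2_{\alpha_+}=(\hat H^2_{\alpha_-})^+\oplus\check H^2_{\alpha_+}$; your Step 1 (Lemma \ref{l1.4} with $w=v$ and $vv_*=1$) merely makes explicit the intertwining that the paper leaves tacit. One point needs correcting: multiplication by $b_\xi$ is \emph{not} an isometry of $\hat H^2_{\alpha_-}$ into itself. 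It preserves the analyticity condition $BT_-f\in H^2$, as you note, but it changes the metric, because on $\bbT$ the cross term in \eqref{7.9d} picks up the factor $\overline{b_\xi(\bar\t)}\,b_\xi(\t)=b_{\k}(\t)b_{\bar\k}(\t)$ (and the point masses pick up $|b_\xi(\z_k)|^2=b_{\k}(\z_k)b_{\bar\k}(\z_k)$), so $b_\xi$ is a unitary from $L^2_{\alpha_-^{1}}$ into $L^2_{\alpha_-}$, with the shifted data of \eqref{shift}; in particular $\|\cdot\|_{\alpha_-}<\infty$ does not by itself give $\|b_\xi\cdot\|_{\alpha_-}<\infty$, since the form $\begin{bmatrix}1&\bar R\\ R&1\end{bmatrix}$ degenerates where $|R|=1$. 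The correct identity, and the one the paper writes, is $\{f\in\hat H^2_{\alpha_-}:f(\xi)=0\}=b_\xi\hat H^2_{\alpha_-^{1}}$, and your key display should read $v\,b_{\bar\k}\hat H^2_{\alpha_-^{1}}=b_{\k}\hat H^2_{\alpha_-^{1}}$; with that replacement (both sides are still exactly the zero sets at $\bar\k$ and $\k$ inside $\hat H^2_{\alpha_-}$) the rest of your argument goes through unchanged.
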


\begin{proof}
It is evident that the multiplication by $\bar v=\frac{b_{\bar\k}}
{b_{\k}}$ acts from
\begin{equation*}
\{f\in \hat H^2_{\alpha_-}: f(\k)=0\}= b_{\k}\hat H^2_{\alpha_-^1}
\end{equation*}
to
\begin{equation*}
\{f\in \hat H^2_{\alpha_-}: f(\bar\k)=0\} = b_{\bar\k}\hat
H^2_{\alpha_-^1}.
\end{equation*}
Therefore it acts in their orthogonal complements \eqref{5.16d},
\eqref{6.16d}.

\end{proof}

Recall definition of the characteristic function of a unitary node
and its functional model. Let $U$ be a unitary operator acting from
$K\oplus E_1$ to $K\oplus E_2$. We assume that the Hilbert spaces
 $E_1$ and $E_2$ are finite--dimensional
 (actually, in this section we need $\dim E_1=\dim E_2=1$).
 The characteristic function is defined by
\begin{equation}\label{7.16d}
\Theta(w):=P_{E_2}U(I_{K\oplus E_1}-w P_K U)^{-1}|E_1.
\end{equation}
It is a holomorphic in the unit disk contractive--valued operator
function. We make a specific assumption that $\Theta(w)$ has an
analytic continuation in the exterior of the unite disk through a
certain arc $(a,b)\subset\bbT$ due to the symmetry principle:
$$
\Theta(w)=\Theta^*\left(\frac 1{\bar w}\right)^{-1}.
$$

For $f\in K$ define
\begin{equation}\label{2.5f4}
F(w):=P_{E_2}U(I-w P_K U)^{-1}f.
\end{equation}
This $E_2$--valued holomorphic vector function belongs to the
functional space $K_{\Theta}$ with the following properties.

\begin{itemize}
\item $F(w)\in H^2(E_2)$, moreover it has analytic continuation
through the arc $(a,b)$.
\item $F_*(w):=\Theta^*(w)F\left(\frac 1{\bar w}\right)
\in H^2_-(E_1)$.
\item For almost every $w\in \bbT$ the vector
$\begin{bmatrix}F_*\\ F\end{bmatrix}(w)$ belongs to the image of the
operator $\begin{bmatrix} I&\Theta^*\\ \Theta & I\end{bmatrix}(w)$,
and therefore the scalar product
$$
\left\langle
\begin{bmatrix} I&\Theta^*\\ \Theta &
I\end{bmatrix}^{[-1]}\begin{bmatrix}F_*\\ F\end{bmatrix},
\begin{bmatrix}F_*\\ F\end{bmatrix}
\right\rangle_{E_1\oplus E_2}
$$
has sense and does not depend of the choice of a preimage (the first
term in the above scalar product). Moreover
\begin{equation}\label{2.6f4}
\int_{\bbT} \left\langle
\begin{bmatrix} I&\Theta^*\\ \Theta &
I\end{bmatrix}^{[-1]}\begin{bmatrix}F_*\\ F\end{bmatrix},
\begin{bmatrix}F_*\\ F\end{bmatrix}
\right\rangle_{E_1\oplus E_2} dm<\infty.
\end{equation}
\end{itemize}
The integral in \eqref{2.6f4} represents the square of the norm of
$F$ in $K_{\Theta}$.

Note that in the model space $P_K U\vert K$ became a certain
"standard" operator
\begin{equation}\label{2.11f23}
f\mapsto F(w)\quad\Longrightarrow\quad P_K
Uf\mapsto\frac{F(w)-F(0)}{w},
\end{equation}
see \eqref{2.5f4}.

The following simple identity is a convenient tool in the
forthcoming calculation.
\begin{lemma} For a unitary $U:K\oplus E_1\to
K\oplus E_2$
\begin{equation}\label{1.17d}
U^*P_{E_2}U(I-w P_K U)^{-1}= I+(w-U^*)P_{K}U(I-w P_K U)^{-1}.
\end{equation}
\end{lemma}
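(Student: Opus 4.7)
The identity is purely algebraic: it follows from the two unitarity relations $U^{*}U=I_{K\oplus E_{1}}$ and $P_{K}+P_{E_{2}}=I_{K\oplus E_{2}}$. The plan is to rewrite the left-hand side so that the resolvent $(I-wP_{K}U)^{-1}$ appears naturally on the right, then split off an identity term. The operator $(I-wP_{K}U)^{-1}$ is well defined for $|w|<1$ since $\|P_{K}U\|\le 1$, so no convergence issue arises.

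First I would apply $P_{K}+P_{E_{2}}=I_{K\oplus E_{2}}$ to get
\begin{equation*}
U^{*}P_{E_{2}}U = U^{*}(I_{K\oplus E_{2}}-P_{K})U = I_{K\oplus E_{1}}-U^{*}P_{K}U,
\end{equation*}
where in the last step I used $U^{*}U=I_{K\oplus E_{1}}$. Hence the left-hand side of \eqref{1.17d} equals $(I-U^{*}P_{K}U)(I-wP_{K}U)^{-1}$.

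Next I would rewrite the factor $I-U^{*}P_{K}U$ by adding and subtracting $wP_{K}U$:
\begin{equation*}
I-U^{*}P_{K}U = (I-wP_{K}U) + (w-U^{*})P_{K}U.
\end{equation*}
Multiplying on the right by $(I-wP_{K}U)^{-1}$ yields exactly the right-hand side of \eqref{1.17d}, finishing the proof.

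There is essentially no obstacle here; the statement is a ``resolvent identity'' tailored to the unitary node, and the work is confined to bookkeeping with the two orthogonal decompositions of the domain and codomain of $U$. The only mild point worth noting is that both sides are understood as operators from $K\oplus E_{1}$ to $K\oplus E_{1}$ (the projection $P_{E_{2}}$ is applied in $K\oplus E_{2}$ and then $U^{*}$ brings the result back), which is consistent with the codomain of $(I-wP_{K}U)^{-1}$.
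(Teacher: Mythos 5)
Your proof is correct and follows the same route as the paper: both use $P_K+P_{E_2}=I_{K\oplus E_2}$ together with $U^*U=I$ to write $U^*P_{E_2}U=(I-wP_KU)+(w-U^*)P_KU$ and then multiply on the right by the resolvent. The only difference is that you spell out the intermediate step $U^*P_{E_2}U=I-U^*P_KU$ explicitly, which the paper leaves implicit.
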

\begin{proof} Since $I_{K\oplus E_2}=P_K+P_{E_2}$
and $U$ is unitary we have
\begin{equation*}
U^*P_{E_2}U= (I-w P_K U)+(w-U^*)P_{K}U.
\end{equation*}
Then we multiply this identity by $(I-w P_K U)^{-1}$.
\end{proof}

\begin{theorem}
Let $e_1$, $e_2$ be the normalized vectors of the one-dimensional
spaces \eqref{5.16d} and \eqref{6.16d}
\begin{equation}\label{1.20d}
\begin{split}
e_1(\z)=&\frac{1}{b_{\bar\k}}\frac{\check
k_{\alpha_+^{-1}}(\z,\bar\k)}{\sqrt{\check
k_{\alpha_+^{-1}}(\bar\k,\bar\k)}}
=-i\frac{T_+(\bar\k)}{|T_+(\bar\k)|}\frac{\hat
k_{\alpha_-}^+(\z,\k)}{\sqrt{\hat
k_{\alpha_-}(\k,\k)}},\\
e_2(\z)=&\frac{1}{b_{\k}}\frac{\check
k_{\alpha_+^{-1}}(\z,\k)}{\sqrt{\check k_{\alpha_+^{-1}}(\k,\k)}}=
i\frac{T_+(\k)}{|T_+(\k)|}\frac{ \hat k^+_{\alpha_-}(\z,\bar\k)}{
\sqrt{\hat k_{\alpha_-}(\bar\k,\bar\k)}}.
\end{split}
\end{equation}
Then the reproducing kernel of $\check H^2_{\alpha_+}$ is of the
form
\begin{equation}\label{2.13f6}
\check k_{\alpha_+}(\z,\z_0)= \frac{(v e_2)(\z)\overline{(v
e_2)(\z_0)} -e_1(\z)\overline{e_1(\z_0)}}{1- v (\z)
\overline{v(\z_0)}}.
\end{equation}
\end{theorem}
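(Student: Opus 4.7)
The plan is to verify \eqref{2.13f6} directly by exploiting the unitary colligation of Lemma \ref{l2.2f5}: multiplication by $\bar v$ is a unitary from $\{e_1\}\oplus K$ onto $\{e_2\}\oplus K$, where $K:=\check H^2_{\alpha_+}$. Write $k(\z,\z_0)$ for the right-hand side of \eqref{2.13f6}, and observe the algebraic identities
\begin{equation*}
F_2(\z,\z_0) := k(\z,\z_0) + e_2(\z)\overline{e_2(\z_0)} = \frac{e_2(\z)\overline{e_2(\z_0)} - e_1(\z)\overline{e_1(\z_0)}}{1 - v(\z)\overline{v(\z_0)}},
\end{equation*}
and $F_1(\z,\z_0) := k(\z,\z_0) + e_1(\z)\overline{e_1(\z_0)} = v(\z)\overline{v(\z_0)}F_2(\z,\z_0)$. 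On $\bbT\cup\cZ$, where $|v|=1$, this yields the intertwining $\bar v F_1 = \overline{v(\z_0)}F_2$.

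The first substantive step is the membership claim $F_2(\cdot,\z_0) \in \{e_2\}\oplus K$, which gives $k(\cdot,\z_0)\in K$. By Lemma \ref{l2.2f5} and the intertwining above, this is equivalent to $F_1(\cdot,\z_0)\in\{e_1\}\oplus K$. Using the decomposition \eqref{2.16d} together with the description of $e_2$ in \eqref{1.20d} as the normalized reproducing kernel of $\hat H^2_{\alpha_-}$ at $\bar\k$ transported through the $+$-map, the membership reduces to checking that the numerator $e_2(\z)\overline{e_2(\z_0)} - e_1(\z)\overline{e_1(\z_0)}$ cancels all zeros of the denominator $1-v(\z)\overline{v(\z_0)}$ that lie inside $\bbD$. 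This cancellation is encoded by the explicit formulas \eqref{1.20d}; I expect its verification to be the main technical obstacle.

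To verify the reproducing property, take $f\in K$; orthogonality $f\perp e_2$ and the intertwining give
\begin{equation*}
\langle f, k(\cdot,\z_0)\rangle_{\alpha_+} = \langle f, F_2\rangle_{\alpha_+} = \frac{1}{v(\z_0)}\langle vf, F_1\rangle_{\alpha_+}.
\end{equation*}
Decompose $vf = T^*f + \lambda e_1$ in $\{e_1\}\oplus K$, with $T^* := P_K(v\,\cdot)|_K$ and $\lambda := \langle vf, e_1\rangle_{\alpha_+}$. Extending this identity pointwise to $\bbD$ by analytic continuation yields $v(\z_0)f(\z_0) = (T^*f)(\z_0) + \lambda e_1(\z_0)$. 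Expanding $F_1 = k(\cdot,\z_0) + e_1\overline{e_1(\z_0)}$ and using $T^*f\perp e_1$ produces the recursion
\begin{equation*}
v(\z_0)\bigl[\langle f, k(\cdot,\z_0)\rangle_{\alpha_+} - f(\z_0)\bigr] = \langle T^*f, k(\cdot,\z_0)\rangle_{\alpha_+} - (T^*f)(\z_0).
\end{equation*}
Setting $\Phi(f):=\langle f, k(\cdot,\z_0)\rangle_{\alpha_+} - f(\z_0)$, a bounded linear functional on $K$, this reads $v(\z_0)\Phi(f) = \Phi(T^*f)$. Iterating gives $v(\z_0)^n\Phi(f) = \Phi(T^{*n}f)$, and since $\|T^*\|\le 1$ the right-hand side remains bounded by $\|\Phi\|\|f\|$. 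Hence $\Phi(f)=0$ on the nonempty open set $\{\z_0\in\bbD : |v(\z_0)|>1\}$ (a neighborhood of $\bar\k$), and by holomorphy of $\Phi(f)$ in $\z_0$ we conclude $\Phi(f)\equiv 0$ on $\bbD\setminus\{\k,\bar\k\}$, completing the verification.
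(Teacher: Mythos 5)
Your overall strategy is sound and is in essence a ``hands-on'' unwinding of what the paper does abstractly: the paper identifies the unitary node $\bar v:\{e_1\}\oplus \check H^2_{\alpha_+}\to\{e_2\}\oplus \check H^2_{\alpha_+}$ of Lemma \ref{l2.2f5} with its functional model via the characteristic function $\theta(v(\z))=e_1(\z)/(v(\z)e_2(\z))$ and the map $f\mapsto F$, $F(v(\z))=f(\z)/(v(\z)e_2(\z))$ (formulas \eqref{2.16f8}, \eqref{2.18f8}, both coming from the identity \eqref{1.17d}), and then reads \eqref{2.13f6} off the Darboux identity \eqref{rc13}. Your algebraic identities for $F_1,F_2$ and the recursion $v(\z_0)\Phi(f)=\Phi(T^*f)$, with vanishing first on $\{|v(\z_0)|>1\}$ and then by analytic continuation in $\z_0$, are correct and form a legitimate elementary substitute for the Darboux identity.

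However, there is a genuine gap, and it sits exactly where you flag ``the main technical obstacle'': the membership $k(\cdot,\z_0)\in\check H^2_{\alpha_+}$ is never established, and your reproducing-property computation actually uses it (you need $\langle e_1,k(\cdot,\z_0)\rangle_{\alpha_+}=0$ when expanding $\langle vf,F_1\rangle_{\alpha_+}$, and $e_1$ is \emph{not} orthogonal to $e_2$, so membership in $\{e_2\}\oplus\check H^2_{\alpha_+}$ alone does not suffice). Moreover the implication you assert --- ``$F_2(\cdot,\z_0)\in\{e_2\}\oplus K$ gives $k(\cdot,\z_0)\in K$'' --- is false as stated: from $k=F_2-e_2\overline{e_2(\z_0)}$ you only get $k\in\{e_2\}\oplus K$, and you would additionally need $\langle F_2(\cdot,\z_0),e_2\rangle_{\alpha_+}=\overline{e_2(\z_0)}$, which is a separate nontrivial identity you do not address. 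The cancellation of the interior zero of $1-v(\z)\overline{v(\z_0)}$ (at the unique $\z^*\in\bbD$ with $v(\z^*)=1/\overline{v(\z_0)}$) amounts to the symmetry $\theta(1/\bar w)=1/\overline{\theta(w)}$ of the characteristic function, and the regularity at $\bar\k$ requires matching the poles of $ve_2$, $e_1$ and $v$ there; none of this is free. This is precisely what the paper's model-theoretic route buys: the candidate kernel arises as $P_K(I-\overline{v(\z_0)}U^*P_K)^{-1}U^*e_2\,\overline{v(\z_0)e_2(\z_0)}$, which is manifestly in $K$ because of the leading projection, so membership and the reproducing property come simultaneously from \eqref{2.5f4} and the Darboux identity. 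To close your argument you would have to supply the cancellation/orthogonality verifications, at which point you would essentially be re-deriving the model identification.
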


\begin{proof} First,
we are going to find the characteristic function of the
multiplication operator by $\bar v$ with respect to decompositions
\eqref{5.16d} and \eqref{6.16d} and the corresponding functional
representation of this node.

By \eqref{1.20d} we fixed `basises' in the one-dimensional spaces.
So, instead of the operator we get the matrix, in fact the scalar
function $\theta(w)$:
\begin{equation}\label{3.20d}
\Theta(w) e_1:= P_{E_2}U (I-w P_K U)^{-1}e_1 =e_2\theta(w).
\end{equation}
Let us substitute \eqref{3.20d} into \eqref{1.17d}
\begin{equation}\label{2.14f5}
v(\z)e_2(\z) \theta(w) = e_1(\z) +(w-v(\z))(P_{K}U(I-w P_K
U)^{-1}e_1) (\z).
\end{equation}
Recall an important property of $\hat k_{\alpha_-}^+(\z,\k)$: it has
analytic continuation in the $\bbD$ with the only pole at $\bar\k$
(see Lemma \ref{l2.1f5}). Therefore all terms in \eqref{2.14f5} are
analytic in $\z$ and we can chose $\z$ such that $v(\z)=w$. Then we
obtain the characteristic function in terms of the reproducing
kernels
\begin{equation}\label{2.16f8}
\theta(v(\z)) =\frac {e_1(\z)}{v(\z)e_2(\z)}=\frac{\check
k_{\alpha_+^{-1}}(\z,\bar\k)}{\check k_{\alpha_+^{-1}}(\z,\k)}.
\end{equation}

Similarly for $f\in K=\check H^2_{\alpha_+}$ we define the scalar
function $F(w)$ by
\begin{equation}\label{2.17f8}
P_{E_2}U(I-w P_{K} U)^{-1}f =e_2 F(w).
\end{equation}
Using again \eqref{1.17d} we get
\begin{equation*}
v(\z)e_2(\z) F(w) = f(\z) +(w-v(\z))(P_{K}U(I-w P_K U)^{-1}f) (\z).
\end{equation*}
Therefore,
\begin{equation}\label{2.18f8}
F(v(\z)) =\frac {f(\z)}{v(\z)e_2(\z)}.
\end{equation}

Now we are in a position to get \eqref{2.13f6}. Indeed, by
\eqref{2.17f8} and \eqref{2.18f8} we proved that the vector
$$
P_{K}(I-\overline{v(\z_0)} U^*P_K)^{-1}U^* e_2
\overline{v(\z_0)e_2(\z_0)}
$$
is the reproducing kernel of $K=\check H^2_{\alpha_+}$ with respect
to $\z_0$, $|v(\z_0)|<1$. Using the Darboux identity
\begin{equation*}
P_{E_2}U(I-w P_{K} U)^{-1} P_{K}(I-{\bar w_0} U^*P_K)^{-1}U^*| E_2=
\frac{I-\Theta(w)\Theta^*(w_0)}{1-w\bar w_0}
\end{equation*}
(in the given setting it is a simple and pleasant  exercise) we
obtain
\begin{equation}\label{rc13}
\check k_{\alpha_+}(\z,\z_0)= v(\z)e_2(\z)
\frac{I-\theta(v(\z))\overline{\theta(v(\z_0))}}
{1-v(\z)\overline{v(\z_0)}} \overline{v(\z_0)e_2(\z_0)}
\end{equation}
for $|v(\z)|<1$, $|v(\z_0)|<1$. By \eqref{2.16f8} we have
\eqref{2.13f6} that, by analyticity, holds for all $|\z|<1$, $
|\z_0|<1$.
\end{proof}

\begin{corollary}
The following Wronskian--kind identity is satisfied for the
reproducing kernels
\begin{equation}\label{wif2}
\left|
\begin{matrix}
(T_- e_2^{-})(\z)&
 (T_- e_1^{-})(\z)\\
e_2(\z)& e_1(\z)
\end{matrix}\right |=
- (\log v(\z))',\ |\z|<1.
\end{equation}
\end{corollary}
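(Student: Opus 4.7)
The plan is to verify the identity first on $\bbT$ and then to extend it to $\bbD$ by meromorphic continuation.

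\emph{Step 1 (reduction on $\bbT$).} By the defining relation \eqref{4.10d} of the $-$-mapping, for $\t\in\bbT$ one has $(T_-e_j^-)(\t)=R_+(\t)e_j(\t)+\bar\t\,e_j(\bar\t)$, $j=1,2$. Substituting into the determinant, the $R_+$-contributions cancel identically, leaving
$$
(T_-e_2^-)(\t)\,e_1(\t)-(T_-e_1^-)(\t)\,e_2(\t)=\bar\t\bigl[e_2(\bar\t)\,e_1(\t)-e_1(\bar\t)\,e_2(\t)\bigr].
$$
This exhibits the boundary value of the LHS as a Wronskian-type bilinear expression in $e_1,e_2$.

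\emph{Step 2 (meromorphic structure on $\bbD$).} Both sides of \eqref{wif2} are meromorphic in $\bbD$ with simple poles only at $\k$ and $\bar\k$. Indeed, since $e_j\in(\hat H^2_{\alpha_-})^+$, Theorem \ref{t1.3} yields $e_j^-\in\hat H^2_{\alpha_-}$, so $BT_-e_j^-\in H^2$ and $T_-e_j^-$ is meromorphic in $\bbD$ with at worst simple poles on $\cZ$. Formula \eqref{5.10d} gives $(BT_-e_j^-)(\z_k)=-B'(\z_k)\nu_+(\z_k)e_j(\z_k)$, from which the residues at $\z_k\in\cZ$ of the determinant on the LHS cancel; the remaining simple poles at $\bar\k$ (from $e_1$) and $\k$ (from $e_2$) are inherited from the explicit formulas \eqref{1.20d}. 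On the right, $-(\log v(\z))'=\frac{1-|\k|^2}{(\z-\bar\k)(1-\z\k)}-\frac{1-|\k|^2}{(\z-\k)(1-\z\bar\k)}$ has exactly the same pole locations.

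\emph{Step 3 (residue matching and closure).} The residues of both sides at $\k$ and $\bar\k$ are matched using \eqref{2.1af1} from Lemma \ref{l2.1f5} applied both to $\alpha_+$ and, via the transfer of Lemma \ref{l1.4}, to $\alpha_+^{-1}$. The normalizations $\sqrt{\check k_{\alpha_+^{-1}}(\k,\k)}$ and $\sqrt{\check k_{\alpha_+^{-1}}(\bar\k,\bar\k)}$ appearing in \eqref{1.20d}, together with the values $T_\pm(\k),T_\pm(\bar\k)$ and the factors $(1-|\k|^2)$, combine to produce residues that agree with those of the explicit RHS. Once the residues match, the difference LHS$-$RHS is holomorphic in $\bbD$; its vanishing is obtained by reading Step 1 and invoking the already-established identity \eqref{wids} modulo a change of basis from $(e_1,e_2)$ to a pair of $L_\k(n,\cdot),L_{\bar\k}(n,\cdot)$ (afforded by \eqref{reprapeven}--\eqref{reprapodd}), whose determinant is a unimodular Blaschke factor that is $1$ on $\bbT$ and hence does not affect the Wronskian identity on the boundary.

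\emph{Main obstacle.} The chief difficulty lies in Step 3: identifying $e_1$ precisely relative to the orthonormal basis $\{e^+(n,\cdot)\}$ is nontrivial, since $e_1$ spans the one-dimensional space $\{\hat k^+_{\alpha_-}(\z,\k)\}$ in the orthogonal complement of $\check H^2_{\alpha_+}$ rather than being a single basis vector $e^+(n,\cdot)$. The identification has to be mediated by Lemma \ref{l1.4} (transfer between $L^2_{\alpha_+^{-1}}$ and $L^2_{\alpha_+}$) with careful tracking of Blaschke factors and of the unimodular phases $e^{ic_\pm}$ of \eqref{normob}, so that the Wronskian yields exactly $-(\log v)'$ with the correct sign.
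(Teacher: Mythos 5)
There is a genuine gap, and it sits exactly where you flag the ``main obstacle'': the closure of Step 3. Matching pole locations and residues of the two sides only shows that their difference is holomorphic in $\bbD$; it does not make it vanish. To kill the holomorphic discrepancy you appeal to \eqref{wids} ``already established'' together with an identification of $(e_1,e_2)$ with a pair $L_{\k}(n,\cdot),L_{\bar\k}(n,\cdot)$. But \eqref{wids} is only stated in the introduction and is proved later (Section~\ref{specshur}, via Theorem~\ref{reprth2} and the recurrences), so it is not available at this point of the paper; worse, the identification of $e_1,e_2$ with the correctly normalized $L$'s is precisely what the Wronskian normalization pins down, so invoking it here is circular. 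The same circularity appears if one tries to close via the boundary: your Step~1 together with the symmetry $\overline{e_2(\bar\t)}=e_1(\t)$ reduces the boundary value of the left-hand side to $\bar\t(|e_1(\t)|^2-|e_2(\t)|^2)$, and identifying this with $-(\log v)'(\t)$ is exactly \eqref{wif8} --- which the paper \emph{deduces from} \eqref{wif2}, not the other way around. Finally, the claim that the change of basis has determinant ``a unimodular Blaschke factor that is $1$ on $\bbT$'' is not right: a Blaschke factor is unimodular but not identically $1$ on $\bbT$, and a nonconstant unimodular multiplier would still distort the Wronskian.

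The paper's own argument avoids all of this by a direct two-sided evaluation valid at every interior point: it computes $\{b_{\z_0}(\z)\,\check k^-_{\alpha_+}(\z,\bar\z_0)\}_{\z=\z_0}$ once from the explicit formula \eqref{2.1f1} for the $-$-image of the reproducing kernel (giving $e^{ic}/\bigl(T_-(\z_0)(1-|\z_0|^2)\bigr)$, with the constant fixed by the reproducing property established in Lemma~\ref{l2.1f5}), and once from the Darboux-type representation \eqref{2.13f6} of $\check k_{\alpha_+}(\z,\z_0)$ in terms of $e_1,e_2$; equating the two and using $\overline{e_2(\bar\z_0)}=e_1(\z_0)$ yields \eqref{wif2} pointwise in $\bbD$, with no continuation from the boundary and no residue bookkeeping. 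If you want to salvage your approach, you would need to prove the boundary identity \eqref{wif8} independently first --- which in effect requires the same normalization computation of Lemma~\ref{l2.1f5} that drives the paper's proof.
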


\begin{proof}
We multiply $\check k^-_{\alpha_+}(\z,\bar \z_0)$ by $b_{\z_0}(\z)$
and calculate the resulting function of $\z$ at $\z=\z_0$. By
\eqref{2.1f1} we get
\begin{equation}\label{2.15f3}
\{b_{\z_0}(\z)\check k^-_{\alpha_+}(\z,\bar\z_0)\}_{\z=\z_0}
=e^{ic}\frac{1}{T_-(\z_0)(1-|\z_0|^2 )}.
\end{equation}
Now we make the same calculation but using  representation
\eqref{2.13f6}. We have
\begin{equation*}
\check k^-_{\alpha_+}(\z,\bar\z_0)\\
=\frac {-v(\z_0)} {v(\z)-v(\z_0)}
\left|
\begin{matrix}
v(\z) e_2^{-}(\z)&
 e_1^{-}(\z)\\
\overline{e_1(\bar\z_0)}& \overline{v(\bar\z_0)e_2(\bar\z_0)}
\end{matrix}\right |,
\end{equation*}
or, after multiplication by $b_{\z_0}(\z)$,
\begin{equation*}
\{b_{\z_0}(\z)\check k^-_{\alpha_+}(\z,\bar\z_0)\}_{\z=\z_0} =
e^{ic} \frac {-v(\z_0)} {v'(\z_0)(1-|\z_0|^2 )} \left|
\begin{matrix}
v(\z) e_2^{-}(\z)&
 e_1^{-}(\z)\\
\overline{e_1(\bar\z_0)}& \overline{v(\bar\z_0)e_2(\bar\z_0)}
\end{matrix}\right |.
\end{equation*}
In combination with \eqref{2.15f3}, we get
\begin{equation*}
 -\frac{v'(\z_0)}
{v(\z_0)T_-(\z_0)} = \left|
\begin{matrix}
v(\z_0) e_2^{-}(\z_0)&
 e_1^{-}(\z_0)\\
\overline{e_1(\bar\z_0)}& v^{-1}(\z_0)\overline{e_2(\bar\z_0)}
\end{matrix}\right |.
\end{equation*}

Due to the symmetry $\overline{\hat k_{\alpha_-}(\z,\z_0)} =\hat
k_{\alpha_-}(\bar\z,\bar\z_0)$, we have
$\overline{e_2(\bar\z_0)}=e_1(\z_0)$. Thus \eqref{wif2} is proved.
\end{proof}

\begin{corollary}
Let $\t\in \bbT$, then
\begin{equation}\label{wif8}
|e_2(\t)|^2-
 |e_1(\t)|^2=\frac
{d\log v(\t)}{d\log\t}.
\end{equation}
\end{corollary}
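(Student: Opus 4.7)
The plan is to take the identity \eqref{wif2} to the boundary $\t\in\bbT$ and simplify the two entries of the top row, using the pointwise formula for the $-$--duality given by \eqref{4.10d}.

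First, by \eqref{4.10d}, for any $f^+\in L^2_{\alpha_+}$ we have, for $\t\in\bbT$,
\begin{equation*}
(T_-f^-)(\t)=R_+(\t)f^+(\t)+\bar\t\,f^+(\bar\t).
\end{equation*}
Applying this with $f^+=e_1$ and $f^+=e_2$, the top row of the determinant in \eqref{wif2} on $\bbT$ reads
$(R_+(\t)e_j(\t)+\bar\t e_j(\bar\t))$ for $j=2,1$. Substituting into the $2\times 2$ determinant, the contribution of the $R_+$ terms cancels, leaving
\begin{equation*}
\left|\begin{matrix}(T_-e_2^-)(\t)&(T_-e_1^-)(\t)\\ e_2(\t)&e_1(\t)\end{matrix}\right|
=\bar\t\bigl[e_2(\bar\t)e_1(\t)-e_1(\bar\t)e_2(\t)\bigr].
\end{equation*}

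Next I would invoke the symmetry $\overline{\hat k_{\alpha_-}(\z,\z_0)}=\hat k_{\alpha_-}(\bar\z,\bar\z_0)$ (and the analogous property for $\check k_{\alpha_+^{-1}}$), which was already used in the proof of \eqref{wif2} to conclude $\overline{e_2(\bar\z_0)}=e_1(\z_0)$. On the unit circle this gives $e_2(\bar\t)=\overline{e_1(\t)}$ and $e_1(\bar\t)=\overline{e_2(\t)}$, so the bracket above becomes $|e_1(\t)|^2-|e_2(\t)|^2$.

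Finally, the right--hand side of \eqref{wif2} at $\z=\t\in\bbT$ is
\begin{equation*}
-(\log v(\t))'=-\frac{1}{\t}\,\frac{d\log v(\t)}{d\log\t}.
\end{equation*}
Combining these two evaluations and multiplying by $\t=1/\bar\t$ we obtain
\begin{equation*}
|e_1(\t)|^2-|e_2(\t)|^2=-\frac{d\log v(\t)}{d\log\t},
\end{equation*}
which is \eqref{wif8}. The only delicate point is the justification of the boundary specialization of \eqref{wif2}: one needs the nontangential limits of the entries in the determinant to exist and to satisfy the claimed symmetry, which follows from the fact that $\check k_{\alpha_+^{-1}}(\cdot,\k)$ and $\hat k_{\alpha_-}^+(\cdot,\k)$ enjoy the standard $H^2$--boundary behavior established in Lemma~\ref{l2.1f5} and the discussion of $\check H^2_{\alpha_+},\hat H^2_{\alpha_-}$; modulo this standard boundary argument the computation is purely algebraic.
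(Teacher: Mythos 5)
Your proposal is correct and follows essentially the same route as the paper: take \eqref{wif2} to the boundary, replace $(T_-e_{1,2}^-)(\t)$ by $R_+(\t)e_{1,2}(\t)+\bar\t e_{1,2}(\bar\t)$ so that the $R_+$ terms cancel in the determinant, and use the kernel symmetry $\overline{e_2(\bar\z)}=e_1(\z)$. You have simply written out in full the algebra that the paper's two-line proof leaves implicit, including the bookkeeping of the factor $\bar\t$ relating $(\log v)'$ to $d\log v/d\log\t$.
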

\begin{proof} All terms of \eqref{wif2} have boundary values.
Recall that
$$
(T_- e_{1,2}^-)(\t)=(R_+ e_{1,2})(\t)+\bar\t
e_{1,2}(\bar\t),\quad\t\in \bbT.
$$
Then use again the symmetry of the reproducing kernel.
\end{proof}

\section{A recurrence relation for reproducing kernels and the Schur parameters}

Let
\begin{equation}\label{rknorm}
    K_{\alpha}(\zeta,\zeta_0):=\frac{k_{\alpha}(\zeta,\zeta_0)}{\sqrt{k_{\alpha}(\zeta_0,\zeta_0)}},
\end{equation}
where $k_{\alpha}(\zeta,\zeta_0)$ denotes one of reproducing kernels
$\hat k_{\alpha_\pm}(\zeta,\zeta_0)$ or $\check
k_{\alpha_\pm}(\zeta,\zeta_0)$.

\begin{theorem}
Both systems
$$
\{K_{\alpha}(\zeta,\k),
b_{\k}(\zeta)K_{\alpha^{1}}(\zeta,\overline{\k})\}
$$
and
$$
\{K_{\alpha}(\zeta,\overline{\k}),
b_{\bar\k}(\zeta)K_{\alpha^{1}}(\zeta,{\k})\}
$$
form an orthonormal basis in the two dimensional space spanned by
$K_{\alpha}(\zeta,\k)$ and $K_{\alpha}(\zeta,\overline{\k})$.
Moreover
\begin{equation}\label{f2}
\begin{matrix}
K_{\alpha}(\zeta,\overline{\k})=& a(\alpha)K_{\alpha}(\zeta,{\k})
+\rho(\alpha) b_{\k}(\zeta)K_{\alpha^{1}}(\zeta,\overline{\k}),
\\
K_{\alpha}(\zeta,\k)=&\overline{a(\alpha)}
K_{\alpha}(\zeta,\overline{\k}) +\rho(\alpha)
b_{\bar\k}(\zeta)K_{\alpha^{1}}(\zeta,{\k}),
\end{matrix}
\end{equation}
where
\begin{equation}\label{def}
a(\alpha)=a=\frac{K_{\alpha}(\k,\overline{\k})}
{K_{\alpha}(\k,\k)},\quad \rho(\alpha)=\rho=\sqrt{1-|a|^2}.
\end{equation}
\end{theorem}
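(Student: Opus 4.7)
The plan is to identify a common two-dimensional subspace of $H^2_{\alpha}$ that contains all four vectors, deduce the orthonormal basis statements from the shift structure, and then read off the coefficients in \eqref{f2} via the reproducing property.

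Throughout I would work in $H^2_{\alpha}$ understood as either $\check H^2_{\alpha}$ or $\hat H^2_{\alpha}$; the argument is uniform. By Lemma \ref{l1.4} applied with $w=b_\k$ (so that $w_*=b_{\bar\k}$ and $ww_*=b_\k b_{\bar\k}$), multiplication by $b_\k$ is an isometry from $H^2_{\alpha^{1}}$ onto the subspace $\{f\in H^2_{\alpha}:f(\k)=0\}$, and analogously for $b_{\bar\k}$. Iterating once more, the functions in $H^2_{\alpha}$ that vanish at both $\k$ and $\bar\k$ form $b_\k b_{\bar\k}H^2_{\alpha^{2}}$. Hence
\[
\mathcal N:=H^2_{\alpha}\ominus b_\k b_{\bar\k}H^2_{\alpha^{2}}
\]
is two-dimensional and coincides with the linear span of $K_{\alpha}(\cdot,\k)$ and $K_{\alpha}(\cdot,\bar\k)$.

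Next I would verify that $b_\k K_{\alpha^{1}}(\cdot,\bar\k)$ lies in $\mathcal N$, has unit norm, and is orthogonal to $K_{\alpha}(\cdot,\k)$. The first two are immediate: the norm is preserved by the $b_\k$-isometry, and $b_\k K_{\alpha^{1}}(\cdot,\bar\k)$ vanishes at $\k$, hence is orthogonal to $K_{\alpha}(\cdot,\k)$. Moreover, since $K_{\alpha^{1}}(\cdot,\bar\k)$ is orthogonal in $H^2_{\alpha^{1}}$ to $b_{\bar\k}H^2_{\alpha^{2}}$, its $b_\k$-image is orthogonal in $H^2_{\alpha}$ to $b_\k b_{\bar\k}H^2_{\alpha^{2}}$ and thus sits in $\mathcal N$. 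Swapping the roles of $\k$ and $\bar\k$ handles the second pair. Both systems are therefore orthonormal bases of $\mathcal N$.

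Finally, to derive \eqref{f2} I expand $K_{\alpha}(\z,\bar\k)\in\mathcal N$ in the first basis,
\[
K_{\alpha}(\z,\bar\k)=a\,K_{\alpha}(\z,\k)+c\,b_\k(\z)K_{\alpha^{1}}(\z,\bar\k),
\]
and evaluate at $\z=\k$, where the second term drops out; this yields $a=K_{\alpha}(\k,\bar\k)/K_{\alpha}(\k,\k)=a(\alpha)$, matching \eqref{def}. Unit-norm of $K_{\alpha}(\cdot,\bar\k)$ forces $|c|=\rho(\alpha)$. To fix the phase I would evaluate the same identity at $\z=\bar\k$ and use the Hermitian symmetry $\overline{k_\alpha(\z,\z_0)}=k_\alpha(\bar\z,\bar\z_0)$ (which gives $K_{\alpha}(\bar\k,\bar\k)=K_{\alpha}(\k,\k)$) together with the normalizations $b_\k(\bar\k)>0$ and $K_{\alpha^{1}}(\bar\k,\bar\k)>0$: these force $c$ to be a positive real, hence $c=\rho(\alpha)$. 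The second equation of \eqref{f2} follows by symmetry ($\k\leftrightarrow\bar\k$), the coefficient becoming $\overline{a(\alpha)}$ by the same Hermitian symmetry of the kernel. The main technical obstacle is the identification $\{f\in H^2_{\alpha}:f(\k)=0\}=b_\k H^2_{\alpha^{1}}$: Lemma \ref{l1.4} supplies the isometric direction, but the reverse inclusion---that division by $b_\k$ preserves membership in $H^2_{\alpha^{1}}$---must be checked separately for the two Hardy conventions, invoking the closure definition of $\check H^2$ and the $BT_+$-quotient definition of $\hat H^2$ (in each case hinging on the fact that $B$ and $T_+$ do not vanish at $\k$).
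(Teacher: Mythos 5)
Your proposal is correct and follows essentially the same route as the paper: the paper dismisses the first (orthonormal basis) claim as evident, then expands $K_{\alpha}(\zeta,\bar\k)$ in the first basis, evaluates at $\zeta=\k$ to get the coefficient $a(\alpha)$, uses orthonormality to get $|c|=\rho(\alpha)$, and fixes the sign by evaluating at $\zeta=\bar\k$ with the normalizations $b_{\k}(\bar\k)>0$ and positivity of the diagonal kernel values — exactly your argument. Your extra elaboration of the "evident" part (identifying the two-dimensional space as $H^2_{\alpha}\ominus b_{\k}b_{\bar\k}H^2_{\alpha^{2}}$ via the isometries of Lemma \ref{l1.4}) is a sound filling-in of what the paper leaves implicit, not a different method.
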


\begin{proof}
The first claim is evident, therefore
$$
K_{\alpha}(\zeta,\overline{\k})= c_1 K_{\alpha}(\zeta,{\k}) +c_2
b_{\k}(\zeta)K_{\alpha^{1}}(\zeta,\overline{\k}).
$$
Putting $\zeta=\k$ we get $c_1=a$. Due to orthogonality we have
$$
1=|a|^2+|c_2|^2.
$$
Now, put $\zeta=\overline{\k}$. Taking into account that
$K_{\alpha}(\k,\overline{\k})
=\overline{K_{\alpha}(\overline{\k},\k)}$ and that by normalization
$b_{\k}(\overline{\k})>0$ we prove that $c_2$ being positive is
equal to $\sqrt{1-|a|^2}$. Note that simultaneously we proved that
$$
\rho(\alpha)=b_{\k}(\overline{\k})
\frac{K_{\alpha^{1}}(\overline{\k},\overline{\k})}
{K_{\alpha}(\overline{\k},\overline{\k})}.
$$
\end{proof}

\begin{corollary} A recurrence relation for reproducing kernels
generated by the shift  of the scattering data  is of the form
\begin{equation}\label{rr}
\begin{split}
b_{\k}({\zeta})&
\begin{bmatrix}
K_{\alpha^{1}}(\zeta,{\k}), &-K_{\alpha^{1}}(\zeta,\overline{\k})
\end{bmatrix}
\\
=&
\begin{bmatrix}
K_{\alpha}(\zeta,{\k}), &-K_{\alpha}(\zeta,\overline{\k})
\end{bmatrix}
\frac 1\rho\begin{bmatrix}
1&a\\
\bar a&1
\end{bmatrix}
\begin{bmatrix}
v&0\\
0&1
\end{bmatrix}.
\end{split}
\end{equation}
\end{corollary}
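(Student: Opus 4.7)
The plan is to derive the corollary by pure algebraic manipulation of the two recurrence identities in \eqref{f2}, using the elementary Blaschke identity $v(\zeta)=b_{\k}(\zeta)/b_{\bar\k}(\zeta)$ from \eqref{blfackappa} to reconcile the two factors $b_{\k}$ and $b_{\bar\k}$ that appear naturally on the left-hand sides.

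First I would solve each identity in \eqref{f2} for the term involving $K_{\alpha^1}$. Rearranging the first equation gives
\begin{equation*}
-b_{\k}(\zeta)K_{\alpha^{1}}(\zeta,\bar\k)=\frac{1}{\rho}\Bigl(a\,K_{\alpha}(\zeta,\k)-K_{\alpha}(\zeta,\bar\k)\Bigr)
=\begin{bmatrix} K_{\alpha}(\zeta,\k), &-K_{\alpha}(\zeta,\bar\k)\end{bmatrix}\frac{1}{\rho}\begin{bmatrix} a\\ 1\end{bmatrix},
\end{equation*}
which will be the second column of the claimed matrix identity. Similarly, the second equation in \eqref{f2} yields
\begin{equation*}
b_{\bar\k}(\zeta)K_{\alpha^{1}}(\zeta,\k)=\frac{1}{\rho}\Bigl(K_{\alpha}(\zeta,\k)-\bar a\,K_{\alpha}(\zeta,\bar\k)\Bigr).
\end{equation*}

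The key step is to convert the factor $b_{\bar\k}$ on the left-hand side into $b_{\k}$, so that the two columns match. Using $v=b_{\k}/b_{\bar\k}$, multiply through by $v(\zeta)$ to obtain
\begin{equation*}
b_{\k}(\zeta)K_{\alpha^{1}}(\zeta,\k)=\frac{v(\zeta)}{\rho}\Bigl(K_{\alpha}(\zeta,\k)-\bar a\,K_{\alpha}(\zeta,\bar\k)\Bigr)
=\begin{bmatrix} K_{\alpha}(\zeta,\k), &-K_{\alpha}(\zeta,\bar\k)\end{bmatrix}\frac{1}{\rho}\begin{bmatrix} v\\ \bar a v\end{bmatrix}.
\end{equation*}

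Assembling these two columns into a single matrix equation yields
\begin{equation*}
b_{\k}(\zeta)\begin{bmatrix} K_{\alpha^{1}}(\zeta,\k), &-K_{\alpha^{1}}(\zeta,\bar\k)\end{bmatrix}
=\begin{bmatrix} K_{\alpha}(\zeta,\k), &-K_{\alpha}(\zeta,\bar\k)\end{bmatrix}\frac{1}{\rho}\begin{bmatrix} v&a\\ \bar a v&1\end{bmatrix},
\end{equation*}
and the final step is to factor the $2\times 2$ matrix as $\begin{bmatrix}1&a\\\bar a&1\end{bmatrix}\begin{bmatrix}v&0\\0&1\end{bmatrix}$, which is an immediate verification. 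There is no real obstacle here: the content is entirely in the theorem containing \eqref{f2}; the corollary is just the bookkeeping that packages the two scalar recurrences into a transfer-matrix form suitable for iteration and for matching with the CMV three-term relations \eqref{3term}.
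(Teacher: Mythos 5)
Your proof is correct and follows essentially the same route as the paper: the paper likewise inserts the factor $\mathrm{diag}(v,1)$ via $v=b_{\k}/b_{\bar\k}$ to turn $b_{\k}K_{\alpha^{1}}(\zeta,\k)$ into $b_{\bar\k}K_{\alpha^{1}}(\zeta,\k)\cdot v$ and then invokes \eqref{f2}, only more tersely. Your write-up merely spells out the column-by-column bookkeeping that the paper leaves to the reader.
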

\begin{proof}
Recalling $v=b_{\k}/b_{\bar\k}$, we write
\begin{equation*}
\begin{split}
b_{\k}({\zeta})&
\begin{bmatrix}
K_{\alpha^{1}}(\zeta,{\k}), &-K_{\alpha^{1}}(\zeta,\overline{\k})
\end{bmatrix}\\
=&
\begin{bmatrix}b_{\bar\k}({\zeta})
K_{\alpha^{1}}(\zeta,{\k}),
&-b_{\k}({\zeta})K_{\alpha^{1}}(\zeta,\overline{\k})
\end{bmatrix}
\begin{bmatrix}
v&0\\
0&1
\end{bmatrix}.
\end{split}
\end{equation*}
Then, use \eqref{f2}.
\end{proof}
\begin{corollary}\label{c3}
Let
\begin{equation}\label{sa}
\theta_\alpha(v):=\frac{K_{\alpha}(\zeta,\overline{\k})}
{K_{\alpha}(\zeta,{\k})}.
\end{equation}
Then the Schur parameters of the function $e^{ic} \theta_\alpha(v)$,
 are
$$
\{e^{ic} a(\alpha^n)\}_{n=0}^\infty.
$$
\end{corollary}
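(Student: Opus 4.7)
The plan is to read the Schur algorithm for $e^{ic}\theta_\alpha$ directly off the two recurrence identities \eqref{f2}. First I would verify the base case: since $v(\k)=0$ by \eqref{kappa} and by the definition \eqref{def},
$$
\theta_\alpha(0)=\theta_\alpha(v(\k))=\frac{K_{\alpha}(\k,\bar\k)}{K_{\alpha}(\k,\k)}=a(\alpha),
$$
so $e^{ic}\theta_\alpha(0)=e^{ic}a(\alpha)$, which is exactly the zeroth Schur parameter as claimed.

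The substantive step is to prove the identity
$$
\theta_\alpha(v)=\frac{a(\alpha)+v\,\theta_{\alpha^{1}}(v)}{1+\overline{a(\alpha)}\,v\,\theta_{\alpha^{1}}(v)}. \qquad (\star)
$$
To obtain $(\star)$ I would treat \eqref{f2} as a linear $2\times 2$ system expressing $K_{\alpha}(\zeta,\k)$ and $K_{\alpha}(\zeta,\bar\k)$ in terms of $K_{\alpha^{1}}(\zeta,\k)$ and $K_{\alpha^{1}}(\zeta,\bar\k)$. Substituting one line of \eqref{f2} into the other and using $1-|a|^2=\rho^2$, one divides by $\rho$ and arrives at
\begin{align*}
K_{\alpha}(\zeta,\bar\k) &= a(\alpha)\,b_{\bar\k}(\zeta)K_{\alpha^{1}}(\zeta,\k)+b_{\k}(\zeta)K_{\alpha^{1}}(\zeta,\bar\k),\\
K_{\alpha}(\zeta,\k) &= \overline{a(\alpha)}\,b_{\k}(\zeta)K_{\alpha^{1}}(\zeta,\bar\k)+b_{\bar\k}(\zeta)K_{\alpha^{1}}(\zeta,\k).
\end{align*}
Taking the ratio and dividing numerator and denominator by $b_{\bar\k}(\zeta)K_{\alpha^{1}}(\zeta,\k)$, I recognize $v=b_{\k}/b_{\bar\k}$ from \eqref{blfackappa} and $\theta_{\alpha^{1}}=K_{\alpha^{1}}(\zeta,\bar\k)/K_{\alpha^{1}}(\zeta,\k)$, which yields $(\star)$.

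With $(\star)$ established, multiplying through by $e^{ic}$ gives
$$
e^{ic}\theta_\alpha(v)=\frac{\bigl(e^{ic}a(\alpha)\bigr)+v\,\bigl(e^{ic}\theta_{\alpha^{1}}(v)\bigr)}{1+\overline{\bigl(e^{ic}a(\alpha)\bigr)}\,v\,\bigl(e^{ic}\theta_{\alpha^{1}}(v)\bigr)},
$$
which is precisely the Schur recursion: the zeroth Schur parameter of $e^{ic}\theta_\alpha$ is $e^{ic}a(\alpha)$ and its first Schur iterate is $e^{ic}\theta_{\alpha^{1}}$. I would then close the argument by induction on $n$: applying the same reasoning to $\alpha^{n}$ in place of $\alpha$ (note that the shift \eqref{shift} is consistent with iteration), the $n$-th Schur parameter equals $e^{ic}a(\alpha^{n})$.

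The only real obstacle, which is modest, is the algebraic bookkeeping in solving the $2\times 2$ system cleanly so that the factors $v$ and $\theta_{\alpha^{1}}$ appear in the right places; everything else is a direct consequence of \eqref{f2}, the definition \eqref{def}, and the normalization $v(\k)=0$.
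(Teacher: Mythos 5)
Your proof is correct and follows essentially the same route as the paper: the paper reads the Schur recursion $\theta_\alpha=\bigl(a(\alpha)+v\theta_{\alpha^1}\bigr)/\bigl(1+\overline{a(\alpha)}v\theta_{\alpha^1}\bigr)$ off the matrix form \eqref{rr} of the kernel recurrence and then iterates, noting as you do that multiplication by $e^{ic}$ multiplies all Schur parameters by $e^{ic}$; your derivation of the same identity directly from \eqref{f2} is the identical computation. The only blemish is that your two displayed intermediate identities are each missing a factor $\rho(\alpha)$ on the left-hand side (eliminating one kernel via $1-|a|^2=\rho^2$ leaves one power of $\rho$ after dividing), but this common factor cancels in the ratio, so the conclusion is unaffected.
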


\begin{proof}
Let us note that \eqref{rr} implies
$$
\theta_\alpha(v)= \frac{a(\alpha)+v \theta_{\alpha^1}(v)}
{1+\overline{a(\alpha)}v \theta_{\alpha^1}(v)}
$$
and that $|a(\alpha)|<1$. Then we iterate this relation. Also,
multiplication by $e^{ic}\in \bbT$ of a Schur class function
evidently leads to multiplication by $e^{ic}$ of all Schur
parameters.
\end{proof}

\begin{theorem}
The multiplication operator  with respect to the basis
\eqref{ts04bis} is CMV.
\end{theorem}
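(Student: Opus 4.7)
The plan is to verify directly that in the basis \eqref{ts04bis} the multiplication operator by $v(\tau)$ satisfies the CMV three--term relations \eqref{3term} (equivalently \eqref{evp}) with Schur parameters
$$
a_n = e^{ic_+}\,a(\alpha_+^n),
$$
after which Corollary \ref{c3} identifies the resulting matrix as a genuine CMV matrix $\fA(\{a_n\})\in \fae$.

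The only input required is the reproducing--kernel recurrence \eqref{f2}, which at each level $\alpha=\alpha_+^n$ yields the two elementary one--step identities
\begin{align*}
K_{\alpha_+^n}(\zeta,\bar\k) &= A_n\,K_{\alpha_+^n}(\zeta,\k) + \rho_n\,b_\k(\zeta)\,K_{\alpha_+^{n+1}}(\zeta,\bar\k),\\
K_{\alpha_+^n}(\zeta,\k) &= \bar A_n\,K_{\alpha_+^n}(\zeta,\bar\k) + \rho_n\,b_{\bar\k}(\zeta)\,K_{\alpha_+^{n+1}}(\zeta,\k),
\end{align*}
where $A_n:=a(\alpha_+^n)$ and $\rho_n:=\sqrt{1-|A_n|^2}$. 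Multiplying the second identity (at $n=2m$) by $b_\k^m b_{\bar\k}^m$ and reading off \eqref{ts04bis} produces the key auxiliary equality
$$
b_\k^m b_{\bar\k}^m\,K_{\alpha_+^{2m}}(\zeta,\k) = \bar A_{2m}e^{-ic_+}\,e^+(2m,\tau) + \rho_{2m}\,e^+(2m+1,\tau).
$$

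To obtain the first CMV relation, I would compute $v\,e^+(2m-1,\tau)=b_\k^m b_{\bar\k}^{m-1}K_{\alpha_+^{2m-1}}(\zeta,\k)$ by the second kernel identity at level $n=2m-1$, eliminate the resulting $K_{\alpha_+^{2m-1}}(\zeta,\bar\k)$ term by the first identity at the same level, and use the cancellation $1-|A_{2m-1}|^2=\rho_{2m-1}^2$. Substituting the auxiliary equality above then produces
$$
\rho_{2m-1}\,v\,e^+(2m-1,\tau) = \bar A_{2m-1}e^{-ic_+}\,v\,e^+(2m,\tau) + \bar A_{2m}e^{-ic_+}\,e^+(2m,\tau) + \rho_{2m}\,e^+(2m+1,\tau),
$$
which is precisely the first line of \eqref{3term} with the claimed identification of $a_n$. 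The mirror computation, starting from $v^{-1}e^+(2m,\tau)$ and interchanging $\k\leftrightarrow\bar\k$ and $b_\k\leftrightarrow b_{\bar\k}$, delivers the second CMV relation for $\fA^{-1}$.

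Finally, Corollary \ref{c3} identifies $\{e^{ic_+}a(\alpha_+^n)\}_{n\ge 0}$ as the Schur parameters of the contractive Schur function $e^{ic_+}\theta_{\alpha_+}$; together with the parallel computation on the negative half--line (using the dual data $\alpha_-$ provided by the duality Theorem \ref{t1.3} and the shift \eqref{shift} extended to $n\in\bbZ$), this places the constructed matrix in $\fae$ and completes the verification. The only genuine nuisance is the bookkeeping of the unimodular factor $e^{ic_+}$ and of the placement of the conjugation bar on $A_n$ across the two flavors of kernel recurrence; once $A_n:=a(\alpha_+^n)$ is maintained throughout, the required cancellations are all instances of the classical Szeg\H{o} identity $\rho_n^2=1-|A_n|^2$.
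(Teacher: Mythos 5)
Your verification goes through and is correct, but it is mechanically a different proof from the paper's. The paper expands the image of a basis vector under multiplication by $v$ directly in the basis \eqref{ts04bis}, i.e.\ it computes the four nonzero entries $c_0,\dots,c_3$ of a column of the matrix: $c_0,c_1$ are obtained by evaluating at $\zeta=\bar\k$ and $\zeta=\k$, while $c_2,c_3$ require computing reproducing--kernel scalar products such as $\langle b_{\k}^2 K_{\alpha}(\cdot,\bar\k),\, b_{\k}b_{\bar\k}K_{\alpha}(\cdot,\bar\k)\rangle$; the resulting products $\rho(\alpha^{-1})\rho(\alpha^{-2})$, $-\rho(\alpha^{-1})a(\alpha^{-2})$, $-a(\alpha^{-1})\overline{a(\alpha)}$, $-a(\alpha^{-1})\rho(\alpha)$ are then recognized as the entries of a CMV column. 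You instead verify the factorized relations \eqref{3term}/\eqref{evp} by pure elimination in the two--term kernel recurrence \eqref{f2}, using only $\rho_n^2=1-|a(\alpha_+^n)|^2$; I checked the elimination at level $2m-1$ and the mirror computation for $v^{-1}e^+(2m,\cdot)$, and both produce exactly the relations \eqref{3term} with $a_n=e^{ic_+}a(\alpha_+^n)$, consistent with the free case of Theorem \ref{th1.6}. (The apparent mismatch of a conjugation bar that you flag is in the paper's own \eqref{evp}, whose first line carries $a_{2m}$ where \eqref{3term} carries $\bar a_{2m}$; matching against \eqref{3term} your identification of the Schur parameters is uniform in $n$.) What each approach buys: the paper's computation exhibits the matrix entries explicitly and makes the five--diagonal shape visible at a glance, at the cost of the scalar--product evaluations for $c_2,c_3$; yours is purely algebraic and shorter, but transfers the burden to the claim that \eqref{3term} \emph{characterizes} the operator.

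That last point is the one thing you should make explicit. The relations \eqref{3term} prescribe $\fA$ only on the vectors $u_n=\rho_{2n-1}|2n-1\rangle-\bar a_{2n-1}|2n\rangle$ and (via the second line) on $w_n=a_{2n+1}|2n+1\rangle+\rho_{2n+1}|2n+2\rangle$; to conclude that the multiplication operator coincides with $\fA(\{a_n\})$ you need that these two families together span $l^2(\bbZ)$. This holds because $u_{n+1}$ and $w_n$ occupy the same coordinate pair $(2n+1,2n+2)$ with transition determinant $\rho_{2n+1}^2+|a_{2n+1}|^2=1$, so the span is all of $l^2(\bbZ)$ and a bounded operator satisfying \eqref{3term} is unique. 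With that one sentence added, and with Corollary \ref{c3} guaranteeing $|a(\alpha_+^n)|<1$ so that the $\rho_n$ are genuine Verblunsky complements, your argument is complete.
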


\begin{proof}
Recall \eqref{blfackappa}, from which we can see that the
decomposition of the vector $v(\zeta)
K_{\alpha}(\zeta,\overline{\k})$ is of the form
$$
v(\zeta) K_{\alpha}(\zeta,\bar{\k})=
c_0\frac{K_{\alpha^{-2}}(\zeta,\bar{\k})}
{b_{\k}(\zeta)b_{\bar{\k}}(\zeta)}+
c_1\frac{K_{\alpha^{-1}}(\zeta,{\k})} {b_{\k}(\zeta)}+
c_2{K_{\alpha}(\zeta,\bar{\k})} +
c_3b_{\bar{\k}}(\zeta){K_{\alpha^1}(\zeta,{\k})}.
$$
 Multiplying by the denominator
${b_{\k}(\zeta)b_{\bar{\k}}(\zeta)}$ we get
\begin{equation}\label{9s11}
\begin{split}
b_{\k}^2(\zeta) K_{\alpha}(\zeta,\bar{\k})=&
c_0{K_{\alpha^{-2}}(\zeta,\bar{\k})} +
c_1{K_{\alpha^{-1}}(\zeta,{\k})} {b_{\bar{\k}}(\zeta)}\\+&
c_2{K_{\alpha}(\zeta,\bar{\k})} b_{\k}(\zeta)b_{\bar{\k}}(\zeta) +
c_3 {K_{\alpha^1}(\zeta,{\k})}b_{\k}(\zeta)b^2_{\bar{\k}}(\zeta).
\end{split}
\end{equation}
First we put $\zeta=\bar{\k}$. By the definition of $\rho(\alpha)$
we have
$$
c_0=b_{\k}^2(\bar{\k}) \frac{K_{\alpha}(\bar{\k},\bar{\k})}
{K_{\alpha^{-2}}(\bar{\k},\bar{\k})} =\rho(\alpha^{-1})
\rho(\alpha^{-2}).
$$
Putting $\zeta={\k}$ in \eqref{9s11} and using the definition of
$a(\alpha)$, we have
$$
c_1=-c_0\frac{K_{\alpha^{-2}}(\k,\bar{\k})}
{K_{\alpha^{-1}}(\k,{\k}) b_{\bar{\k}}(\k)}= -\rho(\alpha\mu)
\rho(\alpha^{-2}) \frac{a(\alpha^{-2})}{\rho(\alpha^{-2})}=
-\rho(\alpha^{-1}) a(\alpha^{-2}).
$$

Doing in the same way we can find a representation for $c_2$ that
would involve derivatives of the reproducing kernels. However, we
can find $c_2$ in terms of $a$ and $\rho$  calculating the scalar
product
$$
c_2=\langle b_{\k}^2(\zeta) K_{\alpha}(\zeta,\bar{\k}),
{b_{\k}(\zeta)b_{\bar\k}(\zeta)} {K_{\alpha}(\zeta,\bar{\k})}
\rangle.
$$
Since $b_{\k}(\zeta)$ is unimodular, using \eqref{f2}, we get
$$
c_2=\left\langle\frac{K_{\alpha^{-1}}(\zeta,\bar{\k}) -
a(\alpha^{-1})K_{\alpha^{-1}}(\zeta,\k)} {\rho(\alpha^{-1})},
{b_{\bar\k}(\zeta)} {K_{\alpha}(\zeta,\overline{\k})} \right\rangle.
$$
Recall that $ k_{\alpha}(\zeta,\k)=
K_{\alpha}(\zeta,\k)K_{\alpha}(\k,\k) $ is the reproducing kernel.
Thus
$$
c_2=-\frac{a(\alpha^{-1})}{\rho(\alpha^{-1})}\overline{
\frac{{b_{\bar\k}(\k)} {K_{\alpha}(\k,\overline{\k})}}
{K_{\alpha^{-1}}(\k,\k)}}= -\frac{a(\alpha^{-1})}{\rho(\alpha^{-1})}
\overline{\rho(\alpha^{-1})a(\alpha)}
=-a(\alpha^{-1})\overline{a(\alpha)}.
$$
And, similar,
$$
c_3=\left\langle\frac{K_{\alpha^{-1}}(\zeta,\bar{\k}) -
a(\alpha^{-1})K_{\alpha^{-1}}(\zeta,\k)} {\rho(\alpha^{-1})},
{b^2_{\bar\k}(\zeta)} {K_{\alpha^1}(\zeta,{\k})} \right\rangle.
$$
Thus
$$
c_3=-\frac{a(\alpha^{-1})}{\rho(\alpha^{-1})}\overline{
\frac{{b^2_{\bar\k}(\k)} {K_{\alpha^1}(\k,{\k})}}
{K_{\alpha^{-1}}(\k,\k)}}= -\frac{a(\alpha^{-1})}{\rho(\alpha^{-1})}
\overline{\rho(\alpha^{-1})\rho(\alpha)}
=-a(\alpha^{-1}){\rho(\alpha)}.
$$

To find the decomposition of the vector $v(\zeta)
\frac{K_{\alpha^{-1}}(\zeta,{\k})}{b_{\k}(\z)}$ is even simpler.
Note that all other columns of the CMV matrix,  starting from these
two, can be obtain by the two step shift  of the scattering data.

\end{proof}

\section{From the spectral data to the scattering data: a special representation of the
Schur function}\label{specshur}

In this section we use   Theorem D \cite{syu}, see also \cite{scf}.
For readers convenience we formulate it here.

\begin{theorem}\label{D}

Let $r(v)$ be a meromorphic function in $\Omega$ with the property
\begin{equation}\label{profr}
    \frac{r(v(\z))+\overline{r(v(\z))}}{i(\z-\bar\z)}\ge 0.
\end{equation}
If poles $\{t_j\}$ of $r(v)$ (they should lie on $\bbT\setminus E$
due to \eqref{profr}) satisfy the Blaschke condition \eqref{matbl},
then $r(v(\z))$ is of bounded characteristic in $\bbD$, and in
addition the inner (in the Beurling sense) factor of $r(v(\z))$ is a
quotient of Blaschke products, i.e., it does not have a singular
inner factor.
\end{theorem}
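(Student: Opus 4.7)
My plan is to reduce the statement to the classical Herglotz--Riesz representation theorem on $\bbD$ by working on the two half--disks separately and then gluing them via the Blaschke product that absorbs the poles.

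Set $\tilde r(\z):=r(v(\z))$. Since $v:\bbD\to\Omega$ is a conformal isomorphism that identifies the real diameter $\bbR\cap\bbD$ with (the two sides of) $\bbT\setminus E$, the poles of $r$ pull back to a sequence $\{\z_k\}\subset(-1,1)$ with $\sum(1-|\z_k|)<\infty$. Because $r$ is real on $\bbR\cap\Omega$, $\tilde r$ inherits the symmetry $\tilde r(\bar\z)=\overline{\tilde r(\z)}$, and the positivity \eqref{profr} unpacks, using $i(\z-\bar\z)=-2\Im\z$, to $\Re\tilde r\le 0$ on $\bbD_+:=\bbD\cap\{\Im\z>0\}$ and $\Re\tilde r\ge 0$ on $\bbD_-$.

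Next, pick a conformal map $\psi:\bbD_+\to\bbD$ sending the upper arc of $\bbT$ and the diameter $(-1,1)$ to the two complementary arcs of $\partial\bbD$. Then $g:=-\tilde r\circ\psi^{-1}$ is holomorphic on $\bbD$---the poles of $\tilde r$ on $(-1,1)$ are pushed to $\partial\bbD$---with $\Re g\ge 0$. The Herglotz--Riesz theorem supplies a representation
\begin{equation*}
g(w)=i\beta+\int_{\bbT}\frac{e^{i\t}+w}{e^{i\t}-w}\,d\mu(\t),
\end{equation*}
with $\mu\ge 0$ finite, so $g\in H^p(\bbD)$ for every $p<1$ and its inner--outer factorization has no singular inner factor: its inner part, if any, is the Blaschke product of the zeros of $g$ in $\bbD$. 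Transporting back through $\psi$ shows that $\tilde r$ is of bounded characteristic on $\bbD_+$, with inner factor a ratio of Blaschke products. The analogous conclusion on $\bbD_-$ follows from the same argument applied to $\bbD_-$ or directly from the symmetry $\tilde r(\bar\z)=\overline{\tilde r(\z)}$.

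To globalise, let $B$ be the Blaschke product in $\bbD$ with zeros $\{\z_k\}$ and set $F:=B\tilde r$. The poles of $\tilde r$ at $\z_k$ are simple---the atoms in $\mu$ for $g$ produce only first--order poles---so $F$ is holomorphic on all of $\bbD$, including across $(-1,1)$. Since $|B|\equiv 1$ a.e.\ on $\bbT$, the boundary trace $\log^+|F|$ equals $\log^+|\tilde r|$, which is integrable by the two half--disk analyses. A subharmonicity and Poisson argument---combining the local harmonic majorants of $\log^+|F|$ on $\bbD_\pm$, which now match across the diameter because $F$ is genuinely holomorphic there---produces a global harmonic majorant on $\bbD$ equal to the Poisson extension of the boundary trace. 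This places $F$ in the Smirnov class $N^+(\bbD)$, which is exactly the assertion that $\tilde r$ is of bounded characteristic in $\bbD$ with inner factor a quotient of Blaschke products.

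The hard step is this final gluing: ensuring that the absence of singular inner factor on each half--disk survives the passage to $\bbD$. A priori the two local harmonic majorants could fail to align along the diameter, and a mismatch would reappear, after combining them, as a singular component. The hypothesis $\sum(1-|\z_k|)<\infty$ is doing the essential work here: it lets $B$ exhaust every pole of $\tilde r$ on $(-1,1)$, making $F$ truly holomorphic across the diameter, so that its least harmonic majorant on $\bbD$ is the Poisson integral of a single integrable boundary density on $\bbT$---leaving no room for a surviving singular inner factor.
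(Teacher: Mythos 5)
A preliminary remark: the paper does not prove this statement at all; it is quoted as Theorem~D of \cite{syu} (see also \cite{scf}), so there is no in-paper proof to compare yours with, and I am judging the proposal on its own merits. The first half of your plan is sound in outline: the sign condition forces $\Re\, r(v(\z))$ to vanish on $(-1,1)$ away from the poles (so the correct symmetry is $r(v(\bar\z))=-\overline{r(v(\z))}$ rather than plain conjugation, and ``$r$ real on $\bbR\cap\Omega$'' is not a hypothesis --- harmless, but worth fixing), the poles are indeed simple, a holomorphic function with semidefinite real part is outer, and $F=B\cdot(r\circ v)$ is holomorphic in $\bbD$. One secondary caveat: integrability of $\log^+|g|$ on $\partial\bbD$ only gives integrability of $\log^+|r\circ v|$ against the harmonic measure of the half-disk, whose density on $\bbT$ vanishes at the corner points $\pm1$; that is strictly weaker than the $dm$-integrability you assert.

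The genuine gap is the final gluing, and it cannot be closed with the ingredients you actually invoke there, namely: (a) $F$ is holomorphic in all of $\bbD$, (b) $r\circ v$ has no singular inner factor relative to either half-disk, (c) the boundary trace of $\log^+|F|$ is integrable. These three facts do not imply that $F$ is free of a singular inner factor in $\bbD$. Consider $h(\z)=\exp\bigl(\frac{1+\z}{1-\z}\bigr)$: it is holomorphic in $\bbD$ with no poles (so $B\equiv1$), $|h|=1$ a.e.\ on $\bbT$, and on each half-disk $\log|h|=\Re\frac{1+\z}{1-\z}$ is exactly the Poisson integral, for that half-disk, of its boundary values (map the upper half-disk onto the upper half-plane by $s=\bigl(\frac{1+\z}{1-\z}\bigr)^2$ and check that $\Re\sqrt{s}$ is the Poisson integral of $\sqrt{t}\,\chi_{t>0}$); thus $h$ is outer relative to each half-disk. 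Nevertheless $\log|h(\z)|=\frac{1-|\z|^2}{|1-\z|^2}$ is the Poisson extension of a point mass at $\z=1$, so the canonical factorization of $h$ in $\bbD$ is the reciprocal of a singular inner function. Of course $h$ does not satisfy \eqref{profr}, so it does not contradict the theorem --- but it does contradict your last paragraph, which would ``prove'' that $h$ has no singular factor. The possible singular mass sits precisely at the corners $\pm1$, the preimages of the endpoints of $E$ where the poles accumulate, and excluding it there is the entire content of the theorem; ``$F$ is truly holomorphic across the diameter'' gives nothing at those two boundary points. A correct argument must exploit \eqref{profr} globally near $\pm1$, for instance by showing that $u=\log|r\circ v|+\sum_j\log\frac1{|b_{p_j}|}-\sum_j\log\frac1{|b_{z_j}|}$ (summing over the poles and the interlacing zeros, both Blaschke sequences) is harmonic in $\bbD$ with negative part uniformly integrable on the circles $|\z|=\rho\uparrow1$; this is where the Blaschke condition and the Herglotz structure genuinely enter, as in \cite{scf}.
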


Note the evident fact: if $r(v)$ is of bounded characteristic in
$\Omega$ then for the poles $\{t_j\}$ the Blaschke condition
\eqref{matbl} holds.

\begin{proposition} If $\fA$ belongs to $\fae$ then
the associated Schur functions $\theta_\pm$ are of bounded
characteristic in $\Omega$ and
\begin{equation}\label{scforth}
\log|1-|\theta_\pm(v(\tau))|^2|\in L^1.
\end{equation}
\end{proposition}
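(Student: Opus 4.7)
The plan is to transfer properties of the resolvent matrix $\cR(v)$, which by the Remark following the definition of $\fae$ is already of bounded characteristic in $\Omega$ (via Theorem D applied to scalar Carath\'eodory combinations $\cE^{\ast}\cR\cE$ together with the Blaschke hypothesis \eqref{matbl}), to the Schur functions $\theta_\pm$ via the explicit formulas \eqref{Rwiththeta} and \eqref{Wwiththeta}.

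First I would prove bounded characteristic. Inverting \eqref{Rwiththeta} algebraically gives
$$
A_{-1}^{\ast}\begin{bmatrix}\theta_-^{(1)}(v)&0\\0&\theta_+(v)\end{bmatrix}
=\frac{1}{v}\bigl(\cR(v)-I\bigr)\bigl(\cR(v)+I\bigr)^{-1}.
$$
The right-hand side is a rational expression in the entries of $\cR$; since $\cR$ is of Carath\'eodory type in $\bbD$, $\det(I+\cR)\not\equiv 0$, so the inverse is well defined and its entries remain of bounded characteristic in $\Omega$. Hence $\theta_+$ and $\theta_-^{(1)}$ are of bounded characteristic in $\Omega$. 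The function $\theta_-$ is recovered from $v\theta_-^{(1)}$ by the M\"obius transform dictated by the Schur step \eqref{21o3}--\eqref{27o4}, and a M\"obius transform preserves the class, so $\theta_-$ is of bounded characteristic in $\Omega$ as well.

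Next I would derive the Szeg\"o integrability. Taking determinants in \eqref{Wwiththeta} yields
$$
\det W(t)=(1-|\theta_-^{(1)}(t)|^2)(1-|\theta_+(t)|^2)\left|\det\tfrac{I+\cR(t)}{2}\right|^2.
$$
By the second part of Theorem D the function $\det((I+\cR)/2)$ is of bounded characteristic in $\Omega$ with no singular inner factor, so $\log|\det((I+\cR)(v(\tau))/2)|\in L^1(\bbT)$. Combined with the Szeg\"o hypothesis \eqref{matrsz} this gives
$$
\log\bigl(1-|\theta_-^{(1)}(v(\tau))|^2\bigr)+\log\bigl(1-|\theta_+(v(\tau))|^2\bigr)\in L^1(\bbT).
$$
Since both summands are non-positive, integrability of the sum forces integrability of each, yielding \eqref{scforth} for $\theta_+$ at once. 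For $\theta_-$ I would pass from $\theta_-^{(1)}$ by the Schur-step identity on $\bbT$,
$$
1-|\theta_-(\tau)|^2=\frac{(1-|a_{-1}|^2)\bigl(1-|\theta_-^{(1)}(\tau)|^2\bigr)}{|1+\bar b_0\,\tau\,\theta_-^{(1)}(\tau)|^2},\qquad b_0=-\bar a_{-1};
$$
because $|a_{-1}|<1$ and $|\tau\theta_-^{(1)}|\le 1$, the denominator lies between $(1-|a_{-1}|)^2$ and $4$, so its logarithm is bounded and thus $\log(1-|\theta_-|^2)\in L^1\Leftrightarrow\log(1-|\theta_-^{(1)}|^2)\in L^1$.

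The main obstacle is the first step: making the matrix inversion produce no spurious singular inner factors and keep the diagonal entries individually in the bounded-characteristic class. This is exactly what the strong form of Theorem D (no singular inner factor for the relevant Herglotz-type combinations of $\cR$) delivers, so the algebra in $2\times 2$ matrices then extracts $\theta_\pm$ without loss.
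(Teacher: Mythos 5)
Your proof is correct, but it reaches the bounded\-/characteristic part by a genuinely different route than the paper. The paper never inverts the matrix relation \eqref{Rwiththeta}: it introduces the scalar Herglotz functions $r_{\fA}=\frac{1+v\theta_+\theta_-}{1-v\theta_+\theta_-}$, $r_+=\frac{1+v\theta_+}{1-v\theta_+}$ and $r_-=\frac{1+\theta_-}{1-\theta_-}$, observes that $r_++r_-=\frac{2(1-v\theta_+\theta_-)}{(1-v\theta_+)(1-\theta_-)}$, so that the poles of $r_++r_-$ interlace with its zeros, which are exactly the poles of the resolvent function $r_{\fA}$ and hence satisfy the Blaschke condition; interlacing then forces the poles of $r_\pm$ to satisfy it as well, and Theorem \ref{D} applied to $r_\pm$ gives the bounded characteristic of $\theta_\pm$ directly (with the bonus that the inner parts of $r_\pm$ are quotients of Blaschke products). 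You replace this interlacing step by the closure of the Nevanlinna class under field operations, applying Theorem \ref{D} only to the Carath\'eodory combinations of the entries of $\cR$; this is valid because $\det(I+\cR)\not\equiv 0$, and it is arguably more mechanical, at the price of landing first on $\theta_-^{(1)}$ and returning to $\theta_-$ by a Schur step. For the Szeg\"o condition the paper says only ``by \eqref{Wwiththeta}''; your determinant computation, plus the observation that both logarithms are nonpositive so integrability of the sum yields integrability of each term, is exactly the intended argument. One small misattribution: Theorem \ref{D} does not apply to $\det\frac{I+\cR}{2}$, which is not of Herglotz type; but the fact you actually need, namely $\log\bigl|\det\tfrac{(I+\cR)(v(\tau))}{2}\bigr|\in L^1$, already follows from the elementary fact that a not\-/identically\-/zero function of bounded characteristic has log\-/integrable boundary modulus, so no singular\-/inner\-/factor information is required there.
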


\begin{proof}

We use the formula (see \eqref{Rwiththeta})
\begin{equation*}
   r_{\fA}(v):= \left\langle 0\right|\frac{\fA+v}{\fA-v} \left|0\right\rangle=
\frac{1+v\theta_+(v)\theta_-(v)}{1-v\theta_+(v)\theta_-(v)}.
\end{equation*}
Since  $\fA\in \fae$ and $r_{\fA}(v)$ is a resolvent function, its
poles satisfy the Blaschke condition.

Now we note that
\begin{equation*}
    r_{+}(v)+r_{-}(v)=\frac{1+v\theta_+(v)}
{1-v\theta_+(v)}+\frac{1+\theta_-(v)}{1-\theta_-(v)}=
\frac{2(1-v\theta_+(v)\theta_-(v))}{(1-v\theta_+(v))(1-\theta_-(v))}.
\end{equation*}
Since zeros and poles of the last function interlace we get that
poles of $r_\pm$ also satisfy the Blaschke condition. By Theorem
\ref{D} they are of bounded characteristic in $\Omega$. Hence
 $\theta_\pm$ are also in this class.

By \eqref{Wwiththeta} we get \eqref{scforth}.

\end{proof}

\begin{definition} A
function $\theta(v)$ belongs to the  class $\thte$ if it is a
function of bounded characteristic in $\Omega$ with the following
properties
\begin{equation}\label{proftheta1}
    \frac{1-\theta(v(\z))\overline{\theta(v(\z))}}{i(\z-\bar\z)}\ge 0
\end{equation}
and
\begin{equation}\label{proftheta2}
\log|1-|\theta(v(\tau))|^2|\in L^1.
\end{equation}
\end{definition}

\smallskip
Denote
\begin{equation*}
    \bbT_-=\{\t\in\bbT: \Im\,\t<0\},\quad  \bbD_-=\{\z\in\bbD:
\Im\,\z<0\}.
\end{equation*}

\begin{proposition}\label{prop5.4}
Functions  of the class $\thte$ possess the following parametric
representation
\begin{equation}\label{prthte}
    \theta(v(\z))=e^{ic}\prod_{\Lambda}\frac{\bar\lambda_k}{\lambda_k}
\frac{\lambda_k-\z}{\bar\lambda_k-\z}
\frac{1-\lambda_k\z}{1-\bar\lambda_k\z}e^{-\int_{T_-}
\left(\frac{\t+\z}{\t-\z}-\frac{\bar\t+\z}{\bar\t-\z}\right)(d\mu(\t)-\log\rho(\t)dm(\t))},
\end{equation}
where
\begin{itemize}
\item $\Lambda=\{\lambda_k\}\subset\bbD_-$ is a Blaschke sequence,
\item $\mu$ is a singular measure on the (open) set $\bbT_-$,
\item $\rho$, $0\le\rho\le 1$, is such that
\begin{equation}\label{condrho}
    \int_{T_-}
\log\{(1-\rho(\t))\rho(\t)\}dm(\t)>-\infty.
\end{equation}

\end{itemize}

\end{proposition}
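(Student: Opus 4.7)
The idea is to view $\Theta(\zeta) := \theta(v(\zeta))$ as a meromorphic function of bounded characteristic on the uniformizing disk $\bbD$, exploit the real symmetry to reduce to a standard factorization problem on the half-disk $\bbD_-$, and then re-symmetrize. My first step is to establish the Schwarz reflection identity. Condition \eqref{proftheta1} forces $|\Theta|\le 1$ on $\bbD_-$ and $|\Theta|\ge 1$ on $\bbD_+$; continuity of the meromorphic $\Theta$ then forces $|\Theta(\zeta)|=1$ for $\zeta\in\bbR\cap\bbD$ (away from zeros and poles), whence by reflection
$$\Theta(\bar\zeta)\,\overline{\Theta(\zeta)}=1,\qquad \zeta\in\bbD.$$
Consequently zeros $\{\lambda_k\}\subset\bbD_-$ of $\Theta$ correspond bijectively to poles $\{\bar\lambda_k\}\subset\bbD_+$, and bounded characteristic of $\Theta$ on $\bbD$ yields the Blaschke condition $\sum_k (1-|\lambda_k|)<\infty$.

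Next I would peel off the zeros using the half-disk Blaschke factor
$$b^{\bbD_-}_{\lambda}(\zeta) := \frac{\bar\lambda}{\lambda}\,\frac{\lambda-\zeta}{\bar\lambda-\zeta}\,\frac{1-\lambda\zeta}{1-\bar\lambda\zeta}.$$
A direct calculation (using $\zeta\bar\zeta=1$ on $\bbT$ and $\zeta=\bar\zeta$ on the real diameter) shows that $|b^{\bbD_-}_\lambda|\equiv 1$ on both components of $\partial\bbD_-$, and the factor satisfies $b^{\bbD_-}_\lambda(\bar\zeta)\overline{b^{\bbD_-}_\lambda(\zeta)}=1$, with a simple zero at $\lambda$ and simple pole at $\bar\lambda$ (the extra factor $(1-\lambda\zeta)/(1-\bar\lambda\zeta)$ is precisely what is needed to restore modulus one on $\bbT$). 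Convergence of $B(\zeta):=\prod_k b^{\bbD_-}_{\lambda_k}(\zeta)$ in $\bbD_-$ follows from the Blaschke condition, and the quotient $\Theta_0:=\Theta/B$ is zero- and pole-free on $\bbD$, meromorphic of bounded characteristic, with the same boundary moduli and the same reflection identity as $\Theta$.

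The third step is to represent $\log\Theta_0$ through the half-disk Herglotz kernel
$$G(\tau,\zeta)=\frac{\tau+\zeta}{\tau-\zeta}-\frac{\bar\tau+\zeta}{\bar\tau-\zeta}.$$
This kernel is engineered for the mixed-boundary problem on $\bbD_-$: for real $\zeta$ one has $\bar{G(\tau,\zeta)}=-G(\tau,\zeta)$, so exponentiating against any \emph{real} measure on $\bbT_-$ produces a function of modulus one on $\bbR\cap\bbD$; on the other hand, as $\zeta\to\tau'\in\bbT_-$ the real part of $G(\tau,\zeta)$ concentrates as the Poisson kernel at $\tau'$. Conformally mapping $\bbD_-\to\bbD$ and invoking the classical Smirnov/Nevanlinna factorization of $\Theta_0$ as a Schur-class function on the half-disk, then pulling back, yields an outer factor $\exp\bigl(\int_{\bbT_-}G\,\log\rho\,dm\bigr)$ with $\rho(\tau):=|\Theta_0(\tau)|$, a singular inner factor $\exp\bigl(-\int_{\bbT_-}G\,d\mu\bigr)$ with $\mu\ge 0$ singular, and a unimodular constant $e^{ic}$. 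Combined with $B$ this is exactly \eqref{prthte}.

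The subtle points concentrate in one place: ruling out any singular inner mass on $\bbR\cap\bbD$. Although a priori the Nevanlinna factorization on the half-disk admits singular measures on the full topological boundary $\bbR\cap\bbD \cup \bbT_-$, the reflection identity forces $\Theta_0$ to continue analytically across $\bbR\cap\bbD$ into $\bbD_+$, so no singular inner factor can be carried by the interior diameter. This is the main obstacle, together with keeping track of the constant $e^{ic}$ and verifying that $B$ has modulus one on $\bbR\cap\bbD$ and on $\bbT$ simultaneously (done once by the explicit form above). Finally, the Szeg\"o condition \eqref{proftheta2} at a.e.\ $\tau\in\bbT_-$ reads $\log(1-\rho(\tau)^2)\in L^1$; combined with $\log\rho\in L^1$, which is automatic from $\Theta_0\in$ Smirnov class on the half-disk, one obtains $\log\bigl((1-\rho)\rho\bigr)\in L^1(\bbT_-)$, i.e.\ \eqref{condrho}.
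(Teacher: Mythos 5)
Your argument is correct, and it rests on the same two pillars as the paper's own (very terse) proof: the reflection identity $\overline{\theta(v(\bar\z))}=1/\theta(v(\z))$ (this is \eqref{symmth}, which you correctly extract from \eqref{proftheta1} via the modulus constraints $|\theta\circ v|\le 1$ on $\bbD_-$, $\ge 1$ on the upper half-disk), and the canonical Nevanlinna factorization. The genuine difference is \emph{where} you factorize: the paper applies the canonical representation of a bounded-characteristic function on the whole disk $\bbD$ and then uses \eqref{symmth} to identify the reduced numerator's inner part with the reflection of the denominator's, so that the modulus constraints locate the zeros in $\bbD_-$ and the singular mass on $\overline{\bbT_-}$; you restrict to the half-disk $\bbD_-$, where $\theta\circ v$ is Schur class, factorize there, and re-symmetrize through the explicit factors of \eqref{prthte}. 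Your route makes it transparent why the singular measure lives on $\bbT_-$ (it lives on $\partial\bbD_-$ by construction and you expel it from the diameter by analytic continuation), at the price of having to verify that the restrictions to $\bbD_-$ of the symmetric Blaschke-type factors and of the kernel $G$ really produce the half-disk Blaschke, singular-inner and outer factors (your modulus computations on the two boundary components do this); the full-disk route instead must argue from $|\theta\circ v|\ge 1$ above the diameter that the numerator's singular measure cannot charge the upper arc. Two loose ends, shared with the paper and both removable: neither write-up excludes singular mass at the corner points $\z=\pm 1$ (on the full disk this follows because the reduced numerator and denominator are coprime while \eqref{symmth} makes their singular measures reflections of one another, so a mass at a fixed point of the reflection would be common to both; on the half-disk a corner atom is \emph{not} detected by $|\theta\circ v|=1$ on the open diameter, so your continuation argument alone does not cover it); and you prove only the stated direction, whereas the paper also sketches the converse, namely that every product of the form \eqref{prthte} satisfies \eqref{proftheta1}, via the Smirnov property on $\bbD_-$ and the maximum principle. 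Your derivation of \eqref{condrho} from \eqref{proftheta2} together with $\log\rho\in L^1$ is fine.
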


\begin{proof}
First we note the symmetry
\begin{equation}\label{symmth}
    \overline{\theta(v(\bar\z))}
=\frac 1{\theta(v(\z))}
\end{equation}
and then use  the parametric representation of functions of bounded
characteristic and \eqref{proftheta2}.

In the opposite direction to prove \eqref{proftheta1} we can use
directly representation \eqref{prthte} or note that $\theta(v(\z))$
is of the Smirnov class in the domain $\bbD_-$ and then use the
maximum principle.

\end{proof}

\begin{remark}
$\theta_\pm\in\thte$ implies \eqref{matbl} and \eqref{matrsz}, but
the spectral measure $d\Sigma$ is not necessarily absolutely
continuous on $E$.

\end{remark}

\begin{example}\label{ex1}
On the other hand for every $\theta_+\in\thte$ there exists
$\theta_-\in\thte$ such that the associated to them $\fA$ belongs to
$\fae$. Put, for instance,
\begin{equation}\label{thconst}
    \theta_-(v(\z))=e^{ic_-}\frac{1-\z\bar\k}{1-\z\k},
\end{equation}
that corresponds to the constant Schur parameters (see Theorem
\ref{th1.6}). Since for every $\epsilon>0$
\begin{equation*}
    \sup_{\{\z\in \bbD_-:\Im\,\z<-\epsilon\}}|\theta_-(v(\z))|<1,
\end{equation*}
we have that both resolvent functions
\begin{equation*}
    \frac{1+v\theta_-(v)\theta_+(v)}{1-v\theta_-(v)\theta_+(v)},\quad
\frac{1+v\theta^{(-1)}_+(v)\theta^{(1)}_-(v)}{1-v\theta^{(-1)}_+(v)\theta^{(1)}_-(v)}
\end{equation*}
are uniformly bounded in the such domain. Therefore the open arc
$E\setminus\{e^{i\xi_0},e^{-i\xi_0}\}$ is free of the singular
spectrum.

Consider the end points. Existence of a mass point here means that
at least one of the following fore limits
\begin{equation*}
   \lim_{v\to e^{\pm i\xi_0}}
\frac{1+v\theta_+(v)\theta_-(v)}{1-v\theta_+(v)\theta_-(v)},\quad
 \lim_{v\to e^{\pm
i\xi_0}}\frac{1+v\theta^{(-1)}_+(v)\theta^{(1)}_-(v)}{1-v\theta^{(-1)}_+(v)\theta^{(1)}_-(v)},\quad
v\in\bbT\setminus E,
\end{equation*}
is infinite. In other words at least one of the following relations
hold
\begin{equation}\label{endpoints}
   \lim_{\z\to \pm 1}
v(\z)\theta_+(v(\z))\theta_-(v(\z))=1,\
 \lim_{\z\to\pm 1}v(\z)\theta^{(-1)}_+(v(\z))\theta^{(1)}_-(v(\z))=1,
\end{equation}
for $\z\in [-1,1]$. Due to
\begin{equation}\label{oneandzero}
   {1-v\theta^{(-1)}_+(v)\theta^{(1)}_-(v)}=\frac{\rho_{-1}^2
(1-v\theta_-(v)\theta_+(v))}{(1+a_{-1}\theta_-(v))(1+\bar
a_{-1}v\theta_+(v))}
\end{equation}
the first and the second conditions in \eqref{endpoints} are
equivalent. Thus, up to two possible exceptional values
\begin{equation*}
    e^{-ic_-}=-\frac{1\pm\bar\k}{1\pm\k}\lim_{\z\to\pm
1}\theta_+(v(\z)),
\end{equation*}
the endpoints also free of the mass of the measure $d\Sigma$.

\end{example}

 Now we prove a theorem on a representation of a Schur
function of the above class in the form similar to \eqref{sa}.

\begin{theorem}\label{reprth2}
Let $\theta(v)\in \thte$. Then there exists and unique the
representation
\begin{equation}\label{thetaL}
    \theta(v(\z))=e^{ic}\frac{L_{\bar\k}(\z)}{L_{\k}(\z)},\quad
L_{\bar\k}(\bar\k)>0,\ L_{\k}(\k)>0,
\end{equation}
such that $L_{\bar\k}(\z)$ and $L_{\k}(\z)$ are of Smirnov class in
$D$ with the mutually simple inner parts,  and the (Wronskian)
identity
\begin{equation}\label{wid0}
    \left|\begin{matrix}
\overline{L_{\k}(\t)}& \overline{ L_{\bar\k}(\t)}\\
L_{\bar\k}(\t)&L_{\k}(\t)
\end{matrix}
\right|=\frac{d\log v(\tau)}{d\log\t},\quad \t\in \bbT,
\end{equation}
holds.
\end{theorem}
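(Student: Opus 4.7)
The plan is to read off the factorization directly from the parametric representation of Proposition \ref{prop5.4}. Viewed as a meromorphic function in $\bbD$, $\theta(v(\z))$ has its zeros $\{\lambda_k\}$ and its ``singular inner'' support (from $d\mu$ on $\bbT_-$) in $\bbD_-\cup\bbT_-$, while its poles $\{\bar\lambda_k\}$ and its ``inverse singular inner'' factor (from the reflected measure on the upper semicircle $\{\bar\t:\t\in\bbT_-\}$) are supported in the complementary upper half of $\bbD\cup\bbT$. The two desired conditions dictate the shape of $L_\k$ and $L_{\bar\k}$: the Wronskian identity on $\bbT$ expands to
$$|L_\k(\t)|^2-|L_{\bar\k}(\t)|^2=\frac{d\log v(\t)}{d\log\t},$$
and combined with $|L_{\bar\k}|/|L_\k|=|\theta(v)|$ it forces
$$|L_\k(\t)|^2=\frac{1}{1-|\theta(v(\t))|^2}\,\frac{d\log v(\t)}{d\log\t}.$$

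For existence I would \emph{define} $L_\k$ as the product of (i) the Blaschke product with zeros $\{\bar\lambda_k\}$ in the upper half-disk, (ii) the singular inner factor $\exp\bigl(\int_{\bbT_-}\frac{\bar\t+\z}{\bar\t-\z}\,d\mu(\t)\bigr)$ (a genuine singular inner function in $\bbD$ whose mass is placed on the upper semicircle), and (iii) the outer factor with the modulus just displayed, with the unimodular constant fixed by $L_\k(\k)>0$. The outer factor is well defined because $\log(1-|\theta(v)|^2)\in L^1(\bbT)$ by \eqref{proftheta2}, while $\log(d\log v/d\log\t)=\log|v'|\in L^1(\bbT)$ by the standard boundary regularity of the rational map $v\colon\bbD\to\Omega$ (only integrable logarithmic singularities at the two critical points on $\bbT$). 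I would then set $L_{\bar\k}(\z):=e^{-ic}\theta(v(\z))\,L_\k(\z)$: by direct inspection of Proposition \ref{prop5.4} the poles of $\theta(v)$ are exactly cancelled by the inner factors of $L_\k$, so $L_{\bar\k}$ is of Smirnov class with inner part supported in $\bbD_-\cup\bbT_-$, disjoint from that of $L_\k$ (mutual simplicity). On $\bbT$ one reads off $|L_{\bar\k}|=|\theta(v)|\,|L_\k|$, which gives the Wronskian identity \eqref{wid0}; the condition $L_{\bar\k}(\bar\k)>0$ then fixes the correct value of $e^{ic}$.

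For uniqueness, given another admissible pair $\tilde L_\k,\tilde L_{\bar\k}$, set $\phi:=\tilde L_\k/L_\k$. From $\tilde L_{\bar\k}/\tilde L_\k=L_{\bar\k}/L_\k$ one also gets $\phi=\tilde L_{\bar\k}/L_{\bar\k}$. The Wronskian identity applied to both factorizations yields $|\phi|^2(|L_\k|^2-|L_{\bar\k}|^2)=|L_\k|^2-|L_{\bar\k}|^2$ on $\bbT$, whence $|\phi|=1$ a.e. Now the representation $\phi=\tilde L_\k/L_\k$ shows that, after cancellation of common factors, the Blaschke zeros/poles and singular support of the inner Riesz part of $\phi$ all lie in the upper half of $\bbD\cup\bbT$, while $\phi=\tilde L_{\bar\k}/L_{\bar\k}$ places them in $\bbD_-\cup\bbT_-$; these halves are disjoint, so $\phi$ has no inner Riesz factor at all, i.e., $\phi$ is outer with $|\phi|=1$, hence a unimodular constant. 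The two positivity normalizations pin this constant to $1$.

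The main obstacle is the last bookkeeping step: one must verify rigorously that the inner Riesz part of a ratio of two Smirnov functions whose Blaschke zeros and singular supports both lie in one half of $\bbD\cup\bbT$ also has support in that same half. This is precisely where the half-plane separation encoded in Proposition \ref{prop5.4} — pairing each zero $\lambda_k\in\bbD_-$ with a mirror pole $\bar\lambda_k$ in the upper half, and the singular measure $\mu$ on $\bbT_-$ with its reflection on the complementary semicircle — is indispensable.
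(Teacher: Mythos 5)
Your existence construction is essentially the paper's. Both arguments derive the forced modulus $|L_{\k}(\t)|^2=(1-|\theta(v(\t))|^2)^{-1}\,d\log v(\t)/d\log\t$ from \eqref{thetaL} and \eqref{wid0}, build the outer factors from it (integrability of the logarithm coming from \eqref{proftheta2}), and attach to them the denominator and numerator of the reduced inner--ratio representation $I_{\bar\k}/I_{\k}$ of $\theta(v(\z))$, fixing $e^{ic}$ and the normalizations at the end. Your extra step of locating these inner factors explicitly via Proposition \ref{prop5.4} is consistent with, but not needed for, the paper's version.

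The step that needs repair is exactly the one you flag as the main obstacle, and as stated it is not a true principle. For a competitor pair $(\tilde L_{\k},\tilde L_{\bar\k})$ the hypotheses of the theorem place no constraint on \emph{where} its inner parts $\tilde I_{\k},\tilde I_{\bar\k}$ are supported; hence the representation $\phi=\tilde L_{\k}/L_{\k}$ only tells you that the \emph{denominator} of the reduced inner part of $\phi$ (the part inherited from $I_{\k}$) sits in the upper half, while the numerator, inherited from $\tilde I_{\k}$, may a priori sit anywhere. So the half--plane separation alone does not kill the inner part of $\phi$. The correct mechanism is the hypothesis of mutually simple inner parts: from $\tilde I_{\bar\k}/\tilde I_{\k}=I_{\bar\k}/I_{\k}$, with both sides reduced ratios of coprime inner functions, the uniqueness of such a representation gives $\tilde I_{\k}=c\,I_{\k}$ and $\tilde I_{\bar\k}=c'I_{\bar\k}$ with $|c|=|c'|=1$; combined with your observation that $|\phi|=1$ a.e.\ on $\bbT$ (valid since $d\log v/d\log\t\neq0$ a.e., so the outer parts agree up to a unimodular constant), this makes $\phi$ a unimodular constant, and the two positivity normalizations pin it to $1$. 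The paper dismisses uniqueness in one sentence, implicitly relying on precisely this coprimality rather than on the geometry of the supports.
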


\begin{proof}
By \eqref{wid0} and \eqref{thetaL} we have
\begin{equation}\label{Okap}
    |L_{\k}(\tau)|^2(1-|\theta(v(\tau))|^2)=\frac{d\log
v(\tau)}{d\log\t}.
\end{equation}
Due to \eqref{proftheta2} we can define the outer function $O_\k$
such that
\begin{equation*}
    |O_{\k}(\tau)|^2=(1-|\theta(v(\tau))|^2)^{-1}\frac{d\log
v(\tau)}{d\log\t}, \quad O_{\k}(\k)>0,
\end{equation*}
and the outer function $O_{\bar\k}$ such that
\begin{equation*}
    |O_{\bar\k}(\tau)|^2={|O_{\k}(\tau)|^2}
{|\theta(v(\tau))|^2}, \quad O_{\bar\k}(\bar\k)>0.
\end{equation*}
We represent the inner part of the function $\theta(v(\zeta))$ as
the ration of the inner {\it holomorphic} functions
\begin{equation*}
    \frac{I_{\bar\k}(\z)}{I_{\k}(\z)},\quad I_{\bar\k}(\bar\k)>0,\quad
I_{\k}(\k)>0.
\end{equation*}
Finally we put
\begin{equation*}
    L_{\bar\k}(\z):=I_{\bar\k}(\z)O_{\bar\k}(\z),\quad
L_{\k}(\z):=I_{\k}(\z)O_{\k}(\z).
\end{equation*}

Then the left-- and right-- hand sides of \eqref{thetaL} coincide up
to a unimodular constant and this defines $e^{ic}$. By \eqref{Okap}
relation \eqref{wid0} also holds.

It is evident that $L_{\bar\k}(\z)$ and $L_{\bar\k}(\z)$ as
functions of the Smirnov class are defined uniquely.

Note that due to the uniqueness and  property \eqref{symmth} we have
$\overline{L_{\bar\k}(\bar\z)}=L_{\k}(\z)$. That is \eqref{wid0} can
be written in the form similar to \eqref{wids}
\begin{equation}\label{widbar}
    \left|\begin{matrix}
 L_{\bar\k}(\bar\t)&  L_{\k}(\bar\t)\\
L_{\bar\k}(\t)&L_{\k}(\t)
\end{matrix}
\right|=\frac{d\log v(\tau)}{d\log\t}.
\end{equation}

\end{proof}

\begin{theorem}
Let $\theta\in\thte$ and let $\{a_k\}_{k=0}^\infty$ be the sequence
of its Schur parameters. Put
\begin{equation}\label{thetaLn}
    \theta^{(n)}(v(\z))=e^{ic_n}\frac{L_{\bar\k}(n,\z)}{L_{\k}(n,\z)}.
\end{equation}
Then $e^{ic_n}=e^{ic}$ and
\begin{equation}\label{recLnn}
    \begin{split}
L_{\bar\k}(n,\z)=&(e^{-ic}a_n)L_{\k}(n,\z)+\rho_n
b_\k(\z)L_{\bar\k}(n+1,\z),
\\
L_{\k}(n,\z)=&(e^{ic}\bar a_n)L_{\bar\k}(n,\z)+\rho_n
b_{\bar\k}(\z)L_{\k}(n+1,\z).
\end{split}
\end{equation}

\end{theorem}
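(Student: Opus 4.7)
The proof proceeds by induction on $n$, with the base case $n=0$ immediate from the definitions preceding Theorem \ref{reprth2}. For the inductive step I would first confirm that the Schur iterate $\theta^{(n+1)}$ remains in $\thte$, so that Theorem \ref{reprth2} supplies a representation $\theta^{(n+1)}(v(\z)) = e^{ic_{n+1}} L_{\bar\k}(n+1,\z)/L_\k(n+1,\z)$. Bounded characteristic in $\Omega$ passes through the inverse Schur formula $\theta^{(n+1)} = v^{-1}(\theta^{(n)}-a_n)/(1-\bar a_n\theta^{(n)})$---the apparent singularities at $\k$ and $\bar\k$ cancel because $\theta^{(n)}(v(\k))=a_n$ and $\theta^{(n)}(v(\bar\k))=1/\bar a_n$ by symmetry---while the Szeg\"o integrability propagates via the boundary identity $1-|\theta^{(n)}|^2 = \rho_n^2(1-|\theta^{(n+1)}|^2)/|1+\bar a_n v \theta^{(n+1)}|^2$ on $\bbT$, whose extra factor has bounded logarithm since $1-|a_n|\le|1+\bar a_n v\theta^{(n+1)}|\le 2$.

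Rather than start from the abstract $L(n+1)$ guaranteed above, I would introduce candidate functions
\begin{equation*}
\tilde L_{\bar\k}(n+1,\z):=\frac{L_{\bar\k}(n,\z)-e^{-ic_n}a_n L_\k(n,\z)}{\rho_n b_\k(\z)},\qquad \tilde L_\k(n+1,\z):=\frac{L_\k(n,\z)-e^{ic_n}\bar a_n L_{\bar\k}(n,\z)}{\rho_n b_{\bar\k}(\z)},
\end{equation*}
that is, the system \eqref{recLnn} solved for the $(n+1)$-entries, and verify that this pair meets the characterization of Theorem \ref{reprth2} for $\theta^{(n+1)}$ with phase $e^{ic_n}$. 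Membership in the Smirnov class reduces to cancellation of the single Blaschke factor in each denominator: the numerator of $\tilde L_{\bar\k}(n+1)$ vanishes at $\z=\k$ by the equality $L_{\bar\k}(n,\k)=e^{-ic_n}a_n L_\k(n,\k)$, which is the restatement of $\theta^{(n)}(0)=a_n$, while the companion identity $L_\k(n,\bar\k)=e^{ic_n}\bar a_n L_{\bar\k}(n,\bar\k)$ at $\z=\bar\k$ follows from the symmetry $\overline{L_{\bar\k}(n,\bar\z)}=L_\k(n,\z)$ combined with $L_\k(n,\k)=L_{\bar\k}(n,\bar\k)$. The positivity $\tilde L_\k(n+1,\k)=\rho_n L_\k(n,\k)/b_{\bar\k}(\k)>0$ is immediate from the normalization $b_{\bar\k}(\k)>0$. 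The Wronskian identity \eqref{wid0} propagates because on $\bbT$ the cross term $\Re[e^{ic_n}\bar a_n L_{\bar\k}\overline{L_\k}-e^{-ic_n}a_n L_\k\overline{L_{\bar\k}}]$ vanishes (its two summands are complex conjugates), reducing $|\tilde L_\k(n+1)|^2-|\tilde L_{\bar\k}(n+1)|^2$ to $|L_\k(n)|^2-|L_{\bar\k}(n)|^2$. Finally, forming the ratio $\tilde L_{\bar\k}(n+1)/\tilde L_\k(n+1)$ and substituting $v=b_\k/b_{\bar\k}$ returns precisely the inverse Schur transformation, so $e^{ic_n}\tilde L_{\bar\k}(n+1)/\tilde L_\k(n+1)=\theta^{(n+1)}(v(\z))$.

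The only step that requires a genuine argument is \emph{mutual simplicity} of the inner parts of $\tilde L_\k(n+1)$ and $\tilde L_{\bar\k}(n+1)$. Were a nontrivial common inner divisor $I$ to exist, then $I$ would divide (in the Smirnov class) each of $X:=L_{\bar\k}(n)-e^{-ic_n}a_n L_\k(n)$ and $Y:=L_\k(n)-e^{ic_n}\bar a_n L_{\bar\k}(n)$, and the linear combinations $X+e^{-ic_n}a_n Y=\rho_n^2 L_{\bar\k}(n)$ and $e^{ic_n}\bar a_n X+Y=\rho_n^2 L_\k(n)$ would then force $I$ to be common to $L_\k(n)$ and $L_{\bar\k}(n)$, contradicting the inductive mutual simplicity at level $n$. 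Uniqueness in Theorem \ref{reprth2} now identifies $\tilde L(n+1)$ with $L(n+1)$ and $e^{ic_{n+1}}$ with $e^{ic_n}$, and the induction closes with $e^{ic_n}=e^{ic}$ for all $n$ together with the recurrence \eqref{recLnn}.
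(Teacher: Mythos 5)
Your proof is correct and follows essentially the same route as the paper: both identify the canonical representation of $\theta^{(n+1)}$ supplied by Theorem \ref{reprth2} with the pair obtained by solving \eqref{recLnn}, fix the unimodular constant by evaluating at $\k$ and $\bar\k$, and pin down $\rho_n$ via the Wronskian \eqref{wid0}. You merely spell out the verifications (Smirnov membership after cancelling the Blaschke factors, mutual simplicity of the inner parts, propagation of the Wronskian) that the paper compresses into the phrase ``by the uniqueness of representation \eqref{thetaL}.''
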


\begin{proof} By definition
\begin{equation*}
    \theta^{(1)}=\frac{\theta-a_0}{1-\theta\bar a_0}
\frac{b_{\bar\k}}{b_{\k}}=e^{ic}\frac{L_{\bar\k}-(e^{-ic}a_0)L_{\k}}
{L_{\k}-(\overline{e^{-ic}a_0})L_{\bar\k}}\frac{b_{\bar\k}}{b_{\k}}=
e^{ic_1}\frac{ L^{(1)}_{\bar\k}}{L^{(1)}_{\k}}.
\end{equation*}
By the uniqueness of representation \eqref{thetaL} we get
\begin{equation}\label{recL}
\begin{bmatrix}
\tilde\rho_1b_\k L^{(1)}_{\bar\k}&\tilde\rho b_{\bar\k}L^{(1)}_{\k}
\end{bmatrix}=
\begin{bmatrix}
L_{\bar\k}&L_{\k}
\end{bmatrix}
\begin{bmatrix}
   1&-{e^{ic}\bar a_0}\\
-e^{-ic}a_0&   1
\end{bmatrix},
\end{equation}
with $\tilde\rho_1=\tilde\rho e^{i(c_1-c)}$. Using
\begin{equation*}
 a_0=e^{ic}\frac{L_{\bar\k}(\k)}{L_{\k}(\k)},
\end{equation*}
we have in particular
\begin{equation*}
    \tilde\rho_1
b_{\k}(\bar\k)L^{(1)}_{\bar\k}(\bar\k)=L_{\bar\k}(\bar\k)(1-|a_0|^2),\quad
\tilde\rho b_{\bar\k}(\k)L^{(1)}_{\k}(\k)=L_{\k}(\k)(1-|a_0|^2).
\end{equation*}
That is, both $\tilde\rho, \tilde\rho_{1}$ are positive and
therefore  $\tilde\rho_1=\tilde\rho$ or $e^{ic_1}=e^{ic}$.

From \eqref{recL} we have the matrix identity
\begin{equation}\label{recLmtrx}
\tilde\rho\begin{bmatrix}
    b_\k(\bar\t)L^{(1)}_{\bar\k}(\bar\t)&b_{\bar\k}(\bar\t)L^{(1)}_{\k}(\bar\t)\\
b_\k(\t)L^{(1)}_{\bar\k}(\t)&b_{\bar\k}(\t)L^{(1)}_{\k}(\t)
\end{bmatrix}=
\begin{bmatrix}
    L_{\bar\k}(\bar\t)&L_{\k}(\bar\t) \\
L_{\bar\k}(\t)&L_{\k}(\t)
\end{bmatrix}
\begin{bmatrix}
    1&-{e^{ic}\bar a_0}\\
-{e^{-ic}a_0}&   1
\end{bmatrix}.
\end{equation}
Finally using \eqref{widbar} we have $\tilde\rho^2=1-|a_0|^2$. Hence
$\tilde\rho=\rho_0$. Thus \eqref{recLnn} holds for $n=0$ and we can
iterate this procedure.
\end{proof}

\begin{lemma} For the spectral density $W$ the following
factorization holds
\begin{equation}\label{winv1}
    W^{-1}(v(\t))\frac{dm(v(\tau))}{dm(\tau)}=
\rho_{-1}^2\Phi(\t)\Phi^*(\t),
\end{equation}
where
\begin{equation}\label{Phi2}
    \Phi(\tau)=\begin{bmatrix} \frac 1{b_{\k}(\tau)}{L_{-,\k}(\tau)}&-e^{ic_+}L_{+,\bar \k}(\tau)\\
-e^{ic_-}L^{(1)}_{-,\bar\k}(\tau)& \frac
1{b_{\k}(\tau)}{L^{(-1)}_{+,\k}(\tau)}
\end{bmatrix}.
\end{equation}
\end{lemma}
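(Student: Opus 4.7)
The strategy is to invert the explicit formula \eqref{Wwiththeta} for the spectral density and match the result to $\rho_{-1}^2\Phi\Phi^*$ by means of the Schur-function factorization of Theorem \ref{reprth2}. First, observing from \eqref{Rwiththeta} that $\frac{I+\cR}{2}=(I-vA^*_{-1}\Theta)^{-1}$ with $\Theta=\mathrm{diag}(\theta_-^{(1)},\theta_+)$, inversion of \eqref{Wwiththeta} yields
$$W^{-1}=(I-vA^*_{-1}\Theta)\,\mathrm{diag}\bigl((1-|\theta_-^{(1)}|^2)^{-1},(1-|\theta_+|^2)^{-1}\bigr)\,(I-vA^*_{-1}\Theta)^*.$$

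To absorb the Jacobian $\frac{dm(v(\tau))}{dm(\tau)}=\frac{d\log v}{d\log\tau}$, I would invoke the Wronskian identity \eqref{widbar} applied to $\theta_+$ and to $\theta_-^{(1)}$: this gives $|L_{+,\k}|^2(1-|\theta_+|^2)=|L^{(1)}_{-,\k}|^2(1-|\theta_-^{(1)}|^2)=\frac{d\log v}{d\log\tau}$, so that
$$W^{-1}(v(\tau))\,\frac{dm(v(\tau))}{dm(\tau)}=MM^*,\qquad M:=(I-vA^*_{-1}\Theta)\,\mathrm{diag}(L^{(1)}_{-,\k},L_{+,\k}).$$
The remaining task, and the sole computational heart of the proof, is to verify the entrywise identity $M=v\rho_{-1}\Phi$ on $\bbT$; since $|v|=1$, this immediately yields $MM^*=\rho_{-1}^2\Phi\Phi^*$. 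The first column of $M$ I would handle by applying the recurrence \eqref{recLnn} for $\theta_-$ (whose Schur parameters are $\{-\bar a_{-1-n}\}$) to express $L^{(1)}_{-,\k}$ and $L^{(1)}_{-,\bar\k}$ in terms of $L_{-,\k}$ and $L_{-,\bar\k}$. The second column is handled by the symmetric move: applying \eqref{recLnn} to $\theta_+^{(-1)}$ (Schur parameters $\{a_{-1},a_0,a_1,\ldots\}$) to express $L_{+,\k}$ and $L_{+,\bar\k}$ through $L^{(-1)}_{+,\k}$ and $L^{(-1)}_{+,\bar\k}$. In each column the decisive cancellation combines the identity $vb_{\bar\k}=b_\k$ (from \eqref{blfackappa}) with $1-|a_{-1}|^2=\rho_{-1}^2$; two cross terms annihilate, and a factor $\rho_{-1}$ is produced in the diagonal entries.

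The main obstacle is not conceptual but bookkeeping: one must track the sign convention in the Schur parameters of $\theta_-$ (which are $-\bar a_{-1-n}$, not $a_n$) and choose the correct shift for each column ($\theta_-$ shifts up by one, $\theta_+$ shifts down by one), so that the single parameter $a_{-1}$ controls both the matrix $A^*_{-1}$ on the left and the recurrence step on the right, producing the clean $\rho_{-1}^2$ cancellation. Once this alignment is set up, the identity on $\bbT$ drops out directly and the lemma is proved.
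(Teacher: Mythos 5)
Your proposal is correct and follows essentially the same route as the paper: invert \eqref{Wwiththeta} via $\frac{2}{I+\cR}=I-vA^*_{-1}\Theta$, absorb the Jacobian through \eqref{Okap}, and then identify $v\rho_{-1}\Phi=(I-vA^*_{-1}\Theta)\,\mathrm{diag}(L^{(1)}_{-,\k},L_{+,\k})$ with \eqref{Phi2} via the recurrences \eqref{recLnn}, exactly the cancellation (including the sign $a^{(1)}_{-,-1}=-\bar a_{-1}$) that the paper carries out in \eqref{bkp}--\eqref{bkm}.
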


\begin{proof}
Due to \eqref{thetaL}
\begin{equation}\label{tk+}
    \theta_+(v)=e^{ic_+}\frac{L_{+,\bar\k}(\z)}{L_{+,\k}(\z)},\quad
    \theta_-^{(1)}(v)=e^{ic_-}\frac{L^{(1)}_{-,\bar \k}(\z)}{L^{(1)}_{-,\k}(\z)}.
\end{equation}
Besides, due to \eqref{Okap}
\begin{equation}\label{mtk+}
  1-  |\theta_+(v(\tau))|^2=\frac
1{|L_{+,\k}(\tau)|^2}\frac{dm(v(\tau))}{dm(\tau)}
\end{equation}
and
\begin{equation}\label{mtk-}
  1-  |\theta_-^{(1)}(v(\tau))|^2=\frac
1{|L^{(1)}_{-,\k}(\tau)|^2}\frac{dm(v(\tau))}{dm(\tau)}.
\end{equation}
By definition \eqref{Wwiththeta} and \eqref{mtk-}, \eqref{mtk+}, we
have
\begin{equation*}
    W^{-1}(v(\t))\frac{dm(v(\tau))}{dm(\tau)}=
\frac 2{I+\cR(v)}
\begin{bmatrix}|L^{(1)}_{-,\k}(\tau)|^2&0\\
0&|L_{+,\k}(\tau)|^2
\end{bmatrix}
\frac 2{I+\cR^*(v(\t))}.
\end{equation*}
By definition \eqref{Rwiththeta}
\begin{equation*}
\frac 2{I+\cR(v)}=I-vA_{-1}^*
\begin{bmatrix}
\theta_-^{(1)}&0\\0&\theta_+
\end{bmatrix}.
\end{equation*}
Therefore we get \eqref{winv1} with
\begin{equation}\label{Phi}
   \rho_{-1} \Phi(\tau)=\frac{b_{\bar\k}}{b_{\k}}\begin{bmatrix} L^{(1)}_{-,\k}&0\\
0&L_{+,\k}
\end{bmatrix}(\tau)- A^*_{-1}
\begin{bmatrix} e^{ic_-}L^{(1)}_{-,\bar\k}&0\\
0&e^{ic_+}L_{+,\bar\k}
\end{bmatrix}(\tau).
\end{equation}

By \eqref{recLnn}
\begin{equation*}
\begin{split}
&\rho_{-1} b_{\bar\k}(\z) L_{+, \k}(\z)
=L^{(-1)}_{+, \k}(\z)-e^{ic_+}\bar a_{-1}L^{(-1)}_{+,\bar\k}(\z)\\
=&L^{(-1)}_{+, \k}(\z)-e^{ic_+}\bar a_{-1}(
e^{-ic_+} a_{-1}L^{(-1)}_{+, \k}(\z)+\rho_{-1}b_{\k}(\z)L_{+,\bar\k}(\z))\\
=& (\rho_{-1})^2L^{(-1)}_{+, \k}(\z) -\rho_{-1}e^{ic_+}\bar
a_{-1}b_{\k}(\z)L_{+,\bar\k}(\z),
\end{split}
\end{equation*}
that is
\begin{equation}\label{bkp}
b_{\bar\k}(\z) L_{+,\k}(\z)+e^{ic_+}\bar
a_{-1}b_{\k}(\z)L_{+,\bar\k}(\z)= \rho_{-1}L^{(-1)}_{+,\k}(\z).
\end{equation}
and similarly
\begin{equation}\label{bkm}
    b_{\bar\k}(\z) L^{(1)}_{-, \k}(\z)+e^{ic_-}\bar a^{(1)}_{-,-1}b_{\k}(\z)L^{(1)}_{-,\bar\k}(\z)=
\rho^{(1)}_{-,-1}L_{-, \k}(\z),
\end{equation}
Note that  $a_{-,-1}^{(1)}=   -\bar a_{-1}$ (generally
$a_{-,k}^{(1)}=   -\bar a_{-k-2}$).
Thus, using \eqref{bkm}, \eqref{bkp}, we get \eqref{Phi2} from
\eqref{Phi}. The lemma is proved.

\end{proof}

\begin{lemma}
Define
\begin{equation}\label{scat}
    S(\tau)=\begin{bmatrix} R_-& T_-\\
T_+& R_+
\end{bmatrix}=-\Phi^{-1}(\tau)\bar\tau\Phi(\bar\tau).
\end{equation}
Then \eqref{5.9d}, \eqref{2.10d}, \eqref{8.9d} and \eqref{normob}
hold true.
\end{lemma}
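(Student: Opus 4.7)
The plan is to deduce all four claims from (i) the factorization \eqref{winv1} and (ii) the covering symmetry $v(\bar\tau)=v(\tau)$ on $\bbT$.

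First I would verify that $b_\k(\bar\tau)\,b_{\bar\k}(\tau)=1$ for $\tau\in\bbT$: this follows from $\overline{b_\k(\bar\zeta)}=b_{\bar\k}(\zeta)$ (a direct check of the formulas \eqref{blfackappa}) combined with $|b_\k|=|b_{\bar\k}|=1$ on $\bbT$. Hence $v(\bar\tau)=v(\tau)$ for $\tau\in\bbT$, and writing $v(e^{it})$ as an even function of $t$ shows that the Jacobian $dm(v(\tau))/dm(\tau)$ is likewise invariant under $\tau\leftrightarrow\bar\tau$. Combined with \eqref{winv1} this gives
$$
\Phi(\tau)\Phi^*(\tau)=\Phi(\bar\tau)\Phi^*(\bar\tau), \qquad \tau\in\bbT.
$$

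From this identity both unitarity $SS^*=I$ and the involutive symmetry $S^*(\bar\tau)=S(\tau)$ are immediate upon substituting the definition \eqref{scat} and cancelling $\Phi(\tau)\Phi^*(\tau)$ against $\Phi(\bar\tau)\Phi^*(\bar\tau)$. Reading off the $(2,2)$ and $(2,1)$ entries of these two identities yields $|R_+|\le 1$ together with the symmetries $R_+(\tau)=\overline{R_+(\bar\tau)}$ and $T_+(\tau)=\overline{T_-(\bar\tau)}$ of \eqref{5.9d}--\eqref{2.10d}; the second-row norm of $SS^*=I$ produces $|T_+|^2+|R_+|^2=1$, which together with $|T_+|=|T_-|$ (a consequence of unitarity) gives \eqref{8.9d} once $T_-=O/B$ with $|B|=1$ on $\bbT$; and the off-diagonal orthogonality $\bar R_-T_++\bar T_-R_+=0$ rearranges into $R_-=-(T_-/\bar T_+)\bar R_+$.

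The main obstacle is the analytic-function description: namely, that $T_-$ continues from $\bbT$ into $\bbD$ as $O/B$ with $O$ outer. My plan is to apply Cramer's rule to
$$
T_-(\tau)=-\bar\tau\,\frac{\Phi_{22}(\tau)\Phi_{12}(\bar\tau)-\Phi_{12}(\tau)\Phi_{22}(\bar\tau)}{\det\Phi(\tau)},
$$
eliminate the $\bar\tau$-arguments using the conjugation symmetry $L_\k(\bar\zeta)=\overline{L_{\bar\k}(\zeta)}$ from Theorem~\ref{reprth2} together with the boundary identity $1/b_\k(\bar\tau)=b_{\bar\k}(\tau)$, and then use the recurrences \eqref{bkp}--\eqref{bkm} to eliminate the shifted factors $L^{(-1)}_{+,\k}$ and $L^{(1)}_{-,\bar\k}$ in favour of the central functions $L_{\pm,\k},L_{\pm,\bar\k}$. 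After these reductions the expression continues holomorphically across $\bbT$ into $\bbD$; its inner factor is the reciprocal of the Blaschke product $B$ from \eqref{9.9d} (zeros being contributed through the Smirnov inner parts of $L_{+,\k}$ and $L^{(1)}_{-,\k}$, whose Blaschke support is exactly $\cZ$ by Theorem~\ref{reprth2} applied to $\theta_+,\theta^{(1)}_-\in\thte$), and its outer factor supplies $O$. The symmetry $T_+(\tau)=\overline{T_-(\bar\tau)}$ then transfers the analytic structure to $T_+$.

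With $T_-=O/B$ established, the Szeg\"o condition $\int\log(1-|R_+|^2)\,dm>-\infty$ in \eqref{5.9d} is immediate from $1-|R_+|^2=|O|^2$ on $\bbT$ and the outer-function property $\log|O|\in L^1$. Finally, for the phase normalization \eqref{normob} I would evaluate the closed-form expression for $T_-$ at $\zeta=\k$: the positivity $L_{+,\k}(\k)>0$, $L^{(1)}_{-,\k}(\k)>0$ prescribed in Theorem~\ref{reprth2}, combined with the removable-pole phase coming from $b_\k'(\k)=-i/(1-|\k|^2)$ and the explicit $e^{ic_+}$ sitting inside $\Phi_{12}$, produce the required factor $-ie^{ic_+}$; the statement $T_+(\k)=-ie^{ic_-}|T_+(\k)|$ follows by the same analysis with the roles of the two Schur functions swapped.
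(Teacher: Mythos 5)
Your treatment of the unitarity and symmetry claims is correct and in fact slightly cleaner than the paper's: the single identity $\Phi(\tau)\Phi^*(\tau)=\Phi(\bar\tau)\Phi^*(\bar\tau)$, which you correctly extract from \eqref{winv1} together with the covering symmetry $v(\bar\tau)=v(\tau)$, does yield both $SS^*=I$ and $S^*(\bar\tau)=S(\tau)$ in one stroke, whereas the paper first establishes the entrywise symmetry \eqref{Ssym} via \eqref{phisym} and then must compute $T_\pm$ explicitly to upgrade it to $S^*(\bar\tau)=S(\tau)$. The algebraic consequences you read off ($|R_+|\le 1$, $R_+(\tau)=\overline{R_+(\bar\tau)}$, $T_+(\tau)=\overline{T_-(\bar\tau)}$, $R_-=-(T_-/\bar T_+)\bar R_+$, and $|O|^2+|R_+|^2=1$ from the row norms) and the evaluation at $\k$ producing \eqref{normob} all match the paper's computations.

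The genuine gap is in the inner--outer analysis of $T_-$, which is precisely the part of \eqref{2.10d} the paper works hardest for. You assert that the inner factor of the relevant denominator is $1/B$ with ``zeros contributed through the Smirnov inner parts of $L_{+,\k}$ and $L^{(1)}_{-,\k}$, whose Blaschke support is exactly $\cZ$.'' That is not correct: by Proposition \ref{prop5.4} the inner parts of the $L$'s are Blaschke products over the zero sets $\Lambda\subset\bbD_-$ of the associated Schur functions, which have nothing a priori to do with the point spectrum $\cZ\subset\bbR\cap\bbD$. The zeros of $\Delta=\det\Phi$ on $\cZ$ arise from cancellation between the two products in the determinant \eqref{determinant} --- they are the poles of the resolvent $(1+v\theta_+\theta_-)/(1-v\theta_+\theta_-)$ --- not from any individual factor. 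More seriously, you never exclude a singular inner factor of the Smirnov-class function $\rho_{-1}b_{\k}^2\Delta$; without that, the claim ``$T_-=O/B$ with $O$ outer'' is unproven, and the Szeg\"o bound $\int\log(1-|R_+|^2)\,dm>-\infty$, which you derive from $1-|R_+|^2=|O|^2$, collapses with it. The paper's argument here is essential: the ratio of the two candidate numerators equals $(1+v\theta_+\theta_-)/(1-v\theta_+\theta_-)$, so Theorem \ref{D} forces its inner part to be a quotient of Blaschke products; any remaining singular divisor of $b_{\k}^2\Delta$ would then have to be a common inner factor of $b_{\bar\k}L_{-,\k}L_{+,\k}$ and $b_{\k}L_{+,\bar\k}L_{-,\bar\k}$, which is impossible because their inner parts are supported in opposite half-planes. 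Some version of this step must be supplied before the lemma can be considered proved.
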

\begin{proof}
 $S(\t)$ is unitary--valued since $W(v(\bar\t))=W(v(\t))$.

Due to $\overline{L_{\pm,\bar\k}(\bar\z)}={L_{\pm,\k}(\z)}$ and
$\bar A_n=A_n^*=A_n^{-1}$ we get directly from \eqref{Phi}
\begin{equation}\label{phisym}
    \overline{\Phi(\bar \tau)}=-v A_{-1}\Phi(\tau)
\begin{bmatrix}
e^{-ic_-}&0\\0&e^{-ic_+}\end{bmatrix}.
\end{equation}
And, therefore, the following symmetry property of $S$
\begin{equation}\label{Ssym}
\begin{split}
    \overline{S(\bar \tau)}=-&
\begin{bmatrix}
e^{ic_-}&0\\0&e^{ic_+}\end{bmatrix}\Phi(\tau)^{-1}\bar\tau\Phi(\bar\tau)\begin{bmatrix}
e^{-ic_-}&0\\0&e^{-ic_+}\end{bmatrix}\\=&
\begin{bmatrix}
e^{ic_-}&0\\0&e^{ic_+}\end{bmatrix}S(\t)\begin{bmatrix}
e^{-ic_-}&0\\0&e^{-ic_+}\end{bmatrix}.
\end{split}
\end{equation}
is proved.

Let us show that $T_+(\t)=\overline{T_-(\bar\t)}$. We have
\begin{equation}\label{scatelem}
\begin{bmatrix} R_-& T_-\\
T_+& R_+
\end{bmatrix}=\frac{-\bar\tau}{\Delta}
\begin{bmatrix} \frac
1{b_{\k}}{L^{(-1)}_{+,\k}}&e^{ic_+}L_{+,\bar
\k}\\
e^{ic_-}L^{(1)}_{-,\bar\k} & \frac 1{b_{\k}}{L_{-,\k}}
\end{bmatrix}(\tau)
\begin{bmatrix} \frac 1{b_{\k}}{L_{-,\k}}&-e^{ic_+}L_{+,\bar \k}\\
-e^{ic_-}L^{(1)}_{-,\bar\k}& \frac 1{b_{\k}}{L^{(-1)}_{+,\k}}
\end{bmatrix}(\bar\tau),
\end{equation}
where $\Delta=\det\Phi$. Therefore
\begin{equation}\label{s12}
\begin{split}
    T_+=&-e^{ic_-}\bar\tau\frac{
\left\vert\begin{matrix}\frac 1{b_{\k}(\bar\t)}{L_{-,\k}}(\bar\t)&L^{(1)}_{-,\bar\k}(\bar\t)\\
\frac 1{b_{\k}(\t)}{L_{-,\k}}(\t) &L^{(1)}_{-,\bar\k}(\t)
\end{matrix}\right\vert}{\Delta}\\
=&-e^{ic_-}\bar\tau\frac{ \left\vert\begin{matrix}
v^{-1}L^{(1)}_{-,\k}(\bar\tau)-
e^{ic_-} a_{-1}L^{(1)}_{-,\bar \k}(\bar\tau)&L^{(1)}_{-,\bar\k}(\bar\tau) \\
 v^{-1}L^{(1)}_{-,\k}(\tau)-
e^{ic_-} a_{-1}L^{(1)}_{-,\bar\k}(\tau)&L^{(1)}_{-,\bar\k}(\tau)
\end{matrix}\right\vert}{\rho_{-1}\Delta}
=-e^{ic_-}\frac{(v^{-1})'}{\rho_{-1}\Delta}.
\end{split}
\end{equation}
Similarly $T_-=-e^{ic_+}\frac{(v^{-1})'}{\rho_{-1}\Delta}=
e^{i(c_+-c_-)}T_+$. Due to \eqref{Ssym}
$\overline{T_+(\bar\t)}=e^{i(c_+-c_-)}T_+(\t)$, therefore the
symmetry $S^*(\bar\tau)=S(\t)$ is completely proved.

Note also that \eqref{s12} implies the following normalization
\begin{equation}\label{normtp}
    T_+(\k)=e^{ic_-}\frac{b_{\bar\k}(\k)b'_{\k}(\k)}{\rho_{-1}L^{(-1)}_{+,\k}(\k)L_{-,\k}(\k)}=
e^{ic_-}\frac{b'_{\k}(\k)}{L^{(-1)}_{+,\k}(\k)L^{(1)}_{-,\k}(\k)}.
\end{equation}
That is, $T_+(\k)=-ie^{ic_-}|T_+(\k)|$.

Finally we have to prove that $T_\pm$ is a ratio of an outer
function and a Blaschke product. In other words, by \eqref{s12}, we
need to show that the inner part of the Smirnov class function
\begin{equation}\label{determinant}
\begin{split}
   \rho_{-1} b^2_{\k}\Delta=&
\det\begin{bmatrix}L_{-,\k}
&-e^{ic_+}{b_{\k}}L_{+,\bar\k}\\
-e^{ic_-}\rho_{-1}{b_{\k}}L^{(1)}_{-,\bar\k}&
\rho_{-1}L^{(-1)}_{+,\k}\end{bmatrix}\\
=&\left|
\begin{matrix}L_{-,\k}
&-e^{ic_+}{b_{\k}}L_{+,\bar\k}\\
-e^{ic_-}(L_{-,\bar\k}+\bar a_{-1}e^{-ic_-}L_{-,\k})&
{b_{\bar\k}}L_{+,\k}+\bar
a_{-1}e^{ic_+}{b_{\k}}L_{+,\bar\k}\end{matrix}\right|\\
=&\left|
\begin{matrix}L_{-,\k}
&-e^{ic_+}{b_{\k}}L_{+,\bar\k}\\
-e^{ic_-}L_{-,\bar\k}& {b_{\bar\k}}L_{+,\k}\end{matrix}\right|\\
=&{b_{\bar\k}}L_{-,\k}L_{+,\k}-e^{i(c_+
+c_-)}{b_{\k}}L_{+,\bar\k}L_{-,\bar\k}
\end{split}
\end{equation}
is a Blaschke product (actually related to the spectrum of the
associated CMV matrix).

Since
\begin{equation*}
\frac{{b_{\bar\k}}L_{-,\k}L_{+,\k}+e^{i(c_+
+c_-)}{b_{\k}}L_{+,\bar\k}L_{-,\bar\k}}
{{b_{\bar\k}}L_{-,\k}L_{+,\k}-e^{i(c_+
+c_-)}{b_{\k}}L_{+,\bar\k}L_{-,\bar\k}}
=\frac{1+v\theta_+\theta_-}{1-v\theta_+\theta_-}
\end{equation*}
by Theorem \ref{D} the inner part of this fraction is a ration of
two Blaschke products. Thus, any other inner divisor of the inner
part of $\rho_{-1}b^2_{\k}\Delta$ should simultaneously divide the
inner part of the numerator ${b_{\bar\k}}L_{-,\k}L_{+,\k}+e^{i(c_+
+c_-)}{b_{\k}}L_{+,\bar\k}L_{-,\bar\k}$. That is,
${b_{\bar\k}}L_{-,\k}L_{+,\k}$ and $e^{i(c_+
+c_-)}{b_{\k}}L_{+,\bar\k}L_{-,\bar\k}$ possess a nontrivial common
inner factor in  this case. But they are coprime since the inner
part of the first function is supported in the upper half plane and
of the second one in the lower part, see Proposition \ref{prop5.4}.

\end{proof}

\begin{lemma} For every $\z_k\in\cZ$ the following two vectors are
collinear
\begin{equation}\label{collin}
  \begin{bmatrix}
e^{ic_-}L^{(1)}_{-,\bar\k}&\frac 1{b_{\k}}{L_{-,\k}}
\end{bmatrix}(\z_k)
=-\left(\frac 1 {T_-}\right)'(\z_k)\nu_+(\z_k)
  \begin{bmatrix}\frac
1{b_{\k}}{L^{(-1)}_{+,\k}} &e^{ic_+}L_{+,\bar \k}
\end{bmatrix}(\z_k).
\end{equation}
Moreover $\nu_+(\z_k)>0$.

\end{lemma}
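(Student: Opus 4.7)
The plan is to reformulate \eqref{collin} as the vanishing of $\det\Phi$ at $\z_k$, and then read off the stated scalar directly from the identity \eqref{s12}.

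Writing $\Phi=[\Phi_{ij}]$ as in \eqref{Phi2} and $\Delta:=\det\Phi$, a direct inspection shows that the left-hand row vector in \eqref{collin} equals $[-\Phi_{21}(\z_k),\,\Phi_{11}(\z_k)]$ and the right-hand row vector equals $[\Phi_{22}(\z_k),\,-\Phi_{12}(\z_k)]$. These are precisely the two rows of the classical adjugate $\mathrm{adj}(\Phi)(\z_k)$, so the claimed collinearity is equivalent to $\Delta(\z_k)=0$.

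To obtain $\Delta(\z_k)=0$, I would invoke \eqref{s12} rewritten in the form
\begin{equation*}
\frac{1}{T_-(\z)}=-\frac{\rho_{-1}\Delta(\z)}{e^{ic_+}(v^{-1})'(\z)}.
\end{equation*}
The preceding lemma, combined with Theorem \ref{D}, gives $T_-=O/B$ with $O$ outer and $B$ the Blaschke product of $\cZ$; hence $T_-$ has a simple pole at every $\z_k\in\cZ$, and $\Delta$ has a matching simple zero there. To pin down the collinearity scalar, I would equate the second components in \eqref{collin} to obtain
\begin{equation*}
c=\frac{L_{-,\k}(\z_k)}{e^{ic_+}b_{\k}(\z_k)L_{+,\bar\k}(\z_k)}.
\end{equation*}
Differentiating the displayed relation at the simple zero $\z_k$ yields $(1/T_-)'(\z_k)=-\rho_{-1}\Delta'(\z_k)/(e^{ic_+}(v^{-1})'(\z_k))$, so the definition $\nu_+(\z_k):=-c/(1/T_-)'(\z_k)$ reproduces exactly the coefficient in \eqref{collin} and expresses $\nu_+(\z_k)$ as an explicit ratio of the values $L_{\pm,\k}(\z_k)$, $L_{\pm,\bar\k}(\z_k)$, $b_{\k}(\z_k)$, $(v^{-1})'(\z_k)$, $\Delta'(\z_k)$, and the unimodular constants $e^{ic_\pm}$.

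The hard part will be verifying $\nu_+(\z_k)>0$. The key input is that $\cZ\subset\bbR\cap\bbD$, so $\bar\z_k=\z_k$; combining this with the symmetry $\overline{L_{\bar\k}(\bar\z)}=L_{\k}(\z)$ extracted in Theorem \ref{reprth2} (and the analogous symmetries for the shifted factors $L^{(\pm 1)}_{\pm,\cdot}$) and with the reality property $\overline{b_{\k}(\bar\z)}=b_{\bar\k}(\z)$ coming from \eqref{blfackappa}, each bilinear factor appearing in the formula for $\nu_+(\z_k)$ collapses to a modulus squared. The residual unimodular phases are fixed through the normalization $T_+(\k)=-ie^{ic_-}|T_+(\k)|$ from \eqref{normtp} together with its counterpart for $T_-$, and the fact that the ratio $\Delta'(\z_k)/(v^{-1})'(\z_k)$ inherits a definite sign from its identification with the residue of $-\rho_{-1}/(e^{ic_+}T_-)$ at $\z_k$. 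Once these phase cancellations are made explicit, $\nu_+(\z_k)$ emerges as a positive real number.
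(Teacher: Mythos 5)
Your reduction of the collinearity to $\Delta(\z_k)=0$ is correct and is essentially the paper's own mechanism: the two row vectors in \eqref{collin} are exactly the rows of $\mathrm{adj}(\Phi)$, and the paper extracts them from $\Phi^{-1}=\Delta^{-1}\mathrm{adj}(\Phi)$ while computing the residue of the resolvent at $t_k=v(\z_k)$. Two caveats. First, deducing $\Delta(\z_k)=0$ from $T_-=O/B$ leans on the preceding lemma, whose proof identifies the inner part of $b_\k^2\Delta$ with the Blaschke product over the point spectrum only a posteriori; it is cleaner (and is what the paper does) to note that $t_k$ is an eigenvalue, so $\cR$ has a pole there and $\det\bigl(I-vA_{-1}^*\,\mathrm{diag}(\theta_-^{(1)},\theta_+)\bigr)$, hence $\Delta$, vanishes at $\z_k$. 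Second, before you can solve for the scalar by equating second components you must know the right-hand vector in \eqref{collin} is nonzero; the paper gets this from $\Sigma(t_k)\neq 0$ together with $L_{+,\k}(\z_k)=\overline{L_{+,\bar\k}(\z_k)}$.

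The genuine gap is the positivity of $\nu_+(\z_k)$. The symmetry $\overline{L_{\bar\k}(\bar\z)}=L_{\k}(\z)$ at a real $\z_k$ pairs $L_{+,\k}(\z_k)$ with $\overline{L_{+,\bar\k}(\z_k)}$ and $L_{-,\k}(\z_k)$ with $\overline{L_{-,\bar\k}(\z_k)}$, but the ratio you actually need to control, e.g. $L_{-,\k}(\z_k)/L_{+,\bar\k}(\z_k)$, mixes the $+$ and $-$ families and does not collapse to a modulus squared. Likewise $T_-$ is genuinely complex-valued, so its residue at $\z_k$ (equivalently $\Delta'(\z_k)/(v^{-1})'(\z_k)$) has no "definite sign"; phase bookkeeping of this kind can at best show $\nu_+(\z_k)$ is real. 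The indispensable input, which your sketch never invokes, is the positivity of the spectral measure: the paper computes the residue of $\cR$ at $t_k$ to obtain $\Sigma(t_k)=u^*u\,\nu_+(\z_k)$ with $u$ the row vector on the right of \eqref{collin} (formula \eqref{sigmaLsym}), and then $\Sigma(t_k)\ge 0$, $\Sigma(t_k)\neq 0$ force $\nu_+(\z_k)>0$. Without appealing to $\Sigma(t_k)\ge 0$ (or an equivalent Herglotz property of the resolvent at the eigenvalue), the positivity claim cannot be closed.
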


\begin{proof}By definition \eqref{ir} and \eqref{Rwiththeta} we have
$$
t_k\Sigma(t_k)= \left\{{(t_k-v)}\left(I-vA^*_{-1}\begin{bmatrix}
\theta_-^{(1)}(v)&0\\0&\theta_+(v)
\end{bmatrix}\right)^{-1}\right\}_{v=t_k}.
$$
Since
$$
\rho_{-1}v\Phi(\t)=
 \left(I-vA^*_{-1}\begin{bmatrix}
\theta_-^{(1)}(v)&0\\0&\theta_+(v)
\end{bmatrix}\right)
\begin{bmatrix} L^{(1)}_{-,\k}&0\\
0&L_{+,\k}
\end{bmatrix}(\tau),
$$
we get
\begin{equation}
\begin{split}
t_k\Sigma(t_k)&=\begin{bmatrix} L^{(1)}_{-,\k}&0\\
0&L_{+,\k}
\end{bmatrix}(\z_k)\left
\{\frac {(t_k-v)}{\rho_{-1}v}\Phi^{-1}(\t)\right\}_{\t=\z_k}
\\
&=
\begin{bmatrix} L^{(1)}_{-,\k}&0\\
0&L_{+,\k}
\end{bmatrix}(\z_k)
\begin{bmatrix}\frac
1{b_{\k}}{L^{(-1)}_{+,\k}}
&e^{ic_+}L_{+,\bar \k}\\
e^{ic_-}L^{(1)}_{-,\bar\k}&\frac 1{b_{\k}}{L_{-,\k}}
\end{bmatrix}(\z_k)
\left\{\frac {(t_k-v)}{\rho_{-1}v\Delta}\right\}_{\t=\z_k}
\\
&=
\begin{bmatrix} L^{(1)}_{-,\k}&0\\
0&L_{+,\k}
\end{bmatrix}(\z_k)
\begin{bmatrix}\frac
1{b_{\k}}{L^{(-1)}_{+,\k}}
&e^{ic_+}L_{+,\bar \k}\\
e^{ic_-}L^{(1)}_{-,\bar\k}&\frac 1{b_{\k}}{L_{-,\k}}
\end{bmatrix}(\z_k)
\frac {-v'(\z_k)}{\rho_{-1}t_k\Delta'(\z_k)},
\end{split}
\end{equation}
or, using $T_-=-e^{ic_+}\frac{(v^{-1})'}{\rho_{-1}\Delta}$,
\begin{equation}\label{sigmaL}
    \Sigma(t_k)=
\begin{bmatrix} L^{(1)}_{-,\k}&0\\
0&L_{+,\k}
\end{bmatrix}(\z_k)
\begin{bmatrix}\frac
1{b_{\k}}{L^{(-1)}_{+,\k}}
&e^{ic_+}L_{+,\bar \k}\\
e^{ic_-}L^{(1)}_{-,\bar\k}&\frac 1{b_{\k}}{L_{-,\k}}
\end{bmatrix}(\z_k)
\left\{-\left(\frac {e^{ic_+}} {T_-}\right)'(\z_k)\right\}^{-1}.
\end{equation}
From this formula we conclude the vector in the RHS \eqref{collin}
does not vanish. Otherwise, by
$L_{+,\k}(\z_k)=\overline{L_{+,\bar\k}(\z_k)}$, we have
$\Sigma(t_k)=0$, which is impossible. On the other hand rank of the
second matrix in \eqref{sigmaL} is one, therefore \eqref{collin} is
proved.

Now, making of use \eqref{collin} and the symmetry of $T_-$, we get
from \eqref{sigmaL}
\begin{equation}\label{sigmaLsym}
    \Sigma(t_k)=
\begin{bmatrix}\overline{\frac
1{b_{\k}(\z_k)}{L^{(-1)}_{+,\k}(\z_k)}}
\\e^{-ic_+}\overline{L_{+,\bar \k}(\z_k)}
\end{bmatrix}
\begin{bmatrix}\frac
1{b_{\k}(\z_k)}{L^{(-1)}_{+,\k}(\z_k)} &e^{ic_+}L_{+,\bar \k}(\z_k)
\end{bmatrix}
\nu_+(\z_k),
\end{equation}
here $\Sigma(t_k)\ge 0$ implies $\nu_+(\z_k)>0$.

\end{proof}

\begin{remark}
Similarly
\begin{equation}\label{collinm}
 -\left(\frac 1 {T_+}\right)'(\z_k)\nu_-(\z_k) \begin{bmatrix}
e^{ic_-}L^{(1)}_{-,\bar\k}&\frac 1{b_{\k}}{L_{-,\k}}
\end{bmatrix}(\z_k)
=
  \begin{bmatrix}\frac
1{b_{\k}}{L^{(-1)}_{+,\k}} &e^{ic_+}L_{+,\bar \k}
\end{bmatrix}(\z_k).
\end{equation}
Therefore \eqref{3.10d} holds for $\nu_\pm$ defined by
\eqref{collin} and \eqref{collinm}.

\end{remark}

\section{From the spectral representation to the scattering representation }

In this section an essential part of Theorem \ref{thszego} will be
proved.

\begin{theorem}\label{th61}
Let $\fA\in\fae$. Define $S$ by \eqref{scat} and $\nu_\pm$ by
\eqref{collin} and \eqref{collinm}. Then
\begin{equation}\label{tls04bis}
e^\pm(n,\z)=\begin{cases} b_\k^{m}(\z)
b_{\bar\k}^{m}(\z)L_{\pm,\bar\k}(n,\z)e^{ic_\pm} ,
&n=2m\\
b_\k^{m}(\z) b_{\bar\k}^{m+1}(\z)L_{\pm,\k}(n,\z) ,&n=2m+1
\end{cases}
\end{equation}
is an orthonormal basis in $L^2_{\alpha_\pm}$,
$\alpha_\pm=\{R_\pm,\nu_\pm\}$.
\end{theorem}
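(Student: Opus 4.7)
The plan is to construct a unitary $U\colon L^2_{d\Sigma}\to L^2_{\alpha_+}$ that intertwines multiplication by $v$ and carries the spectral images of the standard basis $\{|n\rangle\}\subset l^2(\bbZ)$ exactly to $\{e^+(n,\cdot)\}$. Since $\fA$ is unitarily equivalent to $M_v$ on $L^2_{d\Sigma}$ via the spectral theorem and \eqref{ir}, and since $|-1\rangle\oplus|0\rangle$ is cyclic by \eqref{3term}, the spectral images $\{P_n(v)\}$ of $\{|n\rangle\}$ form an orthonormal basis of $L^2_{d\Sigma}$. Exhibiting $U$ and verifying $U(P_n)=e^+(n,\cdot)$ then automatically gives both orthonormality and completeness of $\{e^+(n,\cdot)\}$ in $L^2_{\alpha_+}$.

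The transformation $U$ is built from the factorization \eqref{winv1}--\eqref{Phi2}. On the AC part, the map takes a $2$-vector $F(t)=(f_{-1}(t),f_0(t))^T$ and produces a scalar function $(UF)(\t)$ by pairing $F(v(\t))$ against an appropriate row-vector assembled from $\Phi(\t)$ (times the square root of the Jacobian $dm(v(\t))/dm(\t)$). On the discrete set $\cZ$, the rank-one structure of $\Sigma(t_k)$ from \eqref{sigmaLsym} forces $(UF)(\z_k)$ up to a positive scalar, fixed so that $|(UF)(\z_k)|^2\nu_+(\z_k)=F(\z_k)^*\Sigma(t_k)F(\z_k)$. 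Isometry on the AC part then reduces to the matrix identity \eqref{winv1}, once one combines \eqref{1.22d} with the definition \eqref{scat} of $S$ and the symmetry \eqref{phisym} to match the coupling between $\t$- and $\bar\t$-values in \eqref{7.9d}. Intertwining with $M_v$ is built in, and density of the range follows from cyclicity of $|-1\rangle\oplus|0\rangle$.

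Identification $U(P_n)=e^+(n,\cdot)$ proceeds by induction. The base case $n=-1,0$ amounts to evaluating $U$ on $(1,0)^T$ and $(0,1)^T$: using \eqref{bkp} to absorb $\rho_{-1}$ in the Jacobian, one obtains $e^+(-1,\z)=b_\k^{-1}L^{(-1)}_{+,\k}(\z)$ and $e^+(0,\z)=e^{ic_+}L_{+,\bar\k}(\z)$, matching \eqref{tls04bis}. The inductive step combines two recursions: the CMV three-term relations \eqref{3term} on $l^2(\bbZ)$ translate through $U$ into \eqref{evp} for the images, while the functional recursion \eqref{recLnn} for $L_{+,\bar\k}(n,\cdot)$ and $L_{+,\k}(n,\cdot)$ is exactly what is needed for the ansatz \eqref{tls04bis} to satisfy \eqref{evp} with the correct Schur parameters $a_n$ of $\fA$. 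The induction propagates in both directions.

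The main obstacle is the algebraic bookkeeping at the AC part: the bilinear form in \eqref{7.9d} couples $f(\t)$ and $f(\bar\t)$, and compatibility with the $\Phi$-factorization requires the symmetry \eqref{phisym} together with $S^*(\bar\t)=S(\t)$ from Proposition \ref{propscat}. Once this is in place, the corresponding statement for $\{e^-(n,\cdot)\}$ follows by applying the same argument to $\iota\fA\iota$, using the involution \eqref{involution}, which exchanges the roles of $\alpha_+$ and $\alpha_-$.
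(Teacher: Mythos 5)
Your proposal is correct and follows essentially the same route as the paper: the paper also builds the unitary from the $\Phi$-factorization \eqref{winv1} and the rank-one structure \eqref{sigmaLsym} (packaged as the lemma identifying the resolvent matrix function of the pair $e^+(-1,\cdot),e^+(0,\cdot)$ in $L^2_{\alpha_+}$ with $\cR(w)$), and then identifies $\cF^+|n\rangle=e^+(n,\cdot)$ for all $n$ via the recursions \eqref{recLnn}, \eqref{reprapeven}--\eqref{reprapodd}. The only difference is cosmetic: you construct $U$ explicitly and check isometry pointwise, while the paper deduces unitarity from equality of the two resolvent matrix functions on the cyclic pair.
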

The proof is based on the following lemma.

\begin{lemma}
For $f^+\in L^2_{\alpha_+}$
\begin{equation}\label{mar31}
\begin{bmatrix}
    \left\langle
\frac{v(\t)+w}{v(\t)-w}f^+,e^+(-1,\t)\right\rangle_{\alpha_+}\\
\left\langle
\frac{v(\t)+w}{v(\t)-w}f^+,e^+(0,\t)\right\rangle_{\alpha_+}
\end{bmatrix}=
\int\frac{t+w}{t-w}d\Sigma(t)\tilde f(t),
\end{equation}
where
\begin{equation}\label{mar32}
    \tilde f(t):=\frac 1{\Delta(\t)}\Phi
\begin{bmatrix}
          {f^+}\\  f^-
               \end{bmatrix}
(\t),\quad t=v(\t), \ \t\in \bbT_-,
\end{equation}
\begin{equation}\label{mar33}
    \tilde f(t_k):=\begin{bmatrix}\overline{e^+(-1,\z_k)}\\
\overline{e^+(0,\z_k)}\end{bmatrix}\frac{f^+(\z_k)}
{|e^+(-1,\z_k)|^2+ |e^+(0,\z_k)|^2}, \quad t_k=v(\z_k), \ \z_k\in
\cZ.
\end{equation}

\end{lemma}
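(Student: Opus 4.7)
The plan is to recognize the lemma as the component identity of a unitary intertwining map $U: L^2_{\alpha_+} \to L^2_{d\Sigma}$ between the scattering representation of $\fA$ (multiplication by $v(\t)$ in $L^2_{\alpha_+}$) and its spectral representation (multiplication by $t$ in $L^2_{d\Sigma}$). Define $Uf^+ := \tilde f$ by \eqref{mar32} on the a.c.\ part and by \eqref{mar33} on the atomic part. Once $U$ is shown to be a unitary operator that intertwines the two multiplication operators and that sends $e^+(-1,\cdot), e^+(0,\cdot)$ to the canonical cyclic vectors $\bigl[\begin{smallmatrix}1\\0\end{smallmatrix}\bigr], \bigl[\begin{smallmatrix}0\\1\end{smallmatrix}\bigr]$ of $L^2_{d\Sigma}$, the identity \eqref{mar31} is obtained by applying $U$ to $\frac{v(\t)+w}{v(\t)-w}f^+$ and taking inner products with those cyclic vectors.

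The isometry of $U$ is checked in two pieces. On the absolutely continuous part, the factorization \eqref{winv1} is pivotal: $W(v(\t))^{-1}\frac{dm(v(\t))}{dm(\t)} = \rho_{-1}^2\Phi(\t)\Phi^*(\t)$, so after the substitution $t = v(\t)$, $\t\in\bbT_-$, one finds $\tilde f^*(t)W(t)\tilde f(t)\,dm(t) = \frac{|v'(\t)|^2}{\rho_{-1}^2|\Delta(\t)|^2}\bigl(|f^+(\t)|^2+|f^-(\t)|^2\bigr)\,dm(\t)$. By the calculation of $T_\pm$ in \eqref{s12} (together with $|T_+|=|T_-|$), this equals $\bigl(|T_+f^+|^2 + |T_-f^-|^2\bigr)(\t)\,dm(\t)$, and matching with \eqref{1.22d} identifies its integral over $\bbT_-$ with the integral part of $\|f^+\|_{\alpha_+}^2$, the factor $\tfrac12$ being absorbed by the fact that $\bbT_-$ is half of $\bbT$ together with the $\pm$-symmetries of the integrand. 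On the atomic part, the rank-one formula \eqref{sigmaLsym}, the collinearity \eqref{collin}, and the definition \eqref{mar33} give $\tilde f(t_k)^*\Sigma(t_k)\tilde f(t_k) = \nu_+(\z_k)|f^+(\z_k)|^2$ after a short computation. The intertwining property is immediate on both parts: on $\bbT_-$ because $\Phi$ is independent of $f^+$ (so $\tilde{(vf^+)}(t) = t\,\tilde f(t)$ pointwise), and on atoms because the correspondence $t_k = v(\z_k)$ is pointwise.

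It remains to verify $U(e^+(-1,\cdot)) = \bigl[\begin{smallmatrix}1\\0\end{smallmatrix}\bigr]$ and $U(e^+(0,\cdot)) = \bigl[\begin{smallmatrix}0\\1\end{smallmatrix}\bigr]$. Taking $f^+ = e^+(-1,\t) = L^{(-1)}_{+,\k}(\t)/b_\k(\t)$ from \eqref{tls04bis}, the $\pm$-duality \eqref{4.10d} gives $f^- = e^-(0,\t) = e^{ic_-}L_{-,\bar\k}(\t)$; plugging these into $\Phi\bigl[\begin{smallmatrix}f^+\\f^-\end{smallmatrix}\bigr]$ and simplifying with the recurrences \eqref{bkp}, \eqref{bkm} and the explicit form \eqref{determinant} of $\rho_{-1}b_\k^2\Delta$ produces the column $\Delta(\t)\bigl[\begin{smallmatrix}1\\0\end{smallmatrix}\bigr]$; division by $\Delta$ yields the claim. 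The analogous argument handles $e^+(0,\cdot)$, and the atom values are consistent with \eqref{mar33} via \eqref{sigmaLsym}. The main obstacle I expect is not conceptual but bookkeeping: the algebraic simplifications in this last step require careful tracking of the phase factors $e^{ic_\pm}$ and of the integer shifts $L^{(n)}$ of $L_{\pm,\k}, L_{\pm,\bar\k}$, while the a.c.\ isometry check requires handling correctly the factor-of-two between the $\bbT_-$ and $\bbT$ integrations together with the $\pm$-symmetries built into the definition of $\|\cdot\|_{\alpha_+}$.
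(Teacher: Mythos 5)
Your proposal is correct and, at bottom, rests on exactly the same identities as the paper's proof --- the factorization $W=(\Phi^{-1})^*\Phi^{-1}\,|v'|/\rho_{-1}^2$ from \eqref{winv1}, the formula $|T_+|^2=|T_-|^2=|v'|^2/(\rho_{-1}^2|\Delta|^2)$ from \eqref{s12}, the rank--one representation \eqref{sigmaLsym} of $\Sigma(t_k)$ together with \eqref{collin}, and the polarization identity \eqref{1.22d} --- but it packages them differently. The paper does not pass through any unitarity statement: it simply expands the two scalar products on the left of \eqref{mar31} using the definition \eqref{7.9d} of $\langle\cdot,\cdot\rangle_{\alpha_+}$, converts the $\bbT$--integral into a $\bbT_-$--integral of $\bigl[\begin{smallmatrix}T_+f^+\\ T_-f^-\end{smallmatrix}\bigr]$ against the matrix built from $e^{\pm}(-1,\t),e^{\pm}(0,\t)$ via \eqref{scate}, and then recognizes the result as $\int\frac{t+w}{t-w}\,d\Sigma\,\tilde f$ through $\Phi^{-1}=\frac1\Delta\bigl[\begin{smallmatrix}e^+(-1)&e^+(0)\\ e^-(0)&e^-(-1)\end{smallmatrix}\bigr]$ (the unitarity of $\cF^+$ is only \emph{deduced afterwards}, in Theorem \ref{th61}, from the coincidence of resolvent matrix functions that this lemma supplies). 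You instead front--load the unitarity of $f^+\mapsto\tilde f$, which forces you to verify in addition that $e^+(-1,\cdot)$ and $e^+(0,\cdot)$ are sent to the coordinate vectors; that extra step does go through --- it reduces to $\det\bigl[\begin{smallmatrix}e^+(-1)&e^+(0)\\ e^-(0)&e^-(-1)\end{smallmatrix}\bigr]=\Delta$, i.e.\ to \eqref{phie} --- but it is work the direct expansion avoids. Two bookkeeping points you should make explicit: the identity $v(\bar\t)=v(\t)$ for $\t\in\bbT$ (conjugation swaps the two banks of the cut $E$) is what makes multiplication by $v$ commute with the $\pm$--duality \emph{and} makes the integrand $|T_+f^+|^2+|T_-f^-|^2$ conjugation--symmetric, which is the justification for trading $\frac12\int_{\bbT}$ for $\int_{\bbT_-}$; and in your computation of the image of $e^+(-1,\cdot)$ the dual element is $e^-(0,\t)=e^{ic_-}L^{(1)}_{-,\bar\k}(\t)$ as in \eqref{e01}, not $e^{ic_-}L_{-,\bar\k}(\t)$.
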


\begin{proof}
Note that in this notations (see \eqref{sigmaLsym})
\begin{equation}\label{sgmae}
    \Sigma(t_k)=
\begin{bmatrix}\overline{e^+(-1,\z_k)}\\
\overline{e^+(0,\z_k)}
\end{bmatrix}
\begin{bmatrix}e^+(-1,\z_k)&e^+(0,\z_k)
\end{bmatrix}\nu_+(\z_k),
\end{equation}
and (see \eqref{scat})
\begin{equation}\label{scate}
    \begin{bmatrix} e^-(-1,\t)&-e^+(0,\t)\\
-e^-(0,\t) &e^+(-1,\t)
\end{bmatrix}
\begin{bmatrix} R_-& T_-\\
T_+& R_+
\end{bmatrix}(\t)=-\bar\t
\begin{bmatrix} e^-(-1,\bar\t)&-e^+(0,\bar\t)\\
-e^-(0,\bar\t) &e^+(-1,\bar\t)
\end{bmatrix}.
\end{equation}
Therefore, by definition of the scalar product in $L^2_{\alpha_+}$,
we have
\begin{equation*}
\begin{split}
&\begin{bmatrix}\left\langle
\frac{v(\t)+w}{v(\t)-w}f^+,e^+(-1,\t)\right\rangle\\
\left\langle \frac{v(\t)+w}{v(\t)-w}f^+,e^+(0,\t)\right\rangle
\end{bmatrix}=
 \sum_{\z_k\in\cZ}
\begin{bmatrix}\overline{e^+(-1,\z_k)}f^+(\z_k)\\
\overline{e^+(0,\z_k)}f^+(\z_k)
\end{bmatrix}
\frac{v(\z_k)+w}{v(\z_k)-w}\nu_+(\z_k)\\
 +&
\int_{\bbT_-}\left\{\begin{bmatrix} T_+(\t)e^+(-1,\t)&T_+(\t)e^+(0,\t)\\
T_-(\t) e^-(0,\t)&T_-(\t) e^-(-1,\t)
\end{bmatrix}\right\}^*
\begin{bmatrix}
          {T_+f^+}\\  T_-f^-
               \end{bmatrix}
(\t)\frac{v(\t)+w}{v(\t)-w}dm(\t).
\end{split}
\end{equation*}
Using  \eqref{sgmae}, definition \eqref{mar33} and
\begin{equation}\label{phie}
    \Phi^{-1}(\t)=\frac 1{\Delta}\begin{bmatrix}e^+(-1,\t)&e^+(0,\t)\\
e^-(0,\t)&e^-(-1,\t)
\end{bmatrix},
\end{equation}
we get
\begin{equation*}
\begin{split}
\begin{bmatrix}
\left\langle
\frac{v(\t)+w}{v(\t)-w}f^+,e^+(-1,\t)\right\rangle\\
\left\langle \frac{v(\t)+w}{v(\t)-w}f^+,e^+(0,\t)\right\rangle
\end{bmatrix}
=& \sum_{t_k\in X}\frac{t_k+w}{t_k-w}\Sigma(t_k)\tilde f(t_k)
\\
 +&
\int_{\bbT_-}\overline {\Delta(\t)}\left\{\Phi^{-1}(\t) \right\}^*
\begin{bmatrix}
          |T_+|^2{f^+}\\  |T_-|^2f^-
               \end{bmatrix}
(\t)\frac{v(\t)+w}{v(\t)-w}dm(\t)\\
= \sum_{t_k\in X}\frac{t_k+w}{t_k-w}\Sigma(t_k) \tilde f(t_k)
 +&
\int_{E}\frac{t+w}{t-w}W(t)\left\{\frac{1}{\Delta}\Phi
\begin{bmatrix}
          {f^+}\\  f^-
               \end{bmatrix}\right\}
(\t)dm(t),
\end{split}
\end{equation*}
since $ W=(\Phi^{-1})^*\Phi^{-1}\frac{|v'|}{\rho_{-1}^2}$ and
$|T_-|^2=|T_+|^2=\frac{|v'|^2}{\rho_{-1}^2|\Delta|^2}$.

\end{proof}

\begin{proof}[Proof of Theorem \ref{th61}]
It was shown that $e^+(-1,\t)$, $e^+(0,\t)$ form a cyclic subspace
for the multiplication operator by $v(\t)$ in $L^2_{\alpha_+}$,
moreover, the resolvent matrix function
\begin{equation*}
    (\cE^+)^*\frac{v(\t)+w}{v(\t)-w}\cE^+,\quad \cE^+
\begin{bmatrix}
c_{-1}\\c_0
\end{bmatrix}:=e^+(-1,\t)c_{-1}+ e^+(0,\t)c_{0},
\end{equation*}
coincides with $\cR(w)$ \eqref{rf}. Therefore the operator
$\cF^+:l^2(\bbZ)\to L^2_{\alpha_+}$, defined by
\begin{equation*}
    \cF^+(\fA-w)^{-1}|n\rangle=(v(\t)-w)^{-1}e^+(n,\t), \quad
n=-1,0,
\end{equation*}
is unitary.

Recurrences \eqref{recLnn} implies \eqref{reprapeven},
\eqref{reprapodd}, and therefore, \eqref{evp}. Thus
\begin{equation*}
    \cF^+|n\rangle=e^+(n,\t), \quad
n\in\bbZ,
\end{equation*}
and the theorem is proved.
\end{proof}

\begin{proof}[Proof of Proposition \ref{prop120}]
Note that $e^+(n,\t)$'s are in the Smirnov class for $n\in \bbZ_+$.
Therefore $(BT_+)(\t)e^+(n,\t)\in L^2$ implies
$(BT_+)(\t)e^+(n,\t)\in H^2$. Thus
\begin{equation*}
    \cF^+(\bbZ_+)\subset \hat{H}^2_{\alpha_+}.
\end{equation*}

In the same way $\cF^-(\bbZ_-)\subset \hat{H}^2_{\alpha_-}$.
Therefore, due to the duality Theorem \ref{t1.3},
\begin{equation*}
   \check{H}^2_{\alpha_+}\subset  \cF^+(\bbZ_+).
\end{equation*}

\end{proof}

\begin{remark}
Let us note the following fact
\begin{equation}\label{limh}
    \lim_{n\to-\infty}\cF^+(\bbZ_{+,n})=L^2_{\alpha_+}, \quad
\lim_{n\to\infty}\cF^+(\bbZ_{+,n})=\{0\},
\end{equation}
where $\bbZ_{+,n}:=\{m\in\bbZ, m\ge n\}$.

Also, in the standard way,
\begin{equation}\label{lrepk}
    l^{n,+}(\z,\z_0):=\sum_{m=n}^\infty
e^+(m,\z)\overline{e^+(m,\z_0)},\quad \z,\z_0 \in\bbD,
\end{equation}
is the reproducing kernel in $\cF^+(\bbZ_{+,n})$.
 In particular,
\begin{equation*}
L_{+,\bar\k}(\z)=\frac{l^{+}(\z,\bar\k)}{\sqrt{l^{+}(\bar\k,\bar\k)}},
 \quad
L_{+,\k}(\z)=\frac{l^{+}(\z,\k)}{\sqrt{l^{+}(\k,\k)}},
\end{equation*}
where $l^{+}(\z,\z_0):=l^{0,+}(\z,\z_0)$.
\end{remark}

\begin{proof}[Proof of \eqref{defnup}] Let
\begin{equation*}
    \delta_k(\t)=\begin{cases}\frac 1{\nu_+(\z_k)},&\t=\z_k,\\
0,&\t\in(\bbT\bigcup\cZ)\setminus\{\z_k\}.
\end{cases}
\end{equation*}
Then
\begin{equation*}
    \langle f^+(\t),\delta_k(\t)\rangle_{\alpha_+}=f^+(\z_k)
\end{equation*}
for every $f^+\in\cF^+(\bbZ_{+,n})$. Therefore the projection  of
$\delta_k(\t)$ onto $\cF^+(\bbZ_{+,n})$ is the reproducing kernel
$l^{n,+}(\t,\z_k)$. Since by \eqref{limh}
\begin{equation*}
    \|\delta_k(\t)\|=\lim_{n\to-\infty}\|l^{n,+}(\t,\z_k)\|,
\end{equation*}
we get by \eqref{lrepk}
\begin{equation*}
    \frac{1}{\nu_+(\z_k)}=\lim_{n\to-\infty}\sum_{m=n}^\infty
|e^+(m,\z_k)|^2=\sum_{m=-\infty}^\infty |e^+(m,\z_k)|^2.
\end{equation*}

\end{proof}

Thus, to complete the proof of Theorem \ref{thszego}, we have to
show asymptotics \eqref{asppm}.

\section{Asymptotics}

In this section we prove the main claim of Theorem \ref{thszego}.
Recall briefly notations. With $\fA\in\fae$ we associate the Schur
functions $\theta_+$, $\theta_-^{(1)}$. They belong to $\thte$ and,
therefore, possess the special representation \eqref{thetaL}. We put
\begin{equation}\label{e01}
\begin{split}
    e^{+}(0,\t)=e^{ic_+}L_{+,\bar\k}(\t),\quad
\rho_{-1}e^{+}(-1,\t)= \frac 1 {v(\t)} L_{+,\k}(\t)+e^{ic_+}\bar
a_{-1}L_{+,\bar\k}(\t),\\
e^{-}(0,\t)=e^{ic_-}L^{(1)}_{-,\bar\k}(\t),\quad
\rho_{-1}e^{-}(-1,\t)= \frac 1 {v(\t)} L^{(1)}_{-,\k}(\t)-e^{ic_-}
a_{-1}L^{(1)}_{-,\bar\k}(\t).
\end{split}
\end{equation}
The scattering matrix $S$ is defined by \eqref{scate} and the
measures $\nu_{\pm}$ on $\cZ$ are defined by
\begin{equation*}
    \nu_{\pm}(\z_k)(|e^{\pm}(-1,\z_k)|^2+|e^{\pm}(0,\z_k)|^2)=\tr\,
\Sigma(t_k).
\end{equation*}
Our goal is to prove asymptotics \eqref{asppm}, \eqref{asmpm} for
the systems defined by recurrence relations \eqref{evp}, \eqref{evm}
with the initial data $e^{\pm}(n,\t)$, $n=-1,0$.

This is a standard fact that such asymptotics can be obtained from a
convergence of a certain system of analytic functions just in a one
fixed point of their domain. More specifically, our first step is a
reduction to the convergence of the reproducing kernels
$L_{\pm,\z_0}(n,\z_0)$ to the standard one $K_{\z_0}(\z_0)$ in a
fixed point of the unite disk.

\begin{lemma} Let $\chi_{\bbT}(\t)$, $\t\in\bbT\cup\cZ$, be the characteristic function of the set $\bbT$.
Then \eqref{asppm}, \eqref{asmpm} can be deduced from
\begin{equation}\label{asl}
    \lim_{n\to\infty}
\|\chi_{\bbT}\{L_{\pm,\z_0}(n,\t)-K_{\z_0}(\t)\}\|_{{\alpha^{(n)}_\pm}}=0,
\quad\z_0\in\bbD.
\end{equation}
\end{lemma}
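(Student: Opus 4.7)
Proof plan. Write each scattering basis element $e^\pm(n,\tau)$ and its free-CMV counterpart $\fe_{n,c_\pm}(\tau)$ in the forms \eqref{tls04bis} and \eqref{ts4bis}, respectively. Since the prefactor $b_\k^m(\tau)b_{\bar\k}^m(\tau)$ (or $b_\k^m(\tau)b_{\bar\k}^{m+1}(\tau)$ in the odd case) is unimodular on $\bbT$, the error on the unit circle decomposes as
\[
e^{\pm}(n,\tau)-\fe_{n,c_\pm}(\tau) \;=\; u^{\pm}_n(\tau)\bigl(L_{\pm,\z_0}(n,\tau)-K_{\z_0}(\tau)\bigr),
\]
with $u^\pm_n$ unimodular on $\bbT$ and $\z_0\in\{\k,\bar\k\}$ selected by the parity of $n$. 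In particular, the pointwise absolute value of the error on $\bbT$ equals $|L_{\pm,\z_0}(n,\tau)-K_{\z_0}(\tau)|$.

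The second step is a norm conversion. Applying the identity \eqref{1.22d} to the shifted scattering data $\alpha_\pm^{(n)}$---legitimate since by Lemma \ref{l1.4} the transmission coefficients $T_\pm$ are invariant under the shift $\alpha_\pm\mapsto\alpha_\pm^{(n)}$---gives the one-sided estimate
\[
\|T_\pm f\|_{L^2(\bbT)}\;\le\;\sqrt{2}\,\|f\|_{\alpha_\pm^{(n)},\bbT}.
\]
Taking $f=L_{\pm,\z_0}(n,\cdot)-K_{\z_0}(\cdot)$ and using that multiplication by the unimodular $u^\pm_n$ preserves $L^2(\bbT)$-norms, the second lines of both \eqref{asppm} and \eqref{asmpm} follow directly from the hypothesis \eqref{asl}.

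For the first line of \eqref{asppm} I use duality. From \eqref{splitf} applied to $\tilde f=|-n-1\rangle$, combined with $\cF^+|-n-1\rangle=e^+(-n-1,\cdot)$ and $\cF^-|-n-1\rangle=e^-(n,\cdot)$ from \eqref{screprm},
\[
T_+(\tau)\,e^+(-n-1,\tau) \;=\; \bar\tau\, e^-(n,\bar\tau)+R_-(\tau)\,e^-(n,\tau),
\]
so the discrepancy against the target is $\bar\tau h^-(\bar\tau)+R_-(\tau)h^-(\tau)$ with $h^-:=e^-(n,\cdot)-\fe_{n,c_-}(\cdot)$. By \eqref{4.10d} this quantity equals $T_+ h^+$ for the $+$-image $h^+$ of $h^-$ under the $\pm$-mapping, and \eqref{1.22d} bounds its $L^2(\bbT)$-norm by $\sqrt{2}\,\|h^-\|_{\alpha_-,\bbT}$. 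The main obstacle is that the hypothesis \eqref{asl} only delivers a bound in the shifted norm $\|\cdot\|_{\alpha_-^{(n)},\bbT}$; this is reconciled by Lemma \ref{l1.4} with $w=(b_\k b_{\bar\k})^m$, which satisfies $(b_\k b_{\bar\k})_*=b_\k b_{\bar\k}$, and which therefore supplies a unitary intertwining $L^2_{\alpha_-^{(n)}}\to L^2_{\alpha_-}$ by multiplication by $w$, with a matching intertwining on the $+$-side that pairs $R_-^{(n)}$ with $R_-$. Transporting $h^-$ and its $+$-image along this intertwining before invoking \eqref{1.22d} and \eqref{4.10d}, and then applying Steps 1--2 to the $-$-system, yields the required convergence. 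The first line of \eqref{asmpm} is obtained symmetrically via the involution \eqref{involution}.
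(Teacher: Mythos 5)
Your argument is correct and is essentially the paper's own proof: the paper likewise combines the identity $\bar\t e^\pm(n,\bar\t)+R_\pm(\t)e^\pm(n,\t)=T_\mp(\t)e^\mp(-n-1,\t)$ with \eqref{1.22d} to recognize the two lines of \eqref{asppm}/\eqref{asmpm} as the circle part of $\|e^\pm(n,\cdot)-\fe_{n,c_\pm}\|_{\alpha_\pm}$, and then strips off the unimodular prefactors via \eqref{tls04bis}, \eqref{ts4bis} and the shift \eqref{shift} to land on \eqref{asl}. Your only cosmetic divergences are that you prove just the needed implication rather than the equivalence, and that you make the shift reconciliation explicit through Lemma \ref{l1.4} (where for odd $n$ one takes $w=b_\k^m b_{\bar\k}^{m+1}$ rather than $(b_\k b_{\bar\k})^m$), which the paper leaves implicit.
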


\begin{proof}
Using definition \eqref{scate} and the recurrence relations for
$e^\pm(n,\t)$ we have
\begin{equation*}
    \bar\t e^\pm(n,\bar\t)+R_\pm(\t)e^\pm(n,\t)=
T_{\mp}(\t)e^\mp(-n-1,\t).
\end{equation*}
Therefore conditions \eqref{asppm}, \eqref{asmpm} are equivalent to
\begin{equation*}
    T_{\pm}(\t)e^\pm(n,\t)=T_\pm(\t)\fe_{n,c_\pm}(\t)+o(1),
\end{equation*}
\begin{equation*}
    \bar\t e^\pm(n,\bar\t)+R_\pm(\t)e^\pm(n,\t)=
\bar\t\fe_{n,c_\pm}(\bar\t)+R_\pm(\t)\fe_{n,c_\pm}(\t)+o(1)
\end{equation*}
in $L^2$ as $n\to\infty$. That is,
\begin{equation}\label{asvec}
    \lim_{n\to\infty}
\|\chi_{\bbT}\{e^\pm(n,\t)-\fe_{n,c_\pm}(\t)\}\|_{L^2_{\alpha_+}}=0.
\end{equation}
Then we use \eqref{tls04bis}, \eqref{ts4bis} and definition
\eqref{shift} to rewrite \eqref{asvec} into the form
\begin{equation*}
\begin{split}
    \lim_{n\to\infty}
\|\chi_{\bbT}\{L_{\pm,\bar\k}(n,\t)-K_{\bar\k}(\t)\}\|_{{\alpha^{(n)}_\pm}}&=0,\\
\lim_{n\to\infty}
\|\chi_{\bbT}\{L_{\pm,\k}(n,\t)-K_{\k}(\t)\}\|_{{\alpha^{(n)}_\pm}}&=0.
\end{split}
\end{equation*}
\end{proof}

\begin{lemma}
Assume that
\begin{equation}\label{limll}
    \lim_{n\to\infty} L_{\pm,\z_0}(n,\z_0)= K_{\z_0}(\z_0),
\quad\z_0\in\bbD.
\end{equation}
Then
\begin{equation}\label{asl2}
    \lim_{n\to\infty}
\|L_{\pm,\z_0}(n,\t)-\chi_{\bbT}K_{\z_0}(\t)\|_{{\alpha^{(n)}_\pm}}=0.
\end{equation}
\end{lemma}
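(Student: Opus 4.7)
The plan is to expand the squared norm by Hilbert space bilinearity,
\[
\|L_{\pm,\z_0}(n,\cdot) - \chi_{\bbT}K_{\z_0}\|^2_{\alpha^{(n)}_\pm}
= \|L_{\pm,\z_0}(n,\cdot)\|^2_{\alpha^{(n)}_\pm}
+ \|\chi_{\bbT}K_{\z_0}\|^2_{\alpha^{(n)}_\pm}
- 2\Re\langle L_{\pm,\z_0}(n,\cdot),\chi_{\bbT}K_{\z_0}\rangle_{\alpha^{(n)}_\pm},
\]
and show that each of the three terms converges to $1$, $1$, and $1$ respectively. The first is automatic: $L_{\pm,\z_0}(n,\cdot)$ is, by construction \eqref{rknorm}, the normalized reproducing kernel at $\z_0$ in the relevant Hardy subspace of $L^2_{\alpha^{(n)}_\pm}$, so its squared norm is identically $1$.

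For the second, insert $f=\chi_{\bbT}K_{\z_0}$ in \eqref{7.9d}. The $\cZ$-sum vanishes since $\chi_{\bbT}K_{\z_0}$ is zero on $\cZ$. A direct symmetrization of the matrix weight (using the measure-preserving involution $\t\mapsto\bar\t$) reduces the integral to
\[
\|\chi_{\bbT}K_{\z_0}\|^2_{\alpha^{(n)}_\pm}
= \int_{\bbT}|K_{\z_0}|^2\,dm
+ \Re\int_{\bbT}\t\,(b_{\k}b_{\bar\k})^{n}(\t)\,R_{\pm}(\t)\,
\overline{K_{\z_0}(\bar\t)}\,K_{\z_0}(\t)\,dm.
\]
The diagonal piece equals $1$, the $L^2(\bbT)$-norm of the normalized $H^2$ reproducing kernel. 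The cross term is an oscillatory integral: since $b_{\k}b_{\bar\k}$ is a non-constant inner function with $|b_{\k}b_{\bar\k}(0)|=|\k|^{2}<1$, each fixed Taylor coefficient of $(b_{\k}b_{\bar\k})^{n}$ at the origin tends to $0$, hence $(b_{\k}b_{\bar\k})^{n}\to 0$ in the weak-$*$ topology of $L^{\infty}(\bbT)$. Tested against the fixed $L^{1}$ function $\t R_{\pm}(\t)\overline{K_{\z_0}(\bar\t)}K_{\z_0}(\t)$, this yields the desired $o(1)$.

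For the cross term, the same symmetrization gives
\[
\langle L_{\pm,\z_0}(n,\cdot),\chi_{\bbT}K_{\z_0}\rangle_{\alpha^{(n)}_\pm}
= \int_{\bbT}L_{\pm,\z_0}(n,\t)\overline{K_{\z_0}(\t)}\,dm
+ \int_{\bbT}L_{\pm,\z_0}(n,\t)\,\t(b_{\k}b_{\bar\k})^{n}R_{\pm}\,
\overline{K_{\z_0}(\bar\t)}\,dm.
\]
The pointwise matrix identity $Q=|f+\bar\t\bar R f(\bar\t)|^{2}+(1-|R|^{2})|f(\bar\t)|^{2}$ together with $\|L_{\pm,\z_0}(n,\cdot)\|^{2}_{\alpha^{(n)}_\pm}=1$ yields the uniform bound $\int(1-|R_{\pm}|^{2})|L_{\pm,\z_0}(n,\cdot)|^{2}\,dm\le 2$. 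Because $\cZ\subset\bbR$, the Blaschke product \eqref{9.9d} satisfies $B_{*}=B$, so $BT_{\pm}=O_{*}$ is outer, and $L_{\pm,\z_0}(n,\cdot)=g_{n}/O_{*}$ with $g_{n}\in H^{2}$ uniformly bounded. The oscillatory integral then vanishes by the same weak-$*$ argument applied to $g_{n}$ (now uniformly bounded in $H^{2}$) against the oscillating test function $I^{n}\cdot(\text{fixed piece})/O_{*}$. The non-oscillatory integral is identified via the Cauchy kernel $\overline{K_{\z_0}(\t)}=\sqrt{1-|\z_0|^{2}}/(1-\bar\t\z_0)$ as
\[
\int_{\bbT}L_{\pm,\z_0}(n,\t)\overline{K_{\z_0}(\t)}\,dm
= L_{\pm,\z_0}(n,\z_0)\sqrt{1-|\z_0|^{2}},
\]
which by the hypothesis \eqref{limll} tends to $K_{\z_0}(\z_0)\sqrt{1-|\z_0|^{2}}=1$. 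The main technical obstacle is justifying this last identity, since $L_{\pm,\z_0}(n,\cdot)$ need not lie in $H^{1}(\bbT)$ without an a priori integrability of $1/(1-|R_{\pm}|^{2})$; one handles this via the Smirnov class factorization above, expanding $\overline{K_{\z_0}}$ in a Fourier series and exploiting the uniform $H^{2}$-bound on $g_{n}$, or by a routine cutoff of $\z_0$ approaching the boundary. Assembling the three limits gives $\|L_{\pm,\z_0}(n,\cdot)-\chi_{\bbT}K_{\z_0}\|^{2}_{\alpha^{(n)}_\pm}\to 1+1-2=0$, as required.
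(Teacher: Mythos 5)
Your overall strategy (expand the square and show the three terms tend to $1$, $1$, $1$) is sound in outline, and your treatment of $\|L_{\pm,\z_0}(n,\cdot)\|^2_{\alpha^{(n)}_\pm}=1$ and of $\|\chi_{\bbT}K_{\z_0}\|^2_{\alpha^{(n)}_\pm}\to1$ is fine, since in the latter every integrand is bounded. The gap is in the cross term. Splitting $\langle L_{\pm,\z_0}(n,\cdot),\chi_{\bbT}K_{\z_0}\rangle_{\alpha^{(n)}_\pm}$ into $\int_{\bbT}L_{\pm,\z_0}(n,\t)\overline{K_{\z_0}(\t)}\,dm$ plus an oscillatory integral presupposes that each piece converges separately, i.e.\ that $L_{\pm,\z_0}(n,\cdot)\in L^1(\bbT)$. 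The only a priori control available (which you correctly derive) is $\int(1-|R_\pm|^2)\,|L_{\pm,\z_0}(n,\cdot)|^2\,dm\le 2$, i.e.\ $g_n:=(BT_\pm)L_{\pm,\z_0}(n,\cdot)\in H^2$ with a uniform bound, since $|BT_\pm|^2=1-|R_\pm|^2$ a.e.\ on $\bbT$. For $\fA\in\fae$ one only knows $\log(1-|R_+|^2)\in L^1$, so $1/(BT_\pm)$ need not lie in $L^2(\bbT)$ --- that would essentially be the extra condition \eqref{unsuff} --- and therefore neither $\int L\,\overline{K_{\z_0}}\,dm$ nor your oscillatory piece is a well-defined Lebesgue integral; likewise the Cauchy-formula identity $\int L\,\overline{K_{\z_0}}\,dm=L_{\pm,\z_0}(n,\z_0)\sqrt{1-|\z_0|^2}$ requires $L\in H^1$, which fails in general. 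The repairs you sketch do not close this: the Fourier pairings $\int L\,\bar\t^j\,dm$ are equally undefined, and a ``cutoff of $\z_0$'' does not address integrability on $\bbT$. (A secondary soft spot: pairing the weak-$*$ null sequence $(b_\k b_{\bar\k})^n$ against the \emph{varying} bounded family $g_n$ is not valid as stated; it can be rescued by observing that $(b_\k b_{\bar\k})^n g_n$ lies in $(b_\k b_{\bar\k})^n H^2$ and that the orthogonal projections onto these subspaces tend strongly to zero --- but only once the integrals themselves make sense.)

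The paper sidesteps all of this with an intermediate approximant: fix $\epsilon>0$, choose $B_N\in\mcB$ with $\Re\,B_N(\z_0)\ge1-\epsilon$, and compare $L_{\pm,\z_0}(n,\cdot)$ with $B_NK_{\z_0}$, which \emph{lies in} $\check H^2_{\alpha^{(n)}_\pm}$. The cross term $\langle B_NK_{\z_0},L_{\pm,\z_0}(n,\cdot)\rangle_{\alpha^{(n)}_\pm}=(B_NK_{\z_0})(\z_0)/L_{\pm,\z_0}(n,\z_0)$ is then evaluated \emph{exactly} by the reproducing property --- a Hilbert-space identity requiring no splitting of a possibly divergent integral --- and every explicit integral that remains involves only the fixed bounded function $B_NK_{\z_0}$, so \eqref{pminuspr} applies directly; the factor $B_N$ also reduces the sum over $\cZ$ to finitely many terms. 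You should restructure your cross-term argument along these lines, or else keep the integrand grouped into the combinations $T_\pm f^\pm$ as in \eqref{1.22d}, which are genuinely in $L^2(\bbT)$.
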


\begin{proof}
For $\epsilon>0$ chose $N$ such that $\Re\,B_N(\z_0)\ge 1-\epsilon$.
Note that $B_N K_{\z_0}\in \check{H}^2_{\alpha^{(n)}_\pm}$ and
consider
\begin{equation*}
\|L_{\pm,\z_0}(n,\t)-(B_N K_{\z_0})(\t)\|_{\alpha^{(n)}_\pm}^2,
\quad \|(B_N
K_{\z_0})(\t)-\chi_{\bbT}K_{\z_0}(\t)\|_{\alpha^{(n)}_\pm}^2.
\end{equation*}
For the first term we have
\begin{equation*}
\begin{split}
\|L_{\pm,\z_0}(n,\t)-(B_N K_{\z_0})(\t)\|_{\alpha^{(n)}_\pm}^2 =&
2-2\Re\frac{(B_N K_{\z_0})(\z_0)}{L_{\pm,\z_0}(n,\z_0)}
\\
-\Re&\langle P_- b^n_{\k}b^n_{\bar\k}R_\pm B_NK_{\z_0},\bar\t(B_N
K_{\z_0})(\bar\t)\rangle_{L^2}.
\end{split}
\end{equation*}
Note that for any two functions $f,g\in L^2$ the following limit
exists
\begin{equation}\label{pminuspr}
  \lim_{n\to\infty}\langle P_-
b^n_{\k}b^n_{\bar\k}f,g\rangle_{L^2}=0.
\end{equation}
Therefore, if \eqref{limll} is satisfied then
\begin{equation}\label{pervyj}
\limsup_{n\to\infty}\|L_{\pm,\z_0}(n,\t)-(B_N
K_{\z_0})(\t)\|_{\alpha^{(n)}_\pm}^2 \le 2\epsilon.
\end{equation}

Similarly,
\begin{equation*}
\begin{split}
 \|(B_N
K_{\z_0})(\t)-\chi_{\bbT}K_{\z_0}(\t)\|_{\alpha^{(n)}_\pm}^2=&\sum_{\cZ}
|B_N K_{\z_0}|^2(\z_k)\nu_{\pm}(\z_k)|b^n_{\k}(\z_k)|^2\\
+2-2\Re\, B_N(\z_0)+\Re&\langle P_-
b^n_{\k}b^n_{\bar\k}R_\pm(B_N-1)K_{\z_0},\bar\t((B_N-1)K_{\z_0})(\bar\t)\rangle_{L^2}.
\end{split}
\end{equation*}
Note that the sum over $\cZ$ here contains just a fixed finite
number of nonvanishing terms, and therefore it goes to zero as
$n\to\infty$. Thus, taking also into account \eqref{pminuspr}, we
get
\begin{equation}\label{vtoroy}
    \limsup_{n\to\infty}\|(B_N
K_{\z_0})(\t)-\chi_{\bbT}K_{\z_0}(\t)\|_{\alpha^{(n)}_\pm}^2\le
2\epsilon.
\end{equation}
Combining \eqref{pervyj} and \eqref{vtoroy} we have
\begin{equation*}
    \limsup_{n\to\infty}
\|L_{\pm,\z_0}(n,\t)-\chi_{\bbT}K_{\z_0}(\t)\|^2_{{\alpha^{(n)}_\pm}}\le
8\epsilon.
\end{equation*}
Since $\epsilon>0$  is arbitrary
the lemma is proved.
\end{proof}

\begin{remark} Note that, in addition to \eqref{asl}, \eqref{asl2}
contains
\begin{equation*}
    \lim_{n\to\infty}\sum_{\cZ}|L_{\pm,\z_0}(n,\z_k)|^2
|b_\k(\z_k)|^{2n}\nu_{\pm}(\z_k)=0.
\end{equation*}
\end{remark}

In the proof of \eqref{limll} we follow the line  that was suggested
in \cite{pyu} and then improved in \cite{VYu} and \cite{KPVYu}.
Actually, the general idea is very simple. There are two natural
steps in approximation of the given spectral data by ``regular"
ones. First, to substitute the given measure $\nu_+$ by a finitely
supported $\nu_{N,+}$. Second, to substitute $R_+$ by $q R_+$ with
$0<q<1$. Then the corresponding data produce the Hardy space which
is topologically equivalent to the standard  $H^2$. In particular
$$
\check K_{\alpha_{N,q,+}}(\z_0,\z_0)=\hat
K_{\alpha_{N,q,+}}(\z_0,\z_0).
$$

\begin{lemma} Let $\cZ$ contain a finite number of points and
$\|R_+\|_{L^{\infty}}<1$. Then the limit  \eqref{limll} exists.
\end{lemma}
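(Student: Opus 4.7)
The plan is to identify both Hardy spaces with the standard $H^2(\bbD)$ as sets and then to show that the associated Gram operator $M_n$ converges to the identity in the strong operator topology; reproducing kernel convergence follows at once.

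First, since $\|R_+\|_{L^\infty}\le q<1$, the matrix weight in \eqref{7.9d} is uniformly sandwiched between $(1-q)I$ and $(1+q)I$, so the $H^2$- and $\alpha_+^n$-norms are uniformly equivalent on $H^2$ (the contribution of the finite point-mass part is controlled because point evaluation on $\bbD$ is bounded on $H^2$ and $\cZ$ is finite). Consequently $\check H^2_{\alpha_+^n}=\hat H^2_{\alpha_+^n}=H^2$ as sets with equivalent norms, and their common reproducing kernel $k_{\alpha_+^n}$ satisfies $k_{\alpha_+^n}(\,\cdot\,,\z_0)=M_n^{-1}k^{H^2}_{\z_0}$ by the Riesz representation theorem, where $M_n$ is the positive bounded operator on $H^2$ defined by $\langle f,g\rangle_{\alpha_+^n}=\langle M_n f,g\rangle_{H^2}$ and $k^{H^2}_{\z_0}(\z)=1/(1-\z\bar{\z_0})$. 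The equivalence gives $c_1 I\le M_n\le c_2 I$ uniformly in $n$; the identity $M_n^{-1}-I=M_n^{-1}(I-M_n)$ then reduces \eqref{limll} to verifying $M_n\to I$ in SOT.

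To show SOT convergence I would fix $f\in H^2$ and estimate
$\|(M_n-I)f\|_{H^2}=\sup_{\|g\|_{H^2}\le 1}|\langle f,g\rangle_{\alpha_+^n}-\langle f,g\rangle_{H^2}|$. Expanding \eqref{7.9d} and using $\int_{\bbT}|f(\bar\t)|^2\,dm=\|f\|_{H^2}^2$, this difference is the sum of a finite point-mass form, bounded by $\sum_{\cZ}|f(\z_k)||g(\z_k)||b_\k(\z_k)b_{\bar\k}(\z_k)|^n\nu_+(\z_k)$, which vanishes uniformly in $g$ because $|b_\k(\z_k)b_{\bar\k}(\z_k)|<1$ for $\z_k\in\cZ\subset\bbD$ and $\cZ$ is finite, and an oscillatory sesquilinear integral built from $(b_\k b_{\bar\k})^n R_+$. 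By $H^2/\overline{H^2}$ duality, controlling this integral reduces to proving that
\[
\bigl\|P_{\overline{H^2}}\bigl[(b_\k b_{\bar\k})^n\psi\bigr]\bigr\|_{L^2(\bbT)}\longrightarrow 0\qquad\text{for every }\psi\in L^2(\bbT),\quad \psi:=R_+\,\t\,\overline{f(\bar\t)}.
\]
For a monomial $\psi=\t^k$ this follows from a direct Fourier computation: writing $(b_\k b_{\bar\k})^n=\sum_{j\ge 0}c_j^{(n)}\t^j$, only finitely many coefficients $c_j^{(n)}$ with $0\le j\le\max(-k,0)$ contribute to $P_{\overline{H^2}}[(b_\k b_{\bar\k})^n\t^k]$, and each $c_j^{(n)}\to 0$ because $(b_\k b_{\bar\k})^n\to 0$ uniformly on compacta of $\bbD$ and Cauchy's formula extracts $c_j^{(n)}$. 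Density of trigonometric polynomials in $L^2$ together with the isometry $\|(b_\k b_{\bar\k})^n\psi\|_{L^2}=\|\psi\|_{L^2}$ extends the statement to arbitrary $\psi\in L^2$. This is the main analytic content of the proof.

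Combining both contributions gives $M_n\to I$ in SOT, hence $M_n^{-1}\to I$ in SOT by the identity above, and
\[
k_{\alpha_+^n}(\z_0,\z_0)=\langle M_n^{-1}k^{H^2}_{\z_0},\,k^{H^2}_{\z_0}\rangle_{H^2}\longrightarrow\|k^{H^2}_{\z_0}\|_{H^2}^2=\frac{1}{1-|\z_0|^2}=k_{\z_0}(\z_0),
\]
which is precisely \eqref{limll}. The $-$ side requires no new ideas: $|R_-|=|R_+|\le q$ and the discrete set $\cZ$ is common, so the same argument applies verbatim to $\alpha_-^n$. The hard step is the norm decay of the anti-analytic projection of the inner-function iterates, which is the quantitative form of the orthogonality statement \eqref{pminuspr} used by the authors throughout; every other estimate is a direct consequence of the uniform norm equivalence.
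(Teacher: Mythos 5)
Your argument is correct and follows exactly the route the paper indicates for this (omitted) proof: the finite point masses are killed by $|b_{\k}(\z_k)b_{\bar\k}(\z_k)|^{n}\to 0$, and the off-diagonal term by the decay of $P_-\bigl[(b_{\k}b_{\bar\k})^n\psi\bigr]$ --- your norm-version of \eqref{pminuspr} is indeed what the Gram-operator/SOT packaging requires, and it holds by the same finitely-many-Taylor-coefficients-plus-density argument. The only cosmetic point is that \eqref{limll} concerns the \emph{normalized} kernels, so one should take a square root at the end and note that, since $\check H^2_{\alpha_+^n}=\hat H^2_{\alpha_+^n}$ here, the quantity $L_{\pm,\z_0}(n,\z_0)$ is squeezed between the two coinciding normalized kernels; this changes nothing in substance.
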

\noindent
 Basically, it follows from \eqref{pminuspr} and
$|b_\k(\z_k)|^n\to 0$. It is a fairly easy task and we omit a proof
here.

Further, due to
\begin{equation*}
    \check H^2_{\alpha_{q,+}}\subset\check H^2_{\alpha_+}
\subset\cF^+(\bbZ_+)\subset\hat H^2_{\alpha_+}\subset \hat
H^2_{\alpha_{N,+}}
\end{equation*}
 we have the evident
estimations
$$
\check K_{\alpha_{q,+}}(\z_0,\z_0)\le
 L_{\alpha_+}(\z_0,\z_0)\le
 \hat K_{\alpha_{N,+}}(\z_0,\z_0).
$$
And the key point is that, due to the  duality principle,
\eqref{2.1af1} holds. It allow us to use the left or right side
estimation whenever it is convenient for us.

\begin{theorem} Let $\fA\in \fae$. For $\z_0\in\bbD$ the limit  \eqref{limll}
exists, and therefore \eqref{asppm}, \eqref{asmpm} hold true.
\end{theorem}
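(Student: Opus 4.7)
The plan is to establish \eqref{limll} by a two-sided squeeze based on the regularizations announced just before the statement, with the duality identity \eqref{2.1af1} of Lemma \ref{l2.1f5} converting the troublesome lower bound into a manageable upper bound on the dual side. The asymptotics \eqref{asppm}, \eqref{asmpm} then come for free from the two reduction lemmas already proved in this section.

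Concretely, I would fix $\z_0\in\bbD$ and, for each integer $N$ and parameter $q\in(0,1)$, introduce the regularized data $\alpha_{N,+}$ (keeping only the first $N$ points of $\cZ$ in the discrete part) and $\alpha_{q,+}$ (replacing $R_+$ by $qR_+$). The chain of inclusions
\begin{equation*}
\check H^2_{\alpha_{q,+}^n}\subset \check H^2_{\alpha_+^n}\subset \cF^+(l^2(\bbZ_{+,n}))\subset\hat H^2_{\alpha_+^n}\subset\hat H^2_{\alpha_{N,+}^n}
\end{equation*}
survives the shift by $n$ and therefore yields
\begin{equation*}
\check K_{\alpha_{q,+}^n}(\z_0,\z_0)\le L_{+,\z_0}(n,\z_0)\le\hat K_{\alpha_{N,+}^n}(\z_0,\z_0).
\end{equation*}
For the right-hand side, the data $\alpha_{N,+}$ sits in the regular class already treated in the preceding lemma; the shift by $b_\k^n b_{\bar\k}^n$ contributes a unimodular inner factor on $\bbT$, and via $|b_\k(\z_k)|^n\to 0$ together with the Riemann--Lebesgue type estimate $\langle P_-b_\k^n b_{\bar\k}^n f,g\rangle_{L^2}\to 0$ used in \eqref{pminuspr}, one extracts
\begin{equation*}
\limsup_{n\to\infty}\hat K_{\alpha_{N,+}^n}(\z_0,\z_0)\le K_{\z_0}(\z_0)
\end{equation*}
for every fixed $N$, and the reverse bound by passing $N\to\infty$ is immediate from the inclusion $\check H^2_{\alpha_+^n}\subset\hat H^2_{\alpha_{N,+}^n}$ together with Fatou.

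For the left-hand side the direct argument fails because $\alpha_{q,+}$ still carries the entire infinite mass-point set. Here I would invoke \eqref{2.1af1} in its shifted form
\begin{equation*}
\check k_{\alpha_{q,+}^n}(\k,\k)\cdot\hat k_{(\alpha_{q,-})^{-n-1}}(\bar\k,\bar\k)=\frac{1}{|T_-(\bar\k)|^2(1-|\k|^2)^2},
\end{equation*}
which is legitimate because, by Lemma \ref{l1.4}, the transmission coefficient $T_-$ is invariant under multiplication of the scattering data by inner/co-inner functions. Thus a lower bound for $\check k_{\alpha_{q,+}^n}(\k,\k)$ is equivalent to an upper bound for $\hat k_{(\alpha_{q,-})^{-n-1}}(\bar\k,\bar\k)$, and running the argument of the preceding paragraph on the dual side (now truncating the discrete part of $\alpha_-$) gives
\begin{equation*}
\liminf_{n\to\infty}\check K_{\alpha_{q,+}^n}(\z_0,\z_0)\ge K_{\z_0}(\z_0),
\end{equation*}
after the subsequent passage $q\to 1$.

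Combining the two estimates, sending $N\to\infty$ and $q\to 1$ in this order, and repeating the same argument with the roles of $\alpha_+$ and $\alpha_-$ interchanged for $L_{-,\z_0}$, yields \eqref{limll} at $\z_0=\k,\bar\k$; the analogous statements at arbitrary $\z_0$ follow either by varying the base point $\k$ in the whole construction or by a normal-families argument, since the family $\{L_{\pm,\z_0}(n,\cdot)\}_n$ is uniformly bounded in $H^2$. The main obstacle is the orchestration of the three limits---the shift $n\to\infty$ and the regularization parameters $N\to\infty$, $q\to 1$---together with verifying that the constant on the right of the duality identity remains stable (which it does, because $T_\pm$ are unchanged under the regularization). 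Once \eqref{limll} is in hand, the two preceding lemmas convert it into the full $L^2$ asymptotic statements \eqref{asppm} and \eqref{asmpm}.
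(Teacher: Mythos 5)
Your overall architecture---a two-sided squeeze on $L_{\pm,\z_0}(n,\z_0)$, with the duality identity \eqref{2.1af1} used to flip a lower bound on a $\check k$ into an upper bound on a dual $\hat k$, followed by the easy lemma for regularized data and the limits $N\to\infty$, $q\to1$---is exactly the paper's, and your treatment of the lower bound coincides with \eqref{is2.2}. But your upper bound has a genuine gap: you assert that $\alpha_{N,+}$ ``sits in the regular class already treated in the preceding lemma,'' whereas that lemma requires \emph{both} that $\cZ$ be finite \emph{and} that $\|R_+\|_{L^\infty}<1$. Truncating the discrete spectrum does nothing to $R_+$, so the claim $\limsup_n\hat K_{\alpha^n_{N,+}}(\z_0,\z_0)\le K(\z_0,\z_0)$ does not follow from that lemma; the whole difficulty on the $\hat k$ side is that the quadratic form \eqref{7.9d} degenerates where $|R_+|$ is close to $1$ (the extremal function depends on $n$, so \eqref{pminuspr} cannot be applied to it directly), and that is precisely what the $q$-damping is there to cure. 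The paper's \eqref{is2.1} fixes this by a \emph{second} application of duality: $\hat K_{\alpha^{(n)}_{N,\pm}}(\z_0,\z_0)$ is rewritten in terms of $\check K_{\alpha^{(-n-1)}_{N,\mp}}(\z_0,\z_0)$, the latter is bounded below by $\check K_{\alpha^{(-n-1)}_{N,q,\mp}}(\z_0,\z_0)$, and converting back lands on the doubly regularized data $\alpha^{(n)}_{N,q,\pm}$, to which the easy lemma genuinely applies. Both halves of the squeeze must terminate at the $(N,q)$-regularized data; you did this for the lower bound but not for the upper one.

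A second, related error: you claim the constant in the duality identity is stable ``because $T_\pm$ are unchanged under the regularization.'' Lemma \ref{l1.4} says $T_\pm$ are unchanged under the \emph{shift} $\alpha_+\mapsto\alpha_+^n$; they are certainly changed by the regularizations, since $T_-=O/B$, truncation replaces $B$ by a finite subproduct, and $q$-damping replaces $O$ by $O_q$. This is not cosmetic: the ratios of these transmission coefficients are exactly the factors $|O_q(\z_0)|/|O(\z_0)|$ and $|B_N(\z_0)|$ appearing in \eqref{is2.1}--\eqref{fihlim}, and the proof closes only because these tend to $1$ as $q\to1$ and $N\to\infty$. Your concern about passing from $\z_0=\k,\bar\k$ to general $\z_0$ is legitimate (the identity \eqref{2.1af1} is stated at $\k$), and either of your proposed fixes is reasonable, but the two points above must be repaired first.
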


\begin{proof} Recall Lemma \ref{l1.4} on the relation between $\pm$ mappings and
the notations $|T_\pm(\z)|=\left|\frac{O(\z)}{B(\z)}\right|$,
where $B$ is a Blaschke product and $O$ is an outer function
\eqref{2.10d}.

We have
\begin{equation}\label{is2.1}
\begin{split}
   L_{\alpha_\pm^{(n)}}(\z_0,\z_0)&\le \hat K_{\alpha^{(n)}_\pm}(\z_0,\z_0) \le
\hat K_{\alpha^{(n)}_{N,\pm}}(\z_0,\z_0)=\frac{1}{|T_{N,\pm}(\z_0)|}
    \frac{K^2(\z_0,\z_0)}{\check K_{\alpha^{(-n-1)}_{N,\mp}}(\z_0,\z_0)}\\
    &\le
\frac{1}{|T_{N,\pm}(\z_0)|}
    \frac{K^2(\z_0,\z_0)}{\check K_{\alpha^{(-n-1)}_{N,q,\mp}}(\z_0,\z_0)}
    =
    \frac{|O_q(\z_0)|}{|O(\z_0)|}
    \hat K_{\alpha^{(n)}_{N,q,\pm}}(\z_0,\z_0).
    \end{split}
\end{equation}
And from the other side
\begin{equation}\label{is2.2}
\begin{split}
     L_{\alpha_\pm^{(n)}}(\z_0,\z_0)&\ge
\check K_{\alpha^{(n)}_\pm}(\z_0,\z_0)  \ge \check
K_{\alpha^{(n)}_{q,\pm}}(\z_0,\z_0)=\frac{1}{|T_{q,\pm}(\z_0)|}
    \frac{K^2(\z_0,\z_0)}{\hat K_{\alpha^{(-n-1)}_{q,\mp}}(\z_0,\z_0)}\\
    &\ge
\frac{1}{|T_{q,\pm}(\z_0)|}
    \frac{K^2(\z_0,\z_0)}{\hat K_{\alpha^{(-n-1)}_{q,N,\mp}}(\z_0,\z_0)}
    ={|B_{N}(\z_0)|}
    \check K_{\alpha^{(n)}_{q,N,\pm}}(\z_0,\z_0).
    \end{split}
\end{equation}
Passing to the limit in \eqref{is2.1} and \eqref{is2.2} we get
\begin{equation}\label{fihlim}
\begin{split}
    {|B_{N}(\z_0)|}K(\z_0,\z_0)&\le\liminf_{n\to\infty}
L_{\alpha_\pm^{(n)}}(\z_0,\z_0)\\
&\le\limsup_{n\to\infty} L_{\alpha_\pm^{(n)}}(\z_0,\z_0)\le
\frac{|O_q(\z_0)|}{|O(\z_0)|}K(\z_0,\z_0).
\end{split}
\end{equation}
Since
$$
\lim_{N\to\infty}|B_{N}(\z_0)|= 1\quad \text{and} \quad \lim_{q\to
1}|O_q(\z_0)|=|O(\z_0)|,
$$
 \eqref{fihlim} implies
\eqref{limll} and thus asymptotics \eqref{asppm}, \eqref{asmpm} are
proved.

\end{proof}



\section{ Hilbert transform}\label{shtr}

Recall definition \eqref{trop} of the transformation operator. In
terms of the decomposition \eqref{transferpm} the operator
$\cM_-:l^2(\bbZ_{-})\to l^2(\bbZ_{-})$  is defined by
\begin{equation}\label{tropmns}
    \cM_-=\iota^*\begin{bmatrix}M^-_{0,0}&0&0&\dots\\
M^-_{1,0}&M^-_{1,1}&0&\dots\\
M^-_{2,0}&M^-_{2,1}&M^-_{2,2}&\dots\\
\vdots&\vdots&\vdots&\ddots
\end{bmatrix}\iota,
\end{equation}
where $\iota:l^2(\bbZ_{-})\to l^2(\bbZ_{+})$,
$\iota|m\rangle=|-1-m\rangle$. Also, the shifted transformation
operator is of the form
\begin{equation}\label{tropmshifteven}
    \cM^{(n)}_+=
\begin{bmatrix}M^-_{n,n}&0&0&\dots\\
M^-_{1+n,n}&M^-_{1+n,1+n}&0&\dots\\
M^-_{2+n,0+n}&M^-_{2+n,1+n}&M^-_{2+n,2+n}&\dots\\
\vdots&\vdots&\vdots&\ddots
\end{bmatrix}
\end{equation}
for even $n$ and
\begin{equation}\label{tropmshift}
    \cM^{(n)}_+=\begin{bmatrix}A& &\\
&A&\\
 & & \ddots
\end{bmatrix}
\begin{bmatrix}M^-_{n,n}&0&0&\dots\\
M^-_{1+n,n}&M^-_{1+n,1+n}&0&\dots\\
M^-_{2+n,0+n}&M^-_{2+n,1+n}&M^-_{2+n,2+n}&\dots\\
\vdots&\vdots&\vdots&\ddots
\end{bmatrix}\cS_n^*\fA_1^*\cS_n
\end{equation}
for odd $n$, where $\cS_n:l^2(\bbZ_{+})\to l^2(\bbZ_{+,n})$,
$\cS_n|m\rangle=|n+m\rangle$ and $
A=\begin{bmatrix}\bar a& \rho\\
 \rho&- a
\end{bmatrix}
$ is related to the matrix $\fA_a$ with constant coefficients.

\begin{lemma} $\cM_+$ is bounded if and only if
\begin{equation}\label{yasam4}
    \int_{\bbT}|F(\t)|^2dm(\t)\le C\,\Vert F\Vert^2_{\alpha_+}
\end{equation}
is satisfied for all $F\in\cF^+(\bbZ_{+})$. If  $\cM^{(n)}_+$ is
bounded for a certain $n=n_0$ then it is bounded for all $n\in\bbZ$.
\end{lemma}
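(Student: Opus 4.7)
The plan is to prove the equivalence by a direct change-of-basis computation, and the shift invariance by exploiting the recursive (CMV) structure of the problem.

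For the equivalence, I would take an arbitrary $F = \cF^+ c \in \cF^+(l^2(\bbZ_+))$ with $c=(c_n)_{n\ge 0}\in l^2(\bbZ_+)$ and substitute the decomposition \eqref{transferpm}:
\[
F = \sum_n c_n\, e^+(n,\t) = \sum_n c_n \sum_{l\ge n} M^+_{l,n}\fe_{l,c_+}(\t).
\]
Since $\{\fe_{l,c_+}\}_{l\ge 0}$ is an orthonormal basis of $H^2\subset L^2(\bbT,dm)$, reversing the order of summation yields $F|_\bbT = \sum_l (\cM_+ c)_l\,\fe_{l,c_+}$ in $L^2(\bbT)$ (whenever the right side converges), so that $\int_\bbT|F|^2\,dm = \|\cM_+ c\|^2_{l^2}$. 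On the other hand, unitarity of $\cF^+$ gives $\|F\|^2_{\alpha_+} = \|c\|^2_{l^2}$. Thus \eqref{yasam4} for every such $F$ is equivalent to $\|\cM_+ c\|^2\le C\|c\|^2$ for all $c\in l^2(\bbZ_+)$, which is precisely the boundedness of $\cM_+$ with $\|\cM_+\|^2\le C$.

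For the shift invariance, the key observation is that $\cM_+^{(n)}$ is the transformation operator associated with the shifted scattering data $\alpha_+^{(n)}$, which by Theorem \ref{thm1} corresponds to the half-line CMV matrix $\fA^{(n)}$ obtained by discarding the first $n$ Verblunsky coefficients. The factorization \eqref{21o2} represents the one-step shift $\fA^{(n)}\to\fA^{(n+1)}$ as a product involving a single unitary $2\times 2$ block $A_n$; this is exactly what is encoded by the formulas \eqref{tropmshifteven} and \eqref{tropmshift}, where a double (even $n$ to even $n+2$) shift exhibits $\cM_+^{(n+2)}$ as a tail sub-matrix of $\cM_+^{(n)}$, and an odd-step transition introduces a unitary conjugation by $\cS_n^*\fA_1^*\cS_n$.

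From this, the forward direction $n\ge n_0$ is immediate: compressions and unitary conjugations do not increase operator norm. For $n<n_0$ one extends $\cM_+^{(n_0)}$ by finitely many rows and columns carrying the coefficients $M^+_{l,j}$ with $n\le j<n_0$; these sequences are square-summable because each $e^+(j,\t)$ can be recovered from $e^+(n_0,\t), e^+(n_0+1,\t),\ldots$ by finitely many applications of the three-term recurrence \eqref{evp} (multiplication by $v$ and by the unitary $2\times 2$ blocks $A_j$, all bounded on $L^2(\bbT)$). Adding finitely many bounded rows/columns preserves boundedness, completing the iteration in both directions.

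The main obstacle I anticipate is the bookkeeping that identifies the restriction of $\cF^+$ to $l^2(\bbZ_{+,n})$ with the scattering representation $\cF^{(n),+}$ of the shifted matrix $\fA^{(n)}$ on $l^2(\bbZ_+)$, keeping the odd/even dichotomy and the extra $\cS_n^*\fA_1^*\cS_n$-factor of \eqref{tropmshift} consistent with the definitions \eqref{ts4bis} and \eqref{tls04bis}; once this identification is made precise, the boundedness-propagation reduces to the standard compression/extension principle for infinite matrices.
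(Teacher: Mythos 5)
Your proposal is correct and follows essentially the same route as the paper's (very terse) proof: the equivalence comes from the change of basis \eqref{transferpm} together with unitarity of $\cF^+$ and orthonormality of $\{\fe_{l,c_+}\}$ in $L^2(\bbT)$, and the shift statement combines the monotonicity $\|\cM^{(n+1)}_+\|\le\|\cM^{(n)}_+\|$ (compression plus unitary factors) with recovery of the extra column $e^+(n-1,\cdot)$ from the recurrence \eqref{evp}. You are simply filling in the bookkeeping that the paper leaves implicit.
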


\begin{proof} \eqref{yasam4} follows directly from
\eqref{transferpm}. $\|\cM^{(n+1)}_+\|\le\|\cM^{(n)}_+\|$. So the
only thing is required to be proved is that $\|\cM^{(n)}_+\|<\infty$
implies $\|\cM^{(n-1)}_+\|<\infty$. It follows from the recurrence
\eqref{evp}.

\end{proof}

Let
\begin{equation}\label{yasam1}
\begin{split}
   &\frac{1+\theta(v)}
   {1-\theta(v)}=i\Im \frac{1+\theta(0)}
   {1-\theta(0)}+\int_{\bbT}\frac{t+v}
   {t-v}d\sigma(t)\\
   =&i\Im \frac{1+\theta(0)}
   {1-\theta(0)}+\sum_{t_k\in\bbT\setminus E}\frac{t_k+v}
   {t_k-v}\sigma_k+\int_{E}\frac{t+v}
   {t-v}\left\{\fw(t)dm(t)+d\sigma_s(t)\right\},
   \end{split}
\end{equation}
where $\sigma_s$ is a singular measure on $E$ and
\begin{equation}\label{sigmaac}
\fw(t)=\frac{1-|\theta(t)|^2}{|1-\theta(t)|^2}.
\end{equation}
Then
\begin{equation}\label{yasam}
  \frac{1-\theta(v)\overline{\theta(v_0)}}
   {(1-\theta(v))(1-\overline{\theta(v_0)}}=\int_{\bbT}\frac{1-v\bar v_0}
   {(t-v)(\bar t-\bar v)}d\sigma(t).
\end{equation}

\begin{lemma} Let $\theta\in\thte$. Put $\theta_+=\theta$ and select $\theta_-$
as in  Example \ref{ex1}, so that the associate CMV matrix $\fA$
belongs to $\fae$. Then
\begin{equation}\label{yasam2}
    F(\z):=(L(\z,\k)-e^{ic}L(\z,\bar\k))\int_{\bbT}\frac{t}{b_{\bar \k}(\z)t-b_\k(\z)}
    d\sigma(t) f(t)
\end{equation}
is a unitary map from $L^2_{d\sigma}$ to $\cF^+(\bbZ_{+,1})$.
\end{lemma}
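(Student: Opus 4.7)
The plan is to realize $F$ as an Aleksandrov--Clark type unitary, conjugated to fit the Faddeev--Marchenko setting. Using Theorem \ref{reprth2}, in which $\theta(v(\z))=e^{ic}L(\z,\bar\k)/L(\z,\k)$, one rewrites $L(\z,\k)-e^{ic}L(\z,\bar\k)=L(\z,\k)(1-\theta(v(\z)))$, and $b_{\bar\k}(\z)t-b_\k(\z)=b_{\bar\k}(\z)(t-v(\z))$, so
\begin{equation*}
F(\z)=\frac{L(\z,\k)\,(1-\theta(v(\z)))}{b_{\bar\k}(\z)}\int_{\bbT}\frac{t\,f(t)}{t-v(\z)}\,d\sigma(t).
\end{equation*}
The middle factor $(1-\theta(v))\int tf\,d\sigma/(t-v)$ is the classical Aleksandrov--Clark unitary associated to the Schur function $\theta$ with Clark measure $d\sigma$ (at parameter $\alpha=1$), as read off \eqref{yasam1}. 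The outer meromorphic prefactor $L(\z,\k)/b_{\bar\k}(\z)$ is the change-of-normalization that places the image into $\cF^+(\bbZ_{+,1})\subset L^2_{\alpha_+}$.

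First I would verify that $F$ is well-defined and analytic in $\bbD$. The only potential singularity is at $\z=\bar\k$, where $v(\z)\to\infty$; there the Cauchy integral vanishes at rate $1/v(\z)$, cancelling the simple pole of $1/b_{\bar\k}(\z)$. One also checks $(BT_+)F\in H^2$ using the factorization \eqref{s12} of $T_+$ together with $L(\z,\k)$ being of Smirnov class, so $F\in\hat H^2_{\alpha_+}$.

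The main step is the isometry $\|F_f\|^2_{\alpha_+}=\int|f|^2\,d\sigma$. I would test on the dense family $f_{v_0}(t)=(\bar t-\bar v_0)^{-1}$, $v_0\in\bbD$, for which the Cauchy integral in $F$ evaluates explicitly by \eqref{yasam}. Substituting back, both the reflection-twisted $L^2(\bbT)$ quadratic form appearing in the definition of $\|\cdot\|_{\alpha_+}$ and the weighted sum over $\cZ$ reduce---using the density factorization \eqref{winv1}--\eqref{Phi2} together with the formula \eqref{collin} for $\nu_+$---to the reproducing kernel value $\langle f_{v_0},f_{w_0}\rangle_{L^2_{d\sigma}}$, which is itself another instance of \eqref{yasam}. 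This is the crux of the proof: converting the hybrid norm $\|\cdot\|_{\alpha_+}$ (mixing a reflection-twisted $L^2$ form on $\bbT$ with point masses on $\cZ$) into the scalar Clark norm requires simultaneous alignment of the absolutely continuous and singular/discrete parts of $d\sigma$ with $R_+$ and $\nu_+$, respectively, and the prefactor $L(\z,\k)/b_{\bar\k}(\z)$ is engineered precisely for this.

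To pin down the image as $\cF^+(\bbZ_{+,1})$, I would show $F_f\perp e^+(n,\cdot)$ in $L^2_{\alpha_+}$ for every $n\le 0$. The initial vectors $e^+(0,\t)$ and $e^+(-1,\t)$ are given explicitly by \eqref{e01}, while negative indices follow from the backward recurrence \eqref{evm}; each orthogonality reduces, after using the $\Phi$-factorization and specializing \eqref{yasam} at $v_0=0$, to a moment condition on $f$ against $d\sigma$ that holds by construction. Surjectivity then follows by showing that the orthogonal complement of $F(L^2_{d\sigma})$ inside $\cF^+(\bbZ_{+,1})$ is trivial: any $g$ orthogonal to every $F_{f_{v_0}}$ yields a vanishing Cauchy integral against $d\sigma$, forcing $g=0$ by the reproducing-kernel identity \eqref{yasam}.
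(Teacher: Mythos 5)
Your overall strategy is the same as the paper's --- test the map on the dense family of Cauchy kernels and invoke the identity \eqref{yasam} --- but the execution differs at the decisive point, and the difference matters. The paper chooses the test function $f_{\z_0}(t)=\frac{L(\k,\z_0)-e^{ic}L(\bar\k,\z_0)}{b_\k(\bar\z_0)-t\,b_{\bar\k}(\bar\z_0)}$ and observes that its image under \eqref{yasam2} is, on the nose, the reproducing kernel $l^{1,+}(\z,\z_0)$ of $\cF^+(\bbZ_{+,1})$, via the closed Christoffel--Darboux-type formula \eqref{yasam3} and the representation $\theta=e^{ic}L(\cdot,\bar\k)/L(\cdot,\k)$, $v=b_\k/b_{\bar\k}$. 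That single identification does all the work at once: the isometry is just the statement that $\langle l^{1,+}(\cdot,\z_0),l^{1,+}(\cdot,\z_1)\rangle_{\alpha_+}=l^{1,+}(\z_1,\z_0)$ coincides, by \eqref{yasam}, with $\langle f_{\z_0},f_{\z_1}\rangle_{L^2_\sigma}$, and surjectivity onto $\cF^+(\bbZ_{+,1})$ is automatic because the reproducing kernels span that space --- no separate orthogonality to $e^+(n,\cdot)$ for $n\le 0$ and no separate surjectivity argument are needed. In your version, the step you yourself flag as the crux --- converting the hybrid norm $\Vert\cdot\Vert_{\alpha_+}$ (reflection-twisted integral over $\bbT$ plus the $\nu_+$-weighted sum over $\cZ$) directly into the scalar Clark norm using \eqref{winv1}--\eqref{Phi2} and \eqref{collin} --- is asserted rather than carried out, and this is precisely where all the difficulty of the lemma is concentrated; carrying it out honestly essentially amounts to re-deriving the reproducing-kernel identification \eqref{yasam3} term by term. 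So while your route is viable, you should either execute that computation in full or, better, recognize that the image of the Cauchy kernel is the reproducing kernel of $\cF^+(\bbZ_{+,1})$, which collapses your three separate steps (norm identity, orthogonality to nonpositive indices, surjectivity) into one.
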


\begin{proof}
Put $f(t)=\frac{L(\k,\z_0)-e^{ic}L(\bar\k,\z_0)}{{ b_{
\k}(\bar\z_0)}-t{b_{\bar\k}(\bar\z_0)}}$. Note  that
\begin{equation}\label{yasam3}
    l^{1,+}(\z,\z_0)=\frac{L(\k,\z)L(\z_0,\k)-L
    (\bar\k,\z)L(\z_0,\bar\k)}{b_{\bar\k}(\z)\overline{b_{\bar\k}(\z_0)}
    -b_{\k}(\z)\overline{b_{\k}(\z_0)}}
\end{equation}
is the reproducing kernel in $\cF^+(\bbZ_{+,1})$. By \eqref{yasam}
we have
\begin{equation*}
    \|f\|^2_{L_\sigma^2}=\|F\|^2_{\alpha_+}.
\end{equation*}
Thus the map is an isometry. Since  the set of such functions is
dense, it is unitary.
\end{proof}

\begin{proposition}The transformation operator
$\cM_+^{(1)}$ is bounded if and only if
\begin{equation}\label{yasam5}
    \int_{E}|(\fH f)(v)|^2\frac{dm(v)}{\fw(v)}\le C\Vert
    f\Vert^2_{L^2_\sigma},\quad f\in{L^2_\sigma},
\end{equation}
where
\begin{equation}\label{yasam6}
     (\fH f)(v):=\int_{\bbT}\frac{t}{t-v}
    d\sigma(t) f(t).
\end{equation}
\end{proposition}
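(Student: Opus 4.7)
The plan is to unfold the boundedness of $\cM_+^{(1)}$ through the unitary $f\mapsto F$ provided by the preceding lemma and to compute $|F(\tau)|^2$ on $\bbT$ in closed form. By the first lemma of Section \ref{shtr}, $\cM_+^{(1)}$ is bounded iff the inequality $\int_{\bbT}|F(\tau)|^2\,dm(\tau)\le C\Vert F\Vert^2_{\alpha_+}$ holds for every $F\in\cF^+(\bbZ_{+,1})$. Composing with the unitary map of the preceding lemma (which satisfies $\Vert F\Vert_{\alpha_+}=\Vert f\Vert_{L^2_\sigma}$), the task is to prove the pointwise/integral identification
\begin{equation*}
\int_{\bbT}|F(\tau)|^2\,dm(\tau)\;\asymp\;\int_{E}|(\fH f)(v)|^2\frac{dm(v)}{\fw(v)},
\end{equation*}
uniformly in $f\in L^2_\sigma$, after which the equivalence of the two norm inequalities is immediate.

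For the computation of $|F(\tau)|^2$ on $\bbT$ I would proceed in two pieces. Using $v=b_{\k}/b_{\bar\k}$ we have on the unit circle $b_{\bar\k}(\tau)t-b_{\k}(\tau)=b_{\bar\k}(\tau)(t-v(\tau))$ with $|b_{\bar\k}(\tau)|=1$, so the integral factor in the definition of $F$ reduces to $(\fH f)(v(\tau))$. For the prefactor, the representation $\theta(v(\zeta))=e^{ic}L_{\bar\k}(\zeta)/L_{\k}(\zeta)$ from Theorem \ref{reprth2} gives
\begin{equation*}
L(\tau,\k)-e^{ic}L(\tau,\bar\k)=L_{\k}(\tau)\bigl(1-\theta(v(\tau))\bigr),
\end{equation*}
whence $|F(\tau)|^2=|L_{\k}(\tau)|^2\,|1-\theta(v(\tau))|^2\,|(\fH f)(v(\tau))|^2$.

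Next I would apply the Wronskian/outer identity \eqref{Okap}, $|L_{\k}(\tau)|^2(1-|\theta(v(\tau))|^2)=d\log v(\tau)/d\log\tau$, together with the definition $\fw=(1-|\theta|^2)/|1-\theta|^2$, to rewrite
\begin{equation*}
|L_{\k}(\tau)|^2|1-\theta(v(\tau))|^2=\frac{1}{\fw(v(\tau))}\frac{d\log v(\tau)}{d\log\tau}.
\end{equation*}
Inserting this into $|F(\tau)|^2$ and performing the change of variables $v=v(\tau)$ (whose Jacobian from $\bbT$ to $E$ is exactly $d\log v/d\log\tau$) converts $\int_{\bbT}|F|^2dm$ into $\int_E|(\fH f)(v)|^2\fw(v)^{-1}dm(v)$, up to the harmless multiplicative factor arising from the $2$--to--$1$ branched map $v:\bbT\to E$; the two half-circle contributions agree because the prefactor $|L_\k|^2|1-\theta|^2|(\fH f)\circ v|^2$ is symmetric under $\tau\mapsto\bar\tau$ by the symmetry $\overline{L_{\bar\k}(\bar\zeta)}=L_{\k}(\zeta)$ observed in the proof of Theorem \ref{reprth2}.

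The main obstacle I anticipate is the bookkeeping for this final change of variables, namely verifying that the $2$--to--$1$ character of $v:\bbT\to E$ together with the $\tau\leftrightarrow\bar\tau$ symmetry of $|F(\tau)|^2$ produces the integral over $E$ with a universal constant (and not a varying weight). Once this is checked, combining the norm identity $\Vert F\Vert_{\alpha_+}=\Vert f\Vert_{L^2_\sigma}$ with the displayed identification of $\int_{\bbT}|F|^2dm$ yields the equivalence of \eqref{yasam4} restricted to $\cF^+(\bbZ_{+,1})$ and the weighted Hilbert transform estimate \eqref{yasam5}, completing the proof.
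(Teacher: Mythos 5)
Your proposal is correct and follows essentially the same route as the paper: the paper's own (very terse) proof is precisely the combination of the criterion \eqref{yasam4}, the unitary map \eqref{yasam2}, and the Wronskian identity \eqref{wid0} (equivalently \eqref{Okap}), which you have written out in detail together with the change of variables $v=v(\tau)$. The only point worth flagging is that the $\tau\mapsto\bar\tau$ symmetry of $|L_{\k}(\tau)(1-\theta(v(\tau)))|$ requires not just $\overline{L_{\bar\k}(\bar\z)}=L_{\k}(\z)$ but also the symmetry $\overline{\theta(v(\bar\z))}=1/\theta(v(\z))$ from \eqref{symmth} (since $|L_{\k}(\bar\tau)|=|\theta(v(\tau))|\,|L_{\k}(\tau)|$ on its own is not reflection invariant); with both, your bookkeeping for the two-to-one map $v:\bbT\to E$ goes through.
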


\begin{proof} We use \eqref{yasam4}.
Then, by \eqref{yasam2} and \eqref{wid0}, we have
\begin{equation*}
    \int_{E}\frac{|1-\theta(v)|^2}{1-|\theta(v)|^2}|(\fH f)(v)|^2dm(v)\le
C\,\|F\|^2_{\alpha_+}=
 C\,\Vert
    f\Vert^2_{L^2_\sigma}.
\end{equation*}
Thus \eqref{yasam5} is proved.
\end{proof}

We  give necessary and sufficient conditions on measure $\sigma$
that guarantee \eqref{yasam5}.

Let us reformulate our problem and change the notations slightly.  Obviously we
can straighten up by fractional linear transformation the arc $E$
and point part of $\sigma$ in such a way that $E$ becomes the
segment $[-2,2]$, points $\{\zeta_k\}$ are transformed to $\{x_k\}$
accumulating only to $-2$ and $2$, measure $\sigma$ goes to
$\tilde\sigma$, and $d\tilde\sigma= \tilde\fw\,dx$ on $[-2,2]$,
$\tilde\sigma(x_k)=\sigma_k$. It is easy to see that inequality
\eqref{yasam5} becomes equivalent to the following one

\begin{equation}\label{yasam8}
    \int_{-2}^2|(\cH f)(y)|^2\frac{dy}{\tilde\fw(y)}\le C\Vert
    f\Vert^2_{L^2_{\tilde\sigma}},\,\,\,\forall f\in L^2(d\tilde\sigma)\,,
\end{equation}
where
\begin{equation}\label{yasam9}
     (\cH f)(v):=\int_{-2}^2\frac{f(x)}{x-y}
    d\tilde\sigma(x)\,.
\end{equation}

If we choose all $f$'s from $L^2([-2,2], \tilde\fw dx)$ we get that
\eqref{yasam8} is equivalent to $\tilde\fw\in A_2 [-2,2]$. In fact,
with such test functions $f$ \eqref{yasam8} becomes

\begin{equation}\label{yasam10}
    \int_{-2}^2\bigg|\int_{-2}^2\frac{f(x)\,\tilde\fw(x)dx}{x-y}\bigg|^2\frac{dy}{\tilde\fw(y)}
\leq C\int_{-2}^2 |f(x)|^2\,\tilde\fw(x)dx,\,\,\,\forall f\in
L^2(\tilde\fw dx).
\end{equation}

Put $F:=f\tilde\fw$. Then the previous estimate becomes the
boundedness of
$$
\cH: L^2([-2,2],\tilde\fw^{-1}dx)\rightarrow
L^2([-2,2],\tilde\fw^{-1}dx).
$$
This is of course equivalent to $\tilde\fw^{-1} \in A_2[-2,2]$,
namely, to

\begin{equation}
\label{A2_22} \sup_{I, I\subset [-2,2]} \langle \tilde\fw\rangle_I
\langle \tilde\fw^{-1}\rangle_I\,<\infty,
\end{equation}
where $\langle \tilde\fw\rangle_I := \frac{1}{|I|}\int_I \tilde\fw
dx$. This is obviously the same as  $\tilde\fw \in A_2[-2,2]$.

Notice that it is easy to proof that $\tilde\fw \in A_2[-2,2]$ if
and only if $\tilde\fw$ is a restriction onto $[-2,2]$ of an $A_2$
wight on the whole real line.

\begin{lemma} Condition \eqref{yasam5} implies that
the measure $\sigma$ is absolutely continuous on the arc $E$ and
moreover $\fw\in A_2$.
\end{lemma}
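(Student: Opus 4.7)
The plan is to extract the two conclusions---absolute continuity of $\sigma|_E$ and $\fw\in A_2$---from two separate choices of test function in the quadratic inequality \eqref{yasam5}. First I would transfer the problem to the segment $[-2,2]$ via the fractional linear change of variable already used above, so that \eqref{yasam5} takes the form \eqref{yasam8} with
\begin{equation*}
 d\tilde\sigma=\tilde\fw\,dy+d\tilde\sigma_s+\sum_k \sigma_k\delta_{x_k},
\end{equation*}
where $\tilde\sigma_s$ is the (possibly nontrivial) singular component of $\sigma|_E$ pulled back to $[-2,2]$.

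For the $A_2$ conclusion I restrict \eqref{yasam8} to test functions $f$ supported on $[-2,2]$ and lying in $L^2([-2,2],\tilde\fw\,dy)$. This restriction yields \eqref{yasam10}, and the substitution $F:=f\tilde\fw$ converts \eqref{yasam10} into the $L^2([-2,2],\tilde\fw^{-1}dx)$-boundedness of the classical Hilbert transform. By the Hunt--Muckenhoupt--Wheeden theorem this is equivalent to $\tilde\fw^{-1}\in A_2[-2,2]$, hence to $\tilde\fw\in A_2[-2,2]$. Translating back to $E$ this is precisely the condition \eqref{A2_220}.

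For the absolute continuity statement I would argue by contradiction. Assume $\tilde\sigma_s([-2,2])>0$. After choosing a representative of $\tilde\fw$ that is finite everywhere on $[-2,2]$, I fix $M$ so large that $G_M:=\{y\in[-2,2]:\tilde\fw(y)\le M\}$ satisfies $\tilde\sigma_s(G_M)>0$; such $M$ exists because $G_M\nearrow [-2,2]$ as $M\to\infty$. I then pick a Borel set $B\subset G_M$ with $|B|=0$, $\tilde\sigma_s(B)>0$, disjoint from the point masses $\{x_k\}$, and take $f:=\chi_B$ in \eqref{yasam8}. The right-hand side is $C\,\tilde\sigma_s(B)<\infty$. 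Since $|B|=0$, the function $\cH\chi_B$ coincides with the Cauchy transform $C\mu$ of the singular measure $\mu:=\chi_B\,d\tilde\sigma_s$. The Fourier identity $\widehat{C\mu}(\xi)=i\pi\operatorname{sgn}(\xi)\hat\mu(\xi)$ combined with Plancherel shows that $C\mu\in L^2(\mathbb R,dy)$ would force $\hat\mu\in L^2(\mathbb R,dy)$, so that $\mu$ would have an $L^2$-density with respect to Lebesgue measure, contradicting singularity. Hence $C\mu\notin L^2(\mathbb R,dy)$. Since $C\mu(y)=O(|y|^{-1})$ at infinity, the $L^2$ failure is concentrated in a bounded neighborhood of $\operatorname{supp}\mu$; by shrinking $B$ so that $\operatorname{supp}\mu$ sits in the open sublevel set $\{\tilde\fw<M+1\}$, I arrange $\int_{G_{M+1}}|C\mu|^2\,dy=+\infty$. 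The bound $\tilde\fw^{-1}\ge (M+1)^{-1}$ on $G_{M+1}$ then gives $\int_{G_{M+1}}|C\mu|^2\tilde\fw^{-1}\,dy=+\infty$, contradicting \eqref{yasam8}.

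The main obstacle is the last step: localizing the $L^2$-failure of the Cauchy transform of a singular measure into a sublevel set of $\tilde\fw$. Ensuring the geometric containment $\operatorname{supp}\mu\subset G_{M+1}$ while preserving $\tilde\sigma_s(\operatorname{supp}\mu)>0$ is delicate because $\tilde\fw$ is only defined up to Lebesgue null sets and $\tilde\sigma_s$ may concentrate precisely on such sets. I would handle this by inner regularity: approximating $\tilde\sigma_s$ from within $G_M$ by compactly supported singular measures and applying the Fourier--Plancherel obstruction to each. Once the local obstruction is in place, comparison with $\tilde\fw^{-1}\ge M^{-1}$ on the good set closes the contradiction and completes the proof.
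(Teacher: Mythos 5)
Your treatment of the $A_2$ half is exactly the paper's argument: restrict \eqref{yasam8} to $f\in L^2([-2,2],\tilde\fw\,dy)$ (vanishing $\tilde\sigma_s$-a.e., so that only the absolutely continuous part contributes), substitute $F=f\tilde\fw$, and invoke Hunt--Muckenhoupt--Wheeden to get $\tilde\fw^{-1}\in A_2[-2,2]$, hence $\tilde\fw\in A_2[-2,2]$. That part is fine. (The paper states the lemma as a summary of precisely this discussion and does not write out the absolute-continuity half, so there you are on your own.)

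The absolute-continuity half has a genuine gap at the step you flagged, and inner regularity does not repair it. Plancherel gives $\int_{\bbR}|C\mu|^2\,dy=\infty$, and the decay at infinity plus local boundedness off $\operatorname{supp}\mu$ localize the divergence to every Lebesgue-\emph{neighbourhood} of $\operatorname{supp}\mu$ --- but not to the sublevel set $G_{M+1}=\{\tilde\fw\le M+1\}$, which is merely measurable. The containment $\operatorname{supp}\mu\subset G_{M+1}$ is achieved by redefining $\tilde\fw$ on the Lebesgue-null carrier of $\mu$, which says nothing about $\tilde\fw$ on the sets of positive Lebesgue measure near $\operatorname{supp}\mu$ where $|C\mu|^2\,dy$ actually accumulates. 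Nothing in your hypotheses prevents $\tilde\fw$ from blowing up there: if $\mu$ is carried by a compact porous null set $K$ and $\tilde\fw\asymp\operatorname{dist}(\cdot,K)^{-\alpha}$ with $\alpha>0$ small (an $A_2$ weight), then after setting $\tilde\fw:=0$ on $K$ you have $\operatorname{supp}\mu\subset G_M$ for every $M$, yet $G_{M+1}$ lies, up to a null set, at positive distance from $K$, so $\int_{G_{M+1}}|C\mu|^2\,dy<\infty$. The obstruction is not about where $\operatorname{supp}\mu$ sits but about where $\tilde\fw^{-1}$ is bounded below, and shrinking $B$ cannot control that. The clean fix is to use the $A_2$ property you have already established: extend $\tilde\fw^{-1}$ to an $A_2$ weight on $\bbR$, so the Hilbert transform $H$ is bounded on $L^2(\tilde\fw^{-1}dy)$; if $\int_{[-2,2]}|C\mu|^2\tilde\fw^{-1}dy<\infty$, then writing $C\mu$ as this piece plus an $L^2(dy)$ tail gives $H(C\mu)\in L^1_{\mathrm{loc}}$, whereas distributionally $H(C\mu)=H^2\mu$ is a nonzero multiple of $\mu$; hence $\mu\ll dy$, contradicting singularity. (Alternatively, test \eqref{yasam8} on indicators of intervals $I$: for $y\in 3I\setminus 2I$ the kernel has constant sign, giving $\tilde\sigma(I)^2|I|^{-2}\int_{3I\setminus 2I}\tilde\fw^{-1}\lesssim\tilde\sigma(I)$, which together with $\tilde\fw\in A_2$ yields $\tilde\sigma(I)\lesssim\int_{3I}\tilde\fw\,dy$ and hence $\tilde\sigma\ll dy$ by a covering argument.)
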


Therefore, to prove Theorem \ref{th23} we have to answer the
following question: what is the property of the singular part on
$\bbT\setminus E$?

To continue with \eqref{yasam8} we write it down now for all $f\in
L^2(X,d\tilde\sigma)$:
\begin{equation}\label{yasam11}
    \int_{-2}^2\bigg|\int_{X}\frac{f(x)\,d\tilde\sigma(x)}
{x-y}\bigg|^2\frac{dy}{\tilde\fw(y)}\leq C\int_{X} |f(x)|^2\,
d\tilde\sigma(x),\,\,\,\forall f\in L^2(X,d\tilde\sigma).
\end{equation}
Let us write down the dual inequality. Fix $g\in L^2([-2,2],
\tilde\fw dx)$, we have that \eqref{yasam11} is equivalent to
$$
\sup_{\|g\|_{L^2(\tilde\fw)}\leq 1}\int_{-2}^2 g(y) \int_X\frac{f(x)
d\tilde\sigma(x)}{x-y}dy \leq
C\bigg(\int_X|f|^2\,d\tilde\sigma\bigg)^{1/2}.
$$
Thus, we can conclude that \eqref{yasam11} is equivalent to the
following inequality:

\begin{equation}
\label{dual1} \int_X \bigg|\int_{-2}^2\frac{g(y)}{y-x}dy\bigg|^2
\,d\tilde\sigma \leq C\,\int_{-2}^2 |g|^2 \tilde\fw
dx\,,\,\,\,\,\forall g\in L^2([-2,2],\tilde\fw dx)\,.
\end{equation}

\vspace{.3in}

To understand necessary and sufficient conditions for \eqref{dual1}
we introduce Smir\-nov class $E^2(\Omega)$, where
$\Omega=\bbC\setminus [-2,2]$. Recall that this is the class of
analytic functions $f$ on $\Omega$ having the property that
$\int_{\gamma_n} |f(z)|^2 |dz|\leq C$ for a sequence of smooth
contours converging to $[-2,2]$ (class does not depend on the
sequence of contours). Let us denote by $\phi(z)$ the outer function
in $\Omega$ such that $\tilde\fw=|\phi|^2$ on the boundary $[-2,2]$
of $\Omega$ (the same boundary value on both sides of $[-2,2]$),
$\phi(\infty)>0$. The fact that $\tilde\fw\in A_2[-2,2]$ is way
sufficient for the fact that $\phi$ exists (as $\tilde\fw\in
A_2[-2,2]$ obviously ensures $\int_{-2}^2 |\frac{\log
\tilde\fw(x)}{\sqrt{4-x^2}}|\,dx<\infty$, and the latter condition
means the existence of outer function in $\Omega$ with absolute
value $\tilde\fw$ on the boundary).

\vspace{.3in}

\begin{lemma}
\label{E2A2} Let $\tilde\fw\in A_2[-2,2]$,  $\int_{-2}^2 |g|^2
\tilde\fw dx<\infty$, and let
$$
G(z)=\int_{-2}^2\frac{g(t)dt}{t-z}\,.
$$
Then $G(z)\phi(z)\in E^2(\Omega)$.
\end{lemma}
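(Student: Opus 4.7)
The plan is to establish $F := G\phi \in E^2(\Omega)$ by controlling the non-tangential boundary values of $F$ on both sides of the cut $[-2,2]$ and then invoking standard Smirnov theory for the slit plane $\Omega=\bar\bbC\setminus[-2,2]$.

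First I would verify analyticity and decay at infinity. Since $\tilde\fw\in A_2[-2,2]$ forces $\tilde\fw^{-1}\in L^1[-2,2]$, Cauchy-Schwarz combined with $g\in L^2(\tilde\fw\,dx)$ gives $g\in L^1[-2,2]$; hence $G$ is analytic in $\Omega$ with $G(z)=O(1/|z|)$ at infinity. As $\phi$ is outer in $\Omega$ with $\phi(\infty)>0$ finite, $F$ is analytic in $\Omega$ and vanishes at infinity.

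Second, the Sokhotski-Plemelj formulas give, for a.e.\ $x\in(-2,2)$,
\[
G(x \pm i0) = (\mathcal{H}g)(x) \pm i\pi g(x),
\]
where $\mathcal{H}$ denotes the finite Hilbert transform on $[-2,2]$. Because $|\phi(x\pm i0)|^2=\tilde\fw(x)$ on both sides of the cut, this yields the pointwise bound
\[
|F(x\pm i0)|^2 \le 2\bigl(|(\mathcal{H}g)(x)|^2 + \pi^2|g(x)|^2\bigr)\tilde\fw(x).
\]
The Hunt-Muckenhoupt-Wheeden theorem then applies: $\tilde\fw\in A_2$ makes $\mathcal{H}$ bounded on $L^2(\tilde\fw\,dx)$, so integrating produces
\[
\int_{-2}^2 |F(x\pm i0)|^2\,dx \le C\int_{-2}^2 |g|^2\tilde\fw\,dx,
\]
i.e.\ $F$ has $L^2$ boundary traces from each side of the cut with norm bounded by $C\|g\|_{L^2(\tilde\fw)}$.

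The main obstacle is the final step, promoting a two-sided $L^2$ boundary estimate to genuine $E^2(\Omega)$ membership. I would pass through the Joukowski uniformization $z=\zeta+\zeta^{-1}$ of $\{|\zeta|>1\}$ onto $\Omega$: this sends the two banks of the cut to complementary arcs of $\bbT$ and, after the usual $\sqrt{\psi'}$ change of variable, identifies $E^2(\Omega)$ with $H^2$ of the exterior disk. The Step 3 bound then reads as an $L^2(\bbT)$ estimate on the pulled-back boundary trace. To turn this into $H^2$ membership one must rule out a singular inner factor, and this is precisely where outerness of $\phi$ and the fact that $G$ is the Cauchy integral of an $L^1$ density on $[-2,2]$ enter: each factor lies in the Smirnov class $N^+(\Omega)$, so $F\in N^+(\Omega)$. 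A Smirnov class function with $L^2$ boundary values belongs to $E^2$, producing $F\in E^2(\Omega)$ together with the norm bound $\|G\phi\|_{E^2(\Omega)}\le C\|g\|_{L^2(\tilde\fw)}$.
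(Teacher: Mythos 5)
Your proof is correct and follows essentially the same route as the paper: the Plemelj/jump formulas express $G(x\pm i0)$ in terms of $g$ and $\mathcal{H}g$, the $A_2$ hypothesis gives the weighted $L^2$ bound on the finite Hilbert transform, and Smirnov's theorem (each factor being in the Smirnov class $\mathcal{N}$, with $L^2$ boundary data) upgrades this to $G\phi\in E^2(\Omega)$. The explicit Joukowski uniformization you invoke in the last step is just a concrete way of carrying out the Smirnov-class argument the paper cites from Privalov, not a genuinely different method.
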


\begin{proof}
Consider
$$
G_+(x):=\lim_{y\rightarrow 0+}\int_{-2}^2\frac{g(t)dt}{t-x-iy},
$$
$$
G_-(x):=\lim_{y\rightarrow 0-}\int_{-2}^2\frac{g(t)dt}{t-x-iy}
$$
Jump formula says that $G_+(x) -G_-(x) = c\cdot g(x)$ for a. e. $x$.
On the other hand, $G_+(x) + G_-(x)= c\cdot \cH g(x)$ for a. e. $x$.
We conclude that both  $G_+, G_-\in L^2([-2,2],\tilde\fw dx)$ if and
only if both $g,\cH g\in L^2([-2,2],\tilde\fw dx)$. The latter is
the same as $\cH g\in L^2([-2,2],\tilde\fw dx)$ (because of course $
g\in L^2([-2,2],\tilde\fw dx)$ by assumption). We conclude that both
boundary values are in $L^2([-2,2],\tilde\fw dx)$ if and only if
$\cH g$ is. But the latter condition is equivalent to (we discussed
this already) $\tilde\fw\in A_2[-2,2]$.

We are left to prove that $G_+, G_-\in L^2([-2,2],\tilde\fw dx)$
implies that $G(z)\phi(z)\in E^2(\Omega)$. Actually these claims are
equivalent, and this does not depend on $A_2$ anymore. Notice that
our function $G(z)$ is a Cauchy integral of $L^1(-2,2)$ function,
and, as such, belongs to Smirnov class $E^p(\Omega)$ for any $p\in
(0,1)$.

For any outer function $h$ in $\Omega$ and for any analytic function
$G$, say, from $E^{1/2}(\Omega)$ we have that $Gh\in E^2(\Omega)$ if
and only if $(Gh)_+\in L^2(-2,2), (Gh)_-\in L^2(-2,2)$. This is the
corollary of the famous theorem of Smirnov (see \cite{Privalov})
that says that if in a domain $\Omega$ one has a holomorphic
function $F$ which is the ratio of two bounded holomorphic functions
such that the denominator does not have singular inner part (the
class of such functions is denoted by $\mathcal{N}$, and if
$f|\partial{\Omega} \in L^q(\partial{\Omega})$ then $f\in
E^q(\Omega)$. In our case one should only see that any $G\in
E^{1/2}(\Omega)$  and any outer function $h$ are  functions from
$\mathcal{N}$. Then we apply this observation to our $G$ and to
outer function $h=\phi$, and we see that the requirement
$G(z)\phi(z)\in E^2(\Omega)$ is equivalent to $G_+, G_-\in
L^2([-2,2],\tilde\fw dx)$. We finished the lemma's proof.
\end{proof}

\vspace{.2in}

\begin{remark}
A little bit more is proved. Namely, given a weight $\tilde\fw$ on
$[-2,2]$, we can claim that for every $g$ such that $\int_{-2}^2
|g|^2 \tilde\fw dx<\infty$ we have that function
$$
G(z)=\int_{-2}^2\frac{g(t)dt}{t-z}
$$
satisfies $G(z)\phi(z)\in E^2(\Omega)$ if and only if $\tilde\fw\in
A_2[-2,2]$. We need this claim only in ``if" direction.
\end{remark}

Lemma \ref{E2A2} is very helpful as it allows us to write yet
another inequality equivalent to eqref{dual1}:

\begin{equation}
\label{Carleson1} \sum_{x_k\in X} |G\phi(x_k)|^2
\frac{\sigma_k}{|\phi(x_k)|^2} \leq C\, \int_{-2}^2
|G\phi(x)|^2\,dx\,.
\end{equation}

\vspace{.2in}

We want to see now that when $g$ runs over the whole of
$L^2(\tilde\fw)$, function   $G\phi$ runs over the whole of
$E^2(\Omega)$ (recall that $G(z):=
\int_{-2}^2\frac{g(t)\,dt}{t-z}$). Lemma \ref{E2A2} gives one
direction: if $g\in L^2(\tilde\fw)$ then  $G\phi\in E^2(\Omega)$.

Let us show the other inclusion. So suppose $F\in E^2(\Omega)$.
Consder $G(z)= \frac{F(z)}{\phi(z)}$. We want to represent it as
follows:
\begin{equation}
\label{repr}
\frac{F(z)}{\phi(z)}=\int_{-2}^2\frac{f(t)\,dt}{t-z}\,,\,\,f\in
L^2(\tilde\fw)\,.
\end{equation}

\vspace{.2in}

To do that notice that both boundary value functions
$\left(\frac{F(z)}{\phi(z)}\right)_+,
\left(\frac{F(z)}{\phi(z)}\right)_-$ are in $L^2(\tilde\fw)$. Here
we use again the fact that $\tilde\fw\in A_2[-2,2]$. So these two
boundary value functions are in $L^1$. And $\frac{F(z)}{\phi(z)}\in
\mathcal{N}$ of course. We use again Smirnov's theorem (see
\cite{Privalov}) to conclude that $\frac{F(z)}{\phi(z)}\in
E^1(\Omega)$. Then put
$$
f(t):= c\cdot \left(\left(\frac{F(z)}{\phi(z)}\right)_+  -
\left(\frac{F(z)}{\phi(z)}\right)_-\right)\,.
$$

\vspace{.2in}

It is in $L^2(\tilde\fw)$ and so is in $L^1$. We apply Cauchy
integral theorem to function $\frac{F(z)}{\phi(z)}$ from
$E^1(\Omega)$. We get exactly \eqref{repr} if constant $c$ is chosen
correctly.

All this reasoning shows that in \eqref{Carleson1} when $g$ runs
over the whole of $L^2(\tilde\fw)$, function   $G\phi$ runs over the
whole of $E^2(\Omega)$. Therefore \eqref{Carleson1} can be rewritten
as follows:

\begin{equation}
\label{Carleson2} \sum_{x_k\in X} |F(x_k)|^2
\frac{\sigma_k}{|\phi(x_k)|^2} \leq C\, \int_{-2}^2
|F(x)|^2\,dx\,,\,\,\,\forall F \in E^2(\Omega)\,.
\end{equation}

\vspace{.2in}

This is very nice because \eqref{Carleson2} is a familiar Carleson
measure condition, only not in Hardy class $H^2$ in the unit disc,
but for its full analog $E^2$ in $\Omega=\bbC\setminus [-2,2]$. The
trasfer from the disc to $\Omega$ is obvious:

\begin{lemma}
\label{transfer} Let $D_I$ denote two discs centered at $-2$ and $2$
and of radius $I$. Measure $d\mu$ in $\Omega$ satisfies
$$
\int |F(z)|^2 d\mu(z) \leq C\,\int_{-2}^2 |F(x)|^2\,dx
$$
for all $F\in E^2(\Omega)$ if and only if
\begin{equation}
\label{Carleson3} \int_{D_I}\frac{d\mu(z)}{\sqrt{|z^2-4|}} \leq
C'\sqrt{I}\,.
\end{equation}
\end{lemma}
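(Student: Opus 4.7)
The plan is to transfer the problem to the exterior of the unit disc via the Joukowski map $z=\zeta+\zeta^{-1}$ and then invoke the classical Carleson measure theorem for $H^2$. The map $\zeta\mapsto z=\zeta+\zeta^{-1}$ is a conformal equivalence $\bbD^c\to\Omega$ (identifying $\infty\mapsto\infty$), and on the boundary $|\zeta|=1$ one has $|dz|=|\zeta-\zeta^{-1}|\,d\theta$. Choose the single-valued branch of $\sqrt{\zeta-\zeta^{-1}}$ on the simply-connected $\bbD^c$ compatible with the branch of $\sqrt{z^2-4}$ on $\Omega$ (positive for large real $z$), and associate to $F\in E^2(\Omega)$ the function $g(\zeta):=F(\zeta+\zeta^{-1})\sqrt{\zeta-\zeta^{-1}}$. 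Then $g$ is analytic on $\bbD^c$, vanishes at infinity, and satisfies
\begin{equation*}
\int_{-2}^{2}|F(x)|^2\,dx\;\asymp\;\int_{\bbT}|g(\zeta)|^2\,dm(\zeta),
\end{equation*}
so the correspondence $F\leftrightarrow g$ gives an isomorphism $E^2(\Omega)\cong H^2(\bbD^c)$ up to universal constants.

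Next, let $\tilde\mu$ be the pull-back of $\mu$ to $\bbD^c$ under the inverse Joukowski map and set $d\nu(\zeta):=d\tilde\mu(\zeta)/|\zeta-\zeta^{-1}|$. Using $|f(\zeta)|^2=|g(\zeta)|^2/|\zeta-\zeta^{-1}|$ with $f(\zeta)=F(\zeta+\zeta^{-1})$, one gets
\begin{equation*}
\int_{\Omega}|F|^2\,d\mu\;=\;\int_{\bbD^c}|g|^2\,d\nu.
\end{equation*}
Hence the Carleson-type estimate of the lemma is equivalent to $\int|g|^2\,d\nu\le C\|g\|_{H^2(\bbD^c)}^2$ for every $g\in H^2(\bbD^c)$, which by the classical Carleson measure theorem is equivalent to the box condition $\nu(S_I)\le C|I|$ for every boundary Carleson square $S_I\subset\bbD^c$ on $\bbT$.

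The final step is to translate this family of box conditions into the endpoint condition stated in the lemma. Near any boundary point $\zeta_0\in\bbT\setminus\{\pm 1\}$ the Joukowski map is a bi-Lipschitz local diffeomorphism with $|\zeta-\zeta^{-1}|$ bounded above and below, so the local Carleson condition there reduces to ordinary Carleson behaviour of $\mu$ on compacta of $\Omega$, a property that is absorbed into the integrated estimate of the lemma (taken for $I$ comparable to the distance from $z_0=\zeta_0+\zeta_0^{-1}$ to $\{\pm 2\}$). The branching lives only at $\zeta=\pm 1$: writing $\zeta=\pm 1+h$ gives $z\mp 2=h^2+O(h^3)$, whence $|\zeta\mp 1|^2\asymp|z\mp 2|$ and, using $z^2-4=(z-2)(z+2)$, $|\zeta-\zeta^{-1}|\asymp\sqrt{|z^2-4|}$. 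A Carleson box $S_I$ of width $I$ anchored at $\zeta=\pm 1$ maps, up to universal constants, onto the disc $D_{I^2}$ of radius $I^2$ about $z=\pm 2$, so
\begin{equation*}
\nu(S_I)\;\asymp\;\int_{D_{I^2}}\frac{d\mu(z)}{\sqrt{|z^2-4|}},
\end{equation*}
and setting $J=I^2$ turns $\nu(S_I)\le C|I|$ into exactly the condition $\int_{D_J}d\mu/\sqrt{|z^2-4|}\le C'\sqrt{J}$. Both implications of the lemma follow by running this dictionary in each direction.

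The main obstacle is making the geometric dictionary rigorous at the branch points: (i) choosing compatible single-valued branches of $\sqrt{\zeta-\zeta^{-1}}$ on $\bbD^c$ and $\sqrt{z^2-4}$ on $\Omega$ so that $F\leftrightarrow g$ is a genuine Hilbert-space isomorphism; and (ii) verifying, uniformly over all $I>0$, the equivalences $|\zeta\mp 1|^2\asymp|z\mp 2|$ and $|\zeta-\zeta^{-1}|\asymp\sqrt{|z^2-4|}$ so that the Carleson boxes at $\zeta=\pm 1$ correspond to the discs $D_{I^2}$ with the weight $1/\sqrt{|z^2-4|}$ up to constants independent of $I$. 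Once this dictionary is set up, the analytic content of the lemma is simply the classical Carleson measure theorem for $H^2$.
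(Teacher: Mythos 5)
Your proposal is correct and follows essentially the same route as the paper: transfer to the disc by a conformal map $\psi$ (you use the Joukowski map explicitly), send $F$ to $F\circ\psi\cdot(\psi')^{1/2}\in H^2$, apply the classical Carleson embedding theorem to the pulled-back measure divided by $|\psi'|$, and translate the box condition back, with your computation $|\zeta\mp1|^2\asymp|z\mp2|$, $|\zeta-\zeta^{-1}|\asymp\sqrt{|z^2-4|}$ making explicit the endpoint geometry the paper leaves implicit. The only soft spot --- dismissing Carleson boxes not anchored at $\zeta=\pm1$ as ``absorbed'' --- is no less rigorous than the paper's own one-line treatment and is harmless here because the measure $\mu$ to which the lemma is applied lives on $\bbR\setminus[-2,2]$ with atoms accumulating only at $\pm2$.
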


\begin{proof}
Let $\psi$ be conformal map from the disc $\bbD$ onto $\Omega$. If
$F\in E^2$ then $F\circ\psi\cdot (\psi')^{1/2}\in H^2$. We apply
Carleson measure theorem to the new measure
$\tilde\mu:=\psi^{-1}*\mu$ in the disc and see that
$\tilde\mu/|\psi'|$ is a usual Carleson measure (see
\cite{Garnett}). Coming back to $\Omega$ gives \eqref{Carleson3}.
\end{proof}

Immediately we obtain the following necessary and sufficient
condition for \eqref{yasam11} (or \eqref{dual1}) to hold:

\begin{equation}
\label{NS1} \sum_{k: |x_k\pm 2| \leq \tau}
\frac{\sigma_k}{|\phi(x_k)|^2\,\sqrt{x_k^2-4}}\leq
C\sqrt{\tau}\,,\,\,\,\forall \tau > 0\,.
\end{equation}

\vspace{.4in}

The condition \eqref{NS1} plus $\tilde\fw\in A_2$ give the full
necessary and sufficient condition for \eqref{yasam8} to hold, and
so for the $L^2$ boundedness of the operators of transformation.

However we want to simplify \eqref{NS1}. The problem with this
condition as it is shown now lies in the fact that we have to
compute the outer function $\phi$ with given absolute value
$\sqrt{\tilde\fw}$ on $[-2,2]$. This might not be easy in general.
We want to use the fact that $\tilde\fw\in A_2[-2,2]$ once again to
replace $\phi(x_k)$ by a simpler expression. We need one more lemma.

\begin{lemma}
\label{comparison} Let $\tilde\fw\in A_2[-2,2]$ and let $x>2$. There
are two constants $0<c<C<\infty$ independent of $x$ such that
\begin{equation}
\label{comp} c\,\frac 1{x-2}\int_{4-x}^2 \tilde\fw dt \leq
|\phi^2(x)| \leq C\,\frac 1{x-2}\int_{4-x}^2 \tilde\fw dt\,.
\end{equation}
\end{lemma}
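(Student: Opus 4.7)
The proof rests on the harmonic--measure representation of the outer function. Since $\phi$ is outer in $\Omega=\bbC\setminus[-2,2]$ with $|\phi|^2=\tilde\fw$ on the boundary and $\phi(\infty)>0$,
\[
\log|\phi(x)|^2 \;=\; \int_{-2}^2 \log\tilde\fw(t)\,d\omega_x(t),\qquad d\omega_x(t)=\frac{\sqrt{x^2-4}}{\pi(x-t)\sqrt{4-t^2}}\,dt,
\]
where $d\omega_x$ is harmonic measure in $\Omega$ at $x$. With $\delta:=x-2$ and $I_x:=[4-x,2]$, the substitution $s=2-t=\delta u^2$ yields $\omega_x(I_x)=\tfrac12+O(\delta)$, while on the dyadic pieces $I_n:=[2-2^n\delta,\,2-2^{n-1}\delta]$ one has $\omega_x(I_n)\asymp 2^{-n/2}$. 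Thus $d\omega_x$ is a probability measure concentrated on $I_x$ at scale $\delta$ with geometric dyadic tails (and a symmetric, harmless tail near $t=-2$). Since the lemma is used only for $x_k$ accumulating at $\pm 2$ in the subsequent argument, I may assume $x\in(2,3)$.

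The strategy is to use $\tilde\fw\in A_2$ to exchange this log--weighted integral for the Lebesgue average $\langle\tilde\fw\rangle_{I_x}^{dt}=\tfrac1\delta\int_{I_x}\tilde\fw\,dt$. Two consequences of $A_2$ enter: (i) the $A_\infty$/John--Nirenberg equivalence $\exp\langle\log\tilde\fw\rangle_J^{dt}\asymp\langle\tilde\fw\rangle_J^{dt}$ on every subinterval $J\subset[-2,2]$, and (ii) the doubling estimate $|\log\langle\tilde\fw\rangle_{J_1}-\log\langle\tilde\fw\rangle_{J_2}|\le C\log(|J_2|/|J_1|)$ for nested $J_1\subset J_2$. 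On each $I_n$ with $n\ge 1$ the density of $d\omega_x$ relative to $dt/|I_n|$ is bounded above and below, so $\langle\log\tilde\fw\rangle^{d\omega_x}_{I_n}\asymp\langle\log\tilde\fw\rangle^{dt}_{I_n}$; combined with (i)--(ii) this gives $\int_{I_n}\log\tilde\fw\,d\omega_x = \omega_x(I_n)\bigl(\log\langle\tilde\fw\rangle_{I_x}^{dt}+O(n)\bigr)$, and summing against $\omega_x(I_n)\asymp 2^{-n/2}$ produces a total dyadic--tail contribution of $(1-\omega_x(I_x))\log\langle\tilde\fw\rangle_{I_x}^{dt}+O(1)$.

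The genuinely delicate piece is the main term from $I_0=I_x$, where $d\omega_x/dt\asymp 1/\sqrt{\delta(2-t)}$ has an unbounded singularity at $t=2$, so the naive BMO argument on $I_x$ does not apply. The plan here is to perform the change of variables $r=\sqrt{2-t}$: on $I_x$ it becomes $r\in[0,\sqrt\delta]$, and the restricted measure $d\omega_x|_{I_x}$ transforms, up to a positive constant, into the Poisson density $\tfrac{2\sqrt\delta}{\pi(\delta+r^2)}\,dr$, i.e.\ the Poisson kernel of the upper half plane at the point $i\sqrt\delta$ restricted to $[0,\sqrt\delta]$. Using the $A_2$ control on $\tilde\fw$ one then shows that
\[
\int_{I_x}\log\tilde\fw\,d\omega_x \;=\; \omega_x(I_x)\log\langle\tilde\fw\rangle_{I_x}^{dt}+O(1),
\]
with additive constant depending only on $[\tilde\fw]_{A_2}$; the point is that although $\hat w(r):=\tilde\fw(2-r^2)$ need not be an $A_2$--weight in $dr$, the joint $A_2$ structure of $\tilde\fw$ in $t$ combined with the Jacobian of $t\mapsto r$ provides exactly enough cancellation at scale $\sqrt\delta$, the smaller scales $r\ll\sqrt\delta$ being absorbed by the Poisson decay.

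Combining both pieces gives $\log|\phi(x)|^2=\log\langle\tilde\fw\rangle_{I_x}^{dt}+O(1)$, which is precisely \eqref{comp}. The lower bound in \eqref{comp} can alternatively be obtained at no extra cost by applying the upper bound to the outer function $1/\phi$ (corresponding to the weight $\tilde\fw^{-1}\in A_2$) together with $\langle\tilde\fw\rangle_{I_x}\langle\tilde\fw^{-1}\rangle_{I_x}\le[\tilde\fw]_{A_2}$, so effectively only the upper bound requires proof. The main obstacle is the endpoint contribution from $I_x$ itself; once the Poisson reduction at scale $\sqrt\delta$ via $r=\sqrt{2-t}$ is correctly set up, everything else reduces to routine $A_\infty$ and doubling bookkeeping.
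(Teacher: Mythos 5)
Your architecture is viable and genuinely different from the paper's, but as written there is a gap at exactly the step you yourself flag as ``the genuinely delicate piece.'' The reduction to the upper bound (applying it to $1/\phi$ and $\tilde\fw^{-1}\in A_2$) and the treatment of the dyadic tail $\{2-t\ge x-2\}$ via John--Nirenberg and the logarithmic drift of BMO averages against $\omega_x(I_n)\asymp 2^{-n/2}$ are both fine. The problem is the main term: you assert that
\[
\int_{4-x}^{2}\log\tilde\fw\,d\omega_x=\omega_x([4-x,2])\,\log\langle\tilde\fw\rangle_{[4-x,2]}+O(1)
\]
``is then shown'' after the substitution $r=\sqrt{2-t}$, with the small scales ``absorbed by the Poisson decay.'' That justification does not work: on $[0,\sqrt{\delta}]$ the transformed density $\tfrac{2\sqrt\delta}{\pi(\delta+r^2)}$ is essentially \emph{constant} (it varies by a factor of $2$), so there is no Poisson decay left to absorb anything; in the original variable the density $\asymp(\delta(2-t))^{-1/2}$ genuinely blows up at $t=2$, and the paper explicitly notes that $\frac1{\sqrt{x-2}}\int_{4-x}^2 \fu(s)(2-s)^{-1/2}ds\le C\frac1{x-2}\int_{4-x}^2\fu(s)ds$ is \emph{false} for general $A_2$ weights $\fu$ (take $\fu=(2-s)^{-1/2}$), so a naive Jensen step on $[4-x,2]$ cannot close the estimate.

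The claim you need is nevertheless true, and it is proved by iterating your own tail argument \emph{inward}: split $[4-x,2]$ into shells $J_m$ with $2-t\asymp 4^{-m}(x-2)$, on each of which the harmonic-measure density is comparable to a constant; then $\omega_x(J_m)\asymp 2^{-m}$ decays geometrically while $\langle\log\tilde\fw\rangle_{J_m}-\langle\log\tilde\fw\rangle_{[4-x,2]}=O(m)$ by the BMO bound on $\log\tilde\fw$, and summing gives the $O(1)$ error; Jensen and the $A_\infty$ equivalence $\exp\langle\log\tilde\fw\rangle\asymp\langle\tilde\fw\rangle$ then convert $\langle\log\tilde\fw\rangle_{[4-x,2]}$ into $\log\langle\tilde\fw\rangle_{[4-x,2]}$. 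Once that computation is inserted your proof is complete. For comparison, the paper avoids the dyadic/BMO analysis entirely: it bounds $e^{\int\log\tilde\fw\,P_x}\le\bigl(\int\tilde\fw^{\delta}P_x\bigr)^{1/\delta}$ by Jensen for a suitable exponent $\delta<1$, and then a single H\"older step on $[4-x,2]$ (chosen so that $(2-s)^{-\frac{1}{2(1-\delta)}}$ is integrable) together with the standard $A_2$ tail estimate gives the upper bound directly. That route is shorter and uses only the self-improvement of the $A_2$ condition; yours, once completed, yields the slightly stronger two-sided statement $\log|\phi(x)|^2=\log\langle\tilde\fw\rangle_{[4-x,2]}+O(1)$ in one pass.
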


\begin{proof}
Let $P_z(s)$ stands for the Poisson kernel for domain $\Omega$ with
pole at $z\in \Omega$. It is easy to write its formula using
conformal mapping onto the disc, but we prefer to write its
asymptotic bahavior when $z>2$ and $z-2$ is small:
\begin{equation}
\label{asy} P_{z}(s) \asymp \frac{\sqrt{z-2}}{\sqrt{2-s}\,(z-s)}\,.
\end{equation}

\vspace{.2in}

Notice that it is sufficient to prove only the right inequality in
\eqref{comp}. In fact, the left one then follows from the right one
applied to $\tilde\fw^{-1}$ if one uses $\tilde\fw\in A_2$. So let
us have $\delta$ be a number very close to 1, but $\delta< 1$. There
exists such a $\delta$ that $\tilde\fw^{\delta}$ is still in $A_2$.

Having this in mind we write
$$
\phi^2(x)= e^{\int_{-2}^2\log \tilde\fw\, P_{x}(s)\,ds} \leq
\bigg(\int_{-2}^2 \tilde\fw^{\delta}\,\frac
1{\sqrt{2-s}}\frac{\sqrt{x-2}}{x -s}\,ds\bigg)^{\frac 1{\delta}}\,.
$$

\vspace{.2in}

We can split the last integral into two:
$$
I:= \int_{4-x}^2 \tilde\fw^{\delta}\,\frac
1{\sqrt{2-s}}\frac{\sqrt{x-2}}{x-s}\,ds\leq C\,\frac
1{\sqrt{x-2}}\int_{4-x}^2\tilde\fw^{\delta}\frac 1{\sqrt{2-s}}\,ds
$$
and
$$
II:=\int_{-2}^{4-x} \tilde\fw^{\delta}\cdot ...ds\leq C\,\int_{-2}^2
\tilde\fw^{\delta}\frac{\sqrt{x-2}}{(x-s)^{\frac 3 2}}ds\,.
$$

\vspace{.2in}

It is easy to take care of $II$. In fact, it is well known that for
any $A_2[-2,2]$ weight $\fu$
\begin{equation*}
    \int_{-2}^2 \fu(s)\frac{(x-2)^{a}}{(x-s)^{1+a}}ds \leq C_a\,\frac
1{x-2}\int_{4-x}^2 \fu(s)\,ds\,.
\end{equation*}

\vspace{.2in}

But this is false to claim that for any $A_2$ weight $\fu$ one has
$\frac 1{\sqrt{x-2}}\int_{4-x}^2 \fu\frac 1{\sqrt{2-s}}\,ds\leq
C\,\frac 1{x-2}\int_{4-x}^2 \fu(s)\,ds$!  Just take $\fu$ to be
equal to $\frac 1{\sqrt{2-s}}$ for all $s<2$ and close to $2$.
Therefore term $I$ is more difficult than term $II$. But not much.
Use Cauchy inequality:
$$
I^{\frac 1{\delta}} \leq C\,\bigg(\frac 1{\sqrt{x-2}}\int_{4-x}^2
\tilde\fw^{\delta}\frac 1{\sqrt{2-s}}\,ds\bigg)^{\frac
1{\delta}}\leq \bigg(\frac 1{x-2}\int_{4-x}^2
\tilde\fw\,ds\bigg)\cdot
$$
$$ \bigg(\frac 1{x-2}\int_{4-x}^2\frac 1{(2-s)^{\frac 1 2\cdot\frac 1{1-\delta}}}
\bigg)^{\frac{1-\delta}{\delta}}\cdot (x-2)^{\frac 1{2\delta}} \leq
C\,\frac 1{x-2}\int_{4-x}^2 \tilde\fw\,ds\,.
$$

As a result we get $|\phi^2(x)| \leq C\, \frac 1{x-2}\int_{4-x}^2
\tilde\fw\,ds$, which is the right inequality of the lemma. We
already noticed that the left inequality follows from the right one
(using $A_2$ property and applying what we proved to $\frac
1{\tilde\fw}$). Lemma is completely proved.
\end{proof}

Now we can rewrite \eqref{NS1} in an equivalent form.
\begin{proposition}
Let $x_k\rightarrow 2$ (we consider accumulation to point $2$ only,
accumulation to $-2$ is symmetric). Consider condition
\begin{equation}
\label{NS2} \sum_{k: x_k-2\leq \tau } \frac{\sigma_k}{\int_{4-x_k}^2
\tilde\fw(s)\,ds}\sqrt{x_k-2}\leq C\sqrt{\tau}\,,\,\,\,\forall \tau
> 0\,.
\end{equation}
Then (if points accumulate only to $2$) \eqref{NS2} plus
$\tilde\fw\in A_2[-2,2]$ are equivalent to \eqref{yasam8}.
\end{proposition}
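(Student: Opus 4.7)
The plan is to piggyback on the chain of equivalences already assembled in the preceding subsection, and reduce the proposition to a direct pointwise comparison coming from Lemma \ref{comparison}. Specifically, the combination of the dualization of \eqref{yasam11}$\iff$\eqref{dual1}, the identification in Lemma \ref{E2A2} of $G\phi$ as a generic element of $E^2(\Omega)$ when $g$ ranges over $L^2(\tilde\fw\,dx)$, and the Carleson-measure criterion in Lemma \ref{transfer}, has already established
\[
\eqref{yasam8}\ \Longleftrightarrow\ \tilde\fw\in A_2[-2,2]\ \text{and}\ \eqref{NS1}.
\]
Therefore the whole proposition boils down to proving, under the standing $A_2$ hypothesis, the equivalence $\eqref{NS1}\Longleftrightarrow\eqref{NS2}$.

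To do so I would simply substitute the two-sided bound from Lemma \ref{comparison}, which, under $\tilde\fw\in A_2[-2,2]$, gives
\[
|\phi(x_k)|^2\asymp \frac{1}{x_k-2}\int_{4-x_k}^{2}\tilde\fw(s)\,ds,
\]
with constants independent of $k$ (as $x_k\to 2$). Combined with the elementary asymptotics $\sqrt{x_k^2-4}=\sqrt{x_k-2}\sqrt{x_k+2}\asymp 2\sqrt{x_k-2}$, this yields
\[
\frac{1}{|\phi(x_k)|^2\sqrt{x_k^2-4}}\ \asymp\ \frac{\sqrt{x_k-2}}{\int_{4-x_k}^{2}\tilde\fw(s)\,ds}.
\]
Summing this comparison over $\{k:x_k-2\le\tau\}$ shows that the left-hand sides of \eqref{NS1} and \eqref{NS2} differ by at most a universal multiplicative constant, hence one is $O(\sqrt\tau)$ uniformly in $\tau>0$ iff the other is.

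The case of accumulation to $-2$ is entirely symmetric: one applies the mirror image of Lemma \ref{comparison} near $-2$ and adds the two pieces. Since by assumption $\{x_k\}$ may accumulate only at $\pm 2$, these two local contributions exhaust the whole sum in \eqref{NS1}.

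The only nontrivial ingredient is Lemma \ref{comparison}, which already exploits the $A_2$ property in a delicate way (through the use of an exponent $\delta<1$ with $\tilde\fw^\delta$ still in $A_2$, and through a careful Cauchy inequality to control the critical term $I$). Once that lemma is granted, as it is above, the present proposition is a mechanical substitution and the main obstacle has already been overcome; no additional harmonic-analytic input is needed.
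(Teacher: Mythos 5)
Your proposal is correct and follows exactly the route the paper intends: the preceding chain of equivalences reduces everything to \eqref{NS1} plus $\tilde\fw\in A_2[-2,2]$, and Lemma \ref{comparison} together with $\sqrt{x_k^2-4}\asymp\sqrt{x_k-2}$ turns the summands of \eqref{NS1} into those of \eqref{NS2} up to uniform constants. The paper treats this substitution as immediate after Lemma \ref{comparison}, so your write-up simply makes explicit the same argument.
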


If points accumulate to both $\pm 2$ we need to add  an obvious
symmetric condition near $-2$. Thus Theorem \ref{th23} is completely
proved.

\begin{remark}{\it (step backward---step forward).} Let
\begin{equation}\label{deftite1}
    \tilde\theta(v):=\frac{b_0+v\theta^{(1)}(v)}{1+\bar
b_0v\theta^{(1)}(v)},\quad b_0\in\bbD.
\end{equation}
Then
\begin{equation}\label{deftite}
    \tilde\theta(v):=\frac{1+\bar c_0\theta(0)}{1+
c_0 \overline{\theta(0)}}\frac{c_0+\theta(v)}{1+\bar c_0 \theta(v)},
\end{equation}
where $c_0=\frac{b_0-a_0}{1-\bar a_0 b_0}$ is actually an arbitrary
point in $\bbD$.  Obviously multiplication of $\tilde\theta$ by
$e^{ic}$ does not change the norm of the transformation operator.
Thus arbitrary fraction--linear transformation
\begin{equation}\label{deftite2}
    \tilde\theta(v):=e^{ic}\frac{c_0+\theta(v)}{1+\bar c_0 \theta(v)},
\end{equation}
  preserves $A_2$ \eqref{A2_220} and "Carleson" \eqref{NS20} conditions.
\end{remark}

\section{Sufficient condition in terms of scattering data}
\label{secsc}
\begin{proof}[Proof of Theorem \ref{thsc}]
Let
$$
\bW=\begin{bmatrix} 1&\bar R_+\\
R_+&1
\end{bmatrix},\quad \bB=\begin{bmatrix} \bar B& 0\\
0& B
\end{bmatrix}.
$$
Condition \eqref{sscc3} means that the matrix weight $\bB\bW\bB^*$
is in $A_2$.

First we prove that
\begin{equation}\label{2sos}
||f^-||^2\le Q||f^-||^2_{L^2_{\alpha_-}}
\end{equation}
for $f^-(t)\in \hat H^2_{\alpha_-}$. In fact even $ ||f^-||^2\le
Q||f^-||^2_{R_-}$.

Recall
$$
 T_-(\t)f^-(\t)=\bar\t f^+(\t)+R_+(\t) f^+(\t)\in \bar BH^2,
$$
where $f^+\in L^2_{\alpha_+}\ominus \check H^2_{\alpha_+}$.
Therefore
$$
P_+\bB\bW
\begin{bmatrix} f^+(\t)\\
\bar \t f^+(\bar \t)
\end{bmatrix}=\begin{bmatrix} 0\\
B(\t)T_-(\t)f^-(\t)
\end{bmatrix},
$$
and
\begin{equation}\label{1sos}
\left\langle \bB\bW^{-1}\bB^* P_+\bB\bW
\begin{bmatrix} f^+(\t)\\
\bar \t f^+(\bar \t)
\end{bmatrix},
P_+\bB\bW
\begin{bmatrix} f^+(\t)\\
\bar \t f(^+\bar \t)
\end{bmatrix}\right\rangle=||f^-||^2.
\end{equation}
Due to the $A_2$ condition we get
\end{proof}

\begin{equation}\label{1sos}
||f^-||^2 \le Q \left\langle \bW
\begin{bmatrix} f^+(\t)\\
\bar \t f^+(\bar \t)
\end{bmatrix},
\begin{bmatrix} f^+(\t)\\
\bar \t f^+(\bar \t)
\end{bmatrix}\right\rangle=Q ||f^-||^2_{R_-}.
\end{equation}

Now  we will prove the second part of the claim,  that is,
\begin{equation}\label{2sosplus}
||f^+||^2\le Q||f^+||^2_{L^2_{\alpha_+}}
\end{equation}
for $f^+(t)\in \hat H^2_{\alpha_+}$.

Since \eqref{2sos} holds then $\hat H^2_{\alpha_-}= \check
H^2_{\alpha_-}$ (moreover $\hat H^2_{\alpha_-}\subset H^2$).
Evidently, this implies $\hat H^2_{\alpha_+}= \check
H^2_{\alpha_+}$. Indeed,
\begin{equation*}
\begin{split}
\hat H^2_{\alpha_+}=&(L^2_{\alpha_-}\ominus \check H^2_{\alpha_-})^+
=(L^2_{\alpha_-}\ominus \hat H^2_{\alpha_-})^+
=L^2_{\alpha_+}\ominus (\hat H^2_{\alpha_-})^+= \check
H^2_{\alpha_+}.
\end{split}
\end{equation*}
Therefore \eqref{2sosplus} is guarantied by the inequality
\begin{equation}\label{3sos}
\langle f,f\rangle\le Q \left\{\left\langle \bW
\begin{bmatrix} f(\t)\\
\bar \t f(\bar t)
\end{bmatrix},
\begin{bmatrix} f(\t)\\
\bar \t f(\bar \t)
\end{bmatrix}\right\rangle+ \sum |f(\z_k)|^2\nu(\z_k)\right\}
\end{equation}
 for functions of the form $f=f_1+Bf_2$, where
$$
f_1(\z)=\sum_{k=1}^N\frac{B(\z)}{(\z-\z_k)B'(\z_k)}{f(\z_k)}, \quad
f_2\in H^2.
$$

Note that $f_1$ and $Bf_2$ orthogonal with respect to the standard
metric in $H^2$, i.e.,
$$
\|f\|^2=\|f_1\|^2+\|f_2\|^2.
$$
Let us calculate the matrix of the matric in $\check H^2_{\alpha_+}$
which is generated by this decomposition.
\begin{equation}\label{gmat1}
\langle Bf_2,Bf_2\rangle_{L^2_{\alpha_+}}=\frac 1 2\left\langle
\bB\bW\bB^*
\begin{bmatrix} f_2(\t)\\
\bar \t f_2(\bar \t)
\end{bmatrix},
\begin{bmatrix} f_2(\t)\\
\bar \t f_2(\bar \t)
\end{bmatrix}\right\rangle=\langle(I+\bH_2) f_2,f_2\rangle,
\end{equation}
where $\bH_2$ is the Hankel operator generated by the symbol $\tilde
R_+$,
$$
\bH_2 f_2=P_+\bar\t(\tilde R_+ f_2)(\bar\t).
$$
Similarly
\begin{equation}\label{gmat2}
\langle f_1,f_1\rangle_{L^2_{\alpha_+}} =\langle(I+\bH_1)
f_1,f_1\rangle+ \delta(f_1,f_1).
\end{equation}
Here $\delta$  is the quadratic form corresponding to the scalar
product in $L^2_{\nu_+}$. Finally,
\begin{equation}\label{gmat3}
\langle f_2,f_1\rangle_{L^2_{\alpha_+}} =\langle \bT f_2,f_1\rangle,
\end{equation}
where $\bT$ is the truncated Toeplitz operator
$$
T f_2=P_+BP_-\bar\t(\tilde R_+ f_2)(\bar\t).
$$

In these terms, according to \eqref{3sos} and the above
\eqref{gmat1}--\eqref{gmat3}, we have to show that there exists
$\epsilon(=\frac 1 Q)>0$ such that
\begin{equation}\label{mmm2}
    \epsilon\begin{bmatrix} I &0\\
    0&I\end{bmatrix}\le
    \begin{bmatrix} I+\bH_1 +\delta &\bT\\
    \bT^*&I+\bH_2\end{bmatrix}.
\end{equation}
By  \eqref{sscc3} we have $\|\bH_2\|<1$. Therefore we can substitute
\eqref{mmm2} by
\begin{equation}\label{mmm1}
    \epsilon\begin{bmatrix} I &0\\
    0&I+\bH_2\end{bmatrix}\le
    \begin{bmatrix} I+\bH_1 +\delta &\bT\\
    \bT^*&I+\bH_2\end{bmatrix}.
\end{equation}
It is equivalent to
\begin{equation*}
    \begin{bmatrix} I+\bH_1 +\delta-\epsilon &\bT\\
    \bT^*&(1-\epsilon)(I+\bH_2)\end{bmatrix}\ge 0
\end{equation*}
or
\begin{equation}\label{mmm4}\begin{split}
    &\begin{bmatrix} (1-\epsilon)(I+\bH_1 +\delta-\epsilon) &\bT\\
    \bT^*&(I+\bH_2)\end{bmatrix}\\&=
    \begin{bmatrix} I+\bH_1  &\bT\\
    \bT^*&(I+\bH_2)\end{bmatrix}
    +\begin{bmatrix} (1-\epsilon)(\delta-\epsilon)
    -\epsilon(I+\bH_1)&0\\
    0& 0\end{bmatrix}\ge 0.
    \end{split}
\end{equation}

Since the first term in the RHS \eqref{mmm4} is nonnegative it is
enough to  find $\epsilon$ such that
$$
\frac{\epsilon}{1-\epsilon}(I+\bH_1)+\epsilon I\le\epsilon
\frac{3-\epsilon}{1-\epsilon}I\le\delta.
$$
Note that the last inequality is the same as
\begin{equation}\label{mmm5}
   \epsilon \frac{3-\epsilon}{1-\epsilon}\|f_1\|^2\le
   \|f_1\|_{L_{\nu_+}^2}^2.
\end{equation}

Due to the below (well known) lemma, the Carleson condition for
$\tilde\nu_+$, given by \eqref{sscc1}, implies
\begin{equation}\label{mmm6}
    \|f_1\|^2\le Q\|f_1\|_{L_{\nu_+}^2}^2.
\end{equation}
Thus \eqref{mmm5} and consequently the whole theorem is proved.

\begin{lemma} The following condition
\begin{equation}\label{dc1}
    \|f\|_{K_B}^2\le Q\|f\|_{L_\nu^2}^2, \ \forall f\in K_B:=H^2\ominus BH^2,
\end{equation}
is satisfied if and only if $\tilde \nu$, $\tilde \nu(\z_k)=\frac
1{|B'(\z_k)|^2\nu(\z_k)}$, is a Carleson measure.
\end{lemma}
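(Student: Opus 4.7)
The plan is to deduce the equivalence from a natural conjugation on $K_B$ that interchanges the biorthogonal system $\{\phi_k\}$, $\phi_k(\z)=B(\z)/((\z-\z_k)B'(\z_k))$, with the system of reproducing kernels $\{k_{\z_k}\}$. Throughout I use that $\cZ\subset\bbR$ by \eqref{matbl}, so that $B$ is real on the real axis and $B'(\z_k)\in\bbR$.

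First, I would define $C:K_B\to K_B$ by the boundary prescription $Cf(\t):=\bar\t B(\t)\overline{f(\t)}$ for $\t\in\bbT$. Three standard checks show that $C$ is a conjugate-linear isometry: $|Cf|=|f|$ a.e.\ on $\bbT$ since $|B|=1$; $Cf\in H^2$ because $\langle Cf,\bar\t^n\rangle_{L^2}=\langle B\t^{n-1},f\rangle_{H^2}=0$ for $n\ge 1$ (using $f\perp BH^2$); and $Cf\perp BH^2$ because $\langle Cf,Bh\rangle_{L^2}=\int\t fh\,dm=\widehat{fh}(-1)=0$ for every $h\in H^2$. The identity $C^2f(\t)=\bar\t B\cdot\overline{\bar\t B\bar f}=f$ yields $C^2=I$, and a direct boundary computation gives
\[
C\phi_j(\t)=\frac{\bar\t B(\t)}{B(\t)(\bar\t-\z_j)B'(\z_j)}=\frac{1}{(1-\t\z_j)B'(\z_j)}=\frac{k_{\z_j}(\t)}{B'(\z_j)}.
\]

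Next, for any finite combination $f=\sum_{j\in F}c_j\phi_j$ with $c_j=f(\z_j)$, conjugate linearity of $C$ gives $Cf=\sum_{j\in F}d_j k_{\z_j}$ with $d_j:=\bar c_j/B'(\z_j)$. Since $B'(\z_j)\in\bbR$,
\[
\|f\|^2_{L^2_\nu}=\sum_{j\in F}|c_j|^2\nu_j=\sum_{j\in F}|d_j|^2|B'(\z_j)|^2\nu_j=\sum_{j\in F}|d_j|^2/\tilde\nu_j,
\]
while $\|f\|^2_{H^2}=\|Cf\|^2_{H^2}$. Because $C$ is a bijection of $K_B$ and $\{k_{\z_k}\}$ is complete in $K_B$, the system $\{\phi_k\}$ is also complete, so finite $\phi$-combinations are dense in $K_B$. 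Hence \eqref{dc1} is equivalent to
\[
\bigl\|\textstyle\sum_k d_k k_{\z_k}\bigr\|^2_{H^2}\le Q\sum_k|d_k|^2/\tilde\nu_k
\]
for every finitely supported $d$, that is, to the boundedness of the synthesis operator $S:\ell^2_{1/\tilde\nu}\to H^2$, $Sd:=\sum_k d_k k_{\z_k}$, with $\|S\|^2\le Q$.

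Finally, the adjoint computation based on $\langle k_{\z_k},g\rangle_{H^2}=\overline{g(\z_k)}$ identifies $(S^*g)_k=g(\z_k)\tilde\nu_k$, whence $\|S^*g\|^2_{\ell^2_{1/\tilde\nu}}=\sum_k|g(\z_k)|^2\tilde\nu_k$. Boundedness of $S^*$ is thus precisely the Carleson embedding $H^2\hookrightarrow L^2_{\tilde\nu}$, i.e., $\tilde\nu$ is a Carleson measure in $\bbD$. Since $\|S\|=\|S^*\|$, this establishes the equivalence. The main obstacle is the second paragraph, where one must verify that $C$ really takes $K_B$ isometrically onto itself and satisfies $C\phi_j=k_{\z_j}/B'(\z_j)$; once these two facts are in place, the remaining steps reduce to routine bookkeeping with standard Hilbert-space duality.
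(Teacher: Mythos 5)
Your argument is correct and is essentially the paper's own proof in different clothing: the conjugation $C f=\bar\t B\bar f$ is exactly the unitary map $g=\sum y_k k_{\z_k}\mapsto\tilde g=\sum B y_k/(\z-\z_k)$ used in the paper, and your synthesis operator $S$ with its adjoint $S^*$ reproduces the paper's operator $A$ and the passage to the dual inequality \eqref{dc2}--\eqref{dc3}. The only point worth tightening (present in the paper's version as well) is the passage from finite combinations $\sum_{j\in F}c_j\phi_j$ to arbitrary $f\in K_B$ in the direction ``Carleson $\Rightarrow$ \eqref{dc1}'': since $\|\cdot\|_{L^2_\nu}$ is not continuous in the $K_B$ norm, one should run the duality directly, estimating $|\langle f,g\rangle|$ for $g$ in the dense span of $\{k_{\z_k}\}$ via Cauchy--Schwarz and the bound on $S^*$, rather than appeal to density alone.
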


\begin{proof}
 A function $f\in K_B$ we represent in the form
$$
f=\sum\frac{B(\z)}{(\z-\z_k)B'(\z_k)}{f(\z_k)}
$$
so \eqref{dc1} is equivalent to the  boundness of the operator
$$
A(\{x_k\})=\sum\frac{B(\z)}{(\z-\z_k)B'(\z_k)}\frac{x_k}{\sqrt{\nu(\z_k)}}
 :l^2\to K_B.
$$
Note that
$$
\langle f,g\rangle=\sum \frac{x_k}{\sqrt{\nu(\z_k)}}\bar y_k \quad
 \text{for}\quad
g=\sum\frac{y_k}{1-\z\bar\z_k}.
$$
That is,
$$
A^*(g)=\left\{\frac{y_k}{\sqrt{\nu(\z_k)}}\right\}
$$
and \eqref{dc1} can be rewritten into the form
\begin{equation}\label{dc2}
\sum\frac{|y_k|^2}{\nu(\z_k)}\le Q\|g\|_{K_B}^2.
\end{equation}
Note that
$$
g=\sum\frac{y_k}{1-\z\bar\z_k}\mapsto\tilde
g=\sum\frac{B(\z)y_k}{(\z-\z_k)}
$$
is a unitary mapping, and thus we get from \eqref{dc2}
\begin{equation}\label{dc3}
    \sum|\tilde g(\z_k)|^2\frac 1{|B'(\z_k)|^2\nu(\z_k)}\le Q\|\tilde g\|^2_{K_B}
\end{equation}
for all $\tilde g\in K_B$.
\end{proof}


\section{attachment}


The space $L_{\theta}$ is defined as the set of 2$D$ vector
functions with the scalar product
\begin{equation}\label{al2}
\|f\|^2=
\int_{T}  \left\langle\begin{bmatrix}1&\theta\\
\bar\theta&1\end{bmatrix}\begin{bmatrix} f_1\\ f_2
\end{bmatrix},\begin{bmatrix} f_1\\ f_2
\end{bmatrix}\right\rangle dm(t).
\end{equation}
By $K_{\theta}$ we denote its subspace
$$
K_{\theta}= L_{\theta}\ominus\begin{bmatrix} H^2_-\\H^2_+
\end{bmatrix}.
$$
\begin{lemma}
The vector
$$
\begin{bmatrix} 1\\ -\overline{\theta(\mu)}
\end{bmatrix}\frac{1}{1-t\bar\mu}
$$
is the reproducing kernel in $K_\theta$.
\end{lemma}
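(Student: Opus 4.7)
The plan is to verify the reproducing property by a direct calculation that uses nothing more than the orthogonality conditions defining $K_\theta$ and the fact that $k_\mu(t):=\frac{1}{1-t\bar\mu}$ is the Szeg\"o kernel of $H^2_+$.

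First I would characterize membership in $K_\theta$. Writing $f=\begin{bmatrix} f_1\\ f_2\end{bmatrix}\in L_\theta$, orthogonality to $\begin{bmatrix} H^2_-\\ 0\end{bmatrix}$ amounts to $\int_\bbT (f_1+\theta f_2)\bar h_-\,dm=0$ for every $h_-\in H^2_-$, i.e.\ $f_1+\theta f_2\in H^2_+$; similarly orthogonality to $\begin{bmatrix} 0\\ H^2_+\end{bmatrix}$ gives $\bar\theta f_1+f_2\in H^2_-$. Thus for $f\in K_\theta$ the combination $g:=f_1+\theta f_2$ is a genuine element of $H^2_+$, and its evaluation $g(\mu)$ is what we expect the reproducing formula to recover.

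Next I would compute $\langle f, K_\mu\rangle_{L_\theta}$ directly, where $K_\mu(t)=\begin{bmatrix}1\\ -\overline{\theta(\mu)}\end{bmatrix}k_\mu(t)$. Applying the matrix weight and grouping terms gives
\begin{equation*}
\langle f,K_\mu\rangle_{L_\theta}=\int_\bbT (f_1+\theta f_2)\,\overline{k_\mu}\,dm \;-\; \theta(\mu)\int_\bbT (\bar\theta f_1+f_2)\,\overline{k_\mu}\,dm.
\end{equation*}
Since $k_\mu\in H^2_+$ is the reproducing kernel of $H^2_+$, the first integral equals $(f_1+\theta f_2)(\mu)$ by the scalar Cauchy reproducing property, while the second vanishes because $\bar\theta f_1+f_2\in H^2_-$ is orthogonal to $k_\mu\in H^2_+$. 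This yields $\langle f,K_\mu\rangle_{L_\theta}=(f_1+\theta f_2)(\mu)$, which is the sought-after reproducing identity.

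Finally I would check that $K_\mu$ itself belongs to $K_\theta$. Membership in $L_\theta$ is immediate by computing the norm. For the orthogonality conditions one notes that the two scalar combinations built from $K_\mu$ are
\begin{equation*}
1\cdot k_\mu+\theta\cdot(-\overline{\theta(\mu)})k_\mu=(1-\theta(t)\overline{\theta(\mu)})k_\mu\in H^2_+,
\end{equation*}
\begin{equation*}
\bar\theta\cdot k_\mu+1\cdot(-\overline{\theta(\mu)})k_\mu=(\overline{\theta(t)}-\overline{\theta(\mu)})k_\mu,
\end{equation*}
and the last function lies in $H^2_-$ since $\overline{\theta(\mu)}k_\mu\in H^2_+$ while $\overline{\theta(t)}k_\mu$ has the $H^2_+$--part equal to exactly $\overline{\theta(\mu)}k_\mu$ (this is the standard identity behind Toeplitz/de Branges--Rovnyak model-space kernels). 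The only place requiring some care is this last verification; everything else is bookkeeping in the matrix-weighted inner product.
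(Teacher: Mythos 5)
The paper states this lemma without any proof (it sits in the ``attachment'' section as a bare assertion), so there is nothing to compare against; your argument is the standard one and it is correct. The characterization of $K_\theta$ by the two conditions $f_1+\theta f_2\in H^2_+$ and $\bar\theta f_1+f_2\in H^2_-$ is exactly what the degenerate matrix inner product gives, the computation $\langle f,K_\mu\rangle_{L_\theta}=(f_1+\theta f_2)(\mu)$ is right (and this is the correct functional to reproduce here --- it is the quantity $(f_1+\theta f_2)(z)$ that reappears in \eqref{apr1} and \eqref{inkth}, and it is the only combination of $f_1,f_2$ that is well defined on the equivalence classes of the seminorm and analytic), and the membership check reduces, as you say, to the identity $P_+(\bar\theta k_\mu)=\overline{\theta(\mu)}\,k_\mu$, i.e.\ $T_{\bar\theta}k_\mu=\overline{\theta(\mu)}k_\mu$, which is standard. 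No gaps.
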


\begin{proposition} Let $E=\{t: |\theta|\not=1\}$.
For $f\in K_\theta$
\begin{equation}\label{inkth}
    \int_E\frac{|f_1+\theta f_2|^2}{1-|\theta|^2}dm\le C\|f\|^2
\end{equation}
(compere \eqref{yasam5}) if and only if
\begin{equation}\label{bddth}
\int_{E}  \left\langle\begin{bmatrix}1&\theta\\
\bar\theta&1\end{bmatrix}^{-1}Hf,Hf\right\rangle dm(t)\le
C\int_{T}  \left\langle\begin{bmatrix}1&\theta\\
\bar\theta&1\end{bmatrix}f,f\right\rangle dm(t),
\end{equation}
where $(H f)(z)$ is the Hilbert Transform
$$
(H f)(z)=\int  \begin{bmatrix}1&\theta\\
\bar\theta&1\end{bmatrix}\begin{bmatrix} f_1\\ f_2
\end{bmatrix}\frac{t}{t-z}dm(t).
$$

\end{proposition}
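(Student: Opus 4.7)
The plan is to reduce both inequalities to explicit form by exploiting the orthogonal splitting
\[
L_\theta = K_\theta \oplus \begin{bmatrix} H^2_-\\ H^2_+\end{bmatrix}
\]
and the observation that, thanks to the geometric-series expansion $\tfrac{t}{t-z}=\sum_{n\ge 0}(z\bar t)^n$ (valid for $|t|=1$, $z\in\bbD$), the operator $H$ is nothing but the component-wise Szeg\H{o} projection of $Wf$ onto $H^2_+$, where $W=\bigl[\begin{smallmatrix}1 & \theta\\ \bar\theta & 1\end{smallmatrix}\bigr]$. As a preliminary step I would record the dual characterization
\[
f\in K_\theta \quad\Longleftrightarrow\quad f_1+\theta f_2\in H^2_+ \text{ and } \bar\theta f_1+f_2\in H^2_-,
\]
which is immediate from testing orthogonality of $f$ against vectors of the form $(h_-,0)$ and $(0,h_+)$, $h_\pm\in H^2_\pm$.

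For the direction $\eqref{bddth}\Rightarrow\eqref{inkth}$, given $f\in K_\theta$ the characterization forces $Wf\in H^2_+\oplus H^2_-$, whence $Hf=(f_1+\theta f_2,\,0)$ on $\bbT$. A direct $2\times 2$ computation with $W^{-1}=\tfrac{1}{1-|\theta|^2}\bigl[\begin{smallmatrix}1 & -\theta\\ -\bar\theta & 1\end{smallmatrix}\bigr]$ on $E$ yields $\langle W^{-1}Hf,Hf\rangle = \tfrac{|f_1+\theta f_2|^2}{1-|\theta|^2}$, and the right-hand side of \eqref{bddth} equals $\|f\|^2$ by definition. Hence \eqref{bddth} restricted to $K_\theta$ coincides termwise with \eqref{inkth}.

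For the converse $\eqref{inkth}\Rightarrow\eqref{bddth}$, decompose an arbitrary $f\in L_\theta$ as $f=f_K+f_\perp$ with $f_\perp=(f_1^\perp,f_2^\perp)\in H^2_-\oplus H^2_+$. Since $\theta\in H^\infty_+$, the product $\theta f_2^\perp$ lies in $H^2_+$, while $\bar\theta f_1^\perp$ has support in strictly negative Fourier modes, so $Hf_\perp=(\theta f_2^\perp,\,f_2^\perp)$ and a short calculation gives $\langle W^{-1}Hf_\perp,Hf_\perp\rangle=|f_2^\perp|^2$. The pivotal algebraic identity is the \emph{pointwise} vanishing of the cross term on $E$,
\[
\langle W^{-1}Hf_K,\,Hf_\perp\rangle \;=\; \tfrac{(f_{K,1}+\theta f_{K,2})\,\overline{f_2^\perp}\,(\bar\theta-\bar\theta)}{1-|\theta|^2} \equiv 0,
\]
which follows from the ``triangular'' shape $W^{-1}Hf_K=\tfrac{f_{K,1}+\theta f_{K,2}}{1-|\theta|^2}(1,-\bar\theta)$ paired against $(\theta f_2^\perp,f_2^\perp)$. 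Moreover $\|f_\perp\|_{L_\theta}^2=\|f_1^\perp\|_{L^2}^2+\|f_2^\perp\|_{L^2}^2$, because the off-diagonal $\theta$-terms in $\langle Wf_\perp,f_\perp\rangle$ integrate to zero by $H^2_+\perp H^2_-$ in $L^2(\bbT)$. Combining: the $f_\perp$-contribution to the LHS of \eqref{bddth} is controlled gratis by $\int_{\bbT}|f_2^\perp|^2\,dm\le\|f_\perp\|^2$, while the $f_K$-contribution is controlled by \eqref{inkth}; summing gives \eqref{bddth} for all $f\in L_\theta$.

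The only non-routine point is the pointwise vanishing of $\langle W^{-1}Hf_K,Hf_\perp\rangle$, which is the algebraic cancellation powering the whole argument; the remainder is Fourier bookkeeping, and I expect no further obstacle.
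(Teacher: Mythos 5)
Your argument is correct and follows essentially the same route as the paper: both identify $H$ as the componentwise Szeg\H{o} projection of $Wf$, compute $H$ on $K_\theta$ and on its complement $H^2_-\oplus H^2_+$ separately (giving $(f_1+\theta f_2,0)^T$ and $(\theta,1)^T h_+$), and exploit the pointwise identity $W^{-1}(\theta,1)^T=(0,1)^T$ so that the cross terms in $\langle W^{-1}Hf,Hf\rangle$ cancel, reducing \eqref{bddth} to \eqref{inkth} plus the trivial bound $\int_E|h_+|^2\le\int_{\bbT}|h_+|^2$. No substantive difference from the paper's proof.
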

\begin{proof}
By definition
\begin{equation}\label{apr1}
    \begin{split} Hf(z)=&\int  \begin{bmatrix}1&\theta\\
\bar\theta&1\end{bmatrix}\left\{\begin{bmatrix} f_1\\ f_2
\end{bmatrix}+\begin{bmatrix} h_-\\ h_+
\end{bmatrix}\right\}\frac{t}{t-z}dm(t) \\
    =& P_+\left\{\begin{bmatrix} f_1+\theta f_2\\ \bar\theta f_1+f_2
\end{bmatrix}+\begin{bmatrix} 1\\ \bar\theta
\end{bmatrix}h_-+\begin{bmatrix} \theta\\ 1
\end{bmatrix} h_+
\right\}(z)\\
=&\begin{bmatrix} (f_1+\theta f_2)(z)\\ 0
\end{bmatrix}+\begin{bmatrix} \theta(z)\\ 1
\end{bmatrix} h_+(z).
    \end{split}
\end{equation}
Therefore
\begin{equation}\label{bddth2}
   \begin{split}
&\int_{E}  \left\langle\frac{\begin{bmatrix}1&-\theta\\
-\bar\theta&1\end{bmatrix}}{1-|\theta|^2}Hf,Hf\right\rangle dm\\=&
\int_{E}  \left\langle\frac{\begin{bmatrix}1\\
-\bar\theta\end{bmatrix}(f_1+\theta
f_2)}{1-|\theta|^2}+\begin{bmatrix} 0\\ 1
\end{bmatrix}h_+,
\begin{bmatrix} f_1+\theta f_2\\ 0
\end{bmatrix}+\begin{bmatrix} \theta\\ 1
\end{bmatrix} h_+
\right\rangle dm\\
=&\int_E\frac{|f_1+\theta f_2|^2}{1-|\theta|^2}dm+\int_E|h_+|^2 dm.
 \end{split}
\end{equation}
That is \eqref{bddth} is equivalent to
\begin{equation}\label{inkth2}
    \int_E\frac{|f_1+\theta f_2|^2}{1-|\theta|^2}dm+\int_E|h_+|^2 dm\le C(\|f\|^2
+\int_\bbT|h_+|^2 dm).
\end{equation}
\end{proof}

\begin{lemma} Condition \eqref{bddth} implies
\begin{equation}\label{sina2}
\sup_{I}\frac{1}{|I|}\int_{I} \frac{|\theta-\Mth|^2+
(1-|\Mth|^2)}{1-|\theta|^2} \,dm<\infty,
\end{equation}
where for an arc $I\subset E$ we put
\begin{equation}
\Mth:=\frac{1}{|I|}\int_{I} \theta\,dm.
\end{equation}
\end{lemma}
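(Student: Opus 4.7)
The plan is first to reformulate the inequality \eqref{sina2} as the trace form of a matrix-$A_2$ condition on the weight $\bW$, and then to derive that condition from \eqref{bddth} by a classical test-function argument. Setting $a:=\langle(1-|\theta|^2)^{-1}\rangle_I$ and $b:=\langle\theta(1-|\theta|^2)^{-1}\rangle_I$, a direct computation gives
\[
\langle\bW\rangle_I=\begin{bmatrix}1&\Mth\\ \overline{\Mth}&1\end{bmatrix},\qquad
\langle\bW^{-1}\rangle_I=\begin{bmatrix}a&-b\\ -\bar b&a\end{bmatrix},
\]
and hence the identity
\[
\mathrm{tr}\bigl(\langle\bW\rangle_I\langle\bW^{-1}\rangle_I\bigr)-1
=\frac{1}{|I|}\int_I\frac{|\theta-\Mth|^2+(1-|\Mth|^2)}{1-|\theta|^2}\,dm,
\]
so \eqref{sina2} is equivalent to $\sup_{I\subset E}\mathrm{tr}\bigl(\langle\bW\rangle_I\langle\bW^{-1}\rangle_I\bigr)<\infty$.

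To extract this from \eqref{bddth}, fix an arc $I\subset E$ and a vector $v\in\bbC^2$, and test with $f:=\chi_I v$; the right-hand side of \eqref{bddth} becomes $C|I|\langle\langle\bW\rangle_I v,v\rangle$. Select a companion arc $J\subset E\setminus I$ adjacent to $I$ with $|J|\asymp|I|$. For $t\in I$ and $z\in J$ the kernel $\frac{t}{t-z}$ is non-singular, has modulus $\asymp 1/|I|$, and its argument is confined to a sector of aperture strictly less than $\pi$ centred on $\pm\pi/2$; hence one may choose a unimodular $c$ with $\Re\bigl(c\cdot\frac{t}{t-z}\bigr)\gtrsim 1/|I|$ uniformly in $t,z$. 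Because $f$ is supported on $I$ and $z\in J$ lies off its support, $Hf(z)=\int_I\bW(t)v\,\frac{t}{t-z}\,dm(t)$ is absolutely convergent, and the uniform sign of the real part yields the vector lower bound $|Hf(z)|\gtrsim|\langle\bW\rangle_I v|$ for $z\in J$. Therefore
\[
\int_J\langle\bW^{-1}Hf,Hf\rangle\,dm\gtrsim|J|\,\bigl\langle\langle\bW^{-1}\rangle_J\langle\bW\rangle_I v,\langle\bW\rangle_I v\bigr\rangle.
\]
Combining with \eqref{bddth} and substituting $v\mapsto\langle\bW\rangle_I^{-1}\eta$ yields the ``shifted'' matrix-$A_2$ inequality $\langle\bW^{-1}\rangle_J\le C'\langle\bW\rangle_I^{-1}$ valid for every pair of adjacent arcs $I,J\subset E$ of comparable length.

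To convert this off-diagonal estimate into the desired on-arc trace bound, split $I$ into two adjacent halves $I=I_1\cup I_2$ and apply the shifted bound with source and target interchanged to obtain $\langle\bW^{-1}\rangle_{I_j}\le C'\langle\bW\rangle_{I_{3-j}}^{-1}$ for $j=1,2$. Averaging,
\[
\langle\bW^{-1}\rangle_I\le\tfrac{C'}{2}\bigl(\langle\bW\rangle_{I_1}^{-1}+\langle\bW\rangle_{I_2}^{-1}\bigr).
\]
Taking the trace against $\langle\bW\rangle_I=\tfrac12(\langle\bW\rangle_{I_1}+\langle\bW\rangle_{I_2})$ and bounding each cross-term $\mathrm{tr}(\langle\bW\rangle_{I_j}\langle\bW\rangle_{I_{3-j}}^{-1})$ by $\mathrm{tr}(\langle\bW\rangle_{I_j}\langle\bW^{-1}\rangle_{I_{3-j}})\le 2C'$ (Jensen's inequality $\langle\bW\rangle_{I_{3-j}}^{-1}\le\langle\bW^{-1}\rangle_{I_{3-j}}$ plus the shifted bound once more) produces the uniform estimate $\mathrm{tr}\bigl(\langle\bW\rangle_I\langle\bW^{-1}\rangle_I\bigr)\le C'(1+C')$, which combined with the first step completes the proof.

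The technical heart of the argument lies in the pointwise lower bound on $|Hf(z)|$ for $z\in J$: the integrand $\bW(t)v\,\frac{t}{t-z}$ is vector-valued and the kernel varies by an $O(1)$ factor across $I$, so a careful accounting of the kernel's argument (rather than a crude ``approximately constant'' replacement) is required to extract the sharp lower bound $|Hf(z)|\gtrsim|\langle\bW\rangle_I v|$. A related delicate point is the selection of the companion arc $J$ when $I$ approaches the endpoints of $E$: if no external adjacent $J\subset E$ of comparable length is available, one must instead test with functions supported on one half of $I$ and evaluate on the other, and the diagonalisation step must then be arranged using only sub-arcs of $I$.
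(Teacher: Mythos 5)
Your algebraic bookends are fine: the identity $\tr\bigl(\langle \bW\rangle_I\langle \bW^{-1}\rangle_I\bigr)-1=\frac{1}{|I|}\int_I\frac{|\theta-\Mth|^2+(1-|\Mth|^2)}{1-|\theta|^2}\,dm$ is correct, and the final halving/Jensen step that converts the shifted bound $\langle \bW^{-1}\rangle_J\le C'\langle \bW\rangle_I^{-1}$ into the on-arc trace bound is sound. These two steps are in substance the same computation the paper performs in passing from \eqref{aa3} to \eqref{aa4}. The difference is that the paper simply \emph{invokes} the known implication ``boundedness of the weighted Hilbert transform implies the matrix $A_2$ condition $\langle \bW^{-1}\rangle_I\le C\langle \bW\rangle_I^{-1}$,'' whereas you attempt to prove it, and it is exactly there --- at the step you yourself call the technical heart --- that the argument has a genuine gap.

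The claimed pointwise lower bound $|Hf(z)|\gtrsim|\langle \bW\rangle_Iv|$ for $f=\chi_Iv$, $z\in J$, does not follow from the sector property of the kernel, and is in fact false. Writing $Hf(z)=\int_I \bW(t)v\,k(t,z)\,dm(t)$ with $k(t,z)=t/(t-z)$, the condition $\Re\bigl(c\,k(t,z)\bigr)\gtrsim 1/|I|$ controls only the part of the integrand with a fixed sign: since $\langle \bW(t)v,v\rangle\ge 0$ one does get $|\langle Hf(z),v\rangle|\gtrsim\langle\langle \bW\rangle_Iv,v\rangle$, but the component of $\bW(t)v$ orthogonal to $v$ is a complex-valued function of arbitrary phase, and multiplying it by the \emph{non-constant} kernel can annihilate the quantity $\int_I\bW v\,dm$ that you need to recover. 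Concretely, split $I=I_1\cup I_2$, put $\bW v=\epsilon_j v+a_j e$ on $I_j$ with $e\perp v$; positivity only forces $\epsilon_j\gtrsim|a_j|^2$, so one may take $|a_j|=\delta$ small, $\epsilon_j\asymp\delta^2$, and $a_2=-(\mu_1/\mu_2)a_1$ where $\mu_j=\int_{I_j}k(\cdot,z)\,dm$. Then $Hf(z)=(\mu_1\epsilon_1+\mu_2\epsilon_2)v=O(\delta^2)$ while $|\langle \bW\rangle_Iv|\ge\tfrac12|a_1+a_2|\asymp\delta$, so the bound fails by an unbounded factor; and the averaged substitute you would actually need for $\int_J\langle \bW^{-1}Hf,Hf\rangle$ is nowhere argued. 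The standard repair is to test instead with $f=\chi_I\bW^{-1}e$ (suitably truncated), for which $Hf(z)=\bigl(\int_Ik(\cdot,z)\,dm\bigr)e$ is a \emph{constant vector} times a scalar lying in a sector, giving $\langle\langle \bW^{-1}\rangle_Je,e\rangle\lesssim\langle\langle \bW^{-1}\rangle_Ie,e\rangle$; one must then combine this with the dual inequality and the trivial bound $\langle \bW\rangle_I^{-1}\le\langle \bW^{-1}\rangle_I$ to assemble the two-sided $A_2$ estimate, as in Hunt--Muckenhoupt--Wheeden and Treil--Volberg. That is a genuine argument, not a one-line consequence of the kernel's argument being confined to a sector, and without it your derivation of the shifted matrix bound --- and hence of \eqref{sina2} --- is incomplete.
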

\begin{proof} In particular \eqref{bddth} implies that the matrix
weight $\begin{bmatrix}1&\theta\\
\bar\theta&1\end{bmatrix}$ is in $A_2$ on $E$. And this means
\begin{equation}\label{aa3}
    \int_I \frac{\begin{bmatrix}1&-\theta\\
-\bar\theta&1\end{bmatrix}}{1-|\theta|^2}dm\le C\begin{bmatrix}1&\Mth\\
\bar{\Mth}&1\end{bmatrix}^{-1},
\end{equation}
or
\begin{equation}\label{aa4}
    \int_I \frac{\begin{bmatrix}1-|\theta|^2+|\theta-\Mth|^2&(\Mth-\theta)\sqrt{1-|\Mth|^2}\\
\overline{(\Mth-\theta)}\sqrt{1-|\Mth|^2}&
1-|\Mth|^2\end{bmatrix}}{1-|\theta|^2}dm\le C,
\end{equation}
which is equivalent to \eqref{sina2}
\end{proof}

\bibliographystyle{amsplain}

\end{document}